\title{Solving one variable word equations in the free group in cubic time}
\author{Robert Ferens}{Institute of Computer Science, University of Wroc\l{}aw, Poland}{robert.ferens@cs.uni.wroc.pl}{0000-0002-0079-1936}{}
\author{Artur Je\.z}{Institute of Computer Science, University of Wroc\l{}aw, Poland \and \url{https://ii.uni.wroc.pl/~aje/}}{aje@cs.uni.wroc.pl}{0000-0003-4321-3105}{}
\authorrunning{R.~Ferens and A.~Je\.z}
\keywords{Word equations, free group, one-variable equations}
\newcommand{\iref}[1]{\hyperref[i#1]{i#1}\xspace}
\newcommand{\jref}[1]{\hyperref[j#1]{j#1}\xspace}
\newcommand{\NPclass}{{\sf NP}}
\newcommand{\PSPACE}{{\sf PSPACE}}
\newcommand{\pref}{{\ensuremath{\sqsubseteq} }}
\newcommand{\suff}{{\ensuremath{\sqsupseteq} }}
\DeclareMathOperator{\nf}{nf}
\DeclareMathOperator{\lce}{lce}
\DeclareMathOperator{\lcp}{lcp}
\newcommand{\ov}[1]{\ensuremath{\overline{#1}} }
\newcommand{\twodots}{\mathinner{\ldotp\ldotp}}
\newcommand{\shift}{\sim}
\newcommand{\eqg}{\approx}
\newcommand{\Ocomp}{\mathcal O}
\newcommand{\inv}{^{-1}}
\begin{document}
\maketitle
 
\begin{abstract}
A word equation with one variable in a free group is given as $U = V$,
where both $U$ and $V$ are words over the alphabet of generators of the free group and $X, X \inv$, for a fixed variable $X$.
An~element of the free group is a solution when substituting it for $X$ yields a true equality (interpreted in the free group) of left- and right-hand sides.
It is known that the set of all solutions of a given word equation with one variable is a finite union of sets of the form
$\{\alpha w^i \beta \: : \: i \in \mathbb Z \}$,
where $\alpha, w, \beta$ are reduced words over the alphabet of generators,
and a polynomial-time algorithm (of a high degree)
computing this set is known.
We provide a cubic time algorithm for this problem, which also shows that the
set of solutions consists of at most a quadratic number of the above-mentioned sets.
The algorithm uses only simple tools of word combinatorics and group theory
and is simple to state.
Its analysis is involved and focuses on the combinatorics of occurrences of powers of a word
within a larger word.
\end{abstract}


\section{Introduction}
\subparagraph{Word equations in the free group}
A word equation is a formal equation $U = V$ in which both $U, V$ contain letters from a fixed set (called alphabet) $\Sigma$ and variables;
a solution is a substitution of variables by words over $\Sigma$ such that
this formal equation is turned into an equality.
We consider such equations in a free group, so the aforementioned equality is
interpreted as the equality in the free group generated by $\Sigma$;
naturally, we allow the~usage of~inverses of variables and generators in the equations.
The satisfiability problem (of word equation over the free group)
is to decide, whether the input equation has a solution.
By~solving the equation we mean to return an (explicit or effective)
representation of all solutions.

The first algorithm for the satisfiability problem was given by Makanin~\cite{mak82}
and it is an involved generalization of Makanin's algorithm for the satisfiability of word equation in~the~free monoid~\cite{makanin};
Razborov generalized the~algorithm so that it solves word equations in~the~free group~\cite{raz87};
the description is infinite and is known as Makanin-Razborov diagrams.
Makanin's algorithm is very involved and known to be not primitively recursive~\cite{koscielskinotrecursive},
the same applies to Razborov's generalisation, 
which was the first step of solving Tarski's conjectures (on elementary equivalence and decidability of the theory of free groups)~\cite{tarskielementary,tarski_sela}.
A~different approach based on Plandowski's algorithm for the free monoid case~\cite{plandowskifocs}
was later proposed~\cite{diekertfreegroups},
and an even simpler approach, which gives also a finite description of~the~solution set, was~given by Diekert, Plandowski and Je\.z~\cite{wordeqgroups},
it extends Je\.z's algorithm for the free monoid case~\cite{wordequations}.

The problem of word equations in the free group was first investigated by Lyndon~\cite{lyndon_one_var_FG},
who considered the restricted variant of one-variable equations.
He showed that the solution set is a finite union of sets of the form
\begin{equation}
\label{eq:parametrizable_solution_set}
\{w_0w_1^{i_1}w_2w_3^{i_2}\cdots w_{2k-1}^{i_{k}}w_{2k} \: : \: i_1, \ldots, i_{k} \in \mathbb Z\} \enspace,
\end{equation}
where $w_0, \ldots, w_{2k}$ are words over the generators of the free group,
we call such sets $k$-parametric.
In fact, it was first shown using combinatorial arguments that a superset of all solutions is~of~this form,
and using algebraic methods the superset of all solutions is transformed into the actual set of all solutions.
As a result, $k$ depends on the equation and is a by-product of the algorithm rather than an explicitly given number.
By using a more refined, though purely combinatorial, argument Appel~\cite{appel_one_var_FG}
showed that there exists a superset of solutions that is a finite union of $1$-parametric sets
and that one can test for which values such words are indeed solutions.
In principle, the proof can be readily used as an algorithm, but no reasonable bounds can be derived from it.
Unfortunately, Appel's proof contains an error (see~\cite{chiswell_remeslennikov_one_var_FG} for a discussion).
A similar characterization was announced by Lorentz~\cite{lorents_one_var_FG_2},
but the proof was not supplied.
Chiswell and Remeslennikov~\cite{chiswell_remeslennikov_one_var_FG} used a different approach,
based on geometric group theory, to show that the solution is a finite union of $1$-parametric sets.
However, their argument does not give any algorithm for solving an equation.
Gilman and Myasnikov~\cite{gilman_myasnikov_one_var_FG}
gave a proof that the solution set is $4$-parametric;
their proof is based on formal language theory
and is considerably simpler and shorter than the other known ones,
however, it yields no algorithm.

A polynomial-time algorithm solving the one-variable word equations
(in the free group) was given by Bormotov, Gilman and Myasnikov~\cite{onevariablefreegroup}.
In principle, their argument is similar to Appel, though simpler (and without errors),
and extra care is taken to guarantee that testing takes polynomial time.
The running time is high, though little effort was made to~lower the exponent,
we believe that simple improvements and better analysis should yield $\Ocomp(n^5)$ running time of their algorithm.

It is known that already two-variable word equations (in the free group) do not always have
a parametrizable solution set~\cite{appel_2_var_FG},
here a parametrizable set is a~generalization of~parametric sets~\eqref{eq:parametrizable_solution_set}
in which the exponents using integer parameters can be nested
and one exponent may depend on different parameters.
Moreover, no polynomial-time algorithm for two-variable equations is known.
Other restricted cases were also investigated,
say the famous Lyndon-Sch\"utzenberger Theorem was originally shown for the free group~\cite{lyndonschutzenberger1962}
and satisfiability of~quadratic word equations is
known to be \NPclass-complete~\cite{quadraticfreegroups} in the case of free group.

\subparagraph{Our results and proof outline}
We present an $\Ocomp(n^2 m)$ algorithm for solving equations with one variable in a free group,
where $n$ is the length of the equation and $m$ the number of~occurrences of~the variable in it.

\begin{theorem}
\label{thm:main}
Given a word equation with one variable in a free group,
with length $n$ and $m$ occurrences of the variable,
we can compute the set of all its solutions in time $\Ocomp(n^2m)$.
The set of solutions is a union of $\Ocomp(n^2)$ sets of the form
$\{\alpha w^k \beta \: : \: k \in \mathbb Z \}$,
where $\alpha, w, \beta$ are words over the generators of the given free group.
\end{theorem}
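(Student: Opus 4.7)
The plan is to combine three ingredients: a structural preprocessing of the equation, a case split on the length of $X$, and a combinatorial analysis of how powers $w^k$ can occur inside a larger word.

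First, I would preprocess $U$ and $V$ (free and cyclic reduction, recording the constant ``separator'' words that flank each occurrence of $X$ or $X\inv$). Substituting any candidate $X = \alpha w^k \beta$ triggers cancellations that are bounded by the separators; what remains at each occurrence is a long ``core'' whose position inside the reduced substituted word is predictable. Aligning cores from the left-hand side with cores from the right-hand side gives the main combinatorial constraint that drives the rest of the proof.

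Second, I would split on $|X|$. For $|X|$ below a polynomial threshold in $n$, candidates can be enumerated directly and each tested by substitution and free reduction; the number of cases is kept at $O(n^2)$ and each test is $O(nm)$. For long $|X|$, overlapping cores force strong periodicity: Fine--Wilf-type reasoning on overlaps between two $X$-occurrences, or between an $X$ and an $X\inv$, yields a primitive period $w$ together with boundary words $\alpha,\beta$. Each feasible triple $(\alpha,w,\beta)$ is indexed by how a position of $U$ aligns with a position of $V$, giving $O(n^2)$ candidate families. For each candidate, the requirement that $\alpha w^k \beta$ be a solution reduces, after partial free reduction, to finitely many affine constraints on the integer parameter $k$. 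By sharing work across families that differ only in alignment choices, all $O(n^2)$ families can be generated and their feasible exponents computed in total time $O(n^2m)$.

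The main obstacle is the combinatorial heart of the argument: proving that only $O(n^2)$ candidate families can arise, and that each produces only a controlled set of valid exponents $k$. Many occurrences of a primitive $w$ may overlap inside the substituted sides and create spurious coincidences; ruling these out requires fine control over how powers of a primitive word can sit inside equation-sized windows, and tight accounting for interactions between distinct $X$- and $X\inv$-occurrences, including the subtle case where an $X$ cancels heavily against an adjacent $X\inv$. In line with the abstract's emphasis on ``the combinatorics of occurrences of powers of a word within a larger word,'' I expect this portion — rather than the algorithmic bookkeeping — to absorb most of the effort.
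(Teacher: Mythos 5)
Your sketch correctly identifies the general flavor — periodicity and the combinatorics of powers of a primitive word — but it glosses over the single obstacle that drives the entire paper and, as written, it would not close.

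The key gap: the natural superset of candidate solutions is inherently \emph{two}-parametric, not one-parametric. When you apply the pseudosolution/Nielsen-type argument to a local window $x^{p_h} u_h x^{p_{h+1}} u_{h+1} x^{p_{h+2}}$, the middle $x$ splits as $x_u x_v$, with $x_u$ cancelling against the left neighbour and $x_v$ against the right; this yields candidates of the form $\alpha u^i v^j \beta$ with \emph{independent} exponents $i,j$ and possibly distinct primitives $u,v$, exactly as in Lemma~\ref{lem:solutions_superset} (this matches the prior characterizations of Appel and of Bormotov--Gilman--Myasnikov). Your step ``Fine--Wilf reasoning on overlaps yields a primitive period $w$ together with boundary words $\alpha,\beta$, and the feasibility requirement reduces to finitely many affine constraints on $k$'' assumes away precisely this two-variable structure. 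Reducing the a priori two-dimensional set of pairs $(i,j)$ to a union of $O(n^2)$ one-parameter families of the stated form, \emph{while keeping the total work at $O(n^2 m)$}, is the technical heart of the paper (Section~\ref{sec:restricting-the-superset-of-solutions}): one substitutes the parametric word $\alpha u^I v^J \beta$ into the equation, $u$- and $v$-reduces, uses the pseudosolution lemma to argue that some parametric exponent $\phi$ satisfies $|\phi(i)|\leq 3$, and then carefully bounds the number of distinct exponents by $O(\sqrt{n|u_{i_0}u_{i_0+1}|}/|u|)$ using the fact that coefficients are $\pm 1$ (Lemma~\ref{lem:how_many_different_powers}), with separate treatment of whether a parametric power is ``affected'' by fixing the other variable (Lemmata~\ref{lem:dissapearing_v_powers} and~\ref{lem:how_many_different_powers_second_reduction}) and a delicate case split on $u \not\shift v$, $u\in\{v,\ov v\}$, and $u\shift v$. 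None of this is visible in your sketch.

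Secondly, the dichotomy on $|X|$ (small/large threshold) is not the mechanism the paper uses and is not obviously sound: small solutions are not enumerated by length but arise as $O(1)$-represented words from the explicit case analysis of Lemmata~\ref{lem:x-1uxvx-1form}--\ref{lem:xux-1vxform}, and there is no length threshold at which ``strong periodicity'' kicks in; the superset structure is derived combinatorially from the pseudosolution pairing, not from $|X|$ being large. Finally, the claim that ``sharing work across families that differ only in alignment choices'' yields $O(n^2m)$ total time is asserted without a mechanism; in the paper this rests on the specific data structures (suffix array plus lce queries, Lemmata~\ref{lem:solution_testing}--\ref{lem:solution_testing_2}) and on the summation $\sum_{i_0}\ell_{i_0}\ell_{i_0}'(\,n/\ell_{i_0}+n/\ell_{i_0}'\,)=O(n^2)$ in Lemma~\ref{lem:running_time}, which in turn depends on the refined $\sqrt{n|u_{i_0}u_{i_0+1}|}/|u|$ bounds. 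In short: you have the right intuition about periodicity and powers, but the proposal would need to confront the two-parameter superset explicitly and supply the machinery that collapses it.
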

The running time is achieved in the RAM model,
more specifically we require that operations on $\log n$-bits long integers (and byte-arrays)
can be performed in $\Ocomp(1)$ time.
If this is not the case, then the running time
increases by a multiplicative $\Ocomp(\log n)$ factor.
Note that in Theorem~\ref{thm:main} we allow $w = \varepsilon$,
i.e.\ the set $\{\alpha w^k \beta \: : \: k \in \mathbb Z \}$
from Theorem~\ref{thm:main} may consist of~a~single string.

The $\Ocomp(n^2m)$ running time seems hard to improve:
all known characterization of solution set include $\Omega(n^2)$ 
individual words that should be tested as solutions
and natural testing of a single solution is done in $\Theta(m)$ time,
note that this does not take into account the $1$-parametric sets
that do depend on the parameter,
which seem to be harder to be tested.

We use a previous characterization of the solution superset~\cite{onevariablefreegroup},
from which it follows that the main task is to compute, given words
$\alpha, u, v, \beta$, for which $i, j \in \mathbb Z$ the word $\alpha u^i v^j \beta$ is a~solution.
Roughly speaking, the previous approaches~\cite{appel_one_var_FG,onevariablefreegroup}
argued that if $\alpha u^i v^j \beta$ is a solution for a ``large enough'' $i$ then $\alpha u^{i'} v^j \beta$  is a~solution for each $i' \in \mathbb Z$;
thus one has to check some ``small'' $i$s and one ``large enough'';
for each fixed $i$ we substitute its value and similarly argue that if $j$ is ``large enough''
then each $j'$ yields a solution
(the actual argument is more subtle and symmetric in terms of $i$ and $j$).
We refine this approach: previously the tested values of $i$ and $j$ did not depend on the actual equation,
but only on its length.
We identify a~small set of candidate pairs $(i,j)$ based on the actual equation.
To this end, we substitute $\alpha u^I v^J \beta$ to the equation,
where $I, J$ are integer variables,
and intend to verify, for which values $(i, j)$ of variables $(I, J)$ it is a solution.
Such parametric candidates cannot be tested as solutions
(in particular because it could be that only for some values of $I$ and $J$ they indeed are solutions),
however, some operations can be performed on $u^I$ (or $v^J$),
regardless of the actual value substituted for $I$:
say $u^Iu^Iu^{-1}u^{-I}$ is equal to $u^{I-1}$ (in a free group).
After performing all such possible operations
we obtain a word with ``parametric powers'' of $u, v$,
i.e.\ powers, whose exponents depend on parameters $I, J$,
note that the parameters are the same for all powers in the parametric word,
but the actual exponents in different powers may be different.
If there are only powers of $u$ (or only powers of $v$)
then using known tools one can show that one of those exponents is (almost) $0$.
This yields a linear set of possible $i$s that should be tested.
Ideally, we would like to say that a similar claim holds also when parametric powers of both $u$ and $v$ are present.
However, those powers can interact and such an approach does not work directly.
Instead, if $I =i, J = j$ yields a solution,
then substituting $I = i$ (as a mental experiment)
either reduces the whole word to $\varepsilon$, in which case each $J = j$ yields a solution,
or leaves only powers of $v$, in which case we can reiterate the same approach,
this time for powers of $v$.
The former case gives a set of candidates for $I$, the latter for $J$,
technically those depend on the substituted $i$,
but this dependency can be removed by further analysis.
A similar analysis can be made for substitution $J = j$,
together yielding a superset of all possible solutions,
which are then individually tested.

Additional analysis is needed to bound the number of candidates that is obtained in this way.
To this end, we analyze the set of possible exponents of powers of $u$ and $v$.
In particular, we show that initially all such exponents are of the form $\pm I +c $ and $\pm J +c$,
which allows for much better estimations:
for the candidate solution to be different, the constants in those expressions need to be different and
to have a factor $u^{I+c}$ some $c |u|$ letters from the equation are ``consumed''
and easy calculations show that
there are only $\Ocomp(\sqrt n)$ different possible constants,
which leads to $\Ocomp(\sqrt n)$ different candidates.
One has to take special care of $\alpha, \beta$, as their introduction
can yield a quadratic-size equation.
To avoid this, we analyze how powers of $u$ in concatenations of words can be obtained.

In most cases, we reduce the problem in the free group to the problem in the free monoid (with involution)
and use standard tools of word combinatorics.
However, this requires some additional properties of words $\alpha, u, v, \beta$.
Those cannot be inferred from known characterizations,
and so known proofs are reproved and the additional claims are shown.

\subparagraph{Connection to word equation in the free monoid}
The connection between word equations in the free group and free monoid is not perfectly clear.
On one hand, the satisfiability of the former can be reduced to the satisfiability of word equations over the free monoid (with involution),
this was implicitly done by Makanin~\cite{mak82} and explicitly by Diekert et al.~\cite{diekertfreegroups}
and so generalizations of algorithms for the monoid case are used for the group case.
However, there is an intuition that the additional group structure should make the equations somehow easier.
This manifests for instance for quadratic equations (so the case when each variable is used at most twice),
for which an \NPclass{} algorithm was given for the free-group case~\cite{quadraticfreegroups}
and no such result is known for the free monoid case.
Furthermore, the whole first-order theory of equations over the free group is decidable~\cite{tarskielementary},
while already one alternation of the quantifiers make a similar theory for monoid undecidable
(see~\cite{word_equation_theories_undecidable} for an in-depth discussion of undecidable and decidable fragments).

On the other hand, such general reductions increase the number of variables
and so are not suitable in the bounded number of variables case.
In particular, a polynomial time algorithm for the satisfiability of two-variable equations for the free monoid is known~\cite{twovarnew},
in~contrast to the case of the free group
(the set of solutions is still not parametrisable~\cite{Hmelevskii_free_semigroup},
as~in~the~case of~the~free group.).

\subparagraph{Word equations in free monoid with restricted number of variables}
Word equations in~the~free monoid with restricted number variables were also considered.
For one variable a~cubic-time algorithm is trivial and can be easily improved to quadratic-running time~\cite{lothairediekert}.
Eyono Obono, Goralcik and Maksimenko gave a first non-trivial algorithm running in time $\Ocomp(n \log n)$~\cite{onevarfirst}.
This was improved by D\k{a}browski and Plandowski~\cite{onevarold} to $\Ocomp(n + m \log n)$,
where $m$ is the number of occurrences of the variable in the equation,
and to $\Ocomp(n)$ by Je\.z~\cite{onevarlinear};
the last two algorithms work in the RAM model,
i.e.\ they assume that operations on the $\log n$-bits long numbers can be performed in constant time.
The properties of the solution set were also investigated:
all above algorithms essentially use the fact that the solution set
consists of at most one $1$-parametric set
and $\Ocomp(\log n)$ other solutions~\cite{onevarfirst}.
Plandowski and Laine showed that the solution set is either
exactly a $1$-parametric set or of size $\Ocomp(\log m)$~\cite{onevarnew}
and conjectured that in the latter case there are at most $3$ solutions.
This conjecture was recently proved by Saarela and Nowotka~\cite{Saarela_Nowotka_3_solutions} using novel techniques.

Word equations in the free monoid with two variables were also investigated.
it was shown by Hmelevski\u{\i}~\cite{Hmelevskii_free_semigroup} that there are equations whose solution set is not
parametrizable.
The first polynomial-time algorithm (of a rather high degree)
for satisfiability of such equations was given by Charatonik and Pacholski~\cite{CharatonikPacholski},
this was improved to $\Ocomp(n^6)$ by Ille and Plandowski~\cite{Ille_Plandowski_two_var} 
and later to $\Ocomp(n^5)$ by D\k{a}browski and Plandowski~\cite{twovarnew},
the latter algorithm also returns a~description of all solutions.
The computational complexity of word equations with three variables is unknown,
similarly, the computational complexity of satisfiability in the general case of word equations in the free monoid remains unknown (it is \NPclass-hard and in \PSPACE).

\section{Definitions and preliminaries}\label{sec:definitions}

\subsection{Notions}\label{sec:notions}
\subparagraph{Monoids, monoids with involution}
By $\Sigma$ we denote an alphabet, which is endowed with involution $\ov{\cdot}: \Sigma \to \Sigma$, i.e.\ a function such that $\ov a \neq a  = \ov {\ov a}$.
The free monoid $\Sigma^*$ with involution consists of all finite words over $\Sigma$
and the involution uniquely extended from $\Sigma$ to $\Sigma^*$
by requiring that $\ov{(uv)} = \ov v \, \ov u$,
i.e.\ we think of it as of inverse in a group.
We denote the empty word by $\varepsilon$.
Given a word $uvw$: $u$ is its prefix, $w$ suffix and $v$ its subword; 
we also write $u \pref uvw \suff w$ to denote the prefixes and suffixes;
for a word $w$ often $w'$ and $w''$ will denote the prefix and suffix  of $w$,
this will be always written explicitly.
A~word $w = a_1\cdots a_k$, where $a_1, \ldots, a_k \in \Sigma$, has length $|w| = k$
and $w[i\twodots j]$ denotes a subword $a_i\cdots a_j$.
For $k \geq 0$ a word $u^k$ is a $k$-th \emph{power} of $u$ (or simply $u$-power),
by convention $u^{-k}$ denotes $\ov u ^k$.
A $u$-power prefix (suffix) of $v$ is the longest $u$-power
that is prefix (suffix, respectively) of $v$, note that this may b a positive or negative power, or $\varepsilon$.
A~single-step reduction replaces $wa \ov a v$ with $wv$,
a reduction is a sequence of single-step reductions.
A~word in a free monoid $\Sigma^*$ with involution
is \emph{reduced} if no reduction can be performed on it.
It~is folklore knowledge
(and a bit tedious to show)
that for $w$ there exists exactly one reduced $v$ such that $w$ reduces to~$v$;
we call such a $v$ the \emph{normal form} of $w$ and denote it by $\nf(w)$;
we write $w \eqg v$ when $\nf(w) = \nf(v)$.
A $t \in s^* \cup \ov{s}^*$ is an $s$-power (or power of $s$).
We write $u \shift v$ to denote that $u=u'v'$ and $v=v'u'$
or $\ov v = v'u'$ for some $u' ,v'$.
A reduced word $w$ is \emph{cyclically reduced} if it is not of the form $w=a v \ov a$ for any $a \in \Sigma$ and
$w$ is \emph{primitive} if there is no word $v$, such that $w=v^k$ for some natural number $k > 1$. 


\subparagraph{Free group}
Formally, the free group (over generators $\Sigma$) consists of all reduced words over $\Sigma$
with the operation $w \cdot v = \nf(wv)$.
We use all elements of $\Sigma^*$ to denote elements of the free group,
with $w$ simply denoting $\nf(w)$.
Note that in such a setting $\eqg$ corresponds to~equality in free group.
Note that the inverse $w \inv$ of $w$ is $\ov w$ and we will use this notation,
as~most of~the~arguments are given for the monoid and not the free group.

Any equation in the free group is equivalent to an equation in which the right-hand side is $\varepsilon$,
as $u \eqg v$ is equivalent to $u v^{-1} \eqg  \varepsilon$,
thus in the following we consider only equations in~such a form.
Moreover, $uv \eqg  \varepsilon$ is equivalent to $vu \eqg \varepsilon$,
which can be seen by multiplying by $v$ from the left and $v^{-1}$ from the right;
hence we can assume that the equation begins with a~variable.
Let us fix the equation 
\begin{equation}
\label{eq:main}
X^{p_1} u_1X^{p_2} u_2 \cdots u_{m-1}X^{p_m} u_m \eqg \varepsilon
\end{equation}
for the rest of the paper,
each $u_i$ is a reduced word in $\Sigma^*$,
every $p_i$ is $1$ or $-1$ and
there are no expressions $X \varepsilon \ov X$ nor $\ov X \varepsilon X$ in the equation.
Clearly, $m$ is the number of occurrences of the variable $X$ in the equation,
let $n = m + \sum_{i=1}^{m} |u_i|$ be the length of the equation.
A~reduced word $x \in \Sigma^*$ is a solution when
$x^{p_1} u_1x^{p_2} \cdots u_{m-1}x^{p_m} u_m \eqg \varepsilon$.

\subparagraph{Integer expressions, parametric words}
Let us fix two integer variables $I, J$ for the remainder of the paper.
An integer expression is of the form $n_I I+n_J J+n_c$,
where $n_I, n_J, n_c \in \mathbb Z$ are integers;
an expression is \emph{constant} when $n_I = n_J = 0$ and \emph{non-constant} otherwise.
We denote integer expressions with letters $\phi, \psi$,
note that all expressions that we consider are in the same two variables $I, J$.
A value $\phi(i,j)$ is defined in a natural way;
we also use this notation for substitutions of variables,
say $\phi(I, k - I)$, which is defined in a natural way.
The~integer expression $\phi$ depends on the variable $I$ ($J$)
if $n_I \neq 0$ ($n_J \neq 0$) and it depends on $I+J$ if $n_I = n_J \neq 0$.
If $\phi$ depends on exactly one variable then we write $\phi(i)$ to denote its value.

An $s$-parametric power is of the form $s^\phi$, where $\phi$ is an integer expression and $s$ a word;
then $s(i,j)$ denotes $s^{\phi(i,j)}$,
this can be interpreted both as an element in the monoid and in the free group.
Unless explicitly stated, we consider only non-constant expressions $\phi$ as exponents in parametric powers,
this should remove the ambiguity that an $s$-power is also an $s$-parametric power.
A parametric word is of the form $w = t_0s_1^{\phi_1}t_1\cdots t_{k-1}s_k^{\phi_k}t_{k}$
(all~arithmetic expressions $\phi_1, \ldots, \phi_k$ are in the same two variables $I, J$)
and $w(i, j)$ denotes $t_0s_1^{\phi_1(i, j)}t_1\cdots t_{k-1}s_k^{\phi_k(i, j)}t_{k}$.
In most cases, we consider very simple parametric words, where $k\leq2$ and both expressions depend on one variable only.
We sometimes talk about equality of parametric words (in a free group),
formally $w \eqg w'$ if for each $(i,j) \in \mathbb Z^2$ it holds that $w(i,j) \eqg w'(i,j)$.
We will use those only in very simple cases, say $u^{I+1}u^{-I+1} \eqg u^2$.

As we process sets of integer expressions (as well as parametric powers),
we will often represent them as sorted lists (with duplicates removed):
we can use any linear order, say for integer expressions the lexicographic order on triples $(n_I,n_J,n_c)$
and for parametric powers the lexicographic order on tuples $(s,n_I,n_J,n_c)$,
where tuple $(s,n_I,n_J,n_c)$ corresponds to a~parametric power
$u^{n_II+n_JJ+n_c}$.


\subsection{Pseudosolutions}
We want to specify some properties of reductions of solutions,
instead of usual reduction sequences it is a bit more convenient
to talk about pairings that they induce.
Given a word $w[1\twodots n] \in \Sigma^n$, $w \eqg \varepsilon$
its partial reduction pairing (or simply partial pairing).
is intuitively speaking,
a pairing of indices of $w$ corresponding to some reduction.
Formally, it is a partial function $f:[1\twodots n] \to [1\twodots n]$
such that if $f(i) \neq \bot$ then $f(f(i)) = i$ (it is a pairing), $w[i] = \ov{w[f[i]]}$
(it pairs inverse letters) and either $f(i) \in \{i-1, i+1\}$ or $f(i) = j \neq i$
and $f$ is defined on the whole interval $[\min(i,j)+1,\max(i,j)-1]$
and $f([\min(i,j)+1,\max(i,j)-1]) = [\min(i,j)+1,\max(i,j)-1]$
(so the pairing is well nested and corresponds to a sequence of reductions).
A partial pairing is a \emph{pairing} if it is a total function.
When needed, we will draw partial reduction pairings as on Fig.~\ref{fig:x1xx12}--\ref{fig:xxx6},
i.e.\ by connecting appropriate intervals of positions.
Note that the reduction pairing is not unique, say $a \ov a a \ov a$ has two different reduction pairings.

It is easy to see that a reduction pairing induces to reduction sequence (perhaps more than one) and vice-versa,
and so a word $w$ has a reduction pairing if and only if $w \eqg \varepsilon$.

\begin{lemma}
A word $w$ has a reduction pairing if and only if $w \eqg \varepsilon$.
\end{lemma}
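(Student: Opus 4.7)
The plan is to prove the two implications separately by induction on $|w|$, with the well-nestedness clause of the definition of a reduction pairing being the crux of both arguments.

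For ``pairing implies $w \eqg \varepsilon$'', first I would show that any non-empty reduction pairing $f$ must contain an \emph{adjacent pair}, i.e.\ an index $i$ with $f(i) = i+1$. To see this, pick any $i$ with $f(i) = j > i$; if $j > i+1$ then well-nestedness forces $f$ to restrict to a pairing on $[i+1, j-1]$, so $i+1$ lies in the domain and I may replace $i$ by $i+1$ and iterate. Since the enclosed interval shrinks strictly, after finitely many steps this yields an adjacent pair. Then $w[i] = \ov{w[i+1]}$, so the corresponding single-step reduction produces a shorter word $w'$, and the restriction of $f$ to the surviving positions (with their new indices) is again a reduction pairing; by the inductive hypothesis $w' \eqg \varepsilon$, hence $w \eqg \varepsilon$.

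For the converse, I would start from a reduction sequence $w = w_0 \to w_1 \to \cdots \to w_k = \varepsilon$ witnessing $w \eqg \varepsilon$ and construct a pairing $f$ by recording, for each cancellation step, which two positions of the original word $w$ are being deleted at that moment. This immediately yields a total function that is a pairing and satisfies $w[i] = \ov{w[f(i)]}$. For well-nestedness, a straightforward induction on the reduction sequence shows that whenever $f(i) = j$ with $i < j$, every original position in $[i+1, j-1]$ is cancelled within that interval: any crossing pair $f(k) = l$ with $i < k < j < l$ would require $k$ and $l$ to become adjacent in the current word while some original position (namely $j$, before its own cancellation) still lay strictly between them.

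The main obstacle in both directions is the well-nestedness clause. In the forward direction one must verify both that an adjacent pair can always be located and that deleting it preserves well-nestedness in the smaller word. In the backward direction, merely pairing inverse-letter positions is not enough — it is the adjacency requirements of the reduction sequence that force the pairing to be non-crossing, and arguing this cleanly takes a bit of care.
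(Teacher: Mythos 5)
Your proof is correct, but both directions take a different route from the paper's. For the direction ``pairing implies $w \eqg \varepsilon$'', the paper looks at $f(1) = i$, splits $w = a\, w_1\, \ov a\, w_2$ with $w_1 = w[2\twodots i-1]$ and $w_2 = w[i+1\twodots |w|]$, observes (from well-nestedness and totality) that $f$ restricts to pairings of $w_1$ and $w_2$, and applies the inductive hypothesis to both pieces at once; you instead locate an \emph{adjacent} pair by your interval-shrinking argument, delete it, and induct on a single shorter word. For the converse, the paper is again recursive: it picks a factor $a\ov a$ in $w$, notes $w_1 w_2 \eqg \varepsilon$ for the two flanking pieces, obtains a pairing of $w_1 w_2$ by induction, and splices the $a\ov a$ pair back in; you construct $f$ directly from an entire reduction sequence and then establish well-nestedness by a temporal crossing argument (if $i < k < j < l$ with $f(i)=j$ and $f(k)=l$, then each of $j$ and $k$ would have to be cancelled strictly before the other). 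Both routes are sound. The paper's recursive formulation avoids the slightly fiddly non-crossing argument at the cost of an unstated renumbering/re-splicing step; your direct construction has the virtue of making explicit the correspondence between reduction sequences and pairings, which the paper asserts informally in the sentence just before the lemma but does not actually spell out.
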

\begin{proof}
We proceed by a simple induction: if $w$ is reducible then either $w = a \ov a$ for some $a \in \Sigma$ and then it clearly has a reduction paring
or $w = w_1 a \ov a w_2$ and $w_1w_2 \eqg \varepsilon$.
Create the pairing for $w$ by pairing those $a$ and $\ov a$ and otherwise
using the pairing for $w_1w_2$, which is known to exist by the induction assumption
(formally some renumbering of the indices is needed).

In the other direction, if $w$ has a reduction pairing,
consider $f(1)=i$.
Then $w = a w_1 \ov a w_2$, such that $w_1$ and $w_2$ are paired inside.
Thus by induction assumption both $w_1\eqg \varepsilon$ and $w_2 \eqg \varepsilon$.
Thus $w \eqg a \ov a \eqg \varepsilon$.
\end{proof}

Given a (not necessarily reducible) word $w = w_1w_2w_3 \in \Sigma^*$
we say that $w_2$ is a \emph{pseudo-solution} for a partial reduction pairing $f$
if $f$ is defined on whole $w_2$.
This is sometimes written as $w_1 \underline{w_2}w_3$ to make graphically clear,
which subword is a pseudosolution. 
Note that we do allow that $w_2 \eqg \varepsilon$, in which case it is a pseudo-solution,
and we do allow that $f$ pairs letters inside $w_2$.

The first fact to show is that for any pairing $f$ if we factorize a reducible word
then there is a pseudo-solution for some of its consecutive subwords.
A variant of this Lemma was used Lyndon~\cite[Proposition~1]{lyndon_one_var_FG},
Appel~\cite[Proposition~1]{appel_one_var_FG} and by~Bormotov, Gilman and Myasnikov~\cite[Lemma~3]{onevariablefreegroup}
and it is attributed already to Nielsen~\cite{Nielsen}.

\begin{lemma}[{cf.~\cite[Lemma~3]{onevariablefreegroup}}, full version of Lemma~\ref{lem:pseudosolution_simple}]
\label{lem:pseudosolution}
Let $\varepsilon \eqg s_0u_1s_1u_2\cdots s_{k-1}u_k s_k$ and $f$ be its pairing.
Then there is $u_i$ that is a pseudo-solution of $u_{i-1}s_{i-1}\underline{u_{i}}s_{i}u_{i+1}$ (for $f$).
\end{lemma}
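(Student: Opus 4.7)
The plan is an extremal argument: among all blocks $u_1, \ldots, u_k$, pick the one whose letters under $f$ have smallest ``reach''. First, if some $u_i = \varepsilon$, the condition for being a pseudo-solution is vacuous, so any such $u_i$ works and the lemma holds. Hence I may assume every $u_i$ is non-empty. For each $i$ I then define the \emph{span} of $u_i$ as the interval $[L_i, R_i]$ with
\[
L_i = \min\{\min(\ell, f(\ell)) : \ell \text{ a position of } u_i\}, \qquad R_i = \max\{\max(\ell, f(\ell)) : \ell \text{ a position of } u_i\},
\]
so that $[L_i, R_i]$ is the smallest interval of positions containing $u_i$ together with all $f$-partners of letters of $u_i$. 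Let $i^\ast$ minimize $R_{i^\ast} - L_{i^\ast}$ over $i \in \{1, \ldots, k\}$.

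Next I would prove that $u_{i^\ast}$ is a pseudo-solution of $u_{i^\ast-1} s_{i^\ast-1} u_{i^\ast} s_{i^\ast} u_{i^\ast+1}$, i.e.\ every letter of $u_{i^\ast}$ is paired by $f$ with a position inside this five-block window. Assume for contradiction that some position $\ell$ of $u_{i^\ast}$ is paired with $\ell'$ outside the window; by left/right symmetry, take $\ell'$ strictly to the left of $u_{i^\ast-1}$ (which forces $i^\ast \geq 2$; the case of $\ell'$ strictly to the right of $u_{i^\ast+1}$ is symmetric and forces $i^\ast \leq k-1$). Well-nestedness of $f$ then gives that every position strictly between $\ell'$ and $\ell$ is paired into the open interval $(\ell', \ell)$. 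Since $u_{i^\ast-1}$ lies entirely in $(\ell', \ell)$, all of its letters have their $f$-partners in $(\ell', \ell)$ as well, so
\[
L_{i^\ast-1} \geq \ell' + 1 > L_{i^\ast}, \qquad R_{i^\ast-1} \leq \ell - 1 < R_{i^\ast},
\]
and therefore $R_{i^\ast-1} - L_{i^\ast-1} < R_{i^\ast} - L_{i^\ast}$, contradicting the minimality of $i^\ast$.

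The boundary cases $i^\ast \in \{1, k\}$ present no real obstacle: one side of the dichotomy is vacuous because no position lies outside the entire word, and the surviving side still produces a strictly smaller span at $u_2$ or $u_{k-1}$. The main subtlety I anticipate is exactly the handling of empty blocks, which are invisible to the extremal argument on spans, so isolating that case at the outset is the most important bookkeeping step; after that, the proof rests on a single application of well-nestedness and a one-line comparison of span lengths.
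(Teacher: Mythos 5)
Your proof is correct and is essentially the paper's argument, packaged as an extremal claim rather than an iterative descent. Both first dispose of empty blocks and then use well-nestedness of $f$ to observe that if some block's reach escapes its five-block window, an adjacent block is strictly nested inside the offending pair $(\ell',\ell)$ — which is exactly the step driving the paper's shrinking-interval loop (with invariant $f(I)\subseteq I$) and your strict decrease in $R_{i^\ast-1}-L_{i^\ast-1}$.
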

\begin{proof}
If there is $u_i = \varepsilon$ then we are done. So consider the case that each $u_i \neq \varepsilon$.
We maintain an interval of position $I$ such that $f(I) \subseteq I$
and $I$ contains at least one word $u_i$.
Initially $I = [1\twodots |w|]$.

Take any $u_i$ within $I$ and let $i_{\min}, i_{\max}$ be positions within $u_i$
such that $f(i_{\min}) = \min f(u_i)$ and $f(i_{\max}) = \max f(u_i)$,
i.e.\ $[f(i_{\min}),f(i_{\max})]$ is the smallest interval of positions such that $u_i$ is a pseudosolution within it.
If $f(i_{\min}), f(i_{\max}) \in u_{i-1}s_{i-1}u_is_iu_{i+1}$ then we are done.
If not, then by symmetry consider $f(i_{\min}) \notin u_{i-1}s_{i-1}u_is_iu_{i+1}$.
If $f(i_{\min})$ it is to the right of $u_{i+1}$
then we take as the interval $[i_{\min}+1,f(i_{\min})-1]$:
clearly it contains whole $u_{i+1}$ and $f([i_{\min}+1,f(i_{\min})-1]) = [i_{\min}+1,f(i_{\min})-1]$ and it is smaller than $I$.
If $f(i_{\min})$ it is to the left of $u_{i-1}$ then we take
$[i_{\min}-1,f(i_{\min})+1]$ and analyze it symmetrically.
Thus at some point we will find the pseudo-solution.
\end{proof}


\section{Word combinatorics}

In this section we present various combinatorial properties of words,
treated as elements of free monoid or as elements of the free group.

Section~\ref{sec:period-runs-primitivity} deals
with standard notions of periodicity, primitivity and runs,
Section~\ref{sec:assorted-combinatorial-lemmata}
gives various combinatorial properties that are needed
for the proofs,
but their proofs are not needed in order to understand the general argument.
Section~\ref{sec:maximal-powers} is concerned with maximal powers within a string and the way they factorize into concatenations.
This is one of the main tools used in the restriction of the set of candidate solutions in Section~\ref{sec:restricting-the-superset-of-solutions}.


\subsection{Period, runs, primitivity}\label{sec:period-runs-primitivity}

The following fact follows straight from definitions of cyclic reductions and primitivity.
\begin{lemma}
	\label{lem:cyclic_shifts_reduced_primitive}
If $w$ is cyclically reduced and $v \shift w$ then $v$ is cyclically reduced.
If $w$ is primitive and $v \shift w$ then $v$ is primitive.
\end{lemma}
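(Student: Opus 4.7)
The plan is to split the statement into two parts and, for each, reduce to the case of a pure cyclic rotation (since inversion trivially preserves both properties: $\ov w$ is reduced/cyclically reduced/primitive iff $w$ is). Concretely, by the definition of $\shift$, either $v = v'w'$ with $w = w'v'$, or the same identity holds with $\ov w$ in place of $w$; in the second case I replace $w$ by $\ov w$ and reduce to the first. So it suffices to prove both preservation claims under the assumption $v = v'w'$, $w = w'v'$.

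For cyclic reducedness, I would argue as follows. The trivial case is $v' = \varepsilon$ or $w' = \varepsilon$, giving $v = w$. Otherwise, view the letters of $w = w'v'$ and of $v = v'w'$. The only adjacent pair of positions in $v$ that is not already adjacent in $w$ is the boundary (last letter of $v'$)(first letter of $w'$); but this very same pair is adjacent inside $w$. Hence every adjacency in $v$ has the same letters as some adjacency in $w$, so $v$ is reduced because $w$ is. For cyclic reducedness of $v$, I need the first letter of $v'$ to differ from the inverse of the last letter of $w'$; but in $w = w'v'$ these two letters are adjacent, so $w$ being reduced gives exactly that inequality.

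For primitivity, I would argue by contraposition: suppose $v = v'w'$ is not primitive, so $v = u^k$ with $k \geq 2$. Let $r = |w'| \bmod |u|$ and factor $u = u_1 u_2$ with $|u_2| = r$. A direct positional check shows that cyclically rotating $u^k$ by $|w'|$ positions yields $(u_2 u_1)^k$; that is, $w = w'v' = (u_2 u_1)^k$, contradicting primitivity of $w$. The only subtlety is the arithmetic of the rotation when $|w'|$ is not a multiple of $|u|$, which I handle by the choice of $r$ above; the case when it is a multiple is even easier, as then the two factorizations line up immediately.

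The main obstacle is the indexing in the primitivity step, which I would present carefully by writing $v$ as a concatenation of $k$ blocks of $u$, splitting one block at the rotation boundary, and observing that after rotation the remaining $k-1$ copies of $u$ together with the reassembled boundary block form $k$ consecutive copies of the rotated word $u_2 u_1$. Everything else is immediate from the definitions and from the symmetry between $w$ and $\ov w$.
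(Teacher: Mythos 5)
Your overall strategy---reducing to a pure rotation $v = v'w'$, $w = w'v'$ via the inversion symmetry and then inspecting adjacencies, with a contrapositive rotation argument for primitivity---is sound, and the primitivity part (a cyclic rotation of $u^k$ is $(u_2 u_1)^k$) is correct. The paper itself supplies no proof, stating only that the lemma follows straight from the definitions.

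However, your justification for $v$ being reduced is wrong as written, and the slip sits exactly where the cyclic-reducedness hypothesis is needed. You claim that the boundary pair in $v = v'w'$---the last letter of $v'$ followed by the first letter of $w'$---``is adjacent inside $w$.'' It is not: in $w = w'v'$, the last letter of $v'$ is the \emph{last} letter of $w$ and the first letter of $w'$ is the \emph{first} letter of $w$, so this pair is cyclically adjacent in $w$, not adjacent. (Your own sentence flags the tension, first calling the pair ``not already adjacent in $w$'' and then ``adjacent inside $w$.'') Consequently ``$v$ is reduced because $w$ is'' is not a valid inference from reducedness alone: with $w = a b \ov a$ for $b \notin \{a,\ov a\}$ (reduced, but not cyclically reduced), taking $w' = a$, $v' = b\ov a$ gives the rotation $v = b\ov a\, a$, which is not reduced. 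What you need at this seam is that $w$ is \emph{cyclically} reduced, which says precisely that the first and last letters of $w$ are not inverses. Your subsequent check of the cyclic boundary of $v$ (first letter of $v'$ vs.\ last letter of $w'$, genuinely adjacent in $w = w'v'$, hence not inverses since $w$ is reduced) is fine. In short, you used plain reducedness of $w$ for the cyclic boundary of $v$ but must explicitly invoke cyclic reducedness of $w$ for the interior boundary of $v$; as written the two roles are swapped and the stated reason for $v$ being reduced is incorrect.
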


If $s[1 \ldots |s|-p] = s[p+1 \ldots |s|]$ then $p$ is a \emph{period} of $s$.
It is a classic fact that
\begin{lemma}[Periodicity Lemma, Fine-Willf Lemma]
Is $p, p'$ are periods of $s$ and $|s| \geq p+p' - \gcd(p,p')$
then $\gcd(p,p')$ is also a period of $s$.
\end{lemma}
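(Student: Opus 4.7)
The plan is to prove the Fine--Wilf lemma by strong induction on $p + p'$, following the Euclidean descent. Assume without loss of generality that $p \geq p'$; the case $p = p'$ is immediate since $\gcd(p,p') = p$ is already a period. So assume $p > p'$ and set $d = \gcd(p,p')$. Then $d$ divides $p - p'$ and in particular $p - p' \geq d$.

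The core step is to establish that $p - p'$ is also a period of $s$. Granting this, I would apply the inductive hypothesis to the pair of periods $p'$ and $p - p'$: their greatest common divisor is still $d$, their sum $p' + (p - p') = p$ is strictly smaller than $p + p'$, and the length assumption $|s| \geq p + p' - d$ exceeds the required $p' + (p - p') - d = p - d$. The hypothesis therefore yields that $d$ is a period of $s$, closing the induction.

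To verify $p - p'$ is a period, I would fix $i$ with $1 \leq i \leq |s| - (p - p')$ and prove $s[i] = s[i + p - p']$ by chaining the two known periods. Two quick cases handle almost every $i$: if $i > p'$, then $s[i] = s[i - p'] = s[i - p' + p]$, going backward with period $p'$ and then forward with period $p$; if $i + p \leq |s|$, then $s[i] = s[i + p] = s[i + p - p']$, going forward with period $p$ and then backward with period $p'$. The remaining indices satisfy $i \leq p'$ and $i + p > |s|$ simultaneously, and the length assumption $|s| \geq p + p' - d$ confines them to an interval of at most $d$ consecutive positions close to $p'$.

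The main obstacle is this residual window: neither of the two short chains above applies directly. My plan is to connect each such position to the already-covered range by a longer alternating chain of period applications, combining forward steps of size $p'$ (valid because $i + kp' \leq 2p' \leq |s|$ for small $k$ by the length bound and the inequality $p \geq p' + d$) with backward steps of size $p$ (valid once the intermediate index exceeds $p$). The divisibility $d \mid (p - p')$ together with the length bound $|s| \geq p + p' - d$ guarantees that every intermediate index stays in $[1,|s|]$ and that the chain terminates at $i + p - p'$. Once this residual case is handled, $p - p'$ is a period of $s$, the outer induction applies, and the lemma follows.
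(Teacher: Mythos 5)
The paper states the Fine--Wilf lemma as a classical fact without proof, so I am evaluating your argument on its own. Your Euclidean-descent scaffold is sound, and Cases~1 and~2 of the verification that $p-p'$ is a period are correct. The gap lies in the residual window $i \le p'$, $i+p > |s|$: you assert that an alternating chain of $+p'$ and $-p$ steps connects $i$ to $i+p-p'$ while staying inside $[1,|s|]$, but this is precisely the combinatorial content of the theorem, not a consequence of $d \mid (p-p')$. The bound $i+kp' \le 2p'$ you invoke holds only for $k \le 1$, whereas one may need many forward steps before any backward step becomes legal (take $p=10$, $p'=3$, $d=1$, $|s|=12$, $i=3$: three forward steps, reaching position $12 > 2p'$, are required before the position exceeds $p$). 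Moreover a greedy chain can get stranded on positions in $(|s|-p',\,p]$, a nonempty interval whenever $|s| < p+p'$, where neither a $+p'$ nor a $-p$ step is legal. So the residual case is not a small remainder but the whole difficulty, and it is left unproved.

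A clean repair stays within your framework. Case~2 alone gives $s[i] = s[i+p-p']$ for all $1 \le i \le |s|-p$, which says that $p-p'$ (and also $p'$) is a period of the \emph{prefix} of $s$ of length $|s|-p'$ --- with no residual window to worry about. That prefix has length $|s|-p' \ge p-d = (p-p') + p' - \gcd(p-p',\,p')$, so your inductive hypothesis applies and gives that it has period $d$. Finally, extend $d$-periodicity from the prefix to all of $s$: assuming $d < p'$ (otherwise $p' \mid p$ and the claim is trivial), use that $s$ has period $p'$ to reduce any position to one in $[1,p']$, and since $d \mid p'$ and the prefix has length at least $p'$ with period $d$, a shift by $d$ inside $[1,p']$ that wraps past $p'$ can be realized as $p'/d - 1$ backward $d$-shifts inside the prefix. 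This closes the induction without any chaining in the residual window.
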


For a cyclically reduced word $t$ we say that $w$ is a \emph{run} of $t$ if it is a subword of $t$ or 
$w = t'' t^k t'$ or $\ov w = t'' t^k t'$,
where $t' \pref t \suff t''$ and $k \geq 0$.
Note that it is often assumed that a $t$-run has length at least $|t|$;
also, the involution is often not considered.
If $r$ is a run of $t$ and $|r| \geq |t|$ then $|t|$ is a period of $r$.

\begin{lemma}
\label{lem:different_runs_overlap}
\label{lem:runs_of_different_words}
\label{lem:u-power_pref_and_suff_overlap}
Let $s, s'$ be both primitive and cyclically reduced.
Let $r \neq r'$ be an $s$-run and an $s'$-run, and subwords of $w$.
If $s \not \shift s'$ then the overlap of $r$ and $r'$ is of length smaller than $|s| + |s'|$.
If $s \shift s'$ then the length of overlap of $r$ and $r'$ is smaller than $|s|$
or there is an $s$-run containing both $r, r'$.
\end{lemma}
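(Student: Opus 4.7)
The plan is to apply the Fine--Wilf periodicity lemma to the overlap of $r$ and $r'$ and combine it with the primitivity of $s$ and $s'$.

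\emph{Case $s \not\shift s'$.} I would argue by contradiction: suppose the overlap $v$ has length at least $|s|+|s'|$. Since $r$ is an $s$-run and $|r| \geq |v| \geq |s|$, the word $r$, and therefore $v$, has period $|s|$; symmetrically $v$ has period $|s'|$. As $|v| \geq |s|+|s'|-\gcd(|s|,|s'|)$, Fine--Wilf gives $d := \gcd(|s|,|s'|)$ as a period of $v$, so writing $u := v[1\twodots d]$ every length-$d$ window of $v$ agrees with $u$. The first $|s|$ characters of $v$ form a length-$|s|$ subword of $r$, which by the $s$-run definition is a cyclic shift of $s$ or of $\ov s$, and hence primitive by Lemma~\ref{lem:cyclic_shifts_reduced_primitive}. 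But this word also equals $u^{|s|/d}$, so primitivity forces $|s|/d = 1$, i.e.\ $|s|=d$. The same reasoning applied to $r'$ gives $|s'|=d$, so $|s|=|s'|=d$, and both $s, s'$ (or their inverses) turn out to be cyclic shifts of $u$. This means $s \shift s'$, contradicting the assumption.

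\emph{Case $s \shift s'$.} Then $|s|=|s'|$, and a short verification from the definition of a run shows that every $s'$-run is also an $s$-run (a rotated or inverted $s$-power still has the required shape after adjusting the affixes $t',t''$), so I may treat $r, r'$ as two $s$-runs. Assuming the overlap has length at least $|s|$, both runs have period $|s|$ and agree on a common window of length $\geq |s|$, so the period $|s|$ extends to the whole union interval (WLOG $r$ starts first and $r'$ ends last; otherwise one run contains the other and the conclusion is immediate). The first $|s|$ characters of the union come from $r$ and form a cyclic shift of $s$ (or of $\ov s$ in the inverted form), which together with the period $|s|$ pins down the union to be of the shape $t''s^K t'$ (respectively its inverse) with $t'' \suff s$ and $t' \pref s$. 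This is precisely an $s$-run containing both $r$ and $r'$.

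The most delicate point is the involution in the definition of a run: a priori $r$ could be of the form $t''s^k t'$ while $r'$ has the inverted form. Primitivity again handles this, since the first $|s|$ characters of the overlap cannot simultaneously be a cyclic shift of $s$ and of $\ov s$ unless $\ov s$ is itself a cyclic shift of $s$; in that case the two kinds of $s$-runs coincide and one may choose a single representative. A small separate check covers the degenerate subcase where $|r|<|s|$ or $|r'|<|s|$, in which the overlap hypothesis already forces one run to sit inside the other.
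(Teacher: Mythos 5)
Your proof is correct and takes essentially the same approach as the paper's: apply the Fine--Wilf Periodicity Lemma to the overlap, then invoke primitivity of the length-$|s|$ windows (which are cyclic shifts of $s$ or $\ov s$) to derive the contradiction, and in the conjugate case glue the two period-$|s|$ runs along the length-$\geq|s|$ overlap into a single run. The paper's own proof is more terse, splitting on $|s|\neq|s'|$ versus $|s|=|s'|$ rather than directly on $s\not\shift s'$ versus $s\shift s'$, but the underlying argument is identical; your write-up additionally makes explicit the care needed with the involution (a length-$|s|$ window cannot be a strict cyclic shift of both $s$ and $\ov s$ when $s$ is reduced), which the paper leaves implicit.
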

\begin{proof}
If $|s| \neq |s'|$ and the overlap of $r, r'$ has length at least $|s|+|s'|$
then by Periodicity Lemma this overlap has both period $|s|$ and $|s'|$,
so it has a period $\gcd(|s|,|s'|)$,
which contradicts the primitivity of $s, s'$.
If $|s| = |s'|$ then the assumption that the overlap
is of length at least $|s|$ implies (by primitivity) that $s \shift s'$.
But then $r, r'$ overlap at at least $|s|$ letters
and both have period $|s|$, which implies that they are subwords of the same run.
\end{proof}

\subsection{Maximal powers}\label{sec:maximal-powers}
We say that a word $s^p$ is a \emph{maximal power} in a word $t$,
if it is a subword of $t$ and there is no $s$ nor $\ov s$ to its left and right in $t$;
note that $t$ need not to be reduced.
For instance $a^3$, $a^2$ and $(ab)^2$ are maximal powers in $aaababaa$. To streamline the analysis, we assume that $s^0$
(called the \emph{trivial power}) is a maximal power in any word $t$, even the empty one.

If $s^p$ is a maximal power in a normal form of concatenation of several words
$\nf(w_1\cdots w_\ell)$, then clearly $s^p$ can be partitioned into $\ell$
subwords such that the $i$-th of them comes from $w_i$.
However, we show more: we can identify such a maximal power in each $w_i$,
that $s^p$ is (almost) the normal form of concatenation of those maximal powers.
This is beneficial: the~number of different maximal powers in a word is much
smaller than the number of different powers that are subwords. 

\begin{lemma}
\label{lem:sum_of_powers}
Let $w_1, w_2, \ldots, w_\ell$ be reduced and $s$ be cyclically reduced.
If $s^k$ is a maximal power in $\nf(w_1\cdots w_\ell)$
then for each $1 \leq h \leq \ell$ there exists such a maximal power $s^{k_h}$
in $w_h$ that $|\sum_{h=1}^\ell k_h - k| < \ell$.
Moreover, if $s^k$ is the $s$-power prefix (suffix) of $\nf(w_1\cdots w_\ell)$
then we can choose $s^{k_1}$ as the $s$-power prefix of $w_1$ or a trivial power
($s^{k_\ell}$ as the $s$-power suffix of $w_\ell$ or a trivial power, respectively);
if $s^k = \nf(w_1\cdots w_\ell)$ then both conditions hold simultaneously.
\end{lemma}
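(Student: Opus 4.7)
The plan is to reduce to $\ell = 2$ by induction on $\ell$ and then analyse the cancellation boundary in that base case. For the inductive step, I would set $W = \nf(w_1 \cdots w_{\ell-1})$ so that $\nf(w_1 \cdots w_\ell) = \nf(W w_\ell)$, apply the $\ell = 2$ case to the pair $(W, w_\ell)$ to obtain a maximal power $s^K$ in $W$ and $s^{k_\ell}$ in $w_\ell$ with $|K + k_\ell - k| \le 1$, and then apply the inductive hypothesis to $w_1, \ldots, w_{\ell-1}$ with the chosen $s^K$ in $W$. A triangle inequality then yields $|\sum_h k_h - k| < \ell$. The moreover clauses propagate: when $s^k$ is the $s$-power prefix of $N$, the $\ell = 2$ moreover lets us take $s^K$ as the $s$-power prefix of $W$ (or trivial), after which the prefix case of the inductive hypothesis delivers the conclusion for $w_1$; if $s^K$ is forced to be trivial, taking $k_1 = \cdots = k_{\ell-1} = 0$ is permissible via the ``or trivial'' alternative. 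The suffix case is symmetric, and when $s^k = N$ we chain both choices.

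For the base case $\ell = 2$, write $w_1 = \alpha c$ and $w_2 = \ov{c}\, \beta$ where $c$ is the maximal cancelling factor, so that $N = \alpha\beta$ is reduced. Let $R$ be the maximal $s$-run in $N$ containing $s^k$ and split $R = R_\alpha R_\beta$ according to the $\alpha/\beta$ boundary. I would extend $R_\alpha$ inside $w_1$ to a maximal $s$-run $\tilde R_1 \subseteq w_1$ that contains $R_\alpha$; such an extension can only occur rightward into $c$, because any leftward extension would use characters of $\alpha$ that appear identically in $N$, contradicting the maximality of $R$ in $N$. Define $\tilde R_2$ symmetrically in $w_2$. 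Take $s^{k_h}$ to be the maximal $s$-power in $\tilde R_h$; this is automatically maximal in $w_h$ since $\tilde R_h$ is a maximal $s$-run in $w_h$. When one of $R_\alpha, R_\beta$ is empty, the pattern of the corresponding $\tilde R_h$ is not pinned down by $R$, and we choose it to be either trivial or the maximal $s$-run in $w_h$ adjacent to the cancellation boundary (the example $w_1 = (ab)^{100}$, $w_2 = (b^{-1} a^{-1})^{95}$ shows why the latter may be forced: there one must take $k_1 = 100$ and $k_2 = -95$, with $\tilde R_2$ an $\ov{s}$-run).

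The bound $|k_1 + k_2 - k| \le 1$ is proved by counting full $s$-copies. Let $k_\alpha$ and $k_\beta$ denote the numbers of $R$-aligned full $s$-copies contained entirely in $R_\alpha$ and $R_\beta$; at most one $R$-aligned copy can straddle the $\alpha/\beta$ junction, so $k \in \{k_\alpha + k_\beta,\, k_\alpha + k_\beta + 1\}$. The excesses $k_h - k_\bullet$ come entirely from the extensions of $\tilde R_h$ into $c$ resp.\ $\ov{c}$, and the key combinatorial point is that these two extensions cannot both produce many extra copies: on their overlap inside $c$ the $w_1$-side extension forces the $s$-pattern while the $w_2$-side extension (through $\ov{c}$) forces the $\ov{s}$-pattern, so by the Periodicity Lemma and the cyclic reducedness of $s$ (in the form of Lemma~\ref{lem:u-power_pref_and_suff_overlap}) the overlap is bounded. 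An alignment accounting then yields $|k_1 + k_2 - (k_\alpha + k_\beta)| \le 1$, and thus $|k_1 + k_2 - k| \le 1$. The moreover clause for the prefix is automatic: when $s^k$ begins at position~$1$ of $N$, $R$ also begins at position~$1$ of $\alpha$, so $\tilde R_1$ begins at position~$1$ of $w_1$ with an aligned $s$-boundary, making $s^{k_1}$ the $s$-power prefix of $w_1$ (or trivial when the extension falls short of a full copy of $s$). The main obstacle is this combinatorial control of the two extensions; it may be cleanest to first reduce to the case of primitive $s$ via its primitive root, after which runs of $s$ are unambiguous.
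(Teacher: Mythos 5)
Your inductive reduction to $\ell = 2$ mirrors the paper's proof (the paper folds $w_1, w_2$ into $w_{1,2}$ and recurses from the left; you fold $w_1, \ldots, w_{\ell-1}$ and recurse from the right; both reduce correctly to the two-word case), and the base-case construction is also the same in spirit: your maximal $s$-runs $\tilde R_1, \tilde R_2$ at the cancellation boundary and the maximal powers inside them correspond to the powers $s^{k_1}, s^{k_2}$ the paper picks in its two subcases.

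The justification of the key bound $|k_1 + k_2 - (k_\alpha + k_\beta)| \leq 1$, however, has a genuine gap. Both $\tilde R_1 \cap c$ and $\ov{\tilde R_2 \cap \ov c}$ are prefixes of $c$, so one contains the other; bounding their ``overlap'' via Lemma~\ref{lem:u-power_pref_and_suff_overlap} bounds only the \emph{shorter} of the two extensions, which is not enough to bound the sum of the two excesses. Your own example ($w_1 = (ab)^{100}$, $w_2 = \ov{(ab)}^{95}$, $s = ab$) already has both extensions contributing $95$ copies each; the bound holds there only because the excesses have opposite signs and cancel, a phenomenon the overlap argument neither detects nor uses. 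The correct mechanism (which is what the paper proves) is sharper and splits into two cases: when the maximal power genuinely straddles the $\alpha/\beta$ boundary — both $R_\alpha, R_\beta$ nonempty — \emph{neither} run extends at all, because if $\tilde R_1$ reached even one letter into $c$ then by the $|s|$-periodicity of $R$ the first letter of $c$ would equal the first letter of $\beta$, making $w_2 = \ov c\,\beta$ non-reduced at the junction; and when the power lies entirely on one side ($R_\beta = \varepsilon$), the extension of $\tilde R_1$ into $c$ and the maximal power at the boundary of $\ov c$ taken as $\tilde R_2$ are inverses of each other, so the excesses cancel exactly. Neither step is an ``overlap is short'' argument, and the case $R_\alpha = \varepsilon$ or $R_\beta = \varepsilon$ in particular needs its own cancellation argument that your sketch does not supply. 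A secondary issue: the closing suggestion of reducing to primitive $s$ is not a free move — maximal $s$-powers are not determined by maximal powers of the primitive root of $s$ — and the paper's proof does not need or make this reduction.
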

The proof of Lemma~\ref{lem:sum_of_powers} in case of $\ell \leq 2$ is a simple case distinction.
For larger $\ell$, we let $w_{1,2} = \nf(w_1w_2)$
and apply the induction assumption
to $w_{1,2}w_3, \ldots, w_\ell$,
the proof again follows by simple combinatorics on words.

\begin{proof}
To prove the statement it is enough to indicate appropriate powers $s^{k_1}, s^{k_2}, \ldots, s^{k_\ell}$ for any given maximal power $s^k$ in $\nf(w_1\cdots w_\ell)$.
We construct them by induction.
The base case $\ell = 1$ is trivial, since
$w_1 = \nf(w_1)$ and we simply take the same power $s^{k_1}=s^k$, and then obviously $0=|k_1-k|<l=1$.

Assume that the inductive hypothesis holds for $\ell -1$.
Consider a maximal power $s^k$ in $\nf(w_1\cdots w_\ell)$;
the same power is maximal in $\nf(w_{1,2}w_3\cdots w_\ell)$, where $w_{1,2}=\nf(w_1w_2)$.
Moreover, by the assumption we can construct the appropriate powers $s^{k_{1,2}}, s^{k_3}, \ldots, s^{k_{\ell}}$,
such that $s^{k_h}$ is a maximal power in $w_h$ and that they satisfy the claim of the lemma.
It is enough to construct maximal powers  $s^{k_1},s^{k_2}$ of $w_1,w_2$,
such that $|(k_1+k_2) - k| \leq 1$,
as then 
\begin{align*}
	\left|\sum_{j=1}^\ell k_j - k\right|
		&=
	\left |(k_1 + k_2 - k_{1,2}) + k_{1,2} +\sum_{j=3}^\ell k_j - k\right|\\	
		&\leq
	\left |(k_1 + k_2 - k_{1,2}) \right| + \left|k_{1,2} +\sum_{j=3}^\ell k_j - k\right|\\	
		&\leq
	1 + (\ell - 1)\\
	&= \ell	\enspace .
\end{align*}
Moreover, we should guarantee that if $s^k$ is an $s$-power prefix then $s^{k_1}$ is also
an $s$-power prefix of $w_1$ or $\varepsilon$ (the trivial power)
and if $h = 2$ then
if $s^k$ is an $s$-power suffix of $w$ then $s^{k_h}$ is an $s$-power suffix of $w_h$ or $\varepsilon$.

\begin{figure}
\centering
\includegraphics[scale=1.2]{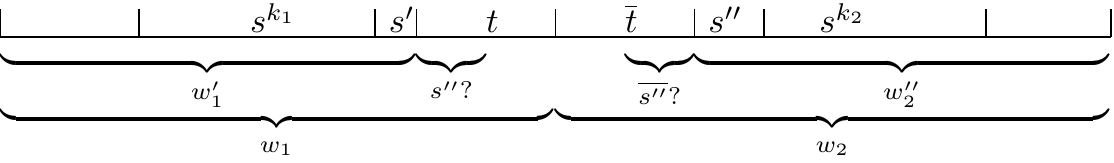}
\caption{The split of power, first case: $s^{k_{1,2}}$ is split between $w_1$ and $w_2$.}
\label{fig:max_power_split_1}
\end{figure}

Let $t$ be the maximal reduction of $w_1$ and $w_2$,
i.e.\ $w_1 = w_1' t$, $w_2 = \ov t w_{2}''$ and $\nf(w_1w_2)=w_1'w_{2}''$;
it could be that $t = \varepsilon$ or that one of $w_1', w_{2}''$ is $\varepsilon$,
see Fig.~\ref{fig:max_power_split_1}.
We consider only the case $k_{1,2} \neq 0$,
the other case is trivial as we choose $k_1=k_2=0$. 
If $s^{k_{1,2}}$ is also a maximal power in $w_1$ or $w_2$,
so as in $w_1'w_2''$,
then we take this power (and the trivial power in the other word) and we are done;
note that if $s^{k_{1,2}}$ was an $s$-power prefix then by our choice we also choose
$s$-power prefix of $w_1$.
There are two remaining cases:
the maximal power $k_{1,2}$ in $w_1'w_2''$ is a subword of both words 
or it is wholly inside $w_1'$ (or $w_2''$), but not maximal in $w_1$ ($w_2$ respectively).

\begin{figure}
\centering
\includegraphics[scale=1.2]{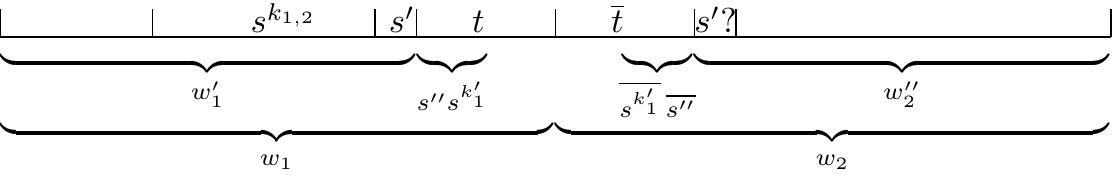}
\caption{The split of power, second case: $s^{k_{1,2}}$ is within $w_1$.}
\label{fig:max_power_split_2}
\end{figure}

Consider the fist case, i.e.\ the maximal power $s^{k_{1,2}}$ is split between $w_1'$ and $w_2''$. see Fig.~\ref{fig:max_power_split_2}.
We consider the subcase when $k_{1,2}>0$, the other one ($k_{1,2}<0$) is similar;
alternatively, we can consider $\ov{w_{1,2}}$ and the corresponding maximal power
$s^{-k_{1,2}}$, proceed with the proof and then at the end take the inverse again.
Let $w_1'$ have a suffix $s^{k_1}s'$ and $w_2''$ have a prefix $s''s^{k_2}$,
where the maximal power in $w_{1,2}$ is of the form $s^{k_{1,2}} = s^{k_1}s's''s^{k_2}$
and either $s'= s'' = \varepsilon$ or $s = s's''$ and $s' \neq \varepsilon \neq s''$.
We claim that $s^{k_1}$ and $s^{k_2}$ are maximal powers in $w_1$ and $w_2$.
By symmetry, suppose that $s^{k_1}$ is not a maximal power in $w_1$.
There cannot be $s$ or $\ov s$ to the left of it,
as this would contradict the fact that $s^{k_1}s's''s^{k_2}$ is a maximal $s$-power in $w_{1,2}$, then consider extending power to the right.
When $s' \neq \varepsilon \neq s''$ then $s''$ is a prefix of $t$
and so $\ov{s''}$ is a suffix of $\ov{t}$,
this contradicts the fact that $w_2 = \ov{t}w_2''$ is reduced.
When $s' = s'' = \varepsilon$
a similar argument shows that $s$ is a prefix of $t$
and so $\ov{s}$ is a suffix of $\ov t$.
By maximality of $t$ we know that $k_1$ and $k_2$ have the same sign.
As $\ov t w_2''$ is reduced, we conclude that $k_2 = 0$,
which means that the power is in fact not split between $w_1', w_2''$, so we get a contradiction.
Hence we take $s^{k_1}$ and $s^{k_2}$ as the maximal powers in $w_1$ and $w_2$, respectively.
Then $k_{1,2} -1 \leq k_1 + k_2 \leq k_{1,2} +1$;
note that here in fact $k_1 + k_2 \leq k_{1,2}$,
but for $k_{1,2} < 0$ we can only prove the claimed $k_1 + k_2 \leq k_{1,2}+1$.
For the second claim, we should show that if $s^k$ is a prefix of $w$ then
$s^{k_1}$ is a prefix of $w_1$ or $k_1 = 0$
and if $s^k$ is a suffix of $w$ then $s^{k_\ell}$ is a suffix of $w_\ell$ or $k_\ell = 0$.
By induction, the claim about the prefix holds for $w_{1,2}$
and thus $s^{k_{1,2}}$ is a prefix or $k_{1,2} = 0$,
the latter case was already considered.
In the former when $s^{k_{1,2}}$ is a prefix of $w_{1,2}$ then the construction guarantees
that $s^{k_1}$ is a prefix of $w_1$.
For the claim for $k_\ell$: it holds by induction assumption when $\ell > 2$
and if $\ell = 2$ (so $w_{1,2} = w$) it is shown symmetrically to the case of $k_1$.

In the second case the non-trivial power $s^{k_{1,2}}$ is fully within one word, say $w_1'$,
but in $w_1 = w_1' t$ it is not a maximal power.
Again, we consider the case when $k_{1,2} > 0$, the other one ($k_{1,2}<0$)
is shown in the same way.
Then $w_1'$ has a suffix $s^{k_{1,2}}s'$ and $t$ a prefix $s''s^{k_1'}$,
where either $s's'' = s$ and $s' \neq \varepsilon \neq s''$ or $s' = s'' = \varepsilon$.
This $s^{k_1'}$ is a maximal power in $t$
and trivially $\ov{s^{k_1'}}$ is a maximal power in $\ov t$;
we claim that the mentioned $\ov{s^{k_1'}}$ is also a maximal power in $\ov{t}w_2'' = w_2$.
Clearly the left side of that power cannot be extended.
If it can be extended to the right, then $\ov{s'}$ (when $s' \neq \varepsilon$, and $\ov s$ when $s'  = \varepsilon$)
would be a prefix of $w_2''$,
in both cases contradicting the maximality of $t$.
Hence we choose this maximal power
$s^{k_1} = s^{k_{1,2}}s's''s^{k_1'}$ in $w_1$
(here $k_1 = k_{1,2} + k_1' +1 $ or $k_1 = k_{1,2} + k_1'$, when $s' = s'' = \varepsilon$)
and $\ov{s^{k_1'}}$ in $w_2$, i.e.\ $k_2 = - k_1'$.
Then  $k_{1,2} -1 \leq k_1 + k_2 \leq k_{1,2} +1 $.
It is left to show the second claim:
if $s^k$ is prefix of $w$ then $s_1$ is a prefix of $w$ or $k_1 = 0$
and if $s^k$ is suffix of $w$ then $s^{k_\ell}$ is a suffix of $w$ or $k_\ell = 0$.
Consider first the prefix. By induction, this holds for $s^{k_{1,2}}$
and the case of $k_{1,2} = 0$ was already considered, so it is enough to consider the one when
$s^{k_{1,2}}$ is a prefix of $w_{1,2}$.
If $s^{k_{1,2}}$ is within $w_2$ then this means that $w_1' = \varepsilon$,
as otherwise $s^{k_{1,2}}$ is not a prefix. And in this case we take the prefix of $t$ as $s^{k_1}$.
If $s^{k_{1,2}}$ is within $w_1$ then $s^{k_{1,2}}$ is a prefix of $w_1'$
and we choose $s^{k_1}$ as the prefix of $w_1$.
For the suffix, for $\ell > 2$ this holds by induction assumption,
for $\ell = 2$ the analysis is symmetric to the one for $k_1$.
\end{proof}

There cannot be too many different maximal powers of the same word $s$ in a given word~$w$:
different maximal powers $s^{k_1}, \ldots, s^{k_p}$
use together $|s|k_1 +  \cdots + |s|k_p$ letters in $w$
and when $k_1, \ldots, k_p$ are pairwise different
then this sum is $\Omega(p^2|s|)$ and so $p = \Ocomp(\sqrt{|w|/|s|})$;
this can be naturally generalized to a set of words $W$ instead of a single word $w$.

\begin{lemma}
\label{lem:different_powers_in_a word}
Let $s$ be cyclically reduced word.
Let $W$ be a set of words and $k = \sum_{w \in W} |w|$.
Suppose that $s^{k_1}, \ldots, s^{k_p}$ are pairwise disjoint subwords
of words in $W$
and that $k_1, \ldots, k_p$ are pairwise different integers.
Then $p \leq \sqrt{4k/|s| + 1}$ and if additionally $k \geq |s|$ then $p \leq \sqrt{5k/|s|}$.
\end{lemma}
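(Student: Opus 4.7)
The plan is to use a simple counting argument based on the length of the disjoint occurrences and on the minimum possible sum of absolute values of $p$ distinct integers.

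First, since $s$ is cyclically reduced (hence in particular reduced), the word $s^{k_i}$ has length exactly $|s|\cdot|k_i|$. Because $s^{k_1},\ldots,s^{k_p}$ are pairwise disjoint subwords of words in $W$, the sum of their lengths is at most the total length of words in $W$, giving
\[
	|s|\sum_{i=1}^{p}|k_i| \;\leq\; k \enspace.
\]

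Next, I would lower-bound $\sum_{i=1}^{p}|k_i|$ using the fact that the $k_i$ are pairwise distinct integers. The minimum of $\sum |k_i|$ over all $p$-element sets of distinct integers is achieved when $\{k_1,\ldots,k_p\}$ is a set of consecutive integers centered at $0$. A short case distinction (odd vs.\ even $p$) gives minimum $q(q+1)=(p^2-1)/4$ when $p=2q+1$ and minimum $q^2=p^2/4$ when $p=2q$, so in both cases
\[
	\sum_{i=1}^p |k_i| \;\geq\; \frac{p^2-1}{4} \enspace.
\]
Combining the two displayed inequalities yields $|s|(p^2-1)/4 \leq k$, that is,
\[
	p \;\leq\; \sqrt{\frac{4k}{|s|}+1} \enspace,
\]
which is the first claimed bound.

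For the second bound, under the additional assumption $k \geq |s|$ we have $1 \leq k/|s|$, so $4k/|s|+1 \leq 5k/|s|$ and hence $p \leq \sqrt{5k/|s|}$. There is no real obstacle here; the only mildly delicate point is verifying the minimum-sum lower bound cleanly (and noting that at most one $k_i$ can equal $0$, which is exactly what ``pairwise distinct'' gives us), and noticing that the two small case checks for parity of $p$ both give the unified lower bound $(p^2-1)/4$.
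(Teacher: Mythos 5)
Your proposal is correct and uses essentially the same counting argument as the paper: bound the total length of the disjoint powers by $k$, lower-bound $\sum_i |k_i|$ for $p$ distinct integers, and combine. The paper's bookkeeping differs slightly—it first reduces to the case where all $k_i$ are non-zero and proves the sharper bound $p \leq \sqrt{4k/|s|+1}-1$ there—but your unified lower bound $\sum_i |k_i| \geq (p^2-1)/4$ (allowing one zero) reaches the same conclusion a bit more directly.
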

\begin{proof}
The bound trivially holds when $k < |s|$, as then the only maximal power in those words is the $s^0$,
i.e.\ there is at most one different power.

Observe that it is enough to show a bound
$p \leq \sqrt{4k/|s| + 1} - 1$ under the additional assumption that
each $k_h$ is non-zero.

Clearly $p$ different non-empty powers (for even $p$)
have smallest sum of lengths if the exponents are
$-\frac{p}{2}, \ldots, -1, 1, \ldots , \frac{p}{2}$.
Then sum of their lengths is
\begin{equation*}
   2 \cdot \frac{\frac{p}{2}\cdot (\frac{p}{2}+1)}{2} |s|
	=
\frac{p^2+2p}{4}|s|
\end{equation*}
letters.
Then $\frac{p^2+2p}{4}|s| \leq k$ implies
$p \leq \sqrt{4k/|s| + 1} - 1$.

For odd $p$ the estimation is similar, as then
the ``optimal'' exponents are
$-\frac {p-1} 2, \ldots, -1, 1, \ldots, \frac{p+1} 2$
(or $-\frac {p+1} 2, -\frac {p-1} 2, \ldots, -1, 1, \ldots, \frac{p-1} 2$),
in which case the estimation on the sum of lengths is the same)
and the sum of lengths is then
\begin{equation*}
\frac{\frac{p-1}{2}\cdot \frac{p+1}{2}}{2}|s| + \frac{\frac{p+1}{2}\cdot \frac{p+3}{2}}{2} |s|
	= \frac{(p+1)^2}{4}|s| \enspace ,
\end{equation*}
which yields a smaller upper-bound on $p$.
\end{proof}

\subsection{Assorted combinatorial lemmata}\label{sec:assorted-combinatorial-lemmata}

\begin{lemma}
\label{lem:inverse_no_overlap}
If $u_1wu_2 = v_1 \ov w v_2$ and $w$ is reduced then either $w = 1$ or $v_1 \ov w \pref u_1$ or $u_1 w \pref v_1$, i.e.\ $w$ and $\ov w$ cannot overlap.
\end{lemma}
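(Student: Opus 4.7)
The statement is an equality of plain words (in the free monoid), not in the free group, so I would view both sides as the same string $s$ and work with the positions occupied by the factor $w$ and by $\bar w$. The factor $w$ sits at positions $[|u_1|+1,\,|u_1|+|w|]$ and $\bar w$ at positions $[|v_1|+1,\,|v_1|+|w|]$. By the symmetry of the claim in $(u_1,w)$ and $(v_1,\bar w)$, I may assume without loss of generality that $|u_1|\le|v_1|$; the conclusion to establish is then $u_1w\pref v_1$, unless $w=\varepsilon$. If the two occurrences are disjoint, i.e.\ $|u_1|+|w|\le|v_1|$, we immediately get $u_1w\pref v_1$ and we are done.

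So assume $w\ne\varepsilon$ and set $d=|v_1|-|u_1|$ with $0\le d<|w|$; this is exactly the overlapping case, to be ruled out. Write $w=w_1w_2$ with $|w_1|=d$, so that $w_2$ is non-empty. Reading the overlap region $[|v_1|+1,\,|u_1|+|w|]$ from both sides and using $\bar w=\bar{w_2}\,\bar{w_1}$, the suffix $w_2$ of $w$ must coincide letter by letter with the prefix $\bar{w_2}$ of $\bar w$, giving
\[
w_2=\bar{w_2}.
\]
Hence the task reduces to showing that a non-empty reduced word cannot equal its own involution.

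For that last step, let $w_2=a_1a_2\cdots a_k$ with $k\ge 1$. The identity $w_2=\bar{w_2}$ gives $a_i=\bar{a_{k+1-i}}$ for every $i$. If $k$ is odd, the middle index $i=(k+1)/2$ yields $a_i=\bar{a_i}$, contradicting the assumption $\bar a\ne a$ of the involution. If $k$ is even, taking $i=k/2$ gives $a_{k/2}=\bar{a_{k/2+1}}$, i.e.\ two adjacent letters of $w_2$ are mutual inverses; but $w_2$ is a subword of the reduced word $w$, contradicting reducedness. Either way we obtain a contradiction, so the overlapping case is impossible and the lemma follows.

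\textbf{Main obstacle.} None of the individual steps is difficult; the only subtlety is bookkeeping of positions to derive the equation $w_2=\bar{w_2}$ cleanly. The involution-palindrome argument in the last paragraph is the real content, and the case split into $k$ odd versus $k$ even is where the hypothesis that $w$ is reduced (as opposed to merely non-empty) is actually used.
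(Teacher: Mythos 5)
Your proof is correct, and its skeleton matches the paper's: assume a non-trivial overlap, observe that the overlap word must equal its own involution, and derive a contradiction using that $w$ is reduced. The only real difference is the finishing step. The paper jumps from $w' = \ov{w'}$ to $w' \eqg \varepsilon$ (implicitly: then $w'^2 = w'\ov{w'} \eqg \varepsilon$, and a free group is torsion-free, so $w' \eqg \varepsilon$, hence the reduced $w'$ is empty). You instead run an explicit parity argument on $|w_2|$: the involution-palindrome identity $a_i=\ov{a_{k+1-i}}$ forces, when $k$ is odd, a self-inverse middle letter (impossible since $\ov a\ne a$), and, when $k$ is even, an adjacent reducible pair (impossible since $w$ is reduced). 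Your finish stays entirely in the free monoid and makes explicit exactly where reducedness is used; the paper's is terser but quietly leans on torsion-freeness of the free group. Both are sound, and the substance is the same.
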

\begin{proof}
Suppose that $w \neq \varepsilon$ and that $w$ and $\ov w$ overlap
so there is $w'$ that is a prefix of one of $w, \ov w$ and a suffix of the other.
Words $w'$ and $\ov{w'}$ are equal, which implies $w' \eqg \varepsilon$.
It contradicts with fact, that $w$ is reduced.
\end{proof}

\begin{lemma}
\label{lem:u_prefix_beta_w_ovbeta}
Let $t$, $p$ be reduced words and $s$ be cyclically reduced word. Let $t$ does not begin with $s$ nor $\ov s$.
Then:
\begin{itemize}
\item The $s$-power prefix of $\nf(t w \ov t)$
is of length at most $2|s| + |w|$ for any $w \in \Sigma^*$.
\item There are at most 16 possible $s$-power prefixes of $\nf(t w p)$
   over all $w$ such that $|w| < |s|$. 
\item If $\nf(t w \ov t)$ is an $s$-power for $w \not \eqg \varepsilon$,
then $|s| \leq |\nf(t w \ov t)| \leq |w|$.
\end{itemize}\end{lemma}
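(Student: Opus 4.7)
I would prove the three parts separately; each one reduces to a calculation about reductions or conjugation in the free group.

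\textbf{Part 1.} Write $\nf(t w t^{-1}) = s^k \cdot r$, where $s^k$ is the $s$-power prefix and $r$ does not begin with $s$ nor $\ov s$. From $t w t^{-1} \eqg s^k r$ we get $w \eqg t^{-1} s^k r t$, and since $w$ is reduced, $|w| = |\nf(t^{-1} s^k r t)|$. Reducing left-to-right: (a) at the boundary $t^{-1}\!\cdot\! s^k$ the cancellation equals $\lcp(t, s^k)$, which is strictly less than $|s|$ because $t$ begins with neither $s$ nor $\ov s$ while $s^k$ begins with one of them (for $k \neq 0$); (b) appending $r$ introduces no further cancellation since $s^k r$ is already reduced; (c) appending $t$ shortens the current word by at most $|t|$. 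Tallying, $|w| \geq |s^k| + |r| - 2(|s|-1)$, i.e. $|s^k| \leq |w| - |r| + 2|s| - 2 \leq |w| + 2|s|$.

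\textbf{Part 2.} Decompose $\nf(twp)$ according to the reduction pairing between $t$, $w$, $p$: generically $\nf(twp) = T \, W \, P$ with $T \pref t$, $P \suff p$, and $W$ a middle factor of $w$; when $w$ is fully consumed, further cancellation between (the remainder of) $T$ and $P$ may occur. Whenever the $s$-power prefix $s^k$ of $\nf(twp)$ has $k \neq 0$, $T$ is a common prefix of $t$ and $s^k$, so $|T| < |s|$ (else $t$ would begin with $s$ or $\ov s$); likewise $|W| \leq |w| < |s|$. Moreover, when $T \neq \varepsilon$ the first letter $t[1]$ of $\nf(twp)$ fixes the sign of $k$ (since $s[1] \neq \ov{s[|s|]}$ by cyclic reduction of $s$), and when $T = \varepsilon$ one has $|t| < |s|$, so only two signs need to be considered in all. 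Fix a sign and parameterise by $\delta = |T| \in [0, |s|-1]$: the suffix of $s^k$ after position $\delta$ must match the shifted periodic pattern $s[\delta+1 \twodots |s|]\, s\, s \cdots$ inside $W P$. Since $|W| < |s|$, the matching inside $W$ covers less than one full copy of $s$, so the value of $k$ is essentially determined by how long the $(s,\delta)$-pattern extends into a suffix of $p$. Appealing to Lemmas~\ref{lem:sum_of_powers} and~\ref{lem:different_powers_in_a word} to bound the number of distinct such extensions as $\delta$ and the relevant suffix of $p$ vary, combined with a small case split (whether $s^k$ lies inside $T$, inside $TW$, or reaches into $P$), one obtains at most $16$ distinct values of $k$.

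\textbf{Part 3.} If $\nf(twt^{-1}) = s^k$ with $k = 0$, then $twt^{-1} \eqg \varepsilon$, so $w \eqg \varepsilon$, contradicting the hypothesis. Hence $|k| \geq 1$ and $|\nf(twt^{-1})| = |k|\cdot|s| \geq |s|$. For the upper bound, $w \eqg t^{-1} s^k t$ and $s^k$ is cyclically reduced (since $s$ is, and $k \neq 0$); invoking the standard fact that conjugation does not shorten a cyclically reduced element---which follows from noting that $\lcp(t, s^k) + \lcp(t, \ov{s^k}) \leq |t|$ thanks to the cyclic reduction of $s^k$---we obtain $|w| = |\nf(t^{-1} s^k t)| \geq |s^k| = |\nf(twt^{-1})|$.

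\textbf{Main obstacle.} Parts~1 and~3 are fairly direct computations. Part~2 is the delicate one: since $p$, unlike $t$, is unconstrained, long $s$-power prefixes of $\nf(twp)$ can be contributed by $p$, and pinning down the constant $16$ requires a careful case analysis of how the two boundary cancellations interact with shifted $s$-patterns in $p$, supported by the combinatorial bounds on maximal powers.
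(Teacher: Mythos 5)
Your proofs of Parts~1 and~3 are essentially correct, though they take a different route than the paper's. For Part~1 the paper bounds $|\nf(tw\ov t)|$ directly (case split on $|t|$ versus $|s|$, counting that more than $|t|-|s|$ letters of $t$ must be reduced), whereas you invert the conjugation, write $w \eqg \ov t s^k r t$, and bound $|w|$ from below; this works once one notes that we may assume $w$ reduced without loss of generality (the lemma does not assume it, but $\nf(tw\ov t)=\nf(t\,\nf(w)\,\ov t)$ and $|\nf(w)|\le |w|$), and that if $|s^k|+|r|<2(|s|-1)$ the claim is trivial anyway. For Part~3, you invoke the standard fact that conjugation does not shorten a cyclically reduced word; your one-line justification via $\lcp(t,s^k)+\lcp(t,\ov{s^k})\le|t|$ is a bit terse (it does not by itself handle the case where $s^k$ is entirely absorbed on the right and then $\ov t$ keeps cancelling with $t$), but the underlying fact is standard and the conclusion is fine.

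Part~2 has a genuine gap. The crux of the paper's argument is \emph{positional}: it shows that the prefix $p^\bullet$ of $p''$ occurring inside a long $s$-power prefix starts, as $w$ ranges over all words with $|w|<|s|$, within a window of fewer than $3|s|$ positions of $p$ (because the number of letters of $p$ consumed by the reduction lies in an interval of width $<3|s|$). Consequently, once $|p^\bullet|\ge 4|s|$, any two such $p^\bullet$'s overlap in at least $|s|$ positions, so by Lemma~\ref{lem:different_runs_overlap} they lie in the \emph{same} maximal $s$-run of $p$; being maximal $s$-power prefixes from that run, they all \emph{end at the same position}, and therefore differ in length by $<3|s|$. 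That is what caps the count at a constant. Your sketch instead "appeals to Lemmas~\ref{lem:sum_of_powers} and~\ref{lem:different_powers_in_a word} to bound the number of distinct such extensions as $\delta$ and the relevant suffix of $p$ vary," but those lemmas only bound the number of distinct maximal $s$-powers in a word by $\Ocomp(\sqrt{|p|/|s|})$, which is not a constant and is far weaker than the claimed bound of $16$. The missing idea is precisely the narrow window for the start of $p^\bullet$ in $p$ and the resulting common right endpoint of all the long $s$-runs. (As a smaller point, the statement "when $T=\varepsilon$ one has $|t|<|s|$" is false: $t$ can cancel completely inside $\nf(twp)$ regardless of $|t|$.)
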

\begin{proof}
Consider the first claim.
If $|t|<|s|$ then we are done,
as $|tw\ov t| < 2|s|+|w|$.
In the other case, when $|t| \geq |s|$,
as $t$ does not begin with $s$ nor $\ov s$,
we need to reduce more than $|t|-|s|$ letters from $t$ (as less than $|s|$ can remain),
so we are left with less than $|t| + |w| + |\ov t| - 2(|t|-|s|) = 2|s| + |w|$.

For the second claim, let $\nf(t w p) = t' p''$,
where $t' \pref \nf(tw)$ and $p \suff p''$.
Let also $p^\bullet \pref p''$ be the prefix of $p''$ used in the
$s$-power prefix of $t'p''$ (it could be that $p^\bullet = \varepsilon$).
We assume that $s$-power of $t'p'$ is not trivial.
Note that $|t'| < 2|s|$:
$t'$ can use less than $|s|$ letters from $t$ (as it does not begin with $s$ nor $\ov s$) 
and less than $|s|$ from $w$ (as $|w|<|s|$).
If $|p^\bullet| < 4|s|$ then the $s$-power prefix ($= t'p^\bullet$)
has length less than $6|s|$, i.e.\ there are $11$ possibilities for it including trivial power.
So in the following we assume that $|p^\bullet| \geq 4|s|$.

Consider, how many letters in $p$ can be reduced in $\nf(t w p)$.
As $t$ does not begin with $s$ nor $\ov s$,
more than $|t| - |s|$ letters are reduced in $t$
and at most $|w|$ of those letters are not reduced with $p$.
So $t$ reduces at least $|t| - |s| - |w|$
and at most $t$ letters from $p$;
clearly $w$ can reduce at most $|w|$ letters (and at least $0$).
So we reduce between $|t| - |s| - |w|$ and $|t| + |w|$ letters from $p$.
Let $w_1$ and $w_2$ be different words of length less than $|s|$,
define $t_i',p_i'',p_i^\bullet$ for $\nf(tw_ip)$ as above, for $i = 1,2$.
Then $||p_1''| - |p_2''|| \leq (|t| + |w|) - (|t| - |s| - |w|) = 2|w|+|s| < 3|s|$.
As $|p_1^\bullet|, |p_2^\bullet| \geq 4|s|$ and they begin within less than $3|s|$ in positions in $p$,
so they overlap at at least $|s|$ positions.

As they are both $s$-runs of $p$, by Lemma~\ref{lem:different_runs_overlap} they are part of the same maximal $s$-run in $p$.
Both has $s$ (or $\ov s$) as a suffix and both, as maximal $s$-prefixes, cannot be extended to the right by next $s$ (or $\ov s$). 
There is only one suffix of a maximal $s$-run for primitive $s$ with such properties, 
so both ends in the same place in $p$ and one is a suffix of another.

Consider $t_1'p_1^\bullet$ and $t_2'p_2^\bullet$,
as they have common last $|s|$ positions and are reduced, they are either both positive
or both negative powers of $s$.
Moreover, $||p_1^\bullet| - |p_2^\bullet|| < 3|s|$ (as prefixes of $p_1''$,$p_2''$ with the same end) and then:
\begin{equation*}
||t_1'p_1^\bullet| -|t_2'p_2^\bullet||  <
 |t_1' - t_2'| + 3|s| \leq
 5 |s|
\end{equation*}
Hence there are at most $5$ such $s$-power prefixes, since each pair of them has the same sign and a difference in length less than $5|s|$.
Adding the previous $11$, this yields at most $16$ possible $s$-power prefixes.

For the last claim note that $\nf(t w \ov t)$ is cyclically-reduced,
as it is a power of cyclically reduced $s$.
Hence at least one of $t$ or $\ov t$ has to fully reduce within $t w \ov t$,
as otherwise the obtained word is not cyclically-reduced.
So the length of $\nf(t w \ov t)$
is at most
\begin{equation*}
|\nf(t w \ov t)| \leq |t w \ov t| - 2 |t| = |w| \enspace .
\end{equation*}
Note also that it cannot be that $t w \ov t \eqg \varepsilon$,
as then also $w \ov t t\eqg \varepsilon$ which implies that $w \eqg \varepsilon$,
which is excluded by the assumption.
\end{proof}

\begin{lemma}
\label{lem:no_long_self_reduction_after_reduction}
Let $s$ be cyclically reduced and primitive, $t \neq \varepsilon$ reduced and $s,\ov s \not \! \! \pref t$.
Then a reduction for $s^k$ in $s^k t s^{k'}$
is of length less than $2|s|$.
\end{lemma}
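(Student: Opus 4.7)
The plan is to analyze a reduction pairing (in the sense defined just before Lemma~\ref{lem:pseudosolution}) of the word $s^k t s^{k'}$. Let $P$ denote the number of letters of $s^k$ that are paired in this pairing. Since $s^k$, $t$, and $s^{k'}$ are each reduced, none of them has internal pairings, so every paired letter of $s^k$ is matched either to a letter of $t$ or to a letter of $s^{k'}$. Well-nestedness forces the paired letters of $s^k$ to form a suffix of $s^k$ and further forces those matched to $s^{k'}$ to lie strictly to the left of those matched to $t$ within this suffix; write $P = P_1 + P_2$, where $P_1$ counts the rightmost letters of $s^k$ paired with $t$ and $P_2$ counts the leftmost letters of $s^k$ paired with $s^{k'}$. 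The aim is to show $P_1 < |s|$ and $P_2 < |s|$.

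The bound $P_1 < |s|$ is quick: the last $P_1$ letters of $s^k$ pair nested with the first $P_1$ letters of $t$, so $t$ begins with the reverse-inverse of those $P_1$ letters. If $P_1 \geq |s|$, the relevant $|s|$-letter prefix of $t$ equals $\ov{s}$ when the exponent of $s^k$ is positive and $s$ when it is negative, violating the hypothesis $s,\ov{s} \not\pref t$.

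For $P_2$ I first note that $P_2 > 0$ forces all of $t$ to be paired (otherwise an unpaired letter of $t$ would cross an $s^k$/$s^{k'}$ pair), so $|t| = P_1 + P_3$ with the last $P_3$ letters of $t$ nested-paired with the first $P_3$ letters of $s^{k'}$. Assuming for contradiction $P_2 \geq |s|$, the $P_2$ outermost $s^k$/$s^{k'}$ pairings translate, via the period-$|s|$ structure of both powers, into a family of letter identities in $s$ covering every residue modulo $|s|$. In the case where the exponents of $s^k$ and $s^{k'}$ have the same sign, this identity takes the form $s_i = \ov{s_{(C - i) \bmod |s|}}$ for all $i$ and some constant $C$; a short parity analysis on $|s|$ and on $C$ yields either a letter $s_i$ with $s_i = \ov{s_i}$ (impossible) or an adjacent pair $s_i, s_{i+1}$ of mutually inverse letters (contradicting that $s$ is reduced, hence cyclically reduced). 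In the mixed-sign case the identity takes the form $s_i = s_{(i+D) \bmod |s|}$ for all $i$, and primitivity of $s$ forces $D \equiv 0 \pmod{|s|}$, equivalently $P_3 \equiv P_1 \pmod{|s|}$.

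The main obstacle is then closing out this last mixed-sign subcase. Assuming $P_1 \geq 1$, I would write $t_{P_1 - 1}$ and $t_{P_1}$ as explicit letters of $s$ using the two pairing conditions and check that the congruence $P_3 \equiv P_1 \pmod{|s|}$ collapses the two relevant indices, forcing $t_{P_1} = \ov{t_{P_1 - 1}}$ and thus violating the reducedness of $t$. The degenerate case $P_1 = 0$ is handled separately: the pairing data then determines all of $t$, and $|t|$ is forced to be a multiple of $|s|$, so $t$ equals a positive power of $s$ or of $\ov{s}$, again contradicting $s,\ov{s} \not\pref t$. Combining, $P_2 < |s|$ in every case, so $P = P_1 + P_2 < 2|s|$ as required.
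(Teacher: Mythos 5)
Your proposal is correct but takes a genuinely different route from the paper. The paper argues at the block level: after reducing WLOG to $k \geq 2$ and noting that (since $s, \ov s \not\pref t$) fewer than $|s|$ letters of $s^k$ can cancel against $t$, it deduces that if $2|s|$ letters of $s^k$ cancel then the second-rightmost copy of $s$ in $s^k$ cancels wholesale against a length-$|s|$ factor of $s^{k'}$ that reads $\ov s$; primitivity of $s$ forces that $\ov s$ to align with a whole block in $s^{k'}$ (so $k'<0$, else $s$ and $\ov s$ would overlap, contradicting Lemma~\ref{lem:inverse_no_overlap}), and then the intervening word $s\,t\,s^{k''}$ must reduce to $\varepsilon$, forcing $t$ to be a nonempty power of $s$ --- contradiction. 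You instead work letter-by-letter with the reduction pairing, splitting the cancelled suffix of $s^k$ into the $P_1$ letters matched into $t$ and the $P_2$ matched into $s^{k'}$, showing $P_1<|s|$ directly and $P_2<|s|$ via residue/parity analysis of the induced letter identities on the cyclic word $s$. Your route is more self-contained --- it needs only the nesting properties of pairings, primitivity, and cyclic reducedness, and avoids Lemma~\ref{lem:inverse_no_overlap} --- at the cost of a finer case split. The paper's argument is shorter by leaning on previously established overlap facts. A few remarks on your sketch: you implicitly use that the $P_2$ letters of $s^{k'}$ involved in the outer pairings are consecutive (this follows from nesting together with $s^{k'}$ being reduced and is worth stating); the parity argument in the same-sign case and the index-collapse in the mixed-sign case are asserted rather than carried out, though both do go through (including the wrap-around position, which needs cyclic reducedness rather than mere reducedness); and there is a small indexing slip --- the two boundary letters of $t$ determined by the two different pairing conditions in the mixed-sign, $P_1 \geq 1$ subcase are $t_{P_1}$ and $t_{P_1+1}$, not $t_{P_1-1}$ and $t_{P_1}$.
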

\begin{proof}
Suppose that the reduction is of length at least $2|s|$.
The case $k \in \{-1, 0, 1\}$ is trivial,
suppose $k\geq 2$, the other one ($k \leq -2$) is reduced by taking $\ov s$ and $-k, -k'$.

By the assumption, less than $|s|$ letters form $s^{k}$ can be reduced with $t$.
Hence the second from the right $s$ in $s^k$
reduces with a subword $\ov s$ of $s^{k'}$.
If $k' > 0$ then we get a contradiction by Lemma~\ref{lem:inverse_no_overlap},
as $\ov s$ and $s$ cannot overlap.
Thus $k'<0$.
As $\ov s$ is primitive, inside $s^{k'}$ to the left of this
reducing $\ov s$ there is $s^{k''}$ for some $k' < k'' \leq 0$:
otherwise we would have $s = s'' s'$ for some $s' \pref s \suff s''$, which cannot be.
But then this means that the word between this $s$ and the reducing $\ov s$ also reduces,
and it is of the form $s t s^{k''} \eqg \varepsilon$.
Then $t \eqg s^{-k''-1}$.
As $t$ is reduced and not $1$, this means that $t = s^{-k''-1} \neq \varepsilon $ and this contradicts with $s,\ov s \not \! \! \pref  t$.
\end{proof}

\begin{lemma}
\label{lem:parametric_pseudosolution_almost_0}
Let $s$ be a cyclically reduced word and  
let $s^{k}$ be a pseudosolution in $s^{k'}t's^{k}t''s^{k''}$,
where $k, k',k''$ are integers, $s$ is cyclically reduced and primitive,
and $t',t''$ are reduced and do not begin nor end with $s$ nor $\ov s$.
Then $|k| \leq 3$.
\end{lemma}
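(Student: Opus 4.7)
The plan is to apply \Cref{lem:no_long_self_reduction_after_reduction} separately to the right-side context and to the (inverted) left-side context of the pseudosolution $s^k$, then sum the resulting one-sided bounds on cancellation.

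First I will show that every position of $s^k$ must be paired by $f$ with a position outside $s^k$. Indeed, since $s$ is primitive and cyclically reduced, the power $s^k$ is itself reduced; hence the pairing cannot contain $f(i)=i\pm 1$ with both endpoints in $s^k$ (adjacent inverses would contradict reducedness), and it cannot contain $f(i)=j$ with $i<j-1$ both in $s^k$ either, because then the subword $s^k[i+1\twodots j-1]$ would be paired entirely within itself, forcing $s^k[i+1\twodots j-1]\eqg\varepsilon$ while being a nonempty subword of a reduced word. By well-nestedness of $f$, the positions of $s^k$ that pair leftward (into $s^{k'}t'$) must precede those pairing rightward (into $t''s^{k''}$), for otherwise two arcs would cross; thus there is $0\le a\le|s^k|$ such that $s^k[1\twodots a]$ pairs leftward and $s^k[a+1\twodots|s^k|]$ pairs rightward.

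Next I will restrict $f$ to the factor $s^k t'' s^{k''}$ to obtain a partial reduction pairing on this factor in which exactly $|s^k|-a$ letters of $s^k$ are cancelled with letters of $t''s^{k''}$. Since $s$ is primitive and cyclically reduced and $t''$ is reduced and does not begin with $s$ nor $\ov s$, \Cref{lem:no_long_self_reduction_after_reduction} bounds the number of $s^k$-letters cancelled by this pairing by strictly less than $2|s|$, so $|s^k|-a<2|s|$. For the left side I take the free-group inverse of the factor $s^{k'}t's^k$, obtaining $s^{-k}\,\ov{t'}\,s^{-k'}$; because $t'$ does not end with $s$ nor $\ov s$, the word $\ov{t'}$ is reduced and does not begin with $s$ nor $\ov s$, and inversion turns the restriction of $f$ to $s^{k'}t's^k$ into a partial reduction pairing of $s^{-k}\,\ov{t'}\,s^{-k'}$ that cancels exactly $a$ letters of $s^{-k}$. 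Applying \Cref{lem:no_long_self_reduction_after_reduction} once more gives $a<2|s|$. Adding the two bounds yields $|s^k|=a+(|s^k|-a)<4|s|$, that is $|k|<4$, and therefore $|k|\le 3$.

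The main subtle point is to confirm that \Cref{lem:no_long_self_reduction_after_reduction}, although phrased for ``the reduction'' of $s^k$ inside $s^kts^{k'}$, actually bounds the number of $s^k$-letters cancelled by \emph{any} partial reduction pairing and not only by the reduction to normal form. Its proof supposes that at least $2|s|$ letters of $s^k$ cancel, deduces that the second-from-rightmost copy of $s$ in $s^k$ must cancel against an $\ov s$ sitting inside $s^{k'}$, and extracts a contradiction from \Cref{lem:inverse_no_overlap} and the primitivity of $s$; this is entirely an argument about the pattern of pairings, not about reaching the normal form, and so it transports without change to the partial pairings obtained here by restriction of $f$.
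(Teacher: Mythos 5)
Your proof is correct and takes essentially the same approach as the paper: the paper writes $s^k = s^2 s^{k-4} s^2$ and applies Lemma~\ref{lem:no_long_self_reduction_after_reduction} to the left context $s^{k'}t's^2$ (in inverted form) and to the right context $s^2t''s^{k''}$, concluding that not all of $s^2 s^{k-4} s^2$ can reduce. Your version simply makes explicit the well-nestedness argument yielding the split point $a$, applies the same lemma to each side to get $a<2|s|$ and $|s^k|-a<2|s|$, and adds the bounds.
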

\begin{proof}
Suppose that $|k| \geq 4$, say $k > 0$, the other case is shown in the same way.
Consider equivalent representation $s^{k'}t's^2 s^{k-4} s^2 t''s^{k''}$.
Then by Lemma~\ref{lem:no_long_self_reduction_after_reduction}
the reductions in $s^{k'}t's^2$ (presented in inverted form for direct lemma application) and $s^2 t''s^{k''}$
are both  of length less than $2|s|$, thus not the whole $s^2 s^{k-4} s^2$ is reduced, contradiction.
\end{proof}

The following Lemma is a simple variant of the classic Nielsen–Schreier theorem 
that each subgroup of a free group is also free.
We use it only for two generators satisfying some additional properties,
so we supply the simplified proof for completeness and for the readers not familiar with it.
\begin{lemma}
\label{lem:concatenation_of_powers_is_not_a_power}
If $s, t$ are cyclically reduced or $st$ is cyclically reduced
and $k_1, k_2, \ldots, k_{2\ell}, k_{2\ell+1}$, for $\ell \geq 1$, are non-zero
integers such that
\begin{equation}
\label{eq:powers_to_powers}
s^{k_1} t^{k_2} \cdots s^{k_{2\ell-1}} t^{k_{2\ell}} \eqg \varepsilon \quad \text{or} \quad 
s^{k_1} t^{k_2} \cdots s^{k_{2\ell-1}} t^{k_{2\ell}} s^{k_{2\ell+1}} \eqg  \varepsilon
\end{equation}
then $s, t$ are powers of the same word.

In particular, the mapping $S \mapsto s$, $T \mapsto t$ defines an isomorphism between
the subgroup generated by $s, t$ and a free group generated by $S, T$.
\end{lemma}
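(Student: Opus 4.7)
The plan is to prove the contrapositive: assume $s$ and $t$ are not powers of a common word and show that no alternating product in~\eqref{eq:powers_to_powers} equals $\varepsilon$. I would handle the two hypotheses of the lemma separately. Under the hypothesis that $s,t$ are cyclically reduced, first pass to the primitive roots of $s$ and $t$ (which remain cyclically reduced); the condition ``not powers of a common word'' then reads $t \notin \{s, \ov s\}$ for the primitive roots, and the relation becomes one in the primitive roots with adjusted but still non-zero exponents.

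With $s,t$ primitive and cyclically reduced, the key combinatorial ingredient is a cancellation bound at each junction of the alternating product: for $a,b\neq 0$, strictly fewer than $|s|+|t|$ letters cancel in the product $s^a \cdot t^b$. If more cancelled, the cancelled segment of length at least $|s|+|t|$ would simultaneously have period $|s|$ (as a suffix of $s^a$) and period $|t|$ (as the inverse of a prefix of $t^b$); the Fine--Wilf Lemma would give it period $d=\gcd(|s|,|t|)$, and then primitivity of $s$, which appears inside as a length-$|s|$ subword inheriting the period $d$, would force $|s|=d$; symmetrically $|t|=d$. The cancelled segment would then be a power of a common length-$d$ word, pinning $s$ and $t$ as cyclic rotations (possibly with signs) of the same root, and a sign analysis at two consecutive junctions using cyclic reducedness would force $t \in \{s,\ov s\}$, a contradiction.

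Armed with this bound, I would induct on $\ell$. The base $\ell=1$ reads $s^{k_1}=t^{-k_2}$; the classical uniqueness of primitive roots (derivable from Lemma~\ref{lem:inverse_no_overlap}) forces $s,t$ to be powers of a common primitive word. For $\ell\geq 2$, the cancellation bound ensures that when the $|k_i|$ are large enough, each factor $s^{k_i}$ or $t^{k_j}$ retains a non-trivial block $s^{\pm 1}$ or $t^{\pm 1}$ in the normal form; the remaining small-exponent cases reduce via Lemma~\ref{lem:no_long_self_reduction_after_reduction} to a shorter alternating-product relation where the induction hypothesis applies. The ``In particular'' claim then follows: the homomorphism from the rank-two free group on $\{S,T\}$ to $\langle s,t\rangle$, $S\mapsto s, T\mapsto t$, has trivial kernel, because every non-trivial reduced word in $\{S,T\}$ is (up to cyclic rearrangement) an alternating product of the form~\eqref{eq:powers_to_powers}, which we have just shown is non-trivial.

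The main obstacle is the alternative hypothesis ``$st$ cyclically reduced'' without $s,t$ individually cyclically reduced, since then $s^a$ and $t^b$ already undergo internal cancellation and the boundary analysis above is not directly applicable. I would handle it by writing $s = u_s v_s u_s^{-1}$ and $t = u_t v_t u_t^{-1}$ with $v_s, v_t$ cyclically reduced, rewriting the alternating product as $u_s \cdot V \cdot u_t^{-1}$ where $V = v_s^{k_1} h v_t^{k_2} h^{-1} v_s^{k_3} h \cdots$ with $h = u_s^{-1} u_t$, and then using that the hypothesis $st$ cyclically reduced prevents $h$ from cancelling fully into the cyclically reduced cores $v_s, v_t$; this essentially reduces the problem to the previous setting over a slightly enriched alphabet with $h$ playing the role of a junction letter.
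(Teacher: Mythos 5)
The overall strategy you propose (contrapositive via a Fine--Wilf cancellation bound at each junction, then induction on $\ell$) is genuinely different from the paper's, which sets up a minimal counterexample over $|s|+|t|$, first disposes of the case where a junction cancellation has length $\min(|s|,|t|)$ by substituting $s=s'\ov t$, and then in the complementary case invokes Lemma~\ref{lem:pseudosolution} to extract a $t^{k_h}$ that is entirely consumed inside $s^{k_{h-1}}t^{k_h}s^{k_{h+1}}$, whence $|k_h|=1$ and $t=\ov{s''}\,\ov{s'}$, which either closes the case or produces strictly shorter generators $s_1,t_1$. Your cancellation bound itself is fine (and in fact can be proved more simply than you sketch: if the cancelled segment has length $\ge|s|+|t|$, Fine--Wilf plus primitivity forces $|s|=|t|$, and then the last $|s|$ letters of the segment are exactly $s$ while their inverse is the first $|t|$ letters of $t^{\pm b}$, so already one junction pins $t\in\{s,\ov s\}$ -- no ``two consecutive junctions'' analysis is needed).

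The genuine gap is in your inductive step. The cancellation bound guarantees that blocks with large $|k_i|$ survive, but when some $|k_i|$ is small, the block $s^{k_i}$ (or $t^{k_j}$) may be fully consumed, and what remains after cancellation is \emph{not} an alternating product of powers of $s$ and $t$: the neighbouring $t$-powers have been partially eaten into and there are left-over ``stubs''. You assert this reduces ``via Lemma~\ref{lem:no_long_self_reduction_after_reduction} to a shorter alternating-product relation where the induction hypothesis applies'', but that lemma bounds how far a power of $s$ can reduce against its neighbours; it does not produce a shorter relation in $s,t$ with the same hypotheses. This is precisely where the paper does real work: from the consumed $t^{k_h}$ (with $|k_h|=1$) it derives $t=\ov{s''}\,\ov{s'}$, and when $|s|\neq|s'|+|s''|$ it constructs a new non-trivial relation in \emph{shorter} cyclically reduced words $s_1,t_1$ -- the induction quantity is $|s|+|t|$, not $\ell$, which is essential since the number of syllables may not drop. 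An induction on $\ell$ with $s,t$ fixed cannot recover this step. Separately, for the ``$st$ cyclically reduced'' hypothesis your conjugation rewrite $s=u_s v_s u_s^{-1}$ is more elaborate than needed: the paper observes that if, say, $s$ is not cyclically reduced while $st$ is, then $s$ and $\ov s$ begin and end with the same letters, so $\ov s t$, $ts$, $t\ov s$ and their inverses are all cyclically reduced, meaning \emph{no} junction cancellation occurs at all and~\eqref{eq:powers_to_powers} is impossible.
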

\begin{proof}
First of all observe that the case of odd number of powers can be reduced to the case of even number of powers:
if
$s^{k_1} t^{k_2} \cdots s^{k_{2\ell-1}} t^{k_{2\ell}} s^{k_{2\ell+1}} \eqg  \varepsilon$
then also
$s^{k_1 + k_{2\ell+1}} t^{k_2} \cdots s^{k_{2\ell-1}} t^{k_{2\ell}}  \eqg  \varepsilon$;
if $k_1 + k_{2\ell+1} \neq 0$ then we made the reduction,
otherwise 
we remove $s^{k_1 + k_{2\ell+1}}$ and continue the procedure.
In the end either we obtain an even number of powers, or that a non-zero power
of $s$ or $t$ is $\varepsilon$, which also shows the claim.

Consider first the case when one of $s, t$, say $s$, is not cyclically reduced..
Then $s, \ov s$ begin and end with the same letter,
thus $st$ being cyclically reduced,
which holds by Lemma assumption,
implies that $\ov s t$ is cyclically reduced.
Then by Lemma~\ref{lem:cyclic_shifts_reduced_primitive}
also $ts$ and $t \ov s$ are cyclically reduced.
Taking the inverse yields that also
$\ov t \ov s$, $\ov s \ov t$, $s \ov t$ and $\ov t s$ are also
cyclically reduced.
Hence, there is no reduction between powers of $s$ and $t$,
so the claim of the Lemma holds.

From now on we consider the case when $s, t$ are cyclically reduced and $st$ is cyclically reduced.
Suppose that the claim does not hold and let $s, t$ be words such that
\eqref{eq:powers_to_powers} holds for some non-zero 
$k_1, \ldots, k_{2\ell}$ and $|s|+|t|$ is minimal possible.
Clearly it is not true that $|s| = |t| = 1$:
any reduction in the equation shows
that they are the same letter (or the inverse),
and so the claim holds.

Consider first the case,
when the reduction in $s t$ is of length $\min(|s|,|t|)$,
say by symmetry $|t| \leq |s|$.
Then $s = s' \ov t$; if $s' = \varepsilon$ then we are done, so assume in the following that $s' \neq \varepsilon$.
Substitute $s = s' \ov t$ into the equation~\eqref{eq:powers_to_powers},
and group and reduce the powers of $s'$ and $t$.
More formally,
consider the word
\begin{equation}
\label{eq:new_powers}
(S'\ov T)^{k_1} T^{k_2} \cdots (S'\ov T)^{k_{2\ell-1}} T^{k_{2\ell}}
\end{equation}
and treat it as a word in group freely generated by $S', T$,
denote by $\equiv$ the equality of elements in this group.
Let $W'$ be the normal form of~\eqref{eq:new_powers},
we then substitute $s', t$ for $S', T$,
obtaining $w'$.
Clearly, $w' \eqg \varepsilon$ (in the original free group).
We need to show that $W' \not \equiv 1$ (so that the equation is non-trivial)
and that $s'$ is cyclically reduced.
This will give the contradiction, as $|s'| + |t| < |s|+|t|$
and they also satisfy the condition of the lemma.

We claim that no reduction of $S'$ and $\ov {S'}$ occurs:
suppose that it does, consider the first one that happens in some arbitrary order of reductions.
Suppose that those are $S'$ and $\ov{S'}$ (in this order)
so in between them is a word $\ov T P T \equiv 1$ for some $P$,
as each $S'$ is followed by $\ov T$ and each $\ov {S'}$ is preceded by $T$.
Note that $\ov T P T \equiv 1$ implies that $P \equiv 1$.
By the choice of $P$, there is no reduction of $S'$ and $\ov {S'}$ inside $P$,
so $P$ is generated by $T$ alone, i.e.\ $P = T^k$ and so $k = 0$.
Hence $S' \ov T$ and $T \ov{S'}$ are next to each other before any reduction,
which cannot be by the form~\eqref{eq:new_powers}
(as all $k_1, \ldots, k_{2\ell}$ are non-zero).
The analysis when the order of powers is $\ov {S'}$, $S'$ is similar:
the word $P'$ between them is generated by $T$ on one hand and $P' \equiv 1$ on the other.
So $W' \not \equiv 1$.

Suppose for the sake of contradiction that $s'$ is not cyclically reduced.
Observe that $s'\ov{t} = s$ is cyclically reduced and we have shown already
that in this case the claim of the lemma holds, which contradicts our assumption that $\varepsilon \neq w' \eqg \varepsilon$.
Hence, $s'$ is cyclically reduced and so $w' \eqg \varepsilon$ for $s', t$
is a smaller counterexample, which cannot be.

When the reduction in $s \ov t$ is of length $|t|$
then the argument is the same, we just exchange $t$ for $\ov t$.
For $t s$ we can take the inverse so that the reduction
in $\ov s \, \ov t$ is of length $|t|$, here we exchange $\ov s$ with $s$ and $\ov t$ with $t$.
For $\ov t s$ after the inverse we have $\ov s t$, so we exchange $\ov s$ and $s$.
For the cases when the reduction is of length $|s|$ observe that by symmetry we can swap $s$ and $t$, 
as $s^{k_1} t^{k_2} \cdots s^{k_{2\ell-1}} t^{k_{2\ell}} \eqg \varepsilon$
is equivalent to $t^{k_{2\ell}}s^{k_1} t^{k_2} \cdots s^{k_{2\ell-1}} \eqg \varepsilon$.

In the following we assume that the reduction between $s^{p_s}t^{p_t}$ and $t^{p_t}s^{p_s}$
for $p_s, p_t \in \{-1, 1\}$,
are all of length smaller than $\min(|s|,|t|)$.
We use Lemma~\ref{lem:pseudosolution} for~\eqref{eq:powers_to_powers}:
let us choose each power of $s$ and $t$.
Then by Lemma~\ref{lem:pseudosolution} there is $k_{h-1},k_h,k_{h+1}$ such that $t^{k_h}$ reduces out in

\begin{equation}
\label{eq:no_power_sum_of_powers_local}
s^{k_{h-1}}t^{k_{h}}s^{k_{h+1}}
\end{equation}
or 
$s^{k_h}$ reduces out in
$t^{k_{h-1}}s^{k_{h}}t^{k_{h+1}}$;
the cases are symmetric (exchange of $s$ and $t$),
so we consider only the first one.
The case when the reduction of left or right side is of length at least
$\min(|s|,|t|)$ were already handled.
So we consider the case when both reductions are of length
smaller than $\min(|s|,|t|)$.
Hence $|k_h| = 1$, by symmetry we consider $k_h = 1$,
and $t$ reduces with both $s^{k_{h-1}}$ and $s^{k_{h+1}}$
in $s^{k_{h-1}}ts^{k_{h+1}}$.
Then $k_{h-1}$ and $k_{h+1}$ are of the same sign,
as otherwise we would get that $t$ is not cyclically reduced.
Consider the case when $k_{h-1}, k_{h+1}>0$, the other ones
are symmetric
(exchange of $s$ and $\ov s$ and/or $t$ and $\ov t$).
Then $t = \ov{s''} \, \ov{s'}$, where $s''$ and $s'$
are non-empty suffix and prefix of $s$.
If $|s| = |s'| + |s''|$ then $s = s's'' = \ov{t}$
and we are done.
If $|s| > |s'|+|s''|$ then let $s = s' s_1 s''$ and $t_1 = \ov {s'}\, \ov{s''}$.
Clearly $|s_1| + |t_1| < |s|+|t|$.
We will construct a nontrivial equation of the form~\eqref{eq:powers_to_powers}
and show that $s_1, t_1$ are cyclically reduced,
thus showing the claim.
Note that in the other case, when $|s| < |s'| + |s''|$,
we can choose a prefix $t'$ and suffix $t''$ of $t$ such that
$s = \ov{t''} \, \ov{ t'}$ and $t = t' t_1 t''$; the analysis is symmetric, so we skip it.

To get the equation of the form~\eqref{eq:powers_to_powers}
for $s_1, t_1$, we reorganize the one for $s,t$:
observe that
\begin{align*}
s^k &= (s' s_1 s'')^k\\
	&= s' (s_1 \underbrace{s'' s'}_{\ov {t_1}})^{k-1} s_1 s''\\
	&\eqg
	s' (s_1 \ov {t_1})^{k-1} s_1 \underbrace{s'' s'}_{\ov {t_1}} \ov{s'}\\
	&=	s' (s_1 \ov {t_1})^k \ov{s'}\\
t^k &= (\ov {s''} \, \ov{s'})^k\\
	&= \ov {s''} t_1^{k-1} \ov{s'}\\
	&\eqg s' t_1^k \ov {s'}
\end{align*}
As powers of $s$ and $t$ alternate in~\eqref{eq:powers_to_powers},
the leading $s'$ and ending $\ov s'$ from power of $s$ and the
leading $\ov{s'}$ and ending $s'$ from powers of $t$ reduce,
except for the first and last one in the whole word.
In the end we obtain a word $s' w' \ov{s'} \eqg \varepsilon$,
so also $w' \eqg \varepsilon$ and clearly $w'$
is a concatenation of powers of $s_1$ and $t_1$.
We still need to show that $w'$ is not trivial,
more formally, consider the free group generated by $S_1, T_1$,
let $\equiv$ be the equivalence relation in this group
and consider a word:
\begin{equation*}
(S_1\ov{T_1})^{k_1} \, T_1^{k_2} \cdots (S_1 \ov {T_1})^{k_{2\ell-1}}  \, T_1^{k_{2\ell}} \enspace ,
\end{equation*}
which is exactly the same equation as~\eqref{eq:new_powers},
so we already know that it is non-trivial.

It is left to show that $s_1, t_1$ are cyclically reduced.
As $t = \ov{s''} \, \ov {s'}$ is cyclically reduced,
by Lemma~\ref{lem:cyclic_shifts_reduced_primitive}
also $t_1 = \ov {s'} \, \ov{s''} \shift t$  is cyclically reduced.
Suppose for the sake of contradiction that $s_1$ is not cyclically reduced.
Observe that $s_1 \ov {t_1} = s_1 s'' s' \shift s$ is cyclically reduced.
Then we have already shown that in this case a nontrivial concatenations of powers
of $s_1$ and $\ov{t_1}$ cannot be equivalent to $\varepsilon$,
which contradicts the constructed example of $w'$.
So $s_1$ is cyclically reduced.
Then $w'$ is a smaller counterexample, contradiction.

Concerning the isomorphism defined by a mapping,
clearly the mapping uniquely extends to a homomorphism (as it is from a free group)
and it is surjective.
Its kernel consists of words
$S^{k_1}T^{k_2}\cdots S^{k_{2\ell-1}}T^{k_{2\ell}}$
such that $s^{k_1}t^{k_2}\cdots s^{k_{2\ell-1}}t^{k_{2\ell}} \eqg \varepsilon$,
so it is trivial.
\end{proof}

\section{Data structure}\label{sec:data-structure}
Words appearing naturally in our proofs and algorithms are concatenations of a constant number of subwords
(or their involutions) of the input equation.
We say that a word $w$ is $k$-\emph{represented}, if $w$ is given as $w = (U\ov U)[b_1\twodots e_1] \cdots (U\ov U)[b_k\twodots e_k]$,
where $U = u_1\cdots u_m$ is the concatenation of all words from the equation~\eqref{eq:main}.
A parametric word $s_0 t_1^{\phi_1} s_1 \cdots s_{\ell-1} t_\ell^{\phi_\ell}s_{\ell}$
is $k$-represented, when $s_0, t_1, s_1, \ldots, t_\ell, s_\ell$
are  $k_0, \ldots, k_{2\ell}$ represented and $k = \sum_{i=0}^{2\ell} k_i$.

Intuitively, all basic operations that we perform on (parametric) words
that are $k$ and $\ell$ represented can be performed in $\Ocomp(k + \ell)$ time.
As $k, \ell$ are usually small constants this amounts to $\Ocomp(1)$ time.

We use standard data structures, like suffix arrays~\cite{suffixarrays} and structures for answering longest common prefix queries on them~\cite{lcpsuffixarrays}.
As a result, we can answer all basic queries
(like normal form, longest common prefix, power prefix, 
etc.)
about words in the equation in $\Ocomp(1)$ time;
note that this is the place in which we essentially use that we can perform operations on $\Ocomp(\log n)$-size numbers in $\Ocomp(1)$ time.
As an example of usage, we can test whether a word is a solution in $\Ocomp(m)$ time:

\begin{lemma}
\label{lem:testing_a_single_solution}
Given a word $\alpha u^i v^j \beta$,
where $\alpha, \beta, u, v$ are $\Ocomp(1)$-represented,
$\alpha, \beta$ are reduced and $u, v$ are cyclically reduced and primitive
and $i, j$ are a pair of integer numbers,
we can test whether $\alpha u^i v^j \beta$ is a solution of~\eqref{eq:main}
in $\Ocomp(m)$ time.
\end{lemma}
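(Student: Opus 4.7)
My plan is to substitute $V := \alpha u^i v^j \beta$ into~\eqref{eq:main} and check whether the resulting word
\[
W \;=\; V^{p_1} u_1 V^{p_2} u_2 \cdots V^{p_m} u_m
\]
reduces to $\varepsilon$ by a left-to-right stack-based normalisation that spends only $\Ocomp(1)$ time per piece. To this end I would first expand $W$ into an alternating list of $\Ocomp(m)$ pieces,
\[
g_0 \,\cdot\, s_1^{e_1} s_2^{e_2} \,\cdot\, g_1 \,\cdot\, s_3^{e_3} s_4^{e_4} \,\cdot\, g_2 \cdots g_{m-1} \,\cdot\, s_{2m-1}^{e_{2m-1}} s_{2m}^{e_{2m}} \,\cdot\, g_m,
\]
where each $s_\ell \in \{u,v\}$ carries an exponent $e_\ell \in \{\pm i, \pm j\}$ dictated by the sign $p_h$ of the $V$-copy it comes from (for $p_h = 1$ the inner pair is $u^i, v^j$; for $p_h=-1$ it is $v^{-j}, u^{-i}$), and each $g_k$ is an $\Ocomp(1)$-represented gluing constant of the form $R_h u_h L_{h+1}$ with $L_h, R_h \in \{\alpha, \beta, \ov\alpha, \ov\beta\}$ (with $g_0 = L_1$ and $g_m = R_m u_m$ at the boundaries). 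Each power is kept as a (base, exponent) pair rather than expanded.

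I would then normalise this sequence with a stack whose invariant is that stored items alternate constants and powers and no adjacent pair can cancel further. When pushing a new item I combine it with the top: a constant and the top constant reduce via $\nf$; a power $s^k$ and a top constant merge by stripping off the $s$-power suffix of the constant and summing the exponents; two powers sharing a base add their exponents. Whenever a combine annihilates the top, the procedure is reapplied between the newly uncovered element and the unconsumed remainder of the pushed piece, so a single push may trigger a cascade. All primitive queries (normal form of a short concatenation, maximal $s$-power prefix/suffix of a constant, whether two bases coincide or are cyclic shifts of each other) are $\Ocomp(1)$ on $\Ocomp(1)$-represented pieces using the suffix-array machinery of Section~\ref{sec:data-structure}, and Lemma~\ref{lem:sum_of_powers} is what lets us read off $s$-power prefixes/suffixes of combined constants from those of the components. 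The test returns \emph{solution} iff the stack ends empty.

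A standard amortised argument then bounds the total running time by $\Ocomp(m)$: the number of pushes is $\Ocomp(m)$ and each cascading combine either pops an element (total pops over the whole sweep are bounded by total pushes) or terminates after $\Ocomp(1)$ work. The main delicate point, and the step I expect to need the most care, is maintaining the invariant that every stack constant stays $\Ocomp(1)$-represented, so that the per-query cost remains constant even after several constants have been fused by cascades; this is arranged by never materialising a fused constant as a long word, but storing it as a prefix of one original $g_k$ concatenated (after one maximal mutual reduction) with a suffix of the next, which by Lemma~\ref{lem:sum_of_powers} is enough to answer all subsequent $s$-power-suffix queries in $\Ocomp(1)$.
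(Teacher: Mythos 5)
Your overall plan is the same as the paper's: substitute $\alpha u^i v^j\beta$ for $X$, obtain a concatenation of $\Ocomp(m)$ compactly represented pieces (gluing constants that are $\Ocomp(1)$-represented, and the two powers per occurrence, treated as $u$- or $v$-runs), then normalize left-to-right with a stack in amortized $\Ocomp(1)$ per piece. The paper's proof is just a citation of its Lemma~\ref{lem:solution_testing_2}, which does exactly this; you are re-deriving that lemma inline.

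The weak point is the invariant you flag yourself, and the fix you sketch does not hold. You claim a fused stack constant can always be stored as a prefix of one original $g_k$ followed by a suffix of the next, so stays $\Ocomp(1)$-represented. This is false once more than two constants coalesce: a cascade can consume $g_{k+1}$ only partially, leaving a middle segment, then touch $g_{k+2}$, etc., so the surviving word is in general a concatenation of truncations of many consecutive $g$'s, i.e.\ $t$-represented for unbounded $t$. Lemma~\ref{lem:sum_of_powers} does not rescue this: it is a combinatorial bound on \emph{how many} distinct maximal $s$-powers a word can contain, not a tool for maintaining a short representation of a normal form, and the algorithmic computation of power prefixes/suffixes is done with the $\lcp$ machinery of Lemma~\ref{lem:solution_testing}, not with Lemma~\ref{lem:sum_of_powers}. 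The clean way out, which is what the paper's Lemma~\ref{lem:solution_testing_2} does, is to never fuse constants at all: keep each surviving piece as its own stack item, each a truncation of a single original piece (hence still $\Ocomp(1)$-represented, since a prefix/suffix of a $1$-represented word or a run is again one of those), and bound the total work by the total number of pieces that are ever popped. Your amortized accounting then goes through without the per-item $\Ocomp(1)$-representation invariant, which should simply be dropped rather than defended.
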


In the following, we give the appropriate construction.

\begin{lemma}
\label{lem:solution_testing}
For the equation~\eqref{eq:main} we can construct a data structure,
which given two words $s$, $t$ that are $k$ and $\ell$
represented, we can:
\begin{itemize}
\item compute the longest prefix of $s$
that has period $p$ in $\Ocomp(k)$ time,
\item compute the $s$-power prefix and suffix of $t$
in $\Ocomp(k +\ell)$ time,
\item compute $\nf(st)$ in $\Ocomp(k+\ell)$ time.
\end{itemize}
\end{lemma}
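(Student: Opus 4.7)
The plan is to set up preprocessing so that all three queries reduce to a single primitive: computing the longest common prefix of two arbitrary represented words. Concretely, the preprocessing constructs a suffix array and a constant-time $\lcp$ oracle on the string $U\ov U$ (where $U=u_1\cdots u_m$), so that $\lcp\bigl((U\ov U)[a\twodots 2n],(U\ov U)[b\twodots 2n]\bigr)$ can be answered in $\Ocomp(1)$ time for any positions $a,b$, using the standard suffix array~\cite{suffixarrays} together with an RMQ over the LCP array~\cite{lcpsuffixarrays}.

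The key primitive is: given words $x$ and $y$ that are $k$- and $\ell$-represented, compute $\lcp(x,y)$ in $\Ocomp(k+\ell)$ time. This is done by a parallel chunk-by-chunk sweep. At each step both cursors point to positions inside some chunk of $x$ and of $y$; since each chunk is a substring of $U\ov U$, a single $\Ocomp(1)$-time $\lcp$ query on $U\ov U$ either reports a mismatch (and we are done) or exhausts the shorter of the two remaining chunk-suffixes and advances that cursor to the next chunk. Each step advances at least one cursor, so the total number of steps is $\Ocomp(k+\ell)$.

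With this primitive in hand, the three operations follow. For the longest prefix of $s$ with period $p$: this length is $\min(|s|,\,p+\lcp(s,\,s[p+1\twodots|s|]))$, and the shift $s[p+1\twodots|s|]$ is still $k$-represented (drop at most one initial chunk and truncate one), giving the bound $\Ocomp(k)$. For the $s$-power prefix of $t$: a single $\lcp$ query between $t$ and $s$ and one between $t$ and $\ov s$ determine the sign; in the positive case we invoke the previous operation on $t$ with period $|s|$ to find the longest prefix of $t$ of period $|s|$, say of length $q$, and since the first $|s|$ letters of $t$ are exactly $s$, this prefix is a prefix of $s^\infty$, so the $s$-power prefix is $s^{\lfloor q/|s|\rfloor}$; the $s$-power suffix is symmetric. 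For $\nf(st)$: the length $r$ of maximal cancellation between $s$ and $t$ equals $\lcp(\ov s, t)$, because $\ov s[i]=\ov{s[|s|-i+1]}$, so a suffix of $s$ of length $r$ is the inverse of a prefix of $t$ of length $r$ if and only if $\ov s$ and $t$ agree on their first $r$ letters; since $\ov U$ is part of $U\ov U$, the word $\ov s$ is also $k$-represented, and the final normal form $s[1\twodots|s|-r]\cdot t[r+1\twodots|t|]$ is $\Ocomp(k+\ell)$-represented by concatenating the non-cancelled portions.

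The main obstacle, beyond bookkeeping of representations across truncations and involutions, is verifying that the $r$ computed as $\lcp(\ov s, t)$ is indeed the full cancellation, so that no reductions remain in $s[1\twodots|s|-r]\cdot t[r+1\twodots|t|]$. This follows from the maximality of $r$ combined with the assumption that $s$ and $t$ are individually reduced: if a further reduction were possible, it would have to cross the boundary, meaning $s[|s|-r]=\ov{t[r+1]}$, which would extend the common prefix of $\ov s$ and $t$ beyond $r$, contradicting the definition of $r$.
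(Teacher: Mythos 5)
Your preprocessing and the first two operations match the paper's proof essentially verbatim: suffix array plus LCP/RMQ on $U\ov U$, a chunk-by-chunk sweep for $\lcp$ of represented words, closure of representations under involution via the $\ov U$ half, the period formula $p+\lcp(s,s[p+1\twodots|s|])$, and the sign-then-period computation for the $s$-power prefix. No issues there.

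For the third item there is a gap. You reduce $\nf(st)$ to a single query $r=\lcp(\ov s,t)$ and explicitly invoke the assumption that $s$ and $t$ are already reduced to conclude that $s[1\twodots|s|-r]\,t[r+1\twodots|t|]$ is reduced. But the lemma does not grant that assumption, and the paper in fact uses this operation on words that are not reduced: in Lemma~\ref{lem:preprocessing} it computes $\nf(f_h)$ for $f_h$ of the form $\beta u_h\alpha$ (etc.), which is a concatenation of three reduced words with possible cancellation at both seams. The paper's proof handles this by processing chunks one at a time with a stack, maintaining a reduced representation $s_1'\cdots s_{i'}'$ and reducing each incoming chunk against the top of the stack, popping exhausted chunks and continuing; the $\Ocomp(k+\ell)$ bound then comes from the standard stack amortization (each chunk is pushed and popped at most once), together with the refined observation that the chunk-by-chunk $\lcp$ costs $\Ocomp(1+k'+\ell')$ where $k',\ell'$ are the numbers of chunks actually consumed, not the worst-case $\Ocomp(k+\ell)$. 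Your single-query shortcut neither handles interior cancellations nor sets up this amortization, so as written it proves a strictly weaker statement than the one the paper needs. The fix is exactly the iterative stack argument from the paper, which you should add if you want the lemma to support its downstream uses.
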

\begin{proof}
Let $U$ be a word $U = u_1\cdots u_m$,
where $u_i$s are the words from the equation~\eqref{eq:main},
For the word $U \ov U$ in linear time we can construct a data structure
which answers in constant time the longest common extension query ($\lce$),
i.e.\ given indices $b$, $b'$ return the largest $k$
such that $U\ov U[b\twodots b+k] = U\ov U[b'\twodots b'+k]$.
There are several standard data structures for this query,
say a suffix array~\cite{suffixarrays} plus a structure for answering \emph{longest common prefix query}
(lcp)~\cite{lcpsuffixarrays} on which we use range minimum queries~\cite{rmq}.
The last structure needs the flexibility of the RAM model to run in $\Ocomp(1)$ time per query
and suffix array construction~\cite{suffixarrays} assumes that the alphabet can be associated with
a set of consecutive natural numbers;
if this is not the case then we can sort the letter in total $\Ocomp(n \log n)$ time
and then assign them consecutive numbers, say starting from $1$.
There are also structures based on suffix tree with lowest common ancestor data structure.

The data structure clearly supports $\lcp$ query in $\Ocomp(1)$ time 
for words that are $1$-represented:
if the words $s, t$ are $1$ represented
then we known the positions $b$ and $b'$ at which they begin in $U \ov U$
and ask $\lce(b,b')$.
If the answer is longer then the minimum of the lengths of the words suffixes
then we cap it at the minimum of those lengths.
It is easy to generalize the query to the case when one word is $k$ represented
and the other $\ell$-represented:
if $s=s_1\cdots s_k$ and $t = t_1\cdots t_\ell$
then we first check the longest common prefix of $s_1$ and $t_1$:
if $|\lcp(s_1,t_1)| < \min(|s_1|,|t_1|)$ then it is also the longest common extension of $s$ and $t$.
Otherwise, when $\lcp(s_1,t_1) = |t_1|$
then $\lcp(s,t) = |t_1| + \lcp(s_1[1+|t_1|\twodots |s_1|],t_2\cdots t_\ell)$
(the case when $\lce(s_1,t_1) = |s_1|$ is done symmetrically).
Each step removes one subword from $k + \ell$ ones, so it takes $\Ocomp(k + \ell)$ time.
In fact, the argument above shows a slightly more refined bound:
if the $\lcp(s,t)$ is contained within
$s_1\cdots s_{k'}$ and $t_1\cdots t_{\ell'}$ then the running time is $\Ocomp(1 + k'+\ell')$.

Before we describe how to use longest common extension query to compute the normal form
observe that if $s = s_1 \cdots s_k$ where each $s_i$ is $1$-represented,
then $\ov s = \ov {s_k} \cdots \ov{s_1}$ and each $\ov{s_i}$ is also $1$-represented,
moreover if $s_i$ is represented effectively,
say we know $b, e$ such that $s_i = U \ov U[b\twodots e]$,
then $\ov{s_i}$ is also $1$-represented, i.e.\ $\ov {s_i} = U \ov U[2|U| - e\twodots 2|U| - b]$.

We will describe how to compute the $\nf(s)$ in $\Ocomp(k)$ time, the computation for $\nf(st)$ is similar.
We will iteratively compute $\nf(s_1 \cdots s_i)$ for consecutive $i$s,
the $\nf(s_1 \cdots s_i)$ is represented as a reduced word $s_1' \cdots s_{i'}'$,
where each $s_{j'}'$ is $1$-represented
(it is also a subword of some $s_{j}$, but this is not important for computation).
For $i=1$ we simply take $s_1' = s_1$, as $s_1$ is reduced by definition.
When we add $s_{i+1}$ we compute in $\Ocomp(1)$ time the reduction between
$s_{i'}'$ and $s_{i+1}$, this is exactly the longest common prefix of  $s_{i'}'$ and $\ov{s_{i+1}}$,
and we shorten the words appropriately.
If after the shortening $s_{i+1} = s_{i'}'= \varepsilon$ then we remove $s_{i'}'$ from the representation
and we are done.
If after the shortening $s_{i+1} = \varepsilon$ then we drop it and we are done (we replace $s_{i'}'$ in the representation with its shortened variant).
If after the shortening $s_{i'}' = \varepsilon$ then we remove it from the representation
and continue with the current representation and shortened $s_{i+1}$.
Clearly in each step either we remove one word from the representation or add one,
so the whole running time is $\Ocomp(k)$ and the obtained word is also $k$-represented.

Concerning the longest prefix of $t$ that has a period $p$ observe that this is
$p + \lcp(t,t[1+p\twodots|t|])$, which can be computed in $\Ocomp(\ell)$ time,
as $t[1+p\twodots|t|]$ is also $\ell$-represented.
The computation of the longest suffix of $t$ has has period $p$ is done in a similar way.

In order to compute the $s$-power prefix of $t$
we check, whether the $t[1\twodots|s|] \in \{s,\ov s\}$,
by computing $\lcp(t,s)$ and $\lcp(t,\ov s)$.,
this can be done in $\Ocomp(k + \ell)$ time.
This determines whether the $s$-power prefix is $\varepsilon$
and whether it is a power of $s$ or $\ov s$.
Then we compute the longest prefix of $t$ has period $|s|$,
which can be done in $\Ocomp(k)$ time.

The computation of $s$-power suffix is done in a similar way.
\end{proof}

When we want to verify, whether $\alpha u^i v^j \beta$ is solution,
we naturally arrive at a situation in which we need to manipulate words that are
represented as concatenations of $1$-represented words \emph{and} runs of $u, v$.
It turns out that operations as in Lemma~\ref{lem:solution_testing}
can still be performed effectively, at least for $u, v$
(assuming that they satisfy some mild conditions).

\begin{lemma}
\label{lem:solution_testing_2}
Given a data structure from Lemma~\ref{lem:solution_testing}
and two words $u,v$ that are primitive, cyclically reduced and $k$-represented
then for a sequence $s_1, \ldots s_\ell$ of words,
such that each $s_i$ is either a run of $u$ or $v$ or $1$-represented word,
we can compute $\nf(s_1\cdots s_\ell)$ in $\Ocomp(k\ell)$ time.
\end{lemma}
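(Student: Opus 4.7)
The plan is to mimic the iterative construction from Lemma~\ref{lem:solution_testing}: process $s_1, s_2, \ldots, s_\ell$ left-to-right while maintaining $\nf(s_1 \cdots s_i)$ as a list $(r_1, \ldots, r_j)$ whose entries are either $1$-represented subwords or runs of $u$ or $v$. Appending $s_{i+1}$ reduces to computing the reduction at the interface between the current tail $r_j$ and $s_{i+1}$, iterating the cancellation further into the list whenever an entry is fully consumed, and finally inserting the surviving suffix of $s_{i+1}$.

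At the heart of the argument is a subroutine that, given two adjacent entries $a, b$ from the list (each either a run or a $1$-represented word), computes the length of the reduction in $ab$ in $\Ocomp(k)$ time. For two $1$-represented entries this is exactly the $\lcp$-query of Lemma~\ref{lem:solution_testing}. When $a$ is a $u$-run $u'' u^p u'$, I would match $\ov a$ against $b$ in three stages: first the tail $u'$ via an $\lcp$-query, then, if the reduction enters the $u^p$ block, determine how many copies of $\ov u$ align with the remaining prefix of $b$ by invoking the $u$-power-prefix primitive of Lemma~\ref{lem:solution_testing}, and finally at most one fractional copy via another $\lcp$-query. The case when $b$ is a run is handled symmetrically. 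For two runs whose primitive bases $u, v$ are not cyclic shifts of each other, Lemma~\ref{lem:runs_of_different_words} bounds the overlap by $|u|+|v|$, so the boundary reduction is short and again falls inside the $\Ocomp(k)$ budget.

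The amortization is identical to that of Lemma~\ref{lem:solution_testing}: every $\Ocomp(k)$-cost boundary step either removes an existing entry from the list or is charged to the arrival of $s_{i+1}$. Since at most $\ell$ entries are ever created, the total work is $\Ocomp(k\ell)$, and the output is delivered in the same list representation, hence also legitimately $\Ocomp(k\ell)$-represented.

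The main obstacle, and the place requiring most care, is when two adjacent runs share a primitive base, i.e.\ bases that are cyclic shifts of each other. Here I have to distinguish a \emph{merge into a single longer run} outcome from a \emph{short cancellation, then stop} outcome, so that the list invariant --- each entry is a legal run or a $1$-represented word, with consecutive runs having different bases --- is preserved after the update. Primitivity of $u, v$ together with the second part of Lemma~\ref{lem:runs_of_different_words} make this local: either both pieces fit into a common $u$-run which I merge explicitly, or the overlap is bounded by $|u|$ and the three-stage matching above suffices.
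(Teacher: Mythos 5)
Your construction is essentially the paper's: maintain a stack whose entries are $1$-represented words or $u$/$v$-runs, compute each boundary cancellation in $\Ocomp(k)$ time via $\lcp$, period, and power-prefix primitives (using Lemma~\ref{lem:different_runs_overlap} to bound the overlap when two runs meet), and amortize by charging each boundary step to a push or a pop. One small overclaim: the invariant that consecutive runs on the stack have distinct primitive bases cannot in general be preserved. For $u = ab$, the two $u$-runs $aba$ and $aab$ sit adjacent in the normal form $abaaab$ without reducing or merging into a single $u$-run (their concatenation does not have period $|u|$). The paper's invariant is weaker --- it only requires each entry to be $1$-represented or a run --- and that suffices: your $\Ocomp(k)$ boundary subroutine and the amortization go through unchanged without the distinct-bases condition, so the lemma still follows.
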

\begin{proof}
We proceed as in Lemma~\ref{lem:solution_testing},
i.e.\ we read the words one by one,
after reading $s_1, \ldots s_{i-1}$
we keep $s'_1, \ldots, s'_{i'-1}$
such that $\nf (s_1 \cdots s_{i-1}) = s'_1 \cdots s'_{i'-1}$
and each $s'_j$ is either $1$-represented or a $u$ or $v$-run.
It is enough to show that processing $s_i$
is proportional to $k$ times the number of removed $s'_{j}$s
plus $1$.

When we read $s_i$ we compute $\nf(s'_{i'-1}s_i)$.
If $\nf(s'_{i'-1}s_i) = \varepsilon$ then we simply remove $s'_{i'-1}$ and finish.
If whole $s'_{i'-1}$ is reduced in $\nf(s'_{i'-1}s_i)$ then we remove it from the stack and continue
(with the new read symbol $\nf(s'_{i'-1}s_i)$ and next topmost symbol).
Otherwise, if $s_i$ was wholly reduced in $\nf(s'_{i'-1}s_i)$
then we put $\nf(s'_{i'-1}s_i)$ on the stack (instead of $s'_{i'-1}$).
Each step either puts or removes one symbol on the stack,
so it is enough to show that we can compute the reduction
in $s'_{i'-1}s_i$ in $\Ocomp(k)$ time;
note that this is exactly $\lcp(\ov {s'_{i'-1}}, s_i)$.

If $s'_{i'-1}, s_i$ are both not runs then this follows directly from Lemma~\ref{lem:solution_testing} (and the running time is $\Ocomp(1)$).
If one (say $s'_{i'-1}$, but the situation is symmetric) is a run
(say of period $p$) and the other is a $1$-represented word
then we compute the longest prefix of $s_i$ that has period $p$ 
and check whether the first $p$ letters of $s_i$ and $\ov {s'_{i'-1}}$ are the same.
As $s'_{i'-1}$ is $k$-represented and $s_i$ is $1$-represented,
this is done in $\Ocomp(k)$ time, by Lemma~\ref{lem:solution_testing}.
This gives the length of the reduction.
The case when $s_i$ is a run and $s'_{i'-1}$ not is done in a symmetric way.

If both $s'_{i'-1}, s_i$ are runs, then there is a distinction, whether they are runs of words of the same period or not,
they can have different periods only when one is a run of $u$, the other of $v$ and $|u| \neq |v|$.
Let $\ov {s'_{i'-1}}$ and $s_i$ have different periods $p' \neq p$, say $p' < p$,
the other case is symmetric.
Then their longest common prefix has length less than $p + p'< 2p$,
see Lemma~\ref{lem:different_runs_overlap}.
So it is enough to compute the longest common prefix
for $\ov{s'_{i'-1}}$ and $s_i[1\twodots 2p]$,
and the latter is $\Ocomp(k)$ represented.
This was considered in the previous case
and can be done in $\Ocomp(k)$ time.
If $\ov s'_{i'-1}, s_i$ have the same period $p$
then their longest common prefix is either shorter than $p$,
or of length $\min(|s'_{i'-1}|,|s_i|)$.
So it is enough to compute the $\lcp(s'_{i'-1}[1\twodots p], s_i[1\twodots p]])$
which can be done in $\Ocomp(k)$ time, as both words are $\Ocomp(k)$ represented.
\end{proof}

\begin{proof}[Proof of lemma~\ref{lem:testing_a_single_solution}]
We evaluate~\eqref{eq:main} under the substitution $x$ for $X$.
Observe that under such a substitution the obtained word is a concatenation of $\Ocomp(m)$
words as required by Lemma~\ref{lem:solution_testing_2}: 
i.e.\ each is either $1$-represented or a $u$ or $v$-run.
So its normal form can be computed in $\Ocomp(m)$ time, as required.
\end{proof}

\section{Superset of solutions}\label{sec:superset-of-solutions}

The previous characterization~\cite{onevariablefreegroup}
essentially showed that a solution is either a $\Ocomp(1)$-represented word
or of the form $u^iu'v''v^j$ for some $i,j \in \mathbb Z$
and $u' \pref u, v \suff v''$ for some well defined $u, v$.
As we intend to analyze those solutions using word combinatorics,
it~is~useful to assume that $u, v$ are cyclically reduced and
primitive.
Unfortunately, this~cannot be extracted directly 
from the previous characterization,
so we repeat the previous arguments taking some extra care.

\begin{lemma}[{cf.~\cite[Lemma~15]{gilman_myasnikov_one_var_FG}}]
\label{lem:solutions_superset}
For a given equation~\eqref{eq:main}, in $\Ocomp(n^2)$ time one can compute
a superset of solutions of the form
\begin{equation*}
S \cup \bigcup_{(\alpha u^Iv^J \beta) \in W} \bigcup_{i,j \in \mathbb Z}  \{\alpha u^{I(i)}v^{J(j)} \beta\}
\end{equation*}
where
$S$ is a set of 
$\Ocomp(1)$-represented words with $|S| = \Ocomp(n^2)$
and 
for each $0 \leq i \leq m-1$ there are numbers $\ell_{i}, \ell_{i}' \leq  |u_i|+|u_{i+1}|$ 
such that $W$ contains exactly
$\ell_{i} \cdot \ell_{i}'$ parametric words satisfying
\begin{itemize}
\item $\alpha$, $\beta$, are $\Ocomp(1)$-represented,
reduced and $|\alpha|,|\beta| \leq |u_i|+|u_{i+1}|$;
\item $u$, $v$ are $2$-represented, cyclically reduced,
primitive and $|u| = \ell_i$ and $|v| = \ell_{i}'$.
\end{itemize}
\end{lemma}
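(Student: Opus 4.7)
The plan is to apply Lemma~\ref{lem:pseudosolution} to the reduction of $x^{p_1} u_1 x^{p_2} u_2 \cdots u_{m-1} x^{p_m} u_m$ obtained from any solution $x$, under the natural grouping in which each $u_j$ plays the role of an $s$-factor and each $x^{p_j}$ plays the role of a $u$-factor of the lemma. This produces some index $i$ such that the occurrence of $x^{p_i}$ is a pseudo-solution in the window $x^{p_{i-1}} u_{i-1} \underline{x^{p_i}} u_i x^{p_{i+1}}$, with indices taken cyclically so that when $i=1$ the left neighbour is obtained from $u_m$, and symmetrically for $i=m$. Reading the partial pairing restricted to this window recovers, by the classical Nielsen--Lyndon--Appel analysis, that $x$ has the shape $\alpha u^I v^J \beta$: the prefix $\alpha$ is determined by the split inside $u_{i-1}$ at which the left-to-right pairing across $x$ ceases to cross out of $x$, the suffix $\beta$ by the symmetric split inside $u_i$, while $u$ and $v$ are read off as cyclic rotations of the residual portions of $u_{i-1}$ and $u_i$.

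For each position $0 \le i \le m-1$ the algorithm enumerates both split points. The split inside the left boundary has at most $|u_i|+|u_{i+1}|$ positions, giving $\ell_i$ candidates for the pair $(\alpha, u)$, since each split determines $\alpha$ uniquely together with a cyclic rotation of a subword that becomes $u$. Symmetrically there are $\ell_i'$ candidates for $(v, \beta)$ on the right, for a total of $\ell_i \cdot \ell_i'$ parametric candidates. Before inserting a candidate we normalize: if the raw $u$ equals $r^k$ for a primitive $r$, we replace $u$ by $r$ and absorb $k$ into the exponent $I$; if $u$ is not cyclically reduced, we write $u \eqg c u^\circ c^{-1}$ with $u^\circ$ cyclically reduced and push $c$ into $\alpha$ (symmetrically for $v$ and $\beta$). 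Both rewrites preserve the bounds $|\alpha|, |\beta| \le |u_i|+|u_{i+1}|$ because the conjugator $c$ is shorter than $u$, and $\alpha, \beta$ remain $\Ocomp(1)$-represented while $u, v$ become $2$-represented as cyclic rotations of a subword of $U \ov U$. Degenerate cases, in which the window absorbs essentially all of $x$ or the parameters collapse to constants, contribute individual $\Ocomp(1)$-represented words to $S$.

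Summing over $i$ gives $|S| + |W| = \Ocomp\bigl(\sum_{i=0}^{m-1}(|u_i|+|u_{i+1}|)^2\bigr) = \Ocomp(n^2)$, and using the data structure of Section~\ref{sec:data-structure} each candidate is constructed in $\Ocomp(1)$ time, yielding the claimed $\Ocomp(n^2)$ total running time. The principal obstacle is the normalization step: the previous characterization in~\cite{onevariablefreegroup} does not explicitly guarantee that $u$ and $v$ are primitive and cyclically reduced, and extracting their primitive cyclically reduced cores while keeping $\alpha, \beta$ within the length budget $|u_i|+|u_{i+1}|$ and $\Ocomp(1)$-represented requires redoing the pseudo-solution analysis with careful bookkeeping, which is exactly the ``extra care'' promised at the opening of this section.
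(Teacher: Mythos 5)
Your high-level plan --- locate a pseudosolution window via Lemma~\ref{lem:pseudosolution}, analyze the partial pairing in that window, normalize to primitive cyclically reduced cores, and count --- is the same as the paper's. The gap is in the claim that ``$u$ and $v$ are read off as cyclic rotations of the residual portions of $u_{i-1}$ and $u_i$'' respectively, with $\alpha$ and $\beta$ determined by two independent split points inside those constants. That description is correct only for the alternating-sign window $\ov x u_h x u_{h+1} \ov x$ (Lemma~\ref{lem:x-1uxvx-1form}), where $u_{h} = \alpha u \ov\alpha$, $u_{h+1} = \beta v \ov\beta$ and $x \eqg \alpha u^i u' v'' v^j \ov\beta$. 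For the window $\ov x u_h x u_{h+1} x$ (Lemma~\ref{lem:x-1uxvxform}) the pairing analysis shows that \emph{both} periodic blocks of $x$ use rotations of the single cyclic core $u$ of $u_h$ alone, so $u$ and $v$ in the resulting parametric word end up conjugate to one another; and for the same-sign window $x u_h x u_{h+1} x$ (Lemma~\ref{lem:xux-1vxform}) the periods $r_u, r_v$ are cyclic cores of $\ov{u_h}\, u_{h+1}$ and $u_h\, \ov{u_{h+1}}$, i.e.\ mixtures of the two constants, not residuals of either one separately. This sign case analysis is not cosmetic: the mixed case is the reason the bound on $\ell_i$ is $|u_i|+|u_{i+1}|$ rather than $|u_i|$, and the conjugate case is the reason Section~\ref{sec:restricting-the-superset-of-solutions} has to treat $u \shift v$ separately. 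Your appeal to the ``classical Nielsen--Lyndon--Appel analysis'' for the shape, followed by normalization, would have to be replaced by the three explicit pairing arguments of Lemmas~\ref{lem:x-1uxvx-1form}--\ref{lem:xux-1vxform}, each of which must itself deliver the cyclically reduced cores and the length budget you later rely on; you correctly foresee that this is where the extra work lies, but the sketch as written collapses all three sign patterns into the one that is easiest.
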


The main principle of the proof of Lemma~\ref{lem:solutions_superset}
is that when $x$ is a solution of an equation~\eqref{eq:main},
then after the substitution the obtained word is reducible
and thus by Lemma~\ref{lem:pseudosolution},
one of substituted $x$ or $\ov x$ is a pseudosolution in
$x^{p_h} u_h x^{p_{h+1}} u_{h+1} x^{p_{h+2}}$, where $p_h, p_{h+1}, p_{h+2} \in \{-1,1\}$.
Thus we analyze each possible triple $p_h, p_{h+1}, p_{h+2}$
and show the possible form of the pseudosolution in corresponding case.
Note that by symmetry we can consider $p_{h+1} = 1$.

We begin with some preliminary Lemmata.

\begin{lemma}
\label{lem:x-1uxform}
Let $x_u \pref x$ be a pseudo-solution of $\ov x \alpha u \ov \alpha \underline{x_u}$,
where $x, \alpha u \ov \alpha $ are reduced
and $u$ is cyclically reduced.
Then 
\begin{itemize}
\item $x_u \pref \alpha \ov u \,\ov \alpha$ or
\item $x_u \eqg \alpha u^i u'$ where $u' \pref u$ and $i \in \mathbb{Z}$
\end{itemize}
\end{lemma}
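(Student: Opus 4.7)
My plan is to analyse the pairing $f$ that makes $x_u$ a pseudo-solution. Since $x$ is reduced, so is its prefix $x_u$: no letter of $x_u$ is paired with another letter of $x_u$, hence every letter of $x_u$ is paired with a letter of the prefix $\ov x \alpha u \ov \alpha$. By well-nestedness of $f$, if $p$ denotes the leftmost position of $\ov x \alpha u \ov \alpha$ paired with some letter of $x_u$, the suffix $T := (\ov x \alpha u \ov \alpha)[p\twodots]$ satisfies $T \eqg \ov{x_u}$ in the free group (positions inside $T$ not paired with $x_u$ must be self-paired and so contribute $\varepsilon$).

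I then split into cases according to where $T$ starts in the factorisation $\ov x \cdot \alpha \cdot u \cdot \ov \alpha$. If $T$ lies inside $\alpha u \ov \alpha$ then $T$ is a suffix of the reduced word $\alpha u \ov \alpha$, so $T$ itself is reduced and $T = \ov{x_u}$ as strings. This splits further into: $T \pref \ov \alpha$, giving $x_u \pref \alpha \pref \alpha \ov u \ov \alpha$; $T = u_2 \ov \alpha$ with $u = u_1 u_2$ and $u_2 \neq \varepsilon$, giving $x_u = \alpha \ov{u_2} \eqg \alpha u^{-1} u_1$ with $u_1 \pref u$; or $T = \alpha_2 u \ov \alpha$ with $\alpha = \alpha_1 \alpha_2$, giving $x_u = \alpha \ov u \ov{\alpha_2} \pref \alpha \ov u \ov \alpha$. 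Each of the three sub-cases yields one of the stated conclusions.

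The main case is $|T| > |\alpha u \ov \alpha|$, so that $T = \ov{x_{\text{tail}}}\,\alpha u \ov \alpha$ for a nonempty prefix $x_{\text{tail}}$ of $x$. Here $T$ need not be reduced, but $T \eqg \ov{x_u}$ still yields $x_u \eqg \alpha \ov u \ov \alpha\, x_{\text{tail}}$ in the free group. Let $\beta$ be the longest common prefix of $\alpha$ and $x_{\text{tail}}$, and write $\alpha = \beta \alpha'$, $x_{\text{tail}} = \beta x_{\text{tail}}'$. Reducedness of $\alpha u \ov \alpha$ forces $\alpha \ov u \ov{\alpha'} x_{\text{tail}}'$ to be reduced, so it equals $x_u$ as a string. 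If both $\alpha'$ and $x_{\text{tail}}'$ are nonempty, comparing the two prefix expressions $x_u \pref x$ and $x_{\text{tail}} \pref x$ at position $|\beta|+1$ would force the first letters of $\alpha'$ and $x_{\text{tail}}'$ to coincide, contradicting the choice of $\beta$. Hence either $x_{\text{tail}}' = \varepsilon$, giving $x_u = \alpha \ov u \ov{\alpha'} \pref \alpha \ov u \ov \alpha$; or $\alpha' = \varepsilon$ and $\alpha \pref x_{\text{tail}}$. In the latter case $x_u \eqg \alpha \ov u\, x_{\text{tail}}'$ with $x_{\text{tail}}' := x_{\text{tail}}[|\alpha|+1\twodots|x_{\text{tail}}|]$; matching the prefix expressions $\alpha \ov u\, x_{\text{tail}}'$ and $\alpha\, x_{\text{tail}}'$ of $x$ letter by letter then shows that $x_{\text{tail}}'$ has period $|u|$ and begins with $\ov u$, so $x_{\text{tail}}' = \ov u^k \ov{u_2}$ for some $k \geq 0$ and suffix $u_2$ of $u$ (cyclic reducedness of $u$ guarantees $\ov u^k \ov{u_2}$ stays reduced). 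Substituting back and using $\ov{u_2} \eqg \ov u\, u_1$ with $u = u_1 u_2$ gives $x_u \eqg \alpha u^{-(k+2)} u_1$ with $u_1 \pref u$, as required.

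The chief obstacle is the case $|T| > |\alpha u \ov \alpha|$; the subtle points are (i) the pigeonhole argument that forces $\alpha$ to be a prefix of $x_{\text{tail}}$ via the letter-by-letter comparison, and (ii) invoking cyclic reducedness of $u$ to absorb one extra $\ov u$ into $\ov{u_2}$, so that the remaining tail $u_1$ is a prefix of $u$ rather than of $\ov u$, as the second conclusion demands.
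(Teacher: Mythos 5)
Your framework (the suffix $T$ with $T \eqg \ov{x_u}$, case split on where $T$ starts, prefix comparison) is sound, and case 1 together with the $\alpha' \neq \varepsilon$ branch of case 2 is correct. But the claim in case 2 that ``reducedness of $\alpha u \ov\alpha$ forces $\alpha \ov u \,\ov{\alpha'}\, x_{\text{tail}}'$ to be reduced'' is false when $\alpha' = \varepsilon$: you have justified the junctions $\alpha\text{--}\ov u$, $\ov u\text{--}\ov{\alpha'}$, and $\ov{\alpha'}\text{--}x_{\text{tail}}'$, but when $\alpha' = \varepsilon$ the actual junction is $\ov u\text{--}x_{\text{tail}}'$, and nothing prevents cancellation there. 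Concretely, take $\alpha = \varepsilon$, $u = ab$, $x = abab$, $x_u = ab$. Then $\ov x u x_u = \ov b\,\ov a\,\ov b\,\ov a\cdot ab\cdot ab$ admits the full pairing $(1,8),(2,7),(3,6),(4,5)$, so $x_u$ is a pseudo-solution; here $p=1$, $T = \ov b\,\ov a\,\ov b\,\ov a\, ab$, $x_{\text{tail}} = abab$, $\beta = \alpha' = \varepsilon$, $x_{\text{tail}}' = abab$, and $\alpha \ov u\, x_{\text{tail}}' = \ov b\,\ov a\, abab$ is not reduced. Its normal form is $ab = x_u$, not the length-$6$ word your formula predicts.

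This is not a stray edge case but an entire missing branch: your argument only ever outputs $x_u \pref \alpha \ov u\,\ov\alpha$ or $x_u \eqg \alpha u^i u'$ with $i \leq -1$, whereas the example above has $x_u = \alpha u^{1}$, a \emph{positive} power. When $\ov u$ cancels into $x_{\text{tail}}'$, the word $x_u$ becomes strictly \emph{shorter} than $x_{\text{tail}}$, so the prefix comparison in $x$ reverses ($x_u \pref x_{\text{tail}}$ instead of $x_{\text{tail}} \pref x_u$), and this direction is where the non-negative exponents come from. The paper handles this by an explicit second sub-case: after re-choosing the pairing so that $\alpha$ cancels into $\ov x$ and $\ov\alpha$ into $x_u$, it uses cyclic reducedness to show that the letters of $u$ cannot be paired with both $\ov x$ and $x_u$, and the branch where $u$ pairs with $\ov x$ yields $x_u = \alpha u^k u'$ with $k \geq 0$. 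You need an analogous second branch (or you must push the normal-form computation of $\alpha \ov u\,\ov\alpha\, x_{\text{tail}}$ past $\ov\alpha$ into $\ov u$, and possibly into $\alpha$, before invoking reducedness and comparing prefixes).
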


\begin{proof}
\begin{figure}
\centering
\includegraphics[scale=1.4]{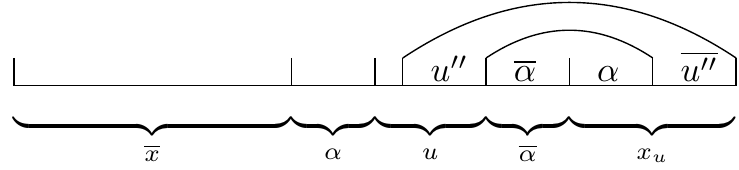}
\caption{Pseudo-solution for the equation $\ov x \alpha u \ov \alpha \underline{x_u}$. Case when whole $x_u$ is reduced  within $\alpha u \ov \alpha $.}
\label{fig:x1xx12}
\end{figure}

If $f(x_u) \subseteq \alpha u \ov \alpha$, see Fig.~\ref{fig:x1xx12},
then $x_u$ is an inverse of some suffix of  $\alpha u \ov \alpha$,
so 
$x_u \pref \alpha \ov u \, \ov \alpha$, as claimed.

In the remaining case observe that we may assume that $\alpha \pref x_u$:
consider the reduction pairing $f$, observe that either $\ov x \alpha$ reduces the whole $\alpha$
or $\ov \alpha x_u$ the whole $\ov \alpha$: if none of this happens then $x_u$ reduces within $\ov \alpha x_u$,
which was considered.
But then in either case $\alpha \pref x$ and so $\alpha \pref x_u$
or $x_u \pref \alpha$, the latter was already considered.

Let $x' \pref x$ be the minimal prefix of $x$ such that $\ov{x'} \alpha u \ov \alpha x_u \eqg \varepsilon$.
If $x' \pref \alpha$ then again we end in the case such that $f(x_u) \subseteq \alpha u \ov \alpha$.
So $\alpha \pref x'$.
As $\ov{x'} \alpha u \ov \alpha x_u \eqg \varepsilon$ we can modify the pairing by first pairing
the suffix $\ov \alpha$ of $x'$ with $\alpha$ and the $\ov \alpha$ with the prefix $\alpha$
of $x_u$, see Fig.~\ref{fig:x1xx13},
and then extend to the rest of $\ov{x'} \alpha u \ov \alpha x_u$.
Note that $x$ is still a pseudosolution for such a modified pairing.

\begin{figure}
\centering
\includegraphics[scale=1.4]{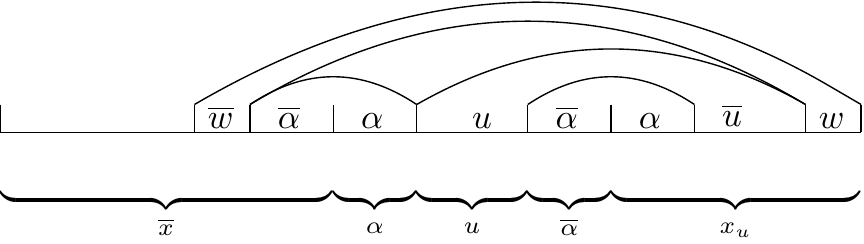}
\caption{Pseudo-solution for the equation $\ov x \alpha u \ov \alpha \underline{x_u}$. Case when $x_u$ is not reduced within $\alpha u \ov \alpha$ and $u$ is paired with $x_u$.}
\label{fig:x1xx13}
\end{figure}

Observe now that it cannot be that letters in $u$ are paired with both $\ov x$ and $x_u$,
as this would imply that the first and last letter of $u$ are paired
with a corresponding letter of $\ov x$ and $x$, respectively.
But then $u$ would not be cyclically reduced.

Consider first the case when (some) letters of $u$ are paired with $x_u$.
Then whole $u$ is paired with $x_u$ and the rest of $x_u$ is paired with $\ov x$
(as otherwise $f(x_u) \subseteq u \ov \alpha$, which was already considered),
see Fig.~\ref{fig:x1xx13}.
Thus $x_u = \alpha \ov u w$ for some $w$ and $\ov x \suff \ov w \, \ov \alpha$, which implies $\alpha w \pref x$.
Comparing $x_u = \alpha \ov u w \pref x$ with $\alpha w \pref x$ we get that $w \pref \ov u$
or $\ov u$ is period of $w$ and so
$x_u = \alpha \ov u ^ k \ov {u''}$, where $\ov {u''} \pref \ov u$ and $k \geq 0$;
this can be alternatively represented as
$x_u \eqg \alpha u ^ {-k-1} u'$, where $u = u'u''$.

\begin{figure}
\centering
\includegraphics[scale=1.4]{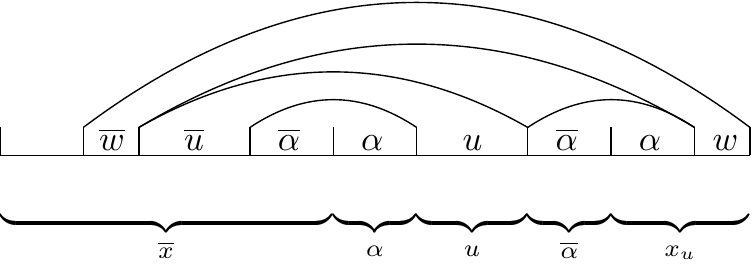}
\caption{Pseudo-solution for the equation $\ov x \alpha u \ov \alpha \underline{x_u}$.
Case when $x_u$ is not reduced within $\alpha u \ov \alpha$ and $u$ is not paired with $x_u$.}
\label{fig:x1xx14}
\end{figure}

The analysis for the case when some letter of $u$ is paired with $\ov x$
is symmetric:
let $x_u = \alpha w$.
As some letter of $x_u$ is paired with $\ov x$ then all letters in $u$ are paired
and so all of them are paired with $\ov x$, see Fig.~\ref{fig:x1xx14},
then $\ov x \suff \ov w \, \ov u \, \ov \alpha $,
which implies $\alpha u w \pref x$, and $x_u = \alpha w \pref x$,
thus $w \pref u$ or $u$ is a period of $w$ and so $x_u = \alpha u^k u'$
for some $u' \pref u$ and $k \geq 0$.
\end{proof}

\begin{lemma}
\label{lem:xvxform}
Let $x_v$ be a pseudo-solution (for some partial pairing $f$) of
$\underline{x_v} v x_ux_v$ but not in $\underline{x_v} v x_u$
(for the restriction of $f$).
Then $x_u x_ v = \ov{v''} \, \ov{v'}$ for some prefix $v' \pref v \suff v''$.
\end{lemma}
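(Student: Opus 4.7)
The plan is to analyze the pairing $f$ on the concatenation $ABCD=x_v\,v\,x_u\,x_v$ by peeling off an outermost chain of $A$-to-$D$ pairs, and then translate the resulting pairing structure into the algebraic identity on $x_u x_v$. Write $A=x_v$ (the underlined copy), $B=v$, $C=x_u$, $D=x_v$. First I will establish that no two positions inside the same block are paired by $f$: such an intra-block pair $(i,i')$ with $i<i'$ would, by nestedness, induce a full sub-pairing of $[i+1,i'-1]$, forcing that substring to reduce to $\varepsilon$ and thus exposing an adjacent inverse pair in a reduced word ($x_v$, $v$, or $x_u$)---a contradiction. Because $A$ is a pseudo-solution of $ABCD$ but not of $ABC$, some position of $A$ is paired with a position of $D$, and a leftmost-position argument combined with the no-intra-block fact will show that this leftmost position is $A[1]$. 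A residual argument---any tail of $D$ past $f(A[1])$ would remain unpaired and must still reduce to $\varepsilon$ in the free group, which is impossible for a reduced non-empty word---pins the partner to $D[|x_v|]$, so $(A[1],D[|x_v|])$ is the outer pair and $x_v[1]=\ov{x_v[|x_v|]}$.

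Next I will iterate the peeling: inside the outer pair the same argument applies to the interior word $A[2\twodots|x_v|]\,B\,C\,D[1\twodots|x_v|-1]$, and so on, yielding the maximal $k\ge 1$ such that $A[i]\leftrightarrow D[|x_v|-i+1]$ for $i=1,\dots,k$. Setting $p:=x_v[1\twodots k]$ and $y:=x_v[k+1\twodots|x_v|-k]$ this gives the decomposition $x_v=p\,y\,\ov p$. The surviving interior is the word $y\ov p\cdot v\cdot x_u\cdot p\,y$, carrying a full valid sub-pairing but with no further $A$-$D$ pair. Combined with the no-intra-block observation, every letter of $y\ov p$ and every letter of $p\,y$ pairs with some letter of $v x_u$, and non-crossing forces $y\ov p$ to match a prefix of $v x_u$ and $p\,y$ to match a suffix, both of length $\ell:=|x_v|-k$, while the middle of $v x_u$ pairs within itself and reduces to $\varepsilon$.

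The hard part will be turning this pairing picture into the stated algebraic identity. In the representative case $\ell\le\min(|v|,|x_u|)$, the prefix-pairing yields the word equality $v[1\twodots\ell]=p\ov y$ and the middle-reduction yields $v[\ell+1\twodots|v|]=\ov{x_u[1\twodots|x_u|-\ell]}$ (both sides being reduced). I would then plug these into $x_v\,v=p\,y\,\ov p\cdot v[1\twodots\ell]\cdot v[\ell+1\twodots|v|]$, and cancel $\ov p\,p$ and then $y\,\ov y$ in the free group to get $x_v\, v\eqg p\cdot v[\ell+1\twodots|v|]$. Choosing $v':=p=v[1\twodots k]\pref v$ and $v'':=v[\ell+1\twodots|v|]\suff v$ gives $x_v\, v\eqg v' v''$, and since $x_u x_v\eqg(x_v v)^{-1}$ by $x_v v x_u x_v\eqg\varepsilon$, we obtain $x_u x_v\eqg(v' v'')^{-1}=\ov{v''}\ov{v'}$, as claimed. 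The boundary cases $\ell>|v|$ or $\ell>|x_u|$---where $y\ov p$ overflows into $x_u$, or $p\,y$ overflows into $v$---will be treated by the same bookkeeping after swapping which of $v,x_u$ absorbs the overflow.
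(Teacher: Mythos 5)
There is a genuine gap, and it invalidates the proposed decomposition. Your argument that $f(A[1])$ must be the last position of $D$ rests on the claim that the tail of $D$ past $f(A[1])$ ``would remain unpaired and must still reduce to $\varepsilon$.'' But $f$ is only a \emph{partial} pairing of $x_v v x_u x_v$: nothing forces positions beyond $f(A[1])$ to be paired at all, and there is no hypothesis that the full word $x_v v x_u x_v$ reduces to $\varepsilon$. In the applications (Lemmata~\ref{lem:x-1uxvxform} and~\ref{lem:xux-1vxform}) this lemma is invoked on a restriction of a pairing of a larger word, so an unpaired residue on the right is the generic situation. Consequently $x_v$ need not have the shape $p\,y\,\ov p$; the correct picture is $x_v = p\,q\,\ov p\,r$ with a possibly nonempty unpaired tail $r$. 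A concrete instance: $x_v = ca\ov c b$, $v = \ov b c\ov a\,\ov a\,\ov c e$, $x_u = \ov e$, over distinct non-inverse letters $a,b,c,e$. The pairing $1{-}14$, $2{-}7$, $3{-}6$, $4{-}5$, $8{-}13$, $9{-}12$, $10{-}11$ (position $15$ unpaired) makes the first $x_v$ a pseudo-solution that escapes $x_v v x_u$; here $f(A[1]) = D[3]$, not $D[4]$, and $x_v = ca\ov c b$ does not end in $\ov c$, so $x_v = p\,y\,\ov p$ fails. Your closing step ``$x_u x_v \eqg (x_v v)^{-1}$ since $x_v v x_u x_v \eqg \varepsilon$'' fails for the same reason: in this example $\nf(x_v v x_u x_v) = b \neq \varepsilon$. (The lemma's conclusion still holds: $v' = \ov b c\ov a$, $v'' = \ov c e$ give $\ov{v''}\,\ov{v'} = \ov e\,ca\ov c b = x_u x_v$.)

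The subsequent bookkeeping also inherits the problem: once $r \neq \varepsilon$ the two chunks of $v x_u$ matched by the surviving parts of the first and second $x_v$ have \emph{different} lengths ($|x_v| - k$ versus $m_1 - k$), so ``both of length $\ell$'' is false, and the identity $v[1\twodots\ell] = p\ov y$ no longer lines up. The paper avoids all of this by decomposing $v = \delta\,\beta\,\ov{x_u}$ (the prefix $\delta$ cancels into the first $x_v$, the suffix $\ov{x_u}$ cancels $x_u$, and the middle $\beta$ cancels into the second $x_v$), writing $x_v = \gamma\,\ov\delta$ with $\gamma$ the part meeting the second $x_v$, and invoking Lemma~\ref{lem:inverse_no_overlap} to get $\gamma \pref \ov\beta$; the unpaired tail of the final $x_v$ is then harmless because the conclusion is read off from $x_u x_v = x_u\gamma\ov\delta$ directly, with $v' = \delta$ a prefix and $v'' = \ov\gamma\,\ov{x_u}$ a suffix of $v$. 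Your no-intra-block observation and the peeling of nested $A$--$D$ pairs are sound and could be made to work, but you would need to carry the tail $r$ through the whole calculation instead of assuming it vanishes, and you would need to derive the word equality (not merely $\eqg$) claimed in the statement.
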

\begin{proof}
\begin{figure}
\centering
\includegraphics[scale=1.4]{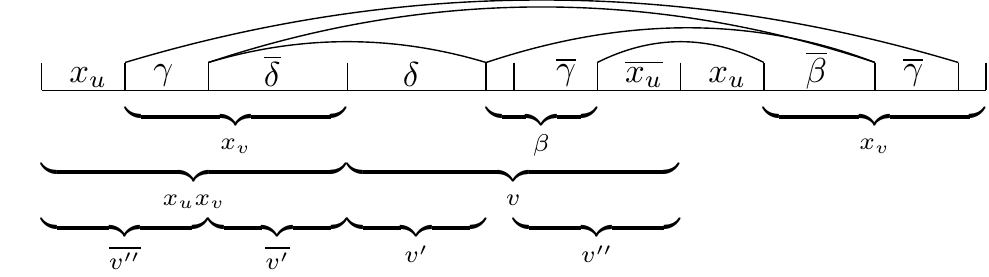}
\caption{Pseudosolution for the equation $\underline{x_v} v x$.
The case in which $x_v$ reduces within $x_v v x_u x_v$ but not inside $x_v v x_u$. 
We take into account that not the whole $v$ is reduced in $\nf(x_v v x_u)$.}
\label{fig:x1xx1}
\end{figure}
First, the whole $x_u$ is reduced within $\nf(x_v v x_u)$:
if not then there would be no further reduction in $\nf(x_v v x_u)x_v$,
as $x_u x_v$ is reduced, and so whole $x_v$ reduces within $x_v v x_u$,
which is forbidden by Lemma assumption.
If also whole $v$ is reduced within $x_v v x_u$ then we are left with a prefix $x_v'$ of $x_v$
that should reduce with $x_v$, i.e.\ $x_v'$ should reduce with $x_v'$,
which cannot happen.
So not the whole $v$ is reduced in $\nf(x_v v x_u)$,
see Fig.~\ref{fig:x1xx1}.
So $v = \delta \beta \ov {x_u}$, where $\delta$ is the maximal prefix that reduces with the preceding $x_v$ and $\ov{x_u}$ reduces with the following $x_u$.
Then $\beta$ reduces with the following $x_v$.
Also, in the first $x_v$ the remaining part
(i.e.\ after reduction of $\delta$), call it $\gamma$,
reduces with the remaining part of the second $x_v$.
So $x_v  = \gamma \ov \delta$ and $\ov \beta \ov \gamma \pref x_v$
and from Lemma~\ref{lem:inverse_no_overlap} we get that $\gamma \pref \ov \beta$ (or $\gamma = \varepsilon$, but this is covered by $\gamma \pref \ov \beta$).
It is left to observe that
\begin{equation*}
x_ux_v = x_u \gamma \ov \delta
\end{equation*}
and $\ov \gamma \, \ov {x_u}$ is a suffix of $v$
(as $\gamma \pref \ov \beta$ implies $\beta \suff \ov \gamma $)
and $\delta$ is a prefix of $v$,
so $x_ux_v = \ov{v''} \, \ov{v'}$ for some prefix $v'$ ($=\gamma$) of $v$ and suffix $v''$ ($=\ov \delta \, \ov{x_u}$), as claimed.
\end{proof}

\begin{lemma}
\label{lem:x-1uxvx-1form}
Let $x$ be a pseudo-solution of $\ov x\alpha u \ov \alpha \underline{x}\beta v \ov \beta \ov x$,
where $x, \alpha u \ov \alpha ,\beta v \ov \beta$ are reduced
and $u, v$ are cyclically reduced.
Then $x = x_u x_v$, where
\begin{itemize}
\item $x_u \pref \alpha \ov u \,\ov \alpha$ or
\item $x_u \eqg \alpha u^i u'$ where $u' \pref u$ and $i \in \mathbb{Z}$
\end{itemize}
similarly
\begin{itemize}
\item $ \beta \ov v \,  \ov \beta \suff x_v$ or
\item $x_v \eqg v'' v^j \ov \beta $ where $v \suff v''$ and $j \in \mathbb{Z}$.
\end{itemize}
\end{lemma}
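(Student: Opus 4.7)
The plan is to split $x = x_u x_v$ using the well-nestedness of the global reduction pairing, then apply Lemma~\ref{lem:x-1uxform} to the left half directly and to the right half via the involution.

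Let $f$ be the pairing of the full word $\ov x \alpha u \ov \alpha x \beta v \ov \beta \ov x$; by the pseudo-solution hypothesis, $f$ is defined on every position of $x$. I would define $s^* \in \{0, 1, \ldots, |x|\}$ to be the largest index such that every position of $x_u := x[1\twodots s^*]$ is paired by $f$ with a position inside $\ov x \alpha u \ov \alpha x_u$; such $s^*$ exists since $s^* = 0$ trivially works. Set $x_v := x[s^*+1 \twodots |x|]$. Using well-nestedness, I would verify that no position of $x_v$ can be paired to the left of $x_v$: pairing with a position of $x_u$ is impossible by the definition of $s^*$ (such a position would pair back into $[1, s^*]$), and pairing with a position $q$ in $\ov x \alpha u \ov \alpha$ would force the whole interval between $q$ and $p$ to be closed under $f$, which would allow $s^*$ to be extended to $p$, contradicting maximality. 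The same well-nestedness argument shows that restricting $f$ to the left segment $\ov x \alpha u \ov \alpha x_u$ (respectively, the right segment $x_v \beta v \ov \beta \ov x$) yields a valid partial pairing, so $x_u$ is a pseudo-solution of $\ov x \alpha u \ov \alpha \underline{x_u}$ and $x_v$ is a pseudo-solution of $\underline{x_v} \beta v \ov \beta \ov x$.

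Now Lemma~\ref{lem:x-1uxform} applied to the left half gives the two stated cases for $x_u$ immediately. For $x_v$, I apply the involution to $\underline{x_v} \beta v \ov \beta \ov x$, obtaining $x \beta \ov v \ov \beta \underline{\ov{x_v}}$, which matches the shape $\ov X \alpha_0 u_0 \ov{\alpha_0} \underline{X_u}$ of Lemma~\ref{lem:x-1uxform} with $X := \ov x$, $\alpha_0 := \beta$, $u_0 := \ov v$, and $X_u := \ov{x_v}$; the hypotheses transfer because $\ov v$ is cyclically reduced, $\beta \ov v \ov \beta$ is reduced, and $\ov{x_v} \pref \ov x$. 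The two resulting cases $X_u \pref \alpha_0 \ov{u_0} \ov{\alpha_0}$ and $X_u \eqg \alpha_0 u_0^i u''$ (with $u'' \pref u_0$) translate, after inverting back and setting $j := -i$ and $v'' := \ov{u''}$, into $\beta \ov v \ov \beta \suff x_v$ and $x_v \eqg v'' v^j \ov \beta$ with $v'' \suff v$, matching the claim.

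The main obstacle is the first step: producing the split point $s^*$ and checking that the two restricted pairings are indeed valid partial pairings with $x_u$ and $x_v$ fully paired on their respective sides. This is the only place where well-nestedness is used in a nontrivial way; once it is set up, the rest is a direct invocation of Lemma~\ref{lem:x-1uxform} on the left together with a symmetric, involution-based invocation on the right.
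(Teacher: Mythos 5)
Your proposal is correct and takes essentially the same route as the paper: fix the pairing, use the fact that $x$ is reduced (so no position of $x$ pairs inside $x$) to split $x = x_u x_v$ into the prefix that pairs to the left and the suffix that pairs to the right, and then invoke Lemma~\ref{lem:x-1uxform} on the left half directly and on the right half through the involution. Two cosmetic slips in the final translation: inverting $\beta\,\ov{v}^i\,u''$ yields $\ov{u''}\,v^i\,\ov{\beta}$, so $j$ should be $i$ rather than $-i$, and in the paper's convention the suffix relation is written $v \suff v''$; neither affects the substance since $i$ ranges over all of $\mathbb{Z}$.
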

\begin{proof}

Fix a (partial) reduction pairing $f$ such that whole middle $x$ is paired.
Define $x_u, x_v$ such that $x = x_ux_v$, $f(x_u) \subseteq x^{-1}\alpha u \ov \alpha$
and $f(x_v) \subseteq \beta v \ov \beta x^{-1}$,
i.e.\ as the prefix of $x$ that is reduced to the left and suffix that is reduced to the right.
Note that this is correct, as $x$ is reduced and so no pairing is done inside it.
Then Lemma~\ref{lem:x-1uxform}
applied to $x \alpha u \ov \alpha x_u$ 
and $x_v \beta v \ov \beta x$ yields the claim
(note that $\ov {u'} u^k \eqg u'' u^{k-1}$).
\end{proof}

\begin{lemma}
\label{lem:x-1uxvxform}
Let $x$ be a pseudo-solution of $\ov x\alpha u \ov \alpha \underline{x} v x$,
where $x, \alpha u \ov \alpha, v$ are reduced and $u$ is cyclically reduced.
Then either
\begin{itemize}
\item $x = \ov{v''} \, \ov{v'}$, where $v' \pref v \suff v''$ or
\item $x = x_u x_v$ where
\begin{itemize}
\item $x_u \pref \alpha \ov u \, \ov \alpha$ or
\item $x_u \eqg \alpha u^i u'$ where $u' \pref u$ and $i \in \mathbb{Z}$
\end{itemize}
and
\begin{itemize}
\item $\nf (\alpha u \ov \alpha \, \ov v) \suff x_v$ or
\item $x_v \eqg  u''  u^j \ov \alpha \, \ov v $,
where $u \suff u''$ and $j \in \mathbb{Z}$.
\end{itemize}
\end{itemize}\end{lemma}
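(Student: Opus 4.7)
I would follow the same template as Lemmas~\ref{lem:x-1uxform} and~\ref{lem:x-1uxvx-1form}. Fix a partial reduction pairing $f$ witnessing that the middle occurrence of $x$ is pseudo-solved, and split $x = x_u x_v$ by placing into $x_u$ the initial positions of the middle $\underline{x}$ whose $f$-partner lies in the left context $\ov x \alpha u \ov\alpha$, and into $x_v$ the trailing positions whose $f$-partner lies in the right context $v x$; well-nestedness guarantees that this gives a prefix/suffix split. For $x_u$, the restriction of $f$ to $\ov x \alpha u \ov\alpha \underline{x_u}$ is exactly the setting of Lemma~\ref{lem:x-1uxform}, which delivers the two stated alternatives for $x_u$.

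For $x_v$ I would distinguish two cases according to whether any pairing of a letter of $x_v$ reaches beyond $v$ into the rightmost occurrence of $x$. If some such pairing exists, then since the rightmost $x$ equals $x_u x_v$ the local structure around $x_v$ is exactly $\underline{x_v} v x_u x_v$ and $x_v$ is pseudo-solved there but not in $\underline{x_v} v x_u$, so Lemma~\ref{lem:xvxform} applies and yields $x_u x_v = \ov{v''} \, \ov{v'}$, i.e.\ $x = \ov{v''} \, \ov{v'}$: the first conclusion. Otherwise every pairing from $x_v$ stays inside $v$; since both $x_v$ and $v$ are reduced this forces $x_v = \ov{v_1}$ for some $v_1 \pref v$, and in the generic subcase (no heavy cancellation at the $\ov\alpha$--$\ov v$ boundary) $\ov{v_1}$ survives as a suffix of $\nf(\alpha u \ov\alpha \ov v)$, giving the first $x_v$ alternative of the second conclusion.

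The main obstacle is the residual subcase of Case~2, where the reduction in $\alpha u \ov\alpha \ov v$ eats past the $\ov{v_1}$ piece and the naive suffix argument fails. To handle it, I would re-examine the global well-nested pairing: the rightmost $x$ must now interact with pieces of the left context (through $\ov x$ or through $\alpha u \ov\alpha$), and repeating the Lemma~\ref{lem:x-1uxform}-style argument on how $u$ pairs---either with $x_u$ or with $\ov x$---while exploiting the cyclic reducedness of $u$ produces the parametric form $x_v \eqg u'' u^j \ov\alpha \ov v$ with $u \suff u''$ and $j \in \mathbb{Z}$. The delicate step is propagating the $\alpha, u$ information from the left into the description of $x_v$: in Lemma~\ref{lem:x-1uxvx-1form} the right context already carried its own $\ov x$, whereas here the analogous channel is the middle-$x$--rightmost-$x$--$\ov x$ triangle, and following this channel faithfully while respecting well-nestedness is the crux of the argument.
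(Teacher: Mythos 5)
Your overall plan mirrors the paper: decompose $x = x_u x_v$ according to which side of the middle $x$ the pairing goes, apply Lemma~\ref{lem:x-1uxform} to the $x_u$-side, and invoke Lemma~\ref{lem:xvxform} on the $x_v$-side. The treatment of $x_u$ is correct. The gap is in the case split for $x_v$: you distinguish ``some pairing of $x_v$ reaches beyond $v$ into the rightmost $x$'' from ``all pairings of $x_v$ stay inside $v$.'' But the hypothesis of Lemma~\ref{lem:xvxform} is that $x_v$ is pseudo-solved in $\underline{x_v}\, v\, x_u x_v$ \emph{and not} in $\underline{x_v}\, v\, x_u$ --- that is, the pairing must reach past the $x_u$-prefix of the rightmost $x$ into its $x_v$-suffix. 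If the pairing only reaches into $x_u$, then $x_v$ \emph{is} pseudo-solved in $\underline{x_v}\, v\, x_u$, Lemma~\ref{lem:xvxform} does not apply, and your conclusion $x = \ov{v''}\,\ov{v'}$ is unwarranted: in that subcase the reductions of $x_v$ consume letters of $x_u$, and the correct outcome is one of the second-bullet forms, not the first. The dividing line the paper uses is $f(x_v) \subseteq v x_u$ versus $f(x_v) \not\subseteq v x_u$, which exactly matches the precondition of Lemma~\ref{lem:xvxform}.

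Your ``residual subcase'' paragraph flags the second problem but does not close it: you do not actually derive the parametric alternative $x_v \eqg u'' u^j \ov\alpha\,\ov v$, only sketch a re-examination of the global pairing via a Lemma~\ref{lem:x-1uxform}-style argument. The paper avoids this entirely. Once $f(x_v) \subseteq v x_u$, all of $x_v$ reduces against a prefix of $v x_u$, which gives $\nf(\ov{x_u}\,\ov v) \suff x_v$; plugging the two alternatives for $x_u$ supplied by Lemma~\ref{lem:x-1uxform} into this single relation immediately produces the two stated alternatives for $x_v$, with no further analysis of how $u$ pairs. You are missing both the corrected case boundary and this substitution step, and together they are the non-trivial content of the proof.
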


\begin{proof}
Fix a (partial) reduction pairing $f$ such that whole middle $x$ is paired.
Let $x_u, x_v$ be such that $x = x_ux_v$ and $f(x_u) \subseteq x^{-1} u$ and $f(x_v) \subseteq v x$.
Consider first the case in which $f(x_v) \not \subseteq v x_u$.
Then Lemma~\ref{lem:xvxform} yields that $x = \ov{v''} \, \ov{v'}$, where $v' \pref v \suff v''$,
as claimed.
Thus we are left with the case when $f(x_v) \subseteq v x_u$, i.e.\ $\nf(\ov x_u \ov v) \suff x_v$.
Applying Lemma~\ref{lem:x-1uxform}
to the $x \alpha u \ov \alpha x_u$ yields that the form of
$x_u$ is as claimed.
Substituting the form of $x_u$ to $\nf(\ov x_u \ov v) \suff x_v$
yields the form of $x_v$.
\end{proof}

\begin{lemma}
\label{lem:xux-1vxform}
Let $x$ be a pseudo-solution of $x u \underline{x} v x$,
where $x, u, v$ are reduced then either
\begin{itemize}
\item 
$x = \ov{v''} \, \ov {v'}$ or
$x = \ov{u'} \, \ov{u''}$ or
$x =  \ov{u''} \, \ov{v'}$ or
$x \eqg \ov{u''} \, u^{\bullet \bullet} \ov v$ or
$x \eqg \ov u v^{\bullet} \ov {v'}$
where $v^\bullet \pref v' \pref v \suff v''$,
$u' \pref u \suff u'' \suff u^{\bullet \bullet}$;
\item $x = x_u x_v$,
where
\begin{itemize}
\item $x_u \pref \alpha$ or
$x_u = \alpha r_u^i r_u'$ for some $i\in \mathbb N$,
where $r_u' \pref r_u$ and $\ov u v = \alpha \ov r_u \ov \alpha$
and $r_u$ is cyclically reduced;
\item $\beta \suff x_v$ or $x_ v = r_v'' r_v^j \beta$
for some $j \in \mathbb N$ where
$u \ov v = \ov \beta r_v \beta$ and
$r_v \suff r_v''$ and $w_v$ is cyclically reduced;
\end{itemize}\end{itemize}
\end{lemma}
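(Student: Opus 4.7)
Fix a partial reduction pairing $f$ witnessing the middle $x$ as a pseudo-solution. Since $x$ is reduced, no two of its letters pair internally, so $x$ decomposes uniquely as $x = x_u x_v$, where $x_u$ is the prefix whose letters pair to the left (into $xu$) and $x_v$ is the suffix pairing to the right (into $vx$). Let $a$ be the contiguous suffix of the left $xu$ and $b$ the contiguous prefix of the right $vx$ consisting of the letters paired with $x_u$ and $x_v$ respectively; well-nestedness of $f$ then gives $a \eqg \ov{x_u}$ and $b \eqg \ov{x_v}$ in the free group. The whole plan is a case analysis on where $a$ and $b$ sit.

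The first step is to dispose of the degenerate cases $x_u = \varepsilon$ and $x_v = \varepsilon$. If $x_u = \varepsilon$, then either $b \pref v$ and $x = x_v \pref \ov v$ (a special case of $x = \ov{v''}\,\ov{v'}$), or $b$ extends past $v$ and Lemma~\ref{lem:xvxform} applies directly to give $x = \ov{v''}\,\ov{v'}$. The symmetric case $x_v = \varepsilon$ gives $x = \ov{u'}\,\ov{u''}$ via the mirror of Lemma~\ref{lem:xvxform} (which holds by inversion symmetry). Assuming now both $x_u,x_v$ non-empty, if $a \suff u$ and $b \pref v$ then both $a$ and $b$ are reduced, so the identities $a = \ov{x_u}$ and $b = \ov{x_v}$ hold verbatim and $x = \ov{u''}\,\ov{v'}$. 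If instead $a \suff u$ but $b = vz$ extends into the right $x$, I would split on whether $z \pref x_u$ or $z$ crosses into the $x_v$-part of the right $x$: the former, combined with $x_u = \ov{u''}$, yields $z = \ov{u^{\bullet\bullet}}$ for some $u^{\bullet\bullet} \suff u''$ and hence $x \eqg \ov{u''}\,u^{\bullet\bullet}\,\ov v$; the latter lets Lemma~\ref{lem:xvxform} kick in and gives $x = \ov{v''}\,\ov{v'}$. The symmetric case where $a$ extends into the left $x$ but $b \pref v$ produces $x \eqg \ov u\,v^{\bullet}\,\ov{v'}$ or $x = \ov{u'}\,\ov{u''}$ in the same fashion.

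The heart of the proof is the remaining case where simultaneously $a = yu$ with $\varepsilon \ne y \suff x$ and $b = vz$ with $\varepsilon \ne z \pref x$. Then the pairing identities read $x_u \eqg \ov u\,\ov y$ and $x_v \eqg \ov z\,\ov v$, coupled to $y \suff x_u x_v$ and $z \pref x_u x_v$ — a mutually recursive system in $x$. I would then introduce the canonical conjugate-cyclic-reduction decompositions $\ov u v = \alpha \ov{r_u}\,\ov \alpha$ and $u \ov v = \ov\beta\, r_v\,\beta$ with $r_u, r_v$ cyclically reduced, and unfold the recursion: each iteration of $x_u \eqg \ov u\,\ov y$, after substituting the current form of $x$ and cancelling the paired conjugators, inserts one further copy of $r_u$ into $x_u$ in the position between $\alpha$ and its trailing remainder, producing either $x_u \pref \alpha$ (if the recursion terminates immediately) or $x_u = \alpha r_u^i r_u'$ for some $i \in \mathbb N$ and $r_u' \pref r_u$; the analogous analysis for $x_v$ yields the symmetric form. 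The main obstacle will be this unfolding step: one must verify that the conjugator surviving after each pass is exactly the $\alpha$ from the canonical decomposition of $\ov u v$, that the intermediate reductions match precisely $\alpha\ov\alpha = \varepsilon$, and that the primitivity of $r_u$ (which follows from $r_u$ being cyclically reduced and the decomposition being canonical) pins the exponent $i$ uniquely and forces the remainder $r_u'$ to be a proper prefix of $r_u$.
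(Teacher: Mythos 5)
Your high-level plan follows the paper's: decompose $x = x_ux_v$ along the partial pairing, dispose of the degenerate cases via Lemma~\ref{lem:xvxform} and its variants, and in the main case bring in the conjugate cyclic-reduction decompositions of $\ov u v$ and $u\ov v$. Your treatment of the first bullet (the five ``short'' forms) is essentially right, apart from a notational slip (you write $u^{\bullet\bullet}\suff u''$ where the paper's convention requires $u''\suff u^{\bullet\bullet}$, i.e.\ $u^{\bullet\bullet}$ extends $u''$ on the left).

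The genuine gap is in the main case, and you flag it yourself as ``the main obstacle''. Your setup there records only the free-group identities $x_u \eqg \ov u\,\ov y$, $x_v \eqg \ov z\,\ov v$ together with the literal relations $y\suff x$, $z\pref x$. That is strictly weaker than what the pairing actually gives you: from well-nestedness the paper extracts \emph{literal} string relations, namely $x_u = \ov{u''}w_u$ with $u''\suff u$, $x_v = w_v\ov{v'}$ with $v'\pref v$, and the two overlap conditions $\ov{v''}\,\ov{w_v}\pref x_u$ and $x_v \suff \ov{w_u}\,\ov{u'}$. Chaining these literal relations (with a careful justification of when one may multiply a prefix relation on the left and still keep a prefix) yields $x_u \pref \nf(\ov u v\, x_u)$. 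Once you have that literal self-referential prefix, writing $\nf(\ov u v) = \alpha r_u\ov\alpha$ with $r_u$ cyclically reduced makes the rest immediate: less than half of $\alpha r_u\ov\alpha$ cancels against $x_u$, so either $x_u\pref\alpha$ or $x_u=\alpha a x_u''$ and $a x_u''\pref r_u a x_u''$ (the right side is already reduced because $r_u$ is cyclically reduced), forcing $a x_u''=r_u^i r_u'$ with $i\geq 0$ and $r_u'\pref r_u$. Your proposed ``recursive unfolding'' of $x_u\eqg\ov u\,\ov y$ operates at the $\eqg$-level only; it does not obviously produce a literal prefix relation, and without one you have no mechanism for pinning down $i\in\mathbb N$ or for showing $r_u'$ is a genuine prefix of $r_u$ rather than some conjugate remainder. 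To close the gap you would need to go back to the pairing and record the literal $\pref$/$\suff$ facts, not just their $\eqg$ shadows.
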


\begin{proof}
\begin{figure}
\centering
\includegraphics[scale=1.4]{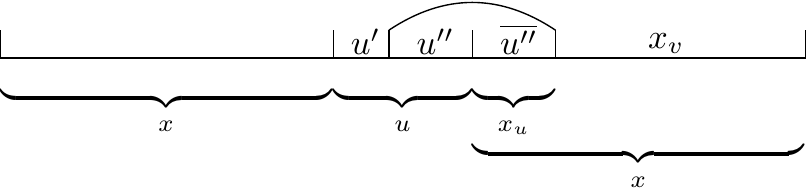}
\caption{Pseudosolution for the equation $x u \underline{x_u}$;
word $u''$ is maximal reducing with $x_u$.
The case in which $x_u = \ov{u''}$.}
\label{fig:xxx1}
\end{figure}

Fix a partial reduction pairing $f$ such that whole middle $x$ is paired.
Define $x_u$ and $x_v$ such that $x = x_ux_v$
and $f(x_u) \subseteq x u$ and $f(x_v) \subseteq v x$.
Consider $f(x_u) \subseteq x u = x_u x_v u$;
if $f(x_u) \not \subseteq x_v u$ then by Lemma~\ref{lem:xvxform}
we get that $x = \ov{u'} \, \ov{u''}$, where $u' \pref u \suff u''$.
Similarly if $f(x_v) \not \subseteq v x_u$
then 
by Lemma~\ref{lem:xvxform}
we get that $x = \ov{v''} \, \ov{v'}$, where $v' \pref v \suff v''$.
So in the following we may assume that
$f(x_u) \subseteq x_v u$ and
$f(x_v) \subseteq v x_u$.

Let $u = u' u''$ where $f(u'') \subseteq x_u$ is maximal with this property.
Either $x_u = \ov {u''}$, see Fig.~\ref{fig:xxx1},
or $\ov {u''} \pref x_u$, see Fig.~\ref{fig:xxx2}.
In the latter case, as not whole $x_u$ is paired with $u''$,
some of its letters need to be paired with the preceding $x_v$ and so the whole $u'$
is paired with this $x_v$ as well, see Fig.~\ref{fig:xxx2},
in particular, $x_v \suff \ov{u'}$.
Then $x_u = \ov{u''}w_u$ for some reduced $w_u \neq \varepsilon$ and $x_v \suff \ov {w_u} \ov {u'}$.

\begin{figure}
\centering
\includegraphics[scale=1.4]{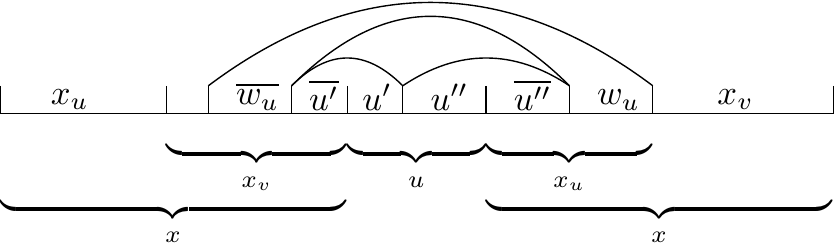}
\caption{Pseudosolution for the equation $x_v u \underline{x_u}$;
word $u''$ is maximal reducing with $x_u$.
The case in which $x_u \neq \ov{u''}$ and so $\ov{u''} \pref x_u$.}
\label{fig:xxx2}
\end{figure}

Similarly, define $v = v'v''$ where $f(v') \subseteq x_v$ is maximal of this property.
Either $ x_v =  \ov{v'}$ or 
$x_v = w_v \ov{v'}$ and $\ov{v''} \ov {w_v} \pref x_u$
for some reduced $w_v \neq \varepsilon$, see Fig.~\ref{fig:xxx3}.

\begin{figure}
\centering
\includegraphics[scale=1.4]{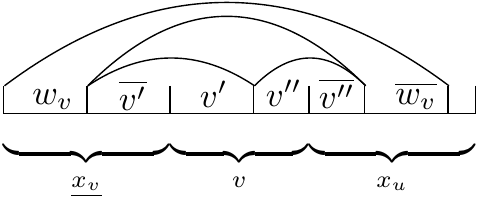}
\caption{Pseudosolution for the equation $ \underline{x_v} v x_u$;
word $v'$ is maximal reducing with $x_v$.
The case in which $x_v \neq \ov{v'}$ and so $x_v \suff \ov{v'}$.}
\label{fig:xxx3}
\end{figure}

Taking into account the form of $x_u$ and $x_v$, there are in total four subcases:

First, if $x_u = \ov{u''}$ and $x_v = \ov{v'}$ then this is 
of the desired form in the first.

Consider the case when $x_u = \ov{u''}$ and $x_v = w_v \ov{v'}$
(and $\ov{v''} \ov {w_v} \pref x_u = \ov{u''}$);
in particular, $x =  \ov{u''} w_v \ov{v'}$,
see Fig.~\ref{fig:xxx4}.
Let $\ov{u^{\bullet \bullet}} = \ov{v''} \ov{w_v}$;
note that $u'' \suff u^{\bullet \bullet}$.
Observe that
\begin{align*}
v \ov{u^{\bullet \bullet}}
	&=
v' v'' \ov{v''} \ov{w_v}\\	
	&\eqg
v' \ov{w_v}\\
	&=
\ov{x_v}
\end{align*}
And so $x = x_u x_v \eqg \ov{u''} u^{\bullet \bullet} \ov v$ for some $u \suff u'' \suff u^{\bullet \bullet}$,
as listed in the first point.
\begin{figure}
\centering
\includegraphics[width=\textwidth]{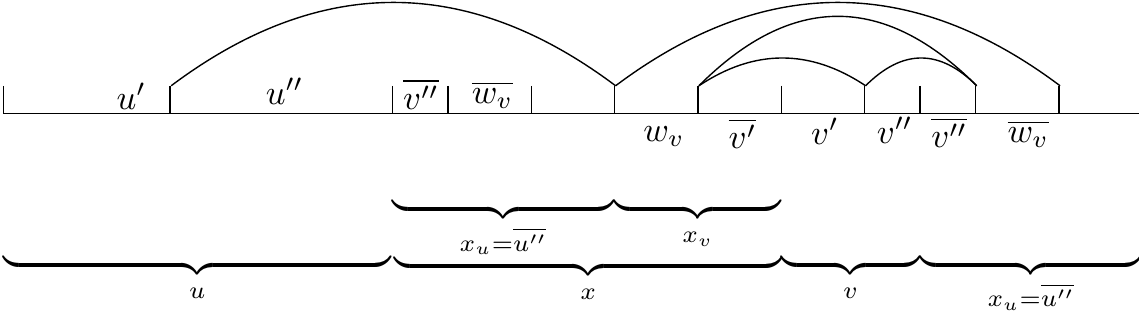}
\caption{Pseudosolution for the equation $x_v u \underline{x} v x_u$.
The case in which  $x_u = \ov{u''}$, $x_v = w_v \ov{v'}$
and $\ov{v''} \ov{w_v} \pref \ov{u''}$.}
\label{fig:xxx4}
\end{figure}

The case of $x_u = \ov{u''} w_u$ and $x_v = \ov{v'}$ is symmetric to the above.

\begin{figure}
\centering
\includegraphics[width=\textwidth]{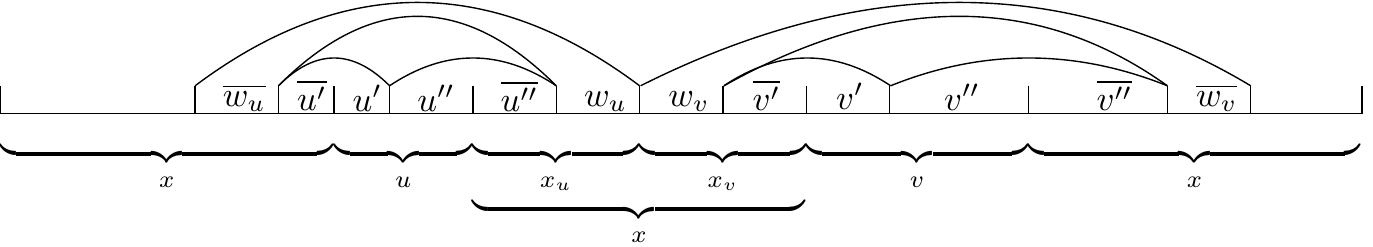}
\caption{Pseudosolution for the equation $x u \underline{x} v x$.
The main case: $x_u = \ov{u''} w_u$ and $x_v = w_v \ov{v'}$.}
\label{fig:xxx6}
\end{figure}

Now let us consider the main case: when $x_u = \ov{u''} w_u$
and $x_v = w_v \ov{v'}$,
see Fig.~\ref{fig:xxx6}.
In particular, $x = \ov{u''} w_u w_v \ov{v'}$.
Note that by case assumption also $x_v \suff \ov{w_u} \, \ov{u'}$
and $\ov{v''} \, \ov{w_v} \pref x_u$,
see Fig.~\ref{fig:xxx7}:
\begin{equation*}
\ov{v''} \ov {w_v} \pref \ov{u''} w_u \quad w_v \ov{v'} \suff \ov{w_u} \ov{u'} 
\end{equation*}

Let us analyze $x_u = \ov{u''} w_u$.
In the following, it is convenient to consider not only the prefix relation on normal forms.
Note that in general $\alpha \pref \beta$ does not imply
$\nf(s) \pref \nf(t)$ and
$\nf(s) \pref \nf(t)$ does not imply $\nf(s's) \pref \nf(s't)$.
However, if $\nf(s) \pref\nf(t)$ and the reductions in $us$ leading to $\nf(us)$ do not reduce whole $s$
then $\nf(us) \pref \nf(ut)$:
first without loss of generality we may assume that $s, t, u$ are reduced
(if they are not then we can reduce them, without affecting any claim).
Let $s = s's''$ and $u = u'\ov{s'}$ where $\nf(us) = u's''$, let also $t = s t''$.
Then $\nf(us) = u's''$
and $\nf(ut) = u's''t''$, as clearly there are no reduction in $s'u''t''$.
Which shows the claim.

Note that $ w_v \ov{v'} \suff \ov{w_u} \ov{u'} \implies u' w_u  \pref v' \ov{w_v}$:
\begin{align*}
\ov{v''} \ov {w_v}
	&\pref
\ov{u''} w_u\\
v' v'' \ov{v''} \ov {w_v} 
	&\pref
v \ov{u''} w_u &\text{Multiply by } v = v'v''\\
\nf(v' \ov {w_v})
	&\pref
\nf(v \ov{u''} w_u) &\text{Reduce left-hand side}\\
v' \ov {w_v}& \pref \nf(v \ov{u''} w_u)
&\nf(v' \ov {w_v}) = v' \ov {w_v}\\
u' w_u &\pref \nf(v \ov{u''} w_u)
&\text{Transitivity}\\
\nf(\ov {u''} \, \ov {u'} u' w_u)
	&\pref
\nf(\ov uv \ov{u''} w_u) &\text{Multiply by } \ov u = \ov {u''} \, \ov {u'}\\
\nf(\ov {u''} w_u) &\pref \nf(\ov u v \ov{u''} w_u) &\text{Reduce left-hand side}\\
x_u &\pref \nf(\ov u v x_u) & x_u = \ov{u''} w_u = \nf(\ov{u''} w_u)
\end{align*}

Multiplying by $v$ is legal: $\ov{v''}\ov{w_v}$ is irreducible
and after the multiplication the remaining
$v' \ov{w_v}$ is also irreducible.
Similarly, after the multiplication by $\ov u$ the $\ov{u''} w_u$ is irreducible, so the multiplication is legal.

\begin{figure}
\centering
\includegraphics[scale=1.4]{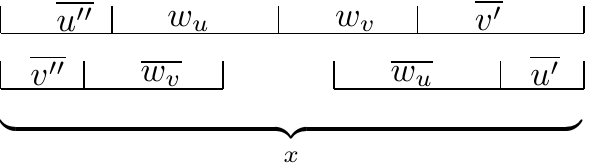}
\caption{Pseudosolution for the equation $x u \underline{x} v x$.
The main case: $x_u = \ov{u''} w_u$ and $x_v = w_v \ov{v'}$.
The depiction of prefixes and suffixes of $x$.}
\label{fig:xxx7}
\end{figure}

Let $\nf(\ov u v) = \alpha r_u \ov \alpha$, where 
$r_u$ is cyclically reduced.
Observe that in $\alpha r_u \ov \alpha x_u$
we can reduce at most half of letters in $\alpha r_u \ov \alpha$,
as otherwise $\nf(\alpha r_u \ov \alpha x_u)$ has less letters than $x_u$, which is its prefix.
Hence $\nf(\alpha r_u \ov \alpha x_u)$ begins with $\alpha a$, where $a$ is the first letter of $r_u$.
If $x_u \pref \alpha$ then we are done.
Otherwise $x_u = \alpha a x_u''$ and so 
$\alpha r_u \ov \alpha x_u \eqg \alpha r_u a x_u''$
and $a x_u'' \pref \nf (r_u a x_u'')$.
Note that $r_u a x_u''$ is reduced:
if there is a reduction in $r_u a$
then the last letter of $r_u$ is $\ov a$ and $a$ is the first letter of $r_u$,
contradiction, as $r_u$ is cyclically reduced.
Hence $a x_u'' \pref r_u a x_u''$
and so $a x_u'' = r_u^i r_u'$ for some $i \geq 0$ and $r_u' \pref r_u$, and so $x_u = \alpha r_u^ir_u'$, as claimed.

A similar analysis applies to $x_v$.
\end{proof}

\begin{proof}[proof of Lemma~\ref{lem:solutions_superset}]
Consider an equation~\eqref{eq:main} and suppose that $x$ is its solution.
By Lemma~\ref{lem:pseudosolution},
$x$ is a pseudosolution in
$x^{p_h} u_h x^{p_{h+1}} u_{h+1} x^{p_{h+2}}$, for some $h$.
Depending on $p_h, p_{h+1}$ and $p_{h+1}$
Lemmata~\ref{lem:x-1uxvx-1form}, \ref{lem:x-1uxvxform}, \ref{lem:xux-1vxform}
yield $(|u_h|+|u_{h+1}|+2)^2$ candidates that are $\Ocomp(1)$-represented
or that $x$ is of the form $\alpha u^i u' v'' v^j \beta$,
where $\alpha, u, v, \beta$ are uniquely defined by 
$x^{p_h} u_h x^{p_{h+1}} u_{h+1} x^{p_{h+2}}$
and $|\alpha u|, |v \beta| \leq |u_hu_{h+1}|$
and $u' \pref u$ and $v \suff v''$ are some prefix and suffix.

In the first case we can compute the candidates in $\Ocomp(1)$ per candidate
(representing them as offsets to words in equation)
and there are (we set $u_0 = \epsilon$ to streamline the calculations)
\begin{align*}
\sum_{h=0}^{m-1} (|u_h|+|u_{h+1}|+2)^2
	&\leq 
\left(\sum_{h=0}^{m-1} (|u_h|+|u_{h+1}|+2)\right)^2\\
	&\leq
\left(\left(2\sum_{h=1}^{m} |u_h|\right)+2m)\right)^2\\
	&=	
(2n)^2\\
	&=
4n^2
\end{align*}
such candidate solutions, as claimed.

In the second case first consider, whether
$u$ is primitive, which can be verified in $\Ocomp(|u|)$ time
(say, using the data structure from Lemma~\ref{lem:solution_testing}
we check whether $p$ is a period
for $p | |u|$).
If $u = w^k$ then we can represent
$u^i u'$ as $w^{ki + i'}w'$ for some $w' \pref w$
and $i'$ such that $w^{i'}w' = u'$;
let also $w''$ be such that $w = w' w''$.
Then $\alpha w^{ki+i'} w' = \alpha w' (w''w')^{ki+i'}$
which is of the promised form.
The same transformation can be carried out for $v'' v^j \beta$.

Clearly $|\alpha w'| < |\alpha w| \leq |\alpha u|\leq |u_hu_{h+1}|$, as required.
Also, there are $|w| \leq |u| \leq |u_hu_{h+1}|$ candidates for
the parametric word $\alpha w'' w^I$.
A similar analysis holds for $v''v^j\beta$,
which yields the claim.
\end{proof}

\section{Restricting the superset of solutions}\label{sec:restricting-the-superset-of-solutions}
By Lemma~\ref{lem:solutions_superset}, we know the form of possible solutions,
and by Lemma~\ref{lem:testing_a_single_solution}
we can test a single candidate solution in $\Ocomp(m)$ time.
In particular, all solutions from the set $S$ in Lemma~\ref{lem:solutions_superset}
can~be~tested in $\Ocomp(n^2m)$ time, as desired.
The other solutions are instances of parametric words the form $\alpha u^Iv^J \beta$
for well-defined $\alpha, u, v, \beta$.
The next step is to bound, for fixed $\alpha, u, v, \beta$,
the set of values $(i, j)$ such that $\alpha u^I v^J \beta(i,j)$ could be a solution;
this is the main result of the paper.

\subparagraph{Idea}
Suppose we want to find out which words of the form $u^i$ are a solution of~\eqref{eq:main}.
We~substitute $u^I$ to the equation and treat its left-hand side as a parametric word $w$
depending on $I$.
If substituting $I = i$ leads to a trivial word,
then it is known that some $u$-power cancels within the neighboring $u$-powers
(actually, a variant of this fact was used to characterize the superset of solutions~\cite{lyndon_one_var_FG,appel_one_var_FG,onevariablefreegroup},
and it is attributed already to Nielsen~\cite{Nielsen}),
more formally:

\begin{lemma}[{cf.~\cite[Lemma~3]{onevariablefreegroup}}]
\label{lem:pseudosolution_simple}
Let $\varepsilon \eqg s_0u_1s_1u_2\cdots s_{k-1}u_k s_k$.
Then there is $u_i$ which reduces within $u_{i-1}s_{i-1}u_{i}s_{i}u_{i+1}$.
\end{lemma}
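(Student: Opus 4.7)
The plan is to derive this statement directly from the already-proved Lemma~\ref{lem:pseudosolution}. Since $\varepsilon \eqg s_0u_1s_1\cdots u_ks_k$, by the correspondence between trivial equivalence and reduction pairings (established in the lemma just before Lemma~\ref{lem:pseudosolution}) there exists a pairing $f$ of the whole word. Apply Lemma~\ref{lem:pseudosolution} with this $f$ to obtain an index $i$ such that $u_i$ is a pseudo-solution of $u_{i-1}s_{i-1}\underline{u_i}s_iu_{i+1}$ for $f$.

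By the definition of pseudo-solution, $f$ is defined on every position of $u_i$ and maps each such position into the range of indices covered by $u_{i-1}s_{i-1}u_is_iu_{i+1}$. Because $f$ is well-nested, its restriction to the positions covered by $u_{i-1}s_{i-1}u_is_iu_{i+1}$ is again a valid pairing of a subword of that expression, and this restricted pairing pairs every letter of $u_i$. Hence the sequence of single-step reductions induced by this restriction removes all letters of $u_i$ without ever using letters outside $u_{i-1}s_{i-1}u_is_iu_{i+1}$, which is precisely what ``$u_i$ reduces within $u_{i-1}s_{i-1}u_is_iu_{i+1}$'' means.

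Since Lemma~\ref{lem:pseudosolution} is already proved in the excerpt and the correspondence between reduction pairings and reduction sequences is also established, there is essentially no obstacle: the simpler formulation just forgets the specific pairing and states the conclusion purely in the language of reductions. The only subtle point worth mentioning explicitly is that a pseudo-solution for a pairing genuinely yields a reduction sequence confined to the indicated subword, which follows from the well-nestedness condition built into the definition of pairing.
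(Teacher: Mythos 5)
Your proposal is correct and takes the same route the paper intends: the paper calls Lemma~\ref{lem:pseudosolution} the ``full version of Lemma~\ref{lem:pseudosolution_simple}'' and gives no separate argument, so deriving the simple statement by invoking Lemma~\ref{lem:pseudosolution} on a reduction pairing of the whole word and then unpacking the pseudo-solution definition is exactly what is meant. One small tightening worth making: rather than saying the restriction of $f$ to the window is ``a valid pairing of a subword,'' observe that by well-nestedness the smallest interval containing $u_i$ together with $f(u_i)$ is mapped onto itself by $f$ and lies inside $u_{i-1}s_{i-1}u_is_iu_{i+1}$, so the induced reduction sequence eliminates $u_i$ using only letters from that window.
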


We want to use Lemma~\ref{lem:pseudosolution_simple}
to claim that some $u$-parametric powers
need to reduce, however, as there can be powers of $u$ as constants,
this makes the analysis problematic:
as an example, consider an equation $a u^I u^\ell \ov a \eqg \varepsilon$,
if $I = i$ is a solution and we set $s_0 = a, u_1 = u^i, s_1 = u^\ell \ov a$
(so that $u_1$ corresponds to $u^I$)
then Lemma~\ref{lem:pseudosolution_simple}
guarantees that
$u^i$ cancels within $u^\ell$, i.e.\ $0 \geq i \ge - \ell$,
even though $I = - \ell$ is the only solution.
This is caused by $u$-powers next to $u$-parametric power,
which makes our application of the Lemma~\ref{lem:pseudosolution_simple} nearly useless.
To fix this, in $a u^i u^\ell \ov a$ we set $s_0 = a, u_1= u^{i + \ell}, s_1 = \ov a$,
and then Lemma~\ref{lem:pseudosolution_simple}
yields $i = -\ell$.
On the level of the parametric word this corresponds to considering
$a u^{I+\ell} \ov a\eqg a u^I u^\ell \ov a$,
i.e.\ we include $u$-powers into the $u$-parametric power next to them.

This is formalized as follows:
A parametric word $w$ is \emph{$u$-reduced} when $u$ is cyclically reduced, primitive and $w$ does not have a subword of the form:
\begin{itemize}
	\item $u^\phi$ for a constant integer expression $\phi$;
	\item $a \ov a$ for some letter $a$ (so $w$ is reduced);
	\item $u^\phi u^\psi$ for some (non-constant) integer expressions $\phi, \psi$;
	\item $u u^\phi, \ov u u^\phi, u^\phi u, u^\phi \ov u$ for some (non-constant) integer expression $\phi$.
\end{itemize}
Note that we do not forbid subwords that are powers of $u$,
we forbid parametric subwords that are in fact subwords, i.e.\ have constant exponents.

Given a parametric word $w$ we can $u$-reduce it to obtain a parametric word that is equal (in the free group) and $u$-reduced
by a simple greedy procedure, i.e.\ replacing a parametric power with a constant integer expression as exponent with a power or
reduction or joining two $u$-powers into one
(the running time for specific applications is analyzed separately at~appropriate places).
When we replace, say $u u^\phi$ with $u^{\phi+1}$, then we say that letters in $u$
were $u$-reduced to $u^{\phi+1}$.
Note that there are different $u$-reduced equivalent parametric words,
so the output of $u$-reduction is not unique, this has no effect on the algorithm, though.

If a parametric word $w$ (with all exponents depending on one variable)
is $u$-reduced then from Lemma~\ref{lem:pseudosolution_simple}
we infer that $w(i) \eqg \varepsilon$
implies $|\phi(i)| \leq 3$ for some parametric power $u^\phi$ in $w$:

\begin{lemma}
\label{lem:parametric_solution_almost_0}
Let $w = w_0 u^{\phi_1} w_1\cdots u^{\phi_k} w_k$ be a $u$-reduced parametric word,
where $w_0, \ldots, w_k$ are words
and $\phi_1,\ldots, \phi_k$ are integer expressions,
all depending on exactly one and same variable.
If $w(i) \eqg \varepsilon$ then there is $\phi_\ell$ such that $|\phi_\ell(i)| \leq 3$.
In particular, $w(i) \eqg \varepsilon$ for each $i$ if and only if $w = \varepsilon$.
\end{lemma}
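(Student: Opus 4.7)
The plan is to combine Lemma~\ref{lem:pseudosolution_simple} with Lemma~\ref{lem:parametric_pseudosolution_almost_0}: the former isolates a single parametric power that must reduce within its immediate context, and the latter bounds its exponent.

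Fix $i$ with $w(i) \eqg \varepsilon$. View the evaluated word $w(i) = w_0 u^{\phi_1(i)} w_1 \cdots u^{\phi_k(i)} w_k$ as a factorization $s_0 u_1 s_1 \cdots u_k s_k$ with $s_h = w_h$ and $u_h = u^{\phi_h(i)}$, and apply Lemma~\ref{lem:pseudosolution_simple}. The lemma yields an index $\ell$ such that $u^{\phi_\ell(i)}$ reduces within $u^{\phi_{\ell-1}(i)} w_{\ell-1} u^{\phi_\ell(i)} w_\ell u^{\phi_{\ell+1}(i)}$, with the convention that the outer parametric power is $\varepsilon$ when $\ell \in \{1, k\}$.

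The next step is to invoke Lemma~\ref{lem:parametric_pseudosolution_almost_0} with $s = u$, pseudosolution $u^{\phi_\ell(i)}$, and flanking words $t' = w_{\ell-1}$, $t'' = w_\ell$. Its hypotheses follow from $u$-reducedness: $u$ is cyclically reduced and primitive by definition; $w_{\ell-1}$ and $w_\ell$ are reduced because $w$ contains no $a \ov a$; and the forbidden patterns $u u^\phi$, $\ov u u^\phi$, $u^\phi u$, $u^\phi \ov u$ guarantee that $w_{\ell-1}$ does not end (and $w_\ell$ does not begin) with $u$ or $\ov u$. The conclusion is $|\phi_\ell(i)| \leq 3$, as required.

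The step that needs the most care is the boundary $\ell = 1$ (symmetrically $\ell = k$): the $u$-reduced condition says nothing about the far end of $w_0$, only about the end adjacent to $u^{\phi_1}$. However, inspection of the proof of Lemma~\ref{lem:parametric_pseudosolution_almost_0}, which reduces to Lemma~\ref{lem:no_long_self_reduction_after_reduction}, reveals that only the adjacent side of each flanking word is actually used, so the argument goes through; the degenerate case $w_0 = \varepsilon$ is trivial, as a $u^2$ block has nothing to reduce against on that side. For the ``in particular'' statement, one direction is immediate. Conversely, suppose $w \neq \varepsilon$ yet $w(i) \eqg \varepsilon$ for every $i$. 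If $k = 0$ then $w = w_0$ is a non-empty reduced word, contradicting $w_0 \eqg \varepsilon$. If $k \geq 1$, the first part produces for each $i$ an index $\ell(i)$ with $|\phi_{\ell(i)}(i)| \leq 3$; by pigeonhole some fixed $\ell$ satisfies $|\phi_\ell(i)| \leq 3$ for infinitely many $i$, which is impossible since $\phi_\ell$ is a non-constant linear expression in a single variable and therefore attains each value in $\{-3, \ldots, 3\}$ at most once.
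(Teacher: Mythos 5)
Your proof is correct and is essentially the paper's argument: both apply Lemma~\ref{lem:pseudosolution_simple} to isolate a power $u^{\phi_\ell(i)}$ that reduces within its three-term neighbourhood and then bound the exponent via Lemma~\ref{lem:no_long_self_reduction_after_reduction}, the only packaging difference being that you route this second step through Lemma~\ref{lem:parametric_pseudosolution_almost_0} while the paper inlines the identical argument. Your care with the boundary indices $\ell\in\{1,k\}$ is well placed --- the stated hypotheses of Lemma~\ref{lem:parametric_pseudosolution_almost_0} (``do not begin nor end with $s$ or $\ov s$'') are indeed formally too strong for $t'=w_0$ or $t''=w_k$, and you correctly observe that only the adjacent-side conditions are used; inlining as the paper does sidesteps this entirely, but your reasoning closes the gap.
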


\begin{proof}
We use Lemma~\ref{lem:pseudosolution} for a factorization with $s_j = w_j$ and $u_j = u^{\phi_j(i)}$.
Then for some $\ell$ we have that $u^{\phi_\ell(i)}$ reduces
within
$u^{\phi_{\ell-1}(i)}w_{\ell-1}u^{\phi_\ell(i)}w_{\ell}u^{\phi_{\ell+1}(i)}$.
Suppose that $|\phi_\ell(i)| \geq 4$, say $\phi_\ell(i) > 0$, the other case is shown in the same way.
Consider $u^{\phi_{\ell-1}(i)}w_{\ell-1}u^2 u^{\phi_{\ell}(i)-4} u^2 w_{\ell}u^{\phi_{\ell+1}(i)}$.
Then by Lemma~\ref{lem:no_long_self_reduction_after_reduction}
the reductions in $u^{\phi_{\ell-1}(i)}w_{\ell-1}u^2$ and $u^2 w_{\ell}u^{\phi_{\ell+1}(i)}$
are both  of length less than $2|u|$, thus not the whole $u^2 u^{w_{\ell}-4} u^2$ is reduced, contradiction.

For the last claim, if $w$ contains $u$-parametric powers,
then clearly there is a finite set of $i$s such that $w(i) \eqg \varepsilon$.
If it does not, then as it is $u$-reduced, it is also reduced,
and so $w \eqg \varepsilon$ implies $w = \varepsilon$.
\end{proof}

As $\phi_\ell$ in Lemma~\ref{lem:parametric_solution_almost_0}
is a non-constant integer expression
then there are at most $7$ values of $i$ such that
$|\phi_\ell(i)| \leq 3$.
Hence it is enough to find appropriate $i$ values.
Clearly, there are at most $m$ integer expressions in $w$
(as this is the number of variables).
We can give better estimations, though:
if the expression is not of the form $kI$ then it ``used'' at least $|u|$ letters from the equation.
So there are $n/|u|$ different expressions and the ones of the form $kI$;
as $|ki| \leq 3$ implies $|i| \leq 3$,
there are $7(1 + n/|u|)$ candidates for $i$ in total.
Lastly, when the solution depends on two variables,
it can be shown that all obtained parametric powers have coefficient $\pm 1$,
which allow even better estimations:
a parametric power $I + c$ uses at~least $c|u|$ letters from the equation
and so it can be shown that at most $\Ocomp(\sqrt{n/{|u|}})$ different integer expressions can be formed in such a case.

The actual solution is of the form $\alpha u^I v^J \beta$.
Firstly, the presence of $\alpha, \beta$ make estimations harder, as their letters can also be used in the $u$- and $v$-reductions.
Secondly, there are two parameters,
which makes a simple usage of Lemma~\ref{lem:parametric_solution_almost_0} impossible.
However, if $w(i, j) \eqg \varepsilon$ then $w(I,j) \eqg \varepsilon$ depends on one variable,
so Lemma~\ref{lem:parametric_solution_almost_0} is applicable to it.
The analysis yields that we can restrict the possible value of $i$ or $j$ or $(i,j)$;
note that this is non-obvious, as~there are infinitely many $w(I,j)$s.
A similar analysis can be made for $w(i, J)$,
and combining those two yields a set of pairs to be tested
as well as $\Ocomp(1)$ individual $i$s and $j$s that should be tested separately.
But for a fixed $i$ ($j$) we can substitute it to the equation
and use Lemma~\ref{lem:parametric_solution_almost_0} for $J$ ($I$, respectively).

\subsection{Restricting the set of $(i,j)$}
Fix some $0 \leq i_0 \leq m-1$ and the corresponding $u_{i_0}, u_{i_0+1}$ in the equation \eqref{eq:main}.
Using Lemma~\ref{lem:solutions_superset}
we construct a parametric word
$\alpha u^Iv^J \beta$, with $\alpha, u, v, \beta$
depending on $u_{i_0}, u_{i_0+1}$ as~well as exponents
$p_{i_0}, p_{i_0+1},p_{i_0+2}$.
We substitute $X = \alpha u^I v^J \beta$ to the equation~\eqref{eq:main},
obtaining a~parametric word on the left-hand side.
We are to find values $(i, j) \in \mathbb Z^2$
for which the value of the obtained parametric word is equivalent to $\varepsilon$,
thus we call such an $(i, j)$ a~solution.
We want to find a~suitable set of pairs $(i,j)$ and test each one individually,
using Lemma~\ref{lem:testing_a_single_solution}.

The analysis depends on the relation between $u$ and $v$:
i.e.\ whether $u \in \{v ,\ov v\}$,
$u \not \shift v$ or~$u \shift v$.
We analyze particulate cases in
Sections~\ref{sec:u-not-sim-v}--\ref{sec:u-shift-v}.
The idea is the same in each case, but technical details differ.

\subsubsection{$u \not \shift v$}\label{sec:u-not-sim-v}
Due to symmetry, we consider the case when $|v| \geq |u|$, note that it could be that $|u| = |v|$.
We~rotate the left-hand side of the equation
so that it begins and ends with a parametric power:
we rotate $\alpha u^I v^J \beta w = \varepsilon$
to $v^J \beta w \alpha u^I = \varepsilon$
or $\ov \beta \ov v^J \ov u ^I \ov \alpha w = \varepsilon$ to
$\ov u ^I \ov \alpha w \ov \beta \ov v^J = \varepsilon$,
depending on the form of the equation.
The equation after the rotation is equisatisfiable to the previous one.

We call each parametric word beginning with $v^J$ or $\ov u^I$ and ending with $u^I$ or $\ov v^J$
and no parametric power inside a \emph{fragment}.
The parametric word after the rotation is a concatenation of $m$ fragments.
We use the name $h$-th fragment to refer to the one corresponding to $u_h$
(so~$h$-th from the left);
let $f_h$ denote the word that is left from $h$-th fragment after removing
the leading and ending parametric power;
note that $f_h$ is of one of the forms
$\beta u_h \alpha $, $\beta u_h \ov \beta$, $\ov \alpha u_h \alpha $, $\ov \alpha u_h \ov \beta$.
For $u^I$ we call the preceding $\alpha$ the associated word,
the same name is used to $\beta$ succeeding $v^J$, $\ov \alpha$ succeeding $\ov u ^I$ and $\ov \beta$ preceding $\ov v^J$.
To simplify, we will call it a word associated with the parametric power.

We now preprocess the equation, by replacing the left-hand side with an equivalent parametric word
(i.e.\ equal according to $\eqg$).
As a first step, we replace each $f_h$ with $\nf(f_h)$.
Next, observe that if $w$ is the power of $u$
then $\ov u^I w u^I \eqg w$ and similarly $v^J w' \ov v ^J \eqg w'$ for $w'$ being a power of $v$.
In the second step we check each fragment separately, and if possible, replace it as described above.
For fragments that remained unchanged in the second step, we~use previous names,
i.e.\ if $h$-th fragment $v^J \nf(f_h) u^I$ was not replaced then
we still write it as $v^J \nf(f_h) u^I$ and call it $h$-th fragment.
A \emph{trivial fragment} is a maximal subword obtained as concatenations of words obtained due to replacements in the second step.

\begin{lemma}
\label{lem:preprocessing}
The preprocessing can be performed in $\Ocomp(m)$ time.
Afterwards the parametric word is $\Ocomp(m)$-represented
and it is a concatenation of fragments and trivial fragments.

Each trivial fragment is obtained by replacing some $h$-th, $h+1$-st, \ldots , $h+k$-th fragments by $\nf(f_h \cdots f_{k+h})$
moreover $|\nf(f_h \cdots f_{k+h})| \leq \sum_{i=h}^{h+k} |u_i|$;
if $k>0$ then such a trivial factor is not a power of $u$ nor $v$.
\end{lemma}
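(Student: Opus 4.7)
The plan is to handle both preprocessing steps using the data structure of Lemma~\ref{lem:solution_testing}, and then to establish the structural claims via a case analysis on fragment types. Each $f_h$ is a concatenation of at most three $\Ocomp(1)$-represented pieces (some $\alpha$ or $\ov\alpha$, the word $u_h$, and some $\beta$ or $\ov\beta$), hence it is itself $\Ocomp(1)$-represented; by Lemma~\ref{lem:solution_testing} each $\nf(f_h)$ can then be computed in $\Ocomp(1)$ time with an $\Ocomp(1)$-representation, which handles Step~1 in $\Ocomp(m)$ total time. For Step~2 I will read off each fragment's type from the signs $p_h$ (only the types $\ov u^I f_h u^I$ and $v^J f_h \ov v^J$ are candidates for replacement) and then, using the $u$- or $v$-power prefix primitive of Lemma~\ref{lem:solution_testing}, test in $\Ocomp(1)$ time whether $\nf(f_h)$ is a power of the appropriate word, again totalling $\Ocomp(m)$. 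Finally, merging a maximal run of $k+1$ consecutive replaced fragments into the single reduced word $\nf(f_h\cdots f_{h+k})$ is carried out by iterated application of the $\nf$-operation from Lemma~\ref{lem:solution_testing} in $\Ocomp(k)$ time per trivial fragment, so the total running time is $\Ocomp(m)$ and the resulting parametric word decomposes into $\Ocomp(m)$ pieces each having $\Ocomp(1)$-representation.

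For the length bound, consider a replaceable fragment of type $\ov u^I f_h u^I$, so $f_h=\ov\alpha u_h\alpha$ and $\nf(f_h)$ is a power of $u$. The plan is to apply the third bullet of Lemma~\ref{lem:u_prefix_beta_w_ovbeta} with $t=\ov\alpha$, $w=u_h$ and $s=u$ to conclude $|\nf(f_h)|\leq|u_h|$. The required side-condition is that $\ov\alpha$ does not begin with $u$ or $\ov u$, which corresponds to the canonical form of $\alpha$ in the parametric representation $\alpha u^Iv^J\beta$; otherwise a letter of $\alpha$ could be absorbed into the adjacent $u^I$ by shifting its constant, contradicting the minimal parametric form produced by Lemma~\ref{lem:solutions_superset}. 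A symmetric argument handles fragments of type $v^J f_i \ov v^J$. Since taking normal forms does not increase length, $|\nf(f_h\cdots f_{h+k})|\leq\sum_{i=h}^{h+k}|\nf(f_i)|\leq\sum_{i=h}^{h+k}|u_i|$.

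For the non-power property when $k>0$, I first observe that the two replaceable types correspond to the sign patterns $(p_h,p_{h+1})=(1,-1)$ and $(-1,1)$, so two adjacent replaceable fragments must alternate in type. Hence a trivial fragment with $k\geq 1$ is an alternating concatenation of $u$- and $v$-powers whose exponents are all nonzero: indeed $\nf(\ov\alpha u_h\alpha)=\varepsilon$ would force $u_h\eqg\varepsilon$, contradicting the ban on $X\varepsilon\ov X$ and $\ov X\varepsilon X$ in~\eqref{eq:main}, and the $v$-side is analogous. If this element were a power of $u$ or of $v$, then multiplying by the inverse of that power would yield a nontrivial alternating product of powers of $u$ and $v$ equivalent to $\varepsilon$; Lemma~\ref{lem:concatenation_of_powers_is_not_a_power} would then force $u$ and $v$ to be powers of a common word, which for primitive $u,v$ means $u\in\{v,\ov v\}$, contradicting the standing assumption $u\not\shift v$. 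The main obstacle I foresee is justifying the canonical-form side-condition that $\ov\alpha$ does not begin with $u$ or $\ov u$ (and symmetrically for $\beta$), since this is implicit rather than explicit in Lemma~\ref{lem:solutions_superset} and may require an additional normalization pass on the output of that lemma.
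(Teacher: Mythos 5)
Your proposal is correct and follows essentially the same route as the paper's proof: handle both preprocessing passes with the $\lcp$/normal-form primitives of Lemma~\ref{lem:solution_testing} and the power-prefix query, bound the length of a trivialized fragment by the third bullet of Lemma~\ref{lem:u_prefix_beta_w_ovbeta}, and rule out a $k>0$ trivial fragment being a power of $u$ or $v$ by reducing to a non-trivial alternating identity and invoking Lemma~\ref{lem:concatenation_of_powers_is_not_a_power}. Your explicit observation that replaceable fragments have sign patterns $(-1,+1)$ and $(+1,-1)$, hence alternate, is a helpful elaboration of what the paper states implicitly, and your argument that the constituent exponents are non-zero because $u_h\neq\varepsilon$ (from the ban on $X\varepsilon\ov X$ and $\ov X\varepsilon X$) is a clean variant of the paper's bound $|u|\leq|\nf(f_h)|$.

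The obstacle you flag at the end is not actually a gap. Look at the proof of the third item of Lemma~\ref{lem:u_prefix_beta_w_ovbeta}: it only uses that $s$ is cyclically reduced (hence $\nf(tw\ov t)$, being a non-trivial $s$-power, is cyclically reduced) and that $w\not\eqg\varepsilon$; the hypothesis that $t$ does not begin with $s$ or $\ov s$ is used for the first two items but never for the third. Concretely, cyclic reducedness of $\nf(tw\ov t)$ forces at least one of $t,\ov t$ to cancel entirely (otherwise the first and last surviving letters would be a letter of $t$ and its inverse in $\ov t$), giving $|\nf(tw\ov t)|\leq |tw\ov t|-2|t|=|w|$, with no prefix condition on $t$ involved. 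So you may simply apply the third bullet with $t=\ov\alpha$ (respectively $t=\beta$) and $w=u_h$ without worrying about the canonical form of $\alpha$, and no additional normalization pass on the output of Lemma~\ref{lem:solutions_superset} is needed.
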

\begin{proof}
Initially, we represent each $f_h$ as $\alpha, u_h, \beta$ (or similarly),
which are all $\Ocomp(1)$-represented, so the parametric word before the preprocessing
is $\Ocomp(m)$-represented.
Computing the normal form can be done in total $\Ocomp(m)$ time,
see Lemma~\ref{lem:solution_testing}.
Hence testing, whether a fragment becomes trivial
(so is a power of $u$ or $v$) can be done in $\Ocomp(m)$ time
for all fragments.
When we replace fragment, say $\ov u^I f_h u^I$,
with a power of $u$ then from Lemma~\ref{lem:u_prefix_beta_w_ovbeta}, third item,
we get that $|u| \leq |\nf(f_h)| \leq |u_h|$.
Clearly the representation size does not increase.

If several consecutive fragments are replaced with trivial fragments,
then we compute the normal form for each concatenation forming a trivial fragment.
As the total representation size is $\Ocomp(m)$,
by Lemma~\ref{lem:solution_testing} we do it in total $\Ocomp(m)$ time.
The representation size does not increase.

Concerning the trivial fragment size:
\begin{equation*}
|\nf(f_h \cdots f_{k+h})| \leq \sum_{i=h}^{h+k} |\nf(f_i)| \leq \sum_{i=h}^{h+k} |u_i| \enspace .
\end{equation*}

Lastly, we want to show that if the trivial fragment
was obtained from more than one fragment then
it is not a power of $u$ nor $v$.
By symmetry, suppose that some trivial fragment is a power of $v$.
Hence there are some non-zero $k_1,\ldots k_{\ell}$
such that
\begin{equation*}
u^{k_1}v^{k_2}\cdots u^{k_{\ell-1}} \eqg v^{k_\ell}
\end{equation*}
(note that we can assume that the first and last power on the left-hand side is not a power of $v$, as in such a case we can move it to the right-hand side).
This is equivalent to
\begin{equation*}
u^{k_1}v^{k_2}\cdots u^{k_{\ell-1}}v^{-k_\ell} \eqg \varepsilon
\end{equation*}
From Lemma~\ref{lem:concatenation_of_powers_is_not_a_power} we conclude
that $u, v$ are powers of the same word, which contradicts the assumption on $u, v$ (that they are primitive, different and not inverses of each other).
\end{proof}

We now perform the $u$-reduction
(note that the $v^J$ is not touched)
and afterwards the~$v$-reduction.
Let the obtained equation be of the form
\begin{equation}
\label{eq:parametric_equation}
W \eqg \varepsilon \enspace ,
\end{equation}
where $W$ is a parametric word.
In the following,
we are looking for $(i,j)$s such that
$w(i, j) \eqg \varepsilon$, and so we simply call $(i,j)$ a solution (of~\eqref{eq:parametric_equation}).

\begin{lemma}
\label{lem:different_u_v_reduction}
For $u \not \shift v$ we can perform the $u$-reduction and  $v$-reduction after the preprocessing in $\Ocomp(m)$ time;
the obtained parametric word is $u$-reduced.
No two parametric powers are replaced by one during the $u$-reduction and $v$-reduction,
in particular,
for a given parametric power $u^\phi$ ($v^\psi$) in~\eqref{eq:parametric_equation}
the $\phi$ ($\psi$) has a coefficient of the variable equal to $\pm 1$
and the only letters that are $u$-reduced ($v$-reduced) to this
power come either from the associated fragment of $u^I$ or $\ov u^I$
($v^J$ or $\ov v^J$)
and the letters from the adjacent trivial fragment (assuming that there is an adjacent trivial fragment).
\end{lemma}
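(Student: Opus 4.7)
My plan is to show that after the preprocessing the only $u$-reduction rules that ever fire are the letter-absorption rules $u u^\phi$, $\ov u u^\phi$, $u^\phi u$, $u^\phi \ov u$, so that no two parametric powers are combined; the coefficient bound, the origin of the absorbed letters, and the running time then all follow easily. The starting point is to enumerate which parametric powers appear on each side of every fragment boundary. Each substitution $X^{p_h}$ contributes two parametric powers in a fixed order — $u^I v^J$ if $p_h = +1$ and $\ov v^J \ov u^I$ if $p_h = -1$ — and by the definition of a fragment the $h$-th fragment runs from the second parametric power of $X^{p_h}$ to the first parametric power of $X^{p_{h+1}}$. Consequently every fragment ends with $u^I$ or $\ov v^J$ and the following fragment starts with the second parametric power of the same $X^{p_{h+1}}$, i.e.\ with $v^J$ or $\ov u^I$ respectively. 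So at any direct fragment-to-fragment boundary a $u$-parametric power is never adjacent to another $u$-parametric power, and the symmetric statement holds for $v$.

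Next I would verify that trivial fragments do not introduce any new same-type adjacency. A single-fragment trivial block is either $u^k$ (coming from a fragment $\ov u^I u^k u^I$) or $v^k$ (from $v^J v^k \ov v^J$); tracing the corresponding signs $p_h$ forces a $u^k$-trivial to be sandwiched between $\ov v^J$ on the left and $v^J$ on the right and symmetrically for $v^k$, so no $u$-parametric power can end up adjacent to a $u^k$-trivial. A multi-fragment trivial block is, by Lemma~\ref{lem:preprocessing}, neither a power of $u$ nor of $v$, so its $u$-prefix and $u$-suffix are proper parts of it and after they are absorbed some non-$u$-letters still separate the two flanking $u$-parametric powers. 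Finally, inside a non-trivial fragment of type $\ov u^I f u^I$ the word $\nf(f_h)$ is not a $u$-power — otherwise the fragment would have been made trivial in preprocessing — so absorbing its $u$-prefix and $u$-suffix still leaves at least one non-$u$-letter between $\ov u^I$ and $u^I$. Combined, these three observations imply that the rule $u^\phi u^\psi \to u^{\phi+\psi}$ never fires and the resulting exponent is never constant, so no two parametric powers ever merge and no collapse to a constant-exponent parametric power occurs.

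From this the remaining claims follow. Each $u$-parametric power enters as $u^I$ or $\ov u^I$ with variable coefficient $\pm 1$, and letter-absorption only alters the constant summand, so the coefficient $\pm 1$ is preserved. The letters absorbed into a given $u^\phi$ come precisely from the $u$-power prefix or suffix of $\nf(f_h)$ in the associated fragment together with the $u$-power prefix or suffix of the at most one adjacent trivial fragment, matching the statement of the lemma. The $v$-reduction is then treated symmetrically, using that $u$-reduction modifies neither $v$-parametric powers nor the $v$-content of fragments, hence creates no new $v$-adjacency. The obtained parametric word is $u$-reduced, and afterwards $v$-reduced, by construction. For the running time I traverse the $\Ocomp(m)$ fragments once and use Lemma~\ref{lem:solution_testing} on the $\Ocomp(1)$-represented pieces to compute each required $u$-power prefix and suffix in constant time, for total $\Ocomp(m)$ time. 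The main obstacle I expect is the sign-bookkeeping in the second paragraph, in particular ruling out the pattern $u^I u^k \ov u^I$ straddling a trivial block, which is the only way two $u$-parametric powers could in principle collapse.
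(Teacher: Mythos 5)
Your proposal is correct and takes essentially the same approach as the paper: identify where $u$-parametric powers sit relative to fragment boundaries, observe that the only candidate separators (non-trivial fragment words, single-fragment trivials, multi-fragment trivials) are never $u$-powers in the relevant position, and conclude that the coefficient and running-time claims follow. Your sign-bookkeeping for single-fragment trivial blocks ($u^k$ sandwiched between $\ov v^J$ and $v^J$) spells out a case the paper leaves implicit, but the underlying argument and the appeal to Lemma~\ref{lem:preprocessing} for multi-fragment trivials coincide with the paper's.
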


Note that the claim that no two parametric powers are replaced by one
is not obvious---in principle, it could be that after the preprocessing
a trivial fragment is a power of $u$ (or $v$) and then it is wholly $u$-reduced,
which can lead to two adjacent parametric powers of $u$,
which are then replaced with one.
However, this cannot happen, as such a trivial fragment
is~of~the~form $u^{k_1}v^{k_2}\cdots$ for some $0 < |k_1|, |k_2|, \ldots$
and such a word cannot be a power of $u$ nor $v$ when $u \not \shift v$,
as the subgroup generated by $u, v$ is a free group.

\begin{proof}
Consider a $u$ parametric power before the $u$-reduction.
To one side it has its fragment,
as the preprocessing finished, either the word in this fragment is not a power $u$
or it is a power and then the parametric power at the other end 
is a $v$-parametric power.
To the other side it either has a $v$-parametric power or a trivial fragment,
but the trivial fragment is not w power of $u$,
see Lemma~\ref{lem:preprocessing}.
So we will not join two $u$-parametric powers, as desired.

Hence to compute the $u$-reduction it is enough to consider each word in a fragment separately and compute its
$u$-power prefix/suffix.
As the whole parametric word is $\Ocomp(m)$ represented,
this takes $\Ocomp(m)$ time.

The same applies to $v$-reduction.
Note that the $v$-reduction does not remove any $v$-parametric powers, so afterwards the parametric word is still $u$-reduced.
\end{proof}

We now estimate, how many different $u$-parametric expressions are there after the reductions.
When we want to distinguish between occurrences of parametric powers with the same exponent
(say, two occurrences of $u^{I+1}$ counted separately) then we write about parametric powers
and when we want to treat it as one, then we talk about exponents.
We~provide two estimations, one focuses on parametric powers and the other on exponents.

\begin{lemma}
\label{lem:how_many_different_powers_simple}
There is a set $S$ of $\Ocomp(1)$ size of integer expressions
such that there are $\Ocomp(n/|u|)$ occurrences of $u$-parametric powers
in~$W$ from~\eqref{eq:parametric_equation} whose exponents are not in $S$
and $\Ocomp(n/|v|)$ occurrences of $v$-parametric powers whose exponents are not in $S$.
The set $S$ can be computed and the parametric powers identified in $\Ocomp(m + n/|u|)$ time.
\end{lemma}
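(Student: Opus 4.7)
My starting point is Lemma~\ref{lem:different_u_v_reduction}: each $u$-parametric power in $W$ has exponent of the form $\pm I + c$, and the only letters $u$-reduced into it come from its associated fragment and the adjacent trivial fragment. I would exploit the fact that $\alpha,\beta,\ov\alpha,\ov\beta$ are fixed across all fragments, so the ``fixed'' contribution of these $\alpha,\beta$-type words to any absorption takes only $\Ocomp(1)$ possible values; whatever extra absorption occurs must come from the $u_h$'s, whose total $u$-content is bounded by $n$.

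Concretely, I would first enumerate the $\Ocomp(1)$ configurations of a $u$-parametric power in $W$: its sign (whether it came from $u^I$ or $\ov u^I$), the associated word adjacent to it inside its fragment (one of $\alpha,\beta,\ov\alpha,\ov\beta$), and whether its outside neighbor is another parametric power (no absorption from that side) or a trivial fragment (with some $\alpha,\beta$-type word at the near end). For each configuration, the ``default'' absorption---obtained by pretending the $u_h$'s contribute no $u$-content, and absorbing only within $\alpha,\beta$-suffixes/prefixes---is a fixed constant computable in $\Ocomp(1)$ time from $\alpha,\beta,u$ via the data structure of Lemma~\ref{lem:solution_testing}. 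Define $S_u$ as the (at most constant-size) set of all such default exponents $\pm I + c$, define $S_v$ symmetrically for $v$, and take $S = S_u \cup S_v$, which has $\Ocomp(1)$ elements.

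Suppose now that a $u$-parametric power $u^{\phi_h}$ in $W$ has $\phi_h \notin S_u$. Then the actual absorption exceeds the default, which is possible only if the reduction consumed at least one extra $u$-run of length $|u|$ from some $u_{h'}$ (either the $u_{h'}$ inside its own fragment, reached after $\alpha$ or $\beta$ is fully absorbed, or a $u_{h'}$ at the end of the adjacent trivial fragment). By the locality statement of Lemma~\ref{lem:different_u_v_reduction}, each letter of each $u_{h'}$ can contribute to the absorption of at most one parametric power, so summing over all $u$-parametric powers the total ``extra'' absorption is at most $\sum_h |u_h| \leq n$ letters, giving at most $n/|u|$ extra copies of $u$ consumed in total, and therefore at most $\Ocomp(n/|u|)$ exceptional $u$-parametric powers. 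The same argument for $v$ bounds the number of $v$-parametric powers with exponent outside $S_v$ by $\Ocomp(n/|v|)$, which is $\Ocomp(n/|u|)$ since $|v|\geq|u|$ in this section.

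For the running time, $S$ is built in $\Ocomp(1)$ time as described. Identifying the exceptional powers is a single left-to-right scan through the $\Ocomp(m)$ parametric powers in $W$, testing each exponent against the constant-size set $S$ in $\Ocomp(1)$ time and emitting it if it fails; emission costs $\Ocomp(1)$ per output, and there are $\Ocomp(n/|u|)$ outputs, giving the claimed $\Ocomp(m + n/|u|)$ bound. The main delicate point in this plan is verifying that every letter absorbed during the $u$-reduction beyond the fixed $\alpha,\beta$ contributions can indeed be charged to some $u_{h'}$ at most once---this requires a careful case analysis of absorption through a trivial fragment (using Lemma~\ref{lem:sum_of_powers} to track $u$-content after normal-form concatenations inside the trivial fragment) and is where the locality clause of Lemma~\ref{lem:different_u_v_reduction} is doing the real work.
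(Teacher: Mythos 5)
Your overall plan---a constant-size set $S$ of ``generic'' exponents plus a charging argument bounding the exceptional parametric powers by $\Ocomp(n/|u|)$---has the same shape as the paper's. But there is a genuine gap at the central step. You assert that for fixed $\alpha,\beta,u$, the ``default'' absorption (pretending $u_h$ contributes no $u$-content) is a single constant per configuration, and that any deviation from it consumes at least one $u$-run of length $|u|$ from some $u_{h'}$. Neither holds. Consider a fragment $v^J\beta u_h\alpha u^I$ with $\alpha=\varepsilon$ and $\beta = d\,u^{k}c$ for single letters $d,c$ and large $k$ (and $u,v,d$ chosen so that $\beta$ does not begin with $v$ or $\ov v$). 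Taking $u_h=\varepsilon$ gives a trivial $u$-power suffix of $\nf(\beta u_h\alpha)$, while taking $u_h=\ov c$ --- a single letter containing no $u$-run at all --- gives $u$-power suffix $u^{k}$; the entire $u^k$ came out of $\beta$, not $u_h$, and your accounting never charges those letters to anything (nor could it, since $\beta$ is shared across all $\Ocomp(m)$ fragments). So the ``default'' is not even a single value, and the exceptional power is not detected by a charge against the $u_h$'s.

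What makes the ``short $u_h$'' case produce only $\Ocomp(1)$ exponents is precisely Lemma~\ref{lem:u_prefix_beta_w_ovbeta}, second item: over all $w$ with $|w|<|s|$, there are at most $16$ possible $s$-power prefixes of $\nf(twp)$, where $t,p$ are fixed and $t$ does not begin with $s$ or $\ov s$. Its proof is a nontrivial run-and-periodicity argument showing that all the long candidate $s$-power prefixes lie in a common maximal $s$-run of $p$ and must align. The paper applies exactly this (together with the first item of the same lemma for fragments of the form $\beta u_h\ov\beta$) in the remaining case, after disposing of the trivial-fragment case and the $|u_h|\geq|u|$ case by the direct $n/|u|$ count that you also propose. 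So your two charging cases are sound and do match the paper; what is missing is the combinatorial lemma replacing the unjustified claim that ``fixed $\alpha,\beta$ gives a fixed default.''
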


The Lemma considers, whether the parametric power used some letters
from the trivial fragment or its associated fragment had $u_h$ of length
at least $|u|$.
If so, then it is in the $\Ocomp(n/|u|)$ parametric powers,
as one such power uses at least $|u|$ letters of the input equation
(this requires some argument for the trivial fragments)
and otherwise is can be shown that there are only $\Ocomp(1)$ possible exponents: say, when we consider the longest suffix of $\nf(\beta u_h \alpha)$ that is a $u$-power,
where $|u_h| < |u|$, then there is a constant number of possibilities how this suffix is formed
(fully within $\alpha$, within $\nf(u_h \alpha)$, uses some letters of $\beta$)
and in each case the fact that $|u_h| < |u|$
means that there are only $\Ocomp(1)$ different $u_h$s that can be used;
note that we need the primitivity of $u$ here.
Concerning the algorithm, note that we can distinguish between these two cases
during the preprocessing and mark the appropriate powers.

\begin{proof}
We consider the (more difficult) case of $v^\psi$, the one for $u^\phi$ is done similarly.

Consider, how $v^\psi$ was created.
If during the $u$-reduction it included some letters
from a trivial fragment, then there are at most $n/|v|$
such parametric powers $v^\psi$:
it $v$-reduced at least $|v|$ letters from a trivial fragment and the sum of lengths of all trivial fragments is at most $n$,
see~Lemma~\ref{lem:preprocessing}.
Similarly, if its associated fragment, say $\beta u_k \alpha$, the $|u_k|\geq |v|$,
then there are at most $n/|v|$ such parametric powers.
Clearly those parametric powers can be identified during the $u$-reduction.

If none of the above holds then we claim that $\psi$ is of the form $\pm J + c$ and there is a constant number of possible $c$s.
Hence the set $S$ from the statement can be computed,
as it is enough to list the powers outside of those $2n/|u|$
occurrences and take their union.
As the set is of constant size, this can be done in $\Ocomp(1)$ time per exponent.

Consider, how $\psi$ was created, note that it does not include letters from trivial fragments.
So $\psi$ is created from $\beta u_k \alpha$ or $\ov \alpha u_k \ov \beta$ or $\beta u_k \ov \beta$:
by computing the normal form removing the $u$-power prefix (second case)
or suffix (third case) and 
then computing the $v$-suffix or prefix (first and third, second and third).
By case assumption $|u_k| < |v|$.
The difference between the $v$-power prefix of $\beta u_k \alpha$
and $v$-power prefix of $\beta u_k \alpha$ after the removal of the $u$-power suffix has length at most $2|v|$:
the part of this difference that is outside the $u$-power suffix has length less than $|v|$,
as it is not included in the $v$-power prefix.
The part inside the $u$-power prefix is both a $u$-run and $v$-run,
so by Lemma~\ref{lem:different_runs_overlap} has length less
than $|u| + |v| \leq 2|v|$.
So the total length of this difference is less than $3|v|$ so at most $2|v|$.
This increases the number of possible expressions at most $5$ times
($-2|v|, - |v|, 0, |v|, 2|v|$).

Consider first the case of $\beta u_k \alpha$.
Then Lemma~\ref{lem:u_prefix_beta_w_ovbeta}, second point,
yields that there are only $\Ocomp(1)$ possible $v$-prefix lengths of $\nf(\beta u_k \alpha)$,
over all $u_k$ such that $|u_k| < |v|$.

In case of $\beta u_k \ov \beta$ from Lemma~\ref{lem:u_prefix_beta_w_ovbeta}, first point,
we have that the $v$-power prefix is of length at most
$2|v| + |u_k|<3|v|$, so again we are done.

The argument for $u$-powers is similar, it just omits the distinction between the $u$-power prefix
and the actual $u^c$ in the $u$-reduction, as they are the same.
\end{proof}

The next lemma provides a better estimation for the number of different exponents,
it essentially uses the fact that all exponents have coefficients at variables $\pm 1$:
as there are only two possible coefficients, we can focus on the constants.
Now, to have a constant $|c|$, we have to use a power $u^c$ from $W$
and to have $k$ \emph{different} constants
one has to use $k$ different powers and so from Lemma~\ref{lem:different_powers_in_a word}
we conclude that $k = \Ocomp(|W|/|u|)$.
In general, $W$ can be of quadratic length, as we introduce $m$ copies of $\alpha$ and $\beta$ into it;
the resulting bound is too weak for our purposes.
To improve the bound, we employ Lemma~\ref{lem:sum_of_powers}:
consider that when the $u$-power suffix of,
say, $\beta u_h \alpha$, is $u^k$ then by Lemma~\ref{lem:sum_of_powers}
there are $k_\alpha, k_u, k_\beta$ such that $|k - k_\alpha - k_u - k_\beta| \leq 2$
and $u^{k_u}$, $u^{k_\beta}$ are maximal $u$-powers in $u_h, \beta$
and $u^{k_\alpha}$ is the $u$-power suffix of $\alpha$.
Using Lemma~\ref{lem:different_powers_in_a word},
this yields that there are $\Ocomp(\sqrt{n/|u|})$
different possible values of $k_u$ (over all $u_h$),
$\Ocomp(\sqrt{|\beta|/|u|}) = \Ocomp(\sqrt{|u_{i_0}u_{i_0+1}|/|u|})$ of $k_\beta$ and $k_\alpha$ is fixed,
so there are at~most $\Ocomp(\sqrt{n/|u|} \cdot \sqrt{|u_{i_0}u_{i_0+1}|/|u|}) = \Ocomp(\sqrt{n|u_{i_0}u_{i_0+1}|}/|u|) $
possible values of $k$.

The actual argument is more involved, as it is also possible
that the $u$-parametric power includes letters from the trivial fragments,
which requires some extra arguments, nevertheless the general approach is similar.

\begin{lemma}
\label{lem:how_many_different_powers}
After the $u$-reduction and $v$-reduction there are $\Ocomp(\sqrt{n|u_{i_0}u_{i_0+1}|}/|u|)$
different integer expressions as exponents in parametric powers of $u$
and $\Ocomp(\sqrt{n|u_{i_0}u_{i_0+1}|}/|v|)$
of $v$ in~the equation.
The (sorted) lists of such expressions can be computed in $\Ocomp(m + n /|u|)$ and~$\Ocomp(m + n /|v|)$ time, respectively.
\end{lemma}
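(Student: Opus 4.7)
The plan is to use the fact, established in Lemma~\ref{lem:different_u_v_reduction}, that every parametric power $u^\phi$ produced during the $u$-reduction has $\phi = \pm I + c$, where the sign of $I$ is determined by the fragment from which the power comes, and where the letters absorbed into $u^\phi$ originate only from (i) the word associated with the underlying $u^I$ or $\ov u^I$, (ii) the word $u_h$ inside the same fragment, and (iii) at most one adjacent trivial fragment. Counting distinct $\phi$ therefore reduces to counting distinct constants $c$, up to a factor of two for the sign.

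The core step is to decompose $c$ into pieces that can each be bounded via Lemma~\ref{lem:different_powers_in_a word}. For a fixed parametric power, the absorbed $u$-power arises as the $u$-power prefix or suffix of a concatenation of the form $\nf(t \, u_h \, t')$ or of that word further concatenated with a trivial fragment $\gamma$, where each of $t, t'$ is one of $\alpha, \beta, \ov\alpha, \ov\beta$. Applying Lemma~\ref{lem:sum_of_powers} to this concatenation produces integers $c_t, c_h, c_{t'}, c_\gamma$ such that $|c - (c_t + c_h + c_{t'} + c_\gamma)| = \Ocomp(1)$, with $u^{c_h}$ a maximal $u$-power in $u_h$, $u^{c_\gamma}$ a maximal $u$-power in $\gamma$, and $u^{c_t}, u^{c_{t'}}$ being the appropriate $u$-power prefix/suffix of the associated word; moreover the ``endpoint'' versions of Lemma~\ref{lem:sum_of_powers} guarantee that $c_t, c_{t'}$ are indeed such a prefix/suffix rather than arbitrary interior maximal powers.

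Then I bound the four contributions separately. The associated words $t, t'$ are fixed and $\Ocomp(1)$-represented with lengths at most $|u_{i_0}| + |u_{i_0+1}|$, so their $u$-power prefixes/suffixes each range over a set of size $\Ocomp(\sqrt{|u_{i_0}u_{i_0+1}|/|u|})$ by Lemma~\ref{lem:different_powers_in_a word}. Over all fragments $\sum_h |u_h| \leq n$, so the distinct values $c_h$ number $\Ocomp(\sqrt{n/|u|})$, again by Lemma~\ref{lem:different_powers_in_a word}. By Lemma~\ref{lem:preprocessing} the total length of all trivial fragments is at most $n$ and each trivial fragment is adjacent to $\Ocomp(1)$ parametric powers, so the distinct values $c_\gamma$ also number $\Ocomp(\sqrt{n/|u|})$. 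Multiplying these counts (and absorbing the $\Ocomp(1)$ error, the $\Ocomp(1)$ choices of $t, t'$, and the two signs) yields the claimed $\Ocomp(\sqrt{n|u_{i_0}u_{i_0+1}|}/|u|)$; the bound for $v$ is identical by symmetry.

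For the algorithmic part, during the $u$-reduction each exponent $\phi$ is produced in amortized $\Ocomp(1)$ time by Lemma~\ref{lem:different_u_v_reduction}, and by Lemma~\ref{lem:how_many_different_powers_simple} the exponents split into a $\Ocomp(1)$-size set $S$ and $\Ocomp(n/|u|)$ extra occurrences, giving $\Ocomp(m + n/|u|)$ tuples $(n_I, n_c)$ in total; since the numbers involved fit in $\Ocomp(\log n)$ bits, a radix sort followed by a sweep to remove duplicates produces the sorted list in $\Ocomp(m + n/|u|)$ time. I expect the main obstacle to be the bookkeeping around trivial fragments: one must verify that Lemma~\ref{lem:sum_of_powers} can be applied cleanly even when $\gamma$ is absent and, more subtly, that absorbing a whole trivial fragment into the $u$-power indeed contributes exactly one maximal $u$-power (ensured by Lemma~\ref{lem:preprocessing}, which forbids trivial fragments that are themselves $u$-powers when more than one fragment was merged).
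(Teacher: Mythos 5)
Your plan correctly identifies the tools (Lemma~\ref{lem:sum_of_powers} and Lemma~\ref{lem:different_powers_in_a word}), but the decomposition into four \emph{independent} contributions $c_t + c_h + c_{t'} + c_\gamma$ is too coarse and does not deliver the claimed bound. Multiplying the four counts you list gives $\Ocomp\bigl(\sqrt{|u_{i_0}u_{i_0+1}|/|u|}\bigr)^2 \cdot \Ocomp\bigl(\sqrt{n/|u|}\bigr)^2 = \Ocomp\bigl(n|u_{i_0}u_{i_0+1}|/|u|^2\bigr)$, which is the \emph{square} of the target $\Ocomp\bigl(\sqrt{n|u_{i_0}u_{i_0+1}|}/|u|\bigr)$; pushing this through Lemma~\ref{lem:running_time} would break the cubic running time.

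Two ideas are missing. First, the inner associated word (e.g.\ $\alpha$ for $u^I$ in the fragment $v^J\beta u_h\alpha u^I$) does not end with $u$ or $\ov u$, so in the case without a trivial fragment the ``endpoint'' clause of Lemma~\ref{lem:sum_of_powers} forces its contribution to be exactly $0$ --- a single value, not one of $\Ocomp\bigl(\sqrt{|u_{i_0}u_{i_0+1}|/|u|}\bigr)$ options; note also that the endpoint clause only controls the word at the far end of the concatenation, so you cannot claim prefix/suffix control over both $t$ and $t'$ simultaneously. Second, and more importantly, you cannot treat $c_h$ and $c_\gamma$ as independent $\Ocomp\bigl(\sqrt{n/|u|}\bigr)$ sources: that alone costs an extra $\Ocomp\bigl(\sqrt{n/|u|}\bigr)$ factor. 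The paper's proof instead combines $u_h$ with the adjacent trivial fragment --- after carefully folding the short remainder of $\alpha'$ into $u_h$, which is exactly why there is a case split on whether the $u$-power suffix of $u_h'\alpha'$ is trivial --- into a \emph{single} word whose length is at most $\sum_{\ell=0}^{k}|u_{h+\ell}|$, so each $u_i$ is counted $\Ocomp(1)$ times over all such combined words, and one application of Lemma~\ref{lem:different_powers_in_a word} to this source yields one $\Ocomp\bigl(\sqrt{n/|u|}\bigr)$ factor. Only after both reductions does the count drop to $\Ocomp\bigl(\sqrt{|u_{i_0}u_{i_0+1}|/|u|}\bigr)\cdot\Ocomp\bigl(\sqrt{n/|u|}\bigr) = \Ocomp\bigl(\sqrt{n|u_{i_0}u_{i_0+1}|}/|u|\bigr)$.
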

\begin{proof}
We first consider the expression powers of $u$.

Consider first a fragment $v^J \beta u_h \alpha u^I$
from which an expression power $u^\phi$ is formed during the $u$-reduction.
Suppose first that to the right of this fragment there is another fragment.
Thus $u^\phi$ was formed only from $\beta u_h \alpha$.

Consider the word $\beta u_h' \alpha'$ obtained after the reduction of  $u_h \alpha$,
i.e.\ $u_h' \alpha' = \nf(u_h \alpha)$ and $u_h'$ was obtained from $u_h$ while $\alpha'$ from $\alpha$.
The $u$-reduction will result in $I + c$,
where $c$ is the $u$-power suffix of $\nf(\beta u_h' \alpha')$.
By Lemma~\ref{lem:sum_of_powers} we obtain that
$|c - (c_\alpha + c_u + c_\beta)|\leq 2$,
where $c_{\alpha}, c_u, c_{\beta}$ are lengths of some maximal powers in
$\alpha'$, $u_h'$ and $\beta$,
moreover, $u^{c_\alpha}$ is a suffix of $\alpha$ or $c_\alpha = 0$.
As $\alpha$ does not have a suffix $u$ nor $\ov u$,
we conclude that $c_\alpha = 0$.
As $|\beta| \leq |u_{i_0}u_{i_0+1}|$ by Lemma~\ref{lem:solutions_superset},
from Lemma~\ref{lem:different_powers_in_a word} there are at most 
$\sqrt{5|u_{i_0}u_{i_0+1}|/|u|}$ possible values of $c_\beta$.
Consider the possible values of $c_u$, over all $u_h'$.
Clearly, the sum of lengths of all $u_h'$ is at most $n$.
By Lemma~\ref{lem:different_powers_in_a word}
there are at most $\sqrt{5n/|u|}$ choices for $c_u$,
for all possible $u_h'$.
Taking into account the $5$ possible choices for the difference between
$c$ and $c_u + c_\alpha$
we get that there are at most
\begin{equation*}
5 \cdot \sqrt{5|u_{i_0}u_{i_0+1}|/|u|} \cdot \sqrt{5n/|u|}
= 
25 \frac{\sqrt{n|u_{i_0}u_{i_0+1}|}}{|u|}
\end{equation*}
possible values of $c$ and so this number of expressions $\phi$.

So suppose now that to the right of $v^J \beta u_h \alpha u^I$ there is a trivial fragment,
by Lemma~\ref{lem:preprocessing} it is equal to $\nf(f_{h+1}\cdots f_{h+k})$ for some $k >0$
and it is of length at most $|u_{h+1}\cdots u_{h+k}|$.
Observe after the $u$-reduction the expression is equal to
\begin{equation*}
I + c+ c_{\nf}' \enspace ,
\end{equation*}
where $c$ is the $u$-power suffix of $\nf(\beta u'_h \alpha)$
and $c_{\nf}'$ is almost the exponent of $u$-power prefix of $\nf(f_{h+1}\cdots f_{h+k})$:
the $u$-power prefix and $u$-power suffix of $\nf(\beta u'_h \alpha)$ could overlap,
but due to Lemma~\ref{lem:u-power_pref_and_suff_overlap} they overlap by less than $|u|$ letters
(as otherwise $\nf(\beta u'_h \alpha)$ would be a power of $u$,
which does not hold by Lemma~\ref{lem:preprocessing}).
We arbitrarily choose which $u$-parametric word to extend,
so $|c_{\nf}' - c_{\nf}|\leq 1$, where $c_{\nf}$ is the exponent of the $u$-power prefix of $\nf(f_{h+1}\cdots f_{h+k})$.
Note that $u^{c + c_{\nf}}$ is a maximal power in 
$\nf(\beta u'_h \alpha' f_{h+1}\cdots f_{h+k})$.
However, there is a slight problem with estimating the possibilities for $c + c_{\nf}$,
as now we cannot claim that the power coming from $\alpha'$ is trivial.
To deal with this technicality, we make a slight case distinction,
depending on $u'_h \alpha'$.

Consider the $u$-power suffix of $u'_h \alpha'$.
If it is $\varepsilon$ then by Lemma~\ref{lem:sum_of_powers}
if $c$ is the $u$-power suffix of $\beta u_h' \alpha'$
then $|c - c_\beta| \leq 1$ for some maximal power $u^{c_\beta}$ of $\beta$,
and there are at most $\sqrt{5|u_{i_0}u_{i_0+1}|/|u|}$
possible values of $c_\beta$.
Concerning $c_{\nf}'$,
there are at most  $\sqrt{5n/|u|}$ choices for $c_{\nf}$,
the $u$-power prefix of $\nf(f_{h+1}\cdots f_{h+k})$,
over all trivial fragments in total
(as the length of all trivial fragments is at most $n$).
Now
\begin{equation*}
c + c_{\nf}' = c_\beta + c_{\nf} + (c - c_\beta) + (c_{\nf}' - c_{\nf}) \enspace .
\end{equation*}
As $|(c - c_\beta) + (c_{\nf}' - c_{\nf})| \leq 2$,
the total number of different integer expressions is at most
\begin{equation}
\label{eq:different_exponenets_1}
\underbrace{5}_{(c - c_\beta) + (c_{\nf}' - c_{\nf})}
	\cdot
\underbrace{\sqrt{5|u_{i_0}u_{i_0+1}|/|u|}}_{c_{\beta}}
	\cdot
\underbrace{\sqrt{5n/|u|}}_{c_{\nf}} = 
25 \frac{\sqrt{|u_{i_0}u_{i_0+1}|n}}{|u|} \enspace .
\end{equation}

So consider the other case, when the $u$-power suffix $u^{c_{u,\alpha}}$ of $u_h' \alpha'$
is not $\varepsilon$,
we consider the case when $c_{u,\alpha} > 0$, the other one is analogous.
Note, that as $\alpha'$ does not end with $u$ nor $\ov u$,
the fact that $c_\alpha \neq 0$ implies that $|\alpha '|<|u|$.
Let $u_h'' = \nf(u_h'  \alpha' \ov u)$;
as $c_{u,\alpha} > 0$, this whole $\ov u$ reduces, so also the whole $\alpha '$
reduces as well, as $|\alpha'| < |u|$.
Thus $|u_h''| < |u_h'| \leq |u_h|$.
Let $c_{u,\alpha}'$ be the length of $u$-power suffix of $u_h''$,
then $|c_{u,\alpha} - c_{u,\alpha}'| = 1$.
Recall that $c$ is the $u$-power suffix of $\nf(\beta u_h' \alpha')$,
by Lemma~\ref{lem:sum_of_powers}
\begin{equation*}
|c - c_{u,\alpha} - c_\beta| \leq 2
\end{equation*}
for some maximal power $u^{c_\beta}$ in $\beta$.
Switching from $c_{u,\alpha}$ to $c_{u,\alpha}'$ we get
(note that the bound holds also for $c_{u, \alpha} < 0$):
\begin{equation*}
|c - c_{u,\alpha}' - c_\beta| \leq 3 \enspace.
\end{equation*}

As in the previous case, let $c_{\nf}$ denotes the $u$-power prefix of $\nf(f_{h+1}\cdots f_{h+k})$, then $|c_{\nf} - c_{\nf}'| \leq 1$.
Observe that $u^{c_{\nf} + c_{u,\alpha}'}$ is a maximal $u$-power in
$\nf(u_h''f_{h+1}\cdots f_{h+k})$.
Let us estimate, how many possible values of $c_{\nf} + c_{u,\alpha}'$
are there.
Observe that
\begin{equation*}
|\nf(u_h''f_{h+1}\cdots f_{h+k})| \leq |u_h| + \sum_{\ell = 1}^k |u_{h+\ell}| = \sum_{\ell = 0}^k |u_{h+\ell}| \enspace .
\end{equation*}
Thus, when summing over all such fragments $u_h$ and neighboring trivial fragment,
the sum of all lengths is at most $n$.
Thus there are at most
$
\sqrt{5n/|u|}
$
possible values of $c_{\nf} + c_{u,\alpha}'$.
Also, there are 
$
\sqrt{5|u_{i_0}u_{i_0+1}|/|u|}
$
possible values of $c_\beta$.
So there are at most
\begin{equation*}
5 \frac{\sqrt{n|u_{i_0}u_{i_0+1}|}}{|u|}
\end{equation*}
possible values of $c_{\nf} + c_{u,\alpha'} + c$.
Recalling that $|c-c_{u, \alpha}'-c_\beta| \leq 3$ and $|c_{\nf} - c_{\nf}'|\leq 1$
we obtain that there are at most

\begin{equation}
\label{eq:different_exponenets_2}
\underbrace{9}_{|c-c_h-c_\beta| + |c_{\nf} - c_{\nf}'|}
	\cdot
5 \frac{\sqrt{n|u_{i_0}u_{i_0+1}|}}{|u|} = 45\frac{\sqrt{|u_{i_0}u_{i_0+1}|n}}{|u|}
\end{equation}
different possible values of $c + c_{\nf}'$, and so also different possible integer expressions.
Note that the first case, when to the other side of $u^I$ there is a $v$-parametric power
yields the same exponent when we choose a trivial power from a trivial fragment.
And this is accounted for in~\eqref{eq:different_exponenets_1}--\eqref{eq:different_exponenets_2}. 
So it is enough to sum~\eqref{eq:different_exponenets_1}--\eqref{eq:different_exponenets_2}
yielding that there are at most
\begin{equation*}
70 \frac{\sqrt{|u_{i_0}u_{i_0+1}|n}}{|u|}
\end{equation*}
different expression powers for this type of fragment.

The analysis for the fragment $\ov u ^I \ov \alpha u_h \ov \beta \ov v ^J$
is the same, and so are the bounds.

For a fragment $\ov u ^I \ov \alpha u_h \alpha u ^I$
observe that $\nf(\ov \alpha u_h \alpha)$ is not a power of $u$,
as otherwise we would have replaced the fragment by a~trivial one in the preprocessing.
The analysis is otherwise similar, with the role of $\beta$ taken by $\alpha$ for one of the parametric powers and by $\ov \alpha$ for the other.
Note however, that the $u$-power prefix and $u$-power suffix of $\ov u ^I \ov \alpha u_h \alpha u ^I$
could overlap,
so we need additionally take into account that there is additional difference $1$ between
the true $u$-power prefix/suffix and the one used in the $u$-reduction.
Hence the number of different integer expressions in this case is at most
\begin{equation*}
7 \cdot \sqrt{5|u_{i_0}u_{i_0+1}|/|u|} \cdot \sqrt{5n/|u|}
+
11 \cdot 5 \frac{\sqrt{n|u_{i_0}u_{i_0+1}|}}{|u|}
= 90
\frac{\sqrt{|u_{i_0}u_{i_0+1}|n}}{|u|} \enspace .
\end{equation*}
Summing up:
\begin{equation*}
(70+70+90)\frac{\sqrt{|u_{i_0}u_{i_0+1}|n}}{|u|} = 
230 \frac{\sqrt{|u_{i_0}u_{i_0+1}|n}}{|u|} \enspace .
\end{equation*}

Let us consider the running time bounds.
Note that the parametric powers are explicitly given in a parametric word that is $\Ocomp(m)$-represented,
so we can get the appropriate list in $\Ocomp(m)$ time.
To sort and remove the duplicates it is enough to observe
that the constant size in each of the expression is $\Ocomp(n/|u|)$:
note that $\beta, u_h, \alpha \leq n$ and also each trivial fragment is of length at most $n$,
hence the constant is at most $4 n/|u|$.
Hence we can use counting-sort:
create a bit table to represent each constant between $-8n/|u|$ and $8n/|u|$
(separately for integer expression with $-I$ and with $I$)
and mark each length in the set and then gather the set of obtained constants,
in time $\Ocomp(n/|u|)$.

If the $v$-reduction were done as first,
then the analysis for $v$ would be the same,
with the only difference being the division by $|v|$ and not $|u|$.
However, in case of $v^J\beta u_h \alpha u^I$ ($\ov u^I \ov \alpha u_h \ov \beta \ov v ^J$)
it could be that $v$-power prefix ($v$-power suffix, respectively) overlaps the 
$u$-power suffix ($u$-power prefix, respectively) that was already $u$-reduced;
the same can happen for trivial fragment.
It was already described in Lemma~\ref{lem:how_many_different_powers_simple}
that the difference between the $v$-power prefix and the word used for $v$-reduction has length at most $2|v|$ (from each side),
which increases the number of possible expressions by a factor of maximally $9$.
The running time bound is shown as for $u$.
\end{proof}

We can use Lemma~\ref{lem:pseudosolution_simple}
together with bounds on the number of different exponents in parametric powers from 
Lemma~\ref{lem:how_many_different_powers} to limit the possible candidates
$(i,j)$ for a solution.
However, these bounds are either on $i$ or on $j$.
And as soon as we fix, say, $J = j$ and substitute it to $W$,
the obtained parametric word $W(I,j)$ (or $W(i,J)$) is more complex
than $W$, in particular, we do not have the bounds of Lemma~\ref{lem:how_many_different_powers} for it,
so the set of possible candidates for $i$ for a given $W(I,j)$ is linear,
which is too much for the desired running time.

Instead, we analyze (as a mental experiment) $W(I, j)$:
Fix $j \in \mathbb Z$ such that  $W(i,j) \eqg \varepsilon$ for some $i$.
Compute $W(I, j)$, $u$-reduce it, call the resulting parametric word $W_{J = j}$.
If~$W_{J = j} = \varepsilon$, then clearly for each $i$ the $(i,j)$ is a solution of~\eqref{eq:parametric_equation} (and vice-versa, see~Lemma~\ref{lem:parametric_solution_almost_0}). 
It can be shown that in this case for some $v^\psi$ in $W_{J = j}$ it holds that $|\psi(j)| < 6$:
at least some two $u$-parametric powers in $W$ should be merged in $W_{J=j}$,
in $W$ they are separated by a~$v$-parametric power, say $v^\psi$.
All letters of $v^{\psi(j)}$ are $u$-reduced, then standard arguments using periodicity show that $|\psi(j)| < 6$
so we can compute all candidates for such $j$s
and test for each one whether indeed $W_{J = j} = \varepsilon$,
this is formally stated in Lemma~\ref{lem:dissapearing_v_powers}.

If $W_{J=j}$ depends on $I$ then from Lemma~\ref{lem:parametric_solution_almost_0} for
some of the (new) $u$-parametric powers $u^\phi$ it holds that $|\phi(i)| < 6$.
Consider, how this $\phi$ was created.
It could be that it is (almost) unaffected by the second $u$-reduction
and so it is (almost) one of the $u$-parametric powers in $W$,
see Lemma~\ref{lem:how_many_different_powers_second_reduction}
for precise formulation and sketch of proof,
in which case we can use Lemma~\ref{lem:how_many_different_powers}.
Intuitively, $u^\phi$ is affected if the whole two parametric powers in $W$ were used to create $u^\phi$.
Then it can be shown that some $v$-parametric power $v^\psi$ from $W$
turned into $v$-power $v^{\psi(j)}$ satisfies $|\psi(j)| < 6$
and is $u$-reduced to $u^\phi$,
the argument is as before, when $W_{J = j} \eqg \varepsilon$.
Moreover, this occurrence of $v^\psi$ also determines $u^\phi$;
hence the choice of $\psi$ determines $\Ocomp(1)$ candidates for $j$,
uniquely identifies $\phi$ and $i$ satisfies $|\phi(i)| < 6$,
i.e.~there are $\Ocomp(1)$ candidates for $(i,j)$.
Then Lemma~\ref{lem:how_many_different_powers_simple} is applied to this $v^\psi$:
if it is one of $n/|v|$ occurrences of $v$-parametric powers
then we get $\Ocomp(1)$ candidates for $(i,j)$ (for this $\psi$),
so~$\Ocomp(n/|v|)$ in total, over all choices of such $\psi$.
Otherwise, $\psi$ it is one of $\Ocomp(1)$ integer expressions (Lemma~\ref{lem:how_many_different_powers_simple})
and so $j$ is from $\Ocomp(1)$-size set and we can compute and consider $W_{J = j}$ for each one of them separately.

A similar analysis applies also to $i \in \mathbb Z$ substituted for $I$.
The results are formalized in~the~Lemma~\ref{lem:indices_sets_description} below,
its proof is spread across a couple of Lemmata.

\begin{lemma}
\label{lem:indices_sets_description}
Given equation~\eqref{eq:parametric_equation} we can compute in $\Ocomp(m n/|u|)$ time sets
$S_I, S_J, S_{\mathbb Z,J} \subseteq \mathbb Z$ and 
$S_{I,J} \subseteq \mathbb Z^2$,
where $|S_I| = \Ocomp({\sqrt{n|u_{i_0}u_{i_0+1}|}}/{|u|})$,
$|S_J| = \Ocomp(1)$, $|S_{\mathbb Z,J}|, |S_{I,J}| = \Ocomp(n/|u|)$,
such that:
if $(i,j)$ is a solution of~\eqref{eq:parametric_equation} then
at least one of the following holds:
\begin{itemize}
\item $i \in S_I$ or
\item $j \in S_J$ or
\item $j \in S_{\mathbb Z,J}$ and for each $i'$ the $(i',j)$ is a solution or
\item $(i,j) \in S_{I,J}$.
\end{itemize}
Similarly,
given equation~\eqref{eq:parametric_equation} we can compute
in $\Ocomp(m n/|v|)$ time
sets $S_{I}', S_{J}', S_{I, \mathbb Z}' \subseteq \mathbb Z$ and 
$S_{I,J}' \subseteq \mathbb Z^2$,
where $|S_{I}'| = \Ocomp(1)$,
$|S_{J}'| = \Ocomp({\sqrt{n|u_{i_0}u_{i_0+1}|}}/{|v|})$ 
$|S_{I, \mathbb Z}'|, |S_{I,J}'| = \Ocomp(n/|v|)$
such that at if $(i,j)$ is a solution of~\eqref{eq:parametric_equation}
then least one of the following holds:
\begin{itemize}
\item $i \in S_{I}'$ or;
\item $i \in S_{I,\mathbb Z}'$ and for each $j' \in \mathbb Z$ the $(i,j')$ is a solution or;
\item $j \in S_{J}'$ or;
\item $(i,j) \in S_{I,J}'$.
\end{itemize}
\end{lemma}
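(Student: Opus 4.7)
My plan is to analyze, for each candidate solution $(i,j)$, the ``mental experiment'' of substituting $J = j$ into $W$ and performing a second $u$-reduction, producing a parametric word $W_{J=j}$ depending only on $I$. Since $W$ is already $u$-reduced, any further $u$-reduction can only happen when a $v$-parametric power $v^\psi$ between two consecutive $u$-parametric powers in $W$ is fully consumed, causing those $u$-parametric powers to merge. A periodicity argument along the lines of Lemma~\ref{lem:no_long_self_reduction_after_reduction}, combined with the $\Ocomp(|u|+|v|)$ overlap bound from Lemma~\ref{lem:different_runs_overlap} (valid because $u \not\shift v$), shows that such a merging forces $|\psi(j)| < 6$.

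Applying Lemma~\ref{lem:parametric_solution_almost_0} to $W_{J=j}$ splits the analysis. Either $W_{J=j} = \varepsilon$, in which case every $(i',j)$ is a solution and $j$ is placed in $S_{\mathbb{Z},J}$; or $W_{J=j}$ contains some $u$-parametric power $u^\phi$ with $|\phi(i)| \le 3$. For the first case, with $W \not\eqg \varepsilon$, at least one merging must have occurred, so some $v^\psi$ of $W$ satisfies $|\psi(j)| < 6$. Invoking Lemma~\ref{lem:how_many_different_powers_simple}, either $\psi$ is one of the $\Ocomp(n/|v|)$ rare exponents (contributing $\Ocomp(n/|v|) \subseteq \Ocomp(n/|u|)$ values of $j$, since $|v| \ge |u|$ by the standing assumption of the case $u \not\shift v$), which populates $S_{\mathbb{Z},J}$; or $\psi$ lies in the $\Ocomp(1)$ common set, giving $\Ocomp(1)$ values of $j$ to be absorbed into $S_J$.

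In the second case I subdivide by how $\phi$ was produced during the second $u$-reduction. If $u^\phi$ is essentially inherited from a $u$-parametric power already present in $W$ (no whole adjacent parametric power was consumed in its formation), Lemma~\ref{lem:how_many_different_powers} bounds the number of distinct such exponents by $\Ocomp(\sqrt{n|u_{i_0}u_{i_0+1}|}/|u|)$, and the constraint $|\phi(i)| \le 3$ then yields $\Ocomp(1)$ values of $i$ per exponent, populating $S_I$. Otherwise $\phi$ arose from merging across some $v^\psi$ in $W$ with $|\psi(j)| < 6$, and this occurrence of $\psi$ determines both the merged exponent $\phi$ and $\Ocomp(1)$ candidates for $j$; applying Lemma~\ref{lem:how_many_different_powers_simple} to $\psi$ yields either $\Ocomp(n/|v|)$ rare exponents, each producing $\Ocomp(1)$ candidate pairs $(i,j)$ for $S_{I,J}$, or $\Ocomp(1)$ common exponents contributing additional candidates to $S_J$. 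The symmetric primed statement is obtained by the mirror experiment of substituting $I = i$ and $v$-reducing, exchanging the roles of $u,v$ and $I,J$ throughout.

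The main obstacle is formalizing precisely when a $u$-parametric power in $W_{J=j}$ ``essentially corresponds'' to an original one in $W$ versus when it is a genuine merge; I expect this to be the content of the auxiliary Lemma~\ref{lem:how_many_different_powers_second_reduction}, which must state that either the new exponent differs from an old one by a bounded offset contributed by a short adjacent $v^{\psi(j)}$, or an entire original $u$-parametric power is absorbed, forcing the separating $v^\psi$ to satisfy $|\psi(j)| < 6$. Once this structural dichotomy is established, the running time $\Ocomp(mn/|u|)$ follows because each substitution-and-$u$-reduction takes $\Ocomp(m)$ time on the $\Ocomp(m)$-represented $W$ via Lemma~\ref{lem:solution_testing}, we carry it out for the $\Ocomp(n/|u|)$ candidate values of $j$ arising from the analysis, and each candidate set is sorted and deduplicated by counting sort exactly as in Lemma~\ref{lem:how_many_different_powers}.
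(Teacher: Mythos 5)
Your proposal follows essentially the same route as the paper's proof: the mental-experiment substitution $J=j$ followed by a second $u$-reduction, application of Lemma~\ref{lem:parametric_solution_almost_0} to conclude some exponent in $W_{J=j}$ is bounded, and then the dichotomy between ``inherited/not affected'' parametric powers (handled via Lemma~\ref{lem:how_many_different_powers_second_reduction}, which you correctly identify as the place where the offset due to a short residual $v^{\psi(j)}$ is absorbed) and ``merged/affected'' ones (where a separating $v^\psi$ with $|\psi(j)|<6$ is identified, and Lemma~\ref{lem:how_many_different_powers_simple} splits $\psi$ into $\Ocomp(1)$ common exponents feeding $S_J$ versus $\Ocomp(n/|v|)$ rare occurrences feeding $S_{\mathbb Z,J}$ or $S_{I,J}$ depending on whether $\phi=0$). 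This matches the paper's case analysis, which formalizes the ``affected'' notion explicitly and routes it through Lemmata~\ref{lem:dissapearing_v_powers} and~\ref{lem:how_many_different_powers_second_reduction}; your deferral of that formalization and your closing assessment of what the auxiliary lemma must assert are accurate.
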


As noted above, the main distinction is whether the $u^\phi$ in $W_{J=j}$ was ``affected'' or not during the second $u$-reduction.
Let us formalize this.
Given an occurrence of a parametric power $u^\phi$ in $W_{J = j}$
consider the largest subword $w$ of $W$ such that each letter in $w(I,j)$
is either reduced or $u$-reduced to this $u^\phi$; note that this may depend on the order of reductions, we fix an arbitrary order.
We say that parametric powers in $w$ are \emph{merged} to $u^\phi$.
We extend this notion also to the case when $W_{J = j} = \varepsilon$,
in which case $W = w$ and every parametric power is merged to the same parametric power $u^0$.
A similar notion is defined also for parametric powers of $v$.
Note that a parametric power is not merged to two different parametric powers $u^\phi$
and $u^{\phi'}$.

\begin{lemma}
\label{lem:merged_to_one}
For any parametric power in $W$ there is at most one parametric power in $W_{J=j}$ to which it was merged; the same holds for $W_{I = i}$.
\end{lemma}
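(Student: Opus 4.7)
The plan is to show that the maximal subwords $W^{(\ell)}$ of $W$ attached to the distinct final $u$-parametric powers $u^{\phi_1},\ldots,u^{\phi_k}$ of $W_{J=j}$ are pairwise disjoint; once this is done, any parametric power of $W$ lies in at most one of them, and hence is merged to at most one parametric power of $W_{J=j}$. The symmetric claim for $W_{I=i}$ follows from the identical argument with the roles of $u,v$ and $I,J$ swapped, so I will treat only $W_{J=j}$.

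I would begin by fixing the order of $u$-reduction steps (as prescribed by the definition of merging) and building a tracing map $\mu$ on the letters of $W(I,j)$: each such letter is either reduced against its inverse, absorbed into exactly one of the parametric powers $u^{\phi_\ell}$ of $W_{J=j}$, or survives as a literal position in some constant block $w_\ell$ of $W_{J=j}$. With a fixed order this assignment is unambiguous. The next and main step is to extract, for each pair of consecutive parametric powers $u^{\phi_\ell}, u^{\phi_{\ell+1}}$ of $W_{J=j}$, a barrier position $p^\star$ of $W$ sitting strictly between the positions feeding $u^{\phi_\ell}$ and those feeding $u^{\phi_{\ell+1}}$, with $\mu(p^\star)$ neither ``reduced'' nor ``absorbed into $u^{\phi_\ell}$''. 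This is the point where $u$-reducedness of $W_{J=j}$ enters: the constant block $w_\ell$ between the two parametric powers must be nonempty, otherwise $W_{J=j}$ would contain the forbidden subword $u^{\phi_\ell}u^{\phi_{\ell+1}}$; any surviving letter inside $w_\ell$ traces through $\mu^{-1}$ to the desired $p^\star$.

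Once the barrier $p^\star$ is in hand, neither $W^{(\ell)}$ nor $W^{(\ell+1)}$ can cross it --- $p^\star$ violates the defining condition of both maximal subwords --- so they are disjoint; iterating over all $\ell$ yields the desired pairwise disjointness, and the degenerate case $W_{J=j}=\varepsilon$ is handled directly by the stated convention that every parametric power is merged to the single power $u^0$. I do not foresee a real obstacle, only one subtle case worth a remark: a parametric power $P$ of $W$ whose instantiated letters all cancel satisfies the ``reduced or $u$-reduced to $u^{\phi_\ell}$'' condition vacuously for every $\ell$, but the barriers $p^\star$ nonetheless partition the positions of $W$ into intervals, and $P$ occupies positions inside a single such interval, so it still lies in at most one $W^{(\ell)}$.
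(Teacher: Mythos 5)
Your proof is correct and establishes the same key claim as the paper --- pairwise disjointness of the maximal subwords attached to distinct parametric powers of $W_{J=j}$ --- but by a slightly different mechanism. The paper argues by contradiction directly from an assumed overlap: if $w_1 = w_1'w$ and $w_2 = ww_2''$ overlap in $w$, then every letter of $w(I,j)$ must be reduced (a letter cannot be $u$-reduced to two distinct powers), so $w(I,j)\eqg\varepsilon$; hence $w_1'ww_2''(I,j)$ reduces to a product of two adjacent $u$-parametric powers, which the greedy $u$-reduction would have merged, contradicting their distinctness in $W_{J=j}$. You instead construct an explicit separator: $u$-reducedness of $W_{J=j}$ forces a nonempty constant block between consecutive parametric powers, and any surviving letter of that block traces back (via well-nestedness of reduction, which you invoke without spelling out, as does the paper) to a position of $W$ that neither maximal subword can contain, hence a barrier. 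Both arguments hinge on the same underlying property --- that $W_{J=j}$ is the output of a \emph{completed} $u$-reduction --- once deployed as ``adjacent $u$-powers would have been merged'' (paper), once as ``a literal letter survives between them'' (yours). Your closing remark about parametric powers of $W$ whose instantiated letters all cancel is a good catch and worth keeping: such a power satisfies the ``reduced or $u$-reduced to $u^{\phi_\ell}$'' condition vacuously with respect to every $\ell$, and it is precisely the interval partition induced by the barriers that prevents it from being counted in more than one $W^{(\ell)}$.
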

\begin{proof}
Let $u^{\phi_1}, u^{\phi_2}$ be two different parametric powers in $W_{J = j}$.
For $u^{\phi_1}$ there is a unique maximal subword $w_1$ of $W(I,j)$ such that each letter in $w$ was either reduced are $u$-reduced to $u^{\phi_1}$
during the creation of $W_{J = j}$;
define $w_2$ similarly for $u^{\phi_2}$. We claim that $w_1, w_3$ are disjoint.
If they overlap, then together they form $w_1'ww_2''$, where $w_1 = w_1'w$ and $w_2 = ww_2''$.
Then $w \eqg \varepsilon$, as it cannot be that a letter is $u$-reduced to both $u^{\phi_1}$ and $u^{\phi_2}$,
so $w_1', w_2''$ are both equivalent to $u$-parametric powers,
hence also $w_1' ww_2''$ is equivalent to a $u$-parametric power,
and so $u^{\phi_1}$ and $u^{\phi_2}$ were $u$-reduced to one $u$-parametric power, contradiction.
\end{proof}

We say that a $u$-parametric power $u^{\phi}$ in $W_{J = j}$ \emph{was affected} by substitution $J = j$
if
\begin{itemize}
\item more than one parametric power was merged to $u^\phi$ \emph{or}
\item for the unique $u$-parametric power $u^{\phi'}$ merged to $u^\phi$
there is a $v$-parametric power $v^{\psi'}$ such that $|\psi'(j)| < 6$ and there is no $u$-parametric power between $u^{\phi'}$ and $v^{\psi'}$.
\end{itemize}
The intuition behind the first condition is that when we merge two $u$-powers then we create a completely new parametric power,
for the second condition, when $|\psi'(j)| < 6$ then $v^{\psi'(j)}$ no longer behaves like $v^{\psi'}$ and can either be wholly merged to a $u$-power
or be canceled by a~trivial fragment, which can also lead to a large modification of the neighbouring $u$-parametric power.
Note that the second condition could be made more restrictive, but the current formulation is good enough for our purposes.

We first investigate the case, when the parametric power was affected by a substitution.

\begin{lemma}
\label{lem:dissapearing_v_powers}
In $\Ocomp(m n/|v|)$ time we can compute and sort
sets $S_J, S_{E,J}$, where $|S_J| = \Ocomp(1)$ and $|S_{E,J}| = \Ocomp(n/|v|)$,
such that for each occurrence of a $u$-parametric power $u^\phi$ in $W_{J = j}$
affected by the substitution $J = j$ either $j \in S_J$ or $(\phi,j) \in S_{E, J}$.

Similarly, in time $\Ocomp(m n/|u|)$ we can compute and sort sets 
$S_I', S_{I,E}$, where $|S_I'| = \Ocomp(1)$ and $|S_{I,E}| = \Ocomp(n/|u|)$,
such that for each occurrence of a $v$-parametric power $v^\psi$ in $W_{I = i}$
affected by the substitution $I = i$ either $i \in S_I'$ or $(i,\psi) \in S_{I,E}$.
\end{lemma}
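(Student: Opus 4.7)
The plan is to exhibit, for every affected $u^\phi$ in $W_{J=j}$, a witness $v$-parametric power $v^{\psi'}$ occurring in $W$ with $|\psi'(j)| < 6$ such that the pair (position of $v^{\psi'}$, $j$) uniquely determines the occurrence $u^\phi$, and then to enumerate candidates by classifying $\psi'$ via Lemma~\ref{lem:how_many_different_powers_simple}.

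The first step is producing the witness. In Case (ii) of the definition of \emph{affected}, the power $v^{\psi'}$ is given directly with $|\psi'(j)| < 6$, and the unique $u^{\phi'}$ adjacent to it in $W$ fixes $u^\phi$ by tracking $u^{\phi'}$ through the $u$-reduction. In Case (i), where at least two parametric powers of $W$ are merged into $u^\phi$, the merged block in $W$ must contain an intermediate $v$-parametric power, because $W$ is $u$-reduced and so two $u$-parametric powers cannot become adjacent after deleting only trivial fragments without an intervening $v$-parametric power. After substituting $J=j$, the material strictly between the outermost merged $u$-parametric powers must $u$-reduce to a single $u$-power $u^c$, yielding an equality $T_1 v^{\psi'_1(j)} T_2 \cdots v^{\psi'_{k-1}(j)} T_k \eqg u^c$ in the free group. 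The main technical step is then to apply Lemma~\ref{lem:pseudosolution_simple} to this equality, taking the $v$-powers as the $u_i$ factors, and to invoke Lemma~\ref{lem:parametric_pseudosolution_almost_0}, using that the trivial fragments $T_\ell$ neither begin nor end with $v$ nor $\ov v$ (because $W$ is $v$-reduced and any such prefix or suffix would have been absorbed during the $v$-reduction). This produces an index $\ell$ with $|\psi'_\ell(j)| \leq 3 < 6$.

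Next, the counting. Lemma~\ref{lem:how_many_different_powers_simple} applied to the $v$-parametric powers of $W$ gives a set $S$ of $\Ocomp(1)$ common integer expressions together with a residual collection of $\Ocomp(n/|v|)$ exceptional occurrences whose exponents lie outside $S$. For each $\psi' \in S$ the inequality $|\psi'(j)| < 6$ has $\Ocomp(1)$ integer solutions, and their union forms $S_J$ with $|S_J| = \Ocomp(1)$. For each exceptional occurrence of $v^{\psi'}$ the inequality has $\Ocomp(1)$ solutions $j$; for each such $j$ the containing $u^\phi$ is determined by the position of $v^{\psi'}$ in $W$, yielding one pair $(\phi, j)$ and hence $|S_{E,J}| = \Ocomp(n/|v|)$ pairs in total.

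Algorithmically, I would first compute $W$ and the common/exceptional classification in $\Ocomp(m + n/|v|)$ time via Lemma~\ref{lem:how_many_different_powers_simple}. The exceptional occurrences determine $\Ocomp(n/|v|)$ distinct candidate values of $j$; for each such $j$ I would construct $W_{J=j}$ and $u$-reduce it in $\Ocomp(m)$ time using the data structure of Lemmata~\ref{lem:solution_testing}--\ref{lem:solution_testing_2}, then read off the $u^\phi$ containing each exceptional $v^{\psi'}$ with $|\psi'(j)| < 6$. The total work is $\Ocomp(m \cdot n/|v|)$. Both output sets are sorted by counting sort since the integer constants appearing in $\phi$ are $\Ocomp(n/|u|)$-bounded, as in the proof of Lemma~\ref{lem:how_many_different_powers}. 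The symmetric statement for $v^\psi$ in $W_{I=i}$ is proved by interchanging the roles of $u$ and $v$, noting that the bound from Lemma~\ref{lem:parametric_pseudosolution_almost_0} invoked in Step~1 does not depend on the relative sizes of $|u|$ and $|v|$.
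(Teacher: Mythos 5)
Your high-level plan (split on the two reasons for being affected, exhibit a witness $v^{\psi'}$ with $|\psi'(j)| < 6$, then route through Lemma~\ref{lem:how_many_different_powers_simple} to classify $\psi'$ and pin down $u^\phi$) matches the paper, and the counting and algorithmic steps are correct. However, the key technical step in Case~(i) has two genuine gaps.

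First, "more than one parametric power merged to $u^\phi$" does not imply two $u$-parametric powers are merged: the merged block can contain exactly one $u$-parametric power together with one or more $v$-parametric powers (some $v^{\psi'(j)}$ is simply swallowed by reductions, with no second $u$-parametric power to close an equality). In that subcase there is no identity of the form $T_1 v^{\psi_1'(j)} \cdots T_k \eqg u^c$ available, so the pseudosolution argument cannot start. The paper deliberately handles this separately, arguing directly that the swallowed $v$-run would have to overlap a $u$-run by more than $2|v| \geq |u|+|v|$, contradicting Lemma~\ref{lem:different_runs_overlap}.

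Second, the symmetric claim for $W_{I=i}$ is not obtained by simply interchanging $u$ and $v$. The reason is precisely the inequality $|v| \geq |u|$: a $u$-run of length a small multiple of $|u|$ can sit entirely inside a $v$-run of comparable length (indeed $u$ or $\ov u$ may be a prefix of $v$), so the "remaining" letters of $u^{\phi_\ell(i)}$ do not produce a contradiction via Lemma~\ref{lem:different_runs_overlap}, and the boundary word $\ov{v^c}T_1$ (or $T_k\ov{v^c}$) in your equality may begin or end with $u$ or $\ov u$, blocking the application of Lemma~\ref{lem:parametric_pseudosolution_almost_0} at $\ell = 1$ or $\ell = k$. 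The claim that Lemma~\ref{lem:parametric_pseudosolution_almost_0}'s conclusion $|k| \leq 3$ is size-agnostic is true but beside the point: the hypothesis that the separating words avoid $s$ and $\ov s$ as prefix/suffix is exactly what fails here. This asymmetry is why the paper's proof of the second claim is substantially longer and proceeds by a four-way case analysis of how $u^{\phi_k}$ arose and what the $u$-power prefix/suffix of a nearby $v$-power can look like. Your proof would need to supply an argument of that strength; the symmetry appeal does not.
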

The sketch of the argument was given above Lemma~\ref{lem:indices_sets_description}.
Concerning the running time, the appropriate exponents are identified during the $u$-reduction and $v$-reduction,
which are performed in given times using the data structure.

\begin{proof}
We first give the proof of the first claim, the second in slightly more involved.

Consider the possible reasons why $u^\phi$ was affected.
Suppose that the second condition holds,
i.e.\ there is some $v^{\psi'}$ such that $|\psi'(j)| < 6$ and there is $u$-parametric power $u^{\phi'}$ merged to $u^\phi$
such that there is no $u$-parametric power between $v^{\psi'}$ and $u^{\phi'}$.
From Lemma~\ref{lem:how_many_different_powers_simple}
there is a constant-size set $E$ such that $\psi' \in E$
or this occurrence of $v^{\psi'}$ is one of $\Ocomp(n/|v|)$ occurrences of $v$-parametric powers in $W$.
In the first case this yields $11 \cdot |E| \in \Ocomp(1)$ many $j'$s such that $|\psi(j')| < 6$ for some $\psi \in E$
and this set can be computed in constant time given $E$;
these numbers are added to $S_J$.
In the other case, this occurrence of $v^{\psi'}$ is one of $\Ocomp(n/|v|)$ chosen occurrences of $v$-parametric powers.
Note that this occurrence of $v$-parametric power plus the value of $j$ plus the direction left/right uniquely defines the parametric power $u^\phi$ in $W_{J = j}$:
this is the unique power such that the first $u$-parametric power directly to the left/right of $v^{\psi'}$ was merged to
(note that $v^{\psi'}$ may be not merged to $u^\phi$).
So there are $\Ocomp(n/|v|)$ choices of $(\phi,j,\psi')$
so there are $\Ocomp(n/|v|)$ choices of $(\phi,j)$, we add them to $S_{E,J}$.
Those sets can be computed in $\Ocomp(m n/|v|)$ time:
after the $v$-reduction ($\Ocomp(m)$ time)
we choose one of the $\Ocomp(n/|v|)$ $v$-parametric powers $v^{\psi'}$
(they can be identified in $\Ocomp(m + n/|v|)$ time by Lemma~\ref{lem:how_many_different_powers_simple})
choose one of the value $j$ such that $|\psi'(j)| < 6$
and perform the second $u$-reduction for $J = j$ ($\Ocomp(m)$ time)
and identify $u^\phi$ to which this $v^\psi$ was merged to (nothing is done if it is not merged);
so we use $\Ocomp(m)$ time for each $\Ocomp(n/|v|)$ candidates.
When all candidates are computed,
we sort and remove the duplicates in $\Ocomp(m + n/|u|)$ time,
in the same way as in Lemma~\ref{lem:how_many_different_powers}.

The other reason why $u^\phi$ was affected is that more than one parametric power was merged to it.
We first show that also some $v$-parametric power was merged to $u^\phi$.
Suppose not, consider all $u$-parametric powers that are merged to $u^\phi$, there are at least two.
During the second $u$-reduction, consider the first moment,
when two $u$-parametric powers (from the chosen ones) are $u$-reduced (if this happens).
Then the word between them is a power of $u$ in $W_{J = j}$, but it was not in $W$.
Hence there was some $v$-parametric power $v$ inside.
Let the word between those two $u$-parametric powers (in $W$) be
$w = s_0 v^{\psi_1} s_1 v^{\psi_2} \cdots v^{\psi_k}s_k$,
then $w(j) \eqg u^\ell$ for some $\ell$.
If only one $u$-parametric power, say $u^{\phi'}$, was merged to $u^\phi$
then consider the maximal word to the left and right of $u^{\phi'}$ in $W$
that does not contain $u$-parametric power
and choose the one that contains a $v$-parametric power merged to $u^\phi$:
it has to exist, as at least two parametric powers were merged.
Suppose that it is to the left, the other case is symmetric.
Then this word is $s_0 v^{\psi_1} s_1 v^{\psi_2} \cdots v^{\psi_k}s_k$,
such that each letter in $v^{\psi_k}s_k)(j)$
was $u$-reduced to $v^{\psi_0}$ or reduced.
Note that this generalizes the previous case,
when the whole $(s_0 v^{\psi_1} s_1 v^{\psi_2} \cdots v^{\psi_k}s_k)(j)$
was $u$-merged to $v^\psi$.

The case when some $|\psi_\ell(j)| < 6$ for $\ell > 0$ was already covered,
so we may assume that $|\psi_\ell(j)| \geq 6$ for $\ell > 0$.
Suppose that some $v^{\psi_\ell}(j)$ for $\ell > 0$ was reduced to at most $2|v|$ letters
and consider the first such $v^{\psi_\ell}(j)$.
Then those reductions are within
$v^{\psi_\ell-1}(j) s_{\ell-1}v^{\psi_\ell}(j)s_{\ell} v^{\psi_\ell+1}(j)$
and from Lemma~\ref{lem:no_long_self_reduction_after_reduction}
the reduction in $v^{\psi_{\ell-1}}(j) s_{\ell-1} v^{\psi_\ell}(j)$ and $v^{\psi_\ell}(j) s_\ell v^{\psi_{\ell+1}}(j)$
have lengths smaller than $2|v|$.
Hence more than $2|v|$ letters remained from $v^{\psi_\ell}(j)$
and they are $u$-reduced to a $u$-parametric power.
But those $2|v|$ letters are both a $v$-run and an $u$-run, contradiction with Lemma~\ref{lem:different_runs_overlap}.
This ends the proof for $S_J, S_{E,J}$.

For the second claim (for $S_{I'}, S_{I,E}$) the analysis when some $u$-parametric power $u^{\phi'}$ satisfies $|\phi'(i)| < 6$ is similar to the one when $|\psi'(j)| < 6$ from the previous main case;
the case when two parametric powers were merged to $v^\psi$ is more involved.
Define $s_0 u^{\phi_1} s_1 u^{\phi_2} \cdots s_{k-1} u^{\phi_{k}} s_k$
similarly as in the previous case, i.e.\ as the maximal string to the left
of the $v$-parametric that is merged to $v^\psi$
and without another $v$-parametric power and such that the whole
$(u^{\phi_{k}} s_k)(i)$ is $v$-reduced to $v^\psi$
(and perhaps some other letters as well).
The case when some of $u^{\phi_1}(i), \ldots, u^{\phi_k}(i)$ 
has length less than $6|u|$ was already considered
above, so we assume that they all have length at least $6|u|$.
In particular, the reductions in $(s_0 u^{\phi_1} s_1 u^{\phi_2} \cdots s_{k-1} u^{\phi_{k}} s_k)(i)$
will reduce at most $2|u|$ letters in each $u^{\phi_1}(i), \ldots, u^{\phi_k}(i)$,
see Lemma~\ref{lem:no_long_self_reduction_after_reduction}.

In the following,
we consider the suffix of $\nf(s_0 u^{\phi_1}(i) s_1 \cdots s_{k-1} u^{\phi_k}(i)s_k)$ that was $v$-reduced to the $v$-power on the right.
We show that its $u$-power suffix is almost $u^{\phi_k}(i)$ on one hand
and a $u$-power suffix of one of $\ov v^2, \ov v, \varepsilon, v, v^2$.
This will give us a finite number of possible values of $i$
(as well as finite number of possible $\psi$).

Consider the parametric power $u^I$ (or $\ov u^I$) from which
$u^{\phi_k}$ was created and the neighboring $v^J$ (or $\ov v ^J$),
it could also be that there is no such parametric power.
There are the following cases:
\begin{enumerate}
\item this power is to the right of $u^{\phi_k}$; \label{case1}
\item $v^J$ is to the left of $u^I$; \label{case2}
\item there is no such $v^J$ nor $\ov v^J$, as this $\ov u^I$ is the first symbol in $W$ from~\eqref{eq:parametric_equation}; \label{case3}
\item this $v^J$ (or $\ov v^J$) was removed during the preprocessing,
i.e.\ it (and some other parametric powers as well as some letters)
were replaced with a trivial factor. \label{case4}
\end{enumerate}

In the first case let the $v$-parametric power to the right of $u^{\phi_k}$
be $v^{\psi_k}$, then is also merged to $v^\psi$.
Moreover, $s_{k} = \varepsilon$, as the $u^I$ and $v^J$,
which were $u$-reduced and $v$-reduced to $u^{\phi_k}$ and $v^{\psi_k}$, respectively, are next to each other.
Let $u^c$ be the $u$-power suffix of $\nf(s_0 u^{\phi_1}(i) s_1 \cdots s_{k-1} u^{\phi_k}(i))$,
we claim that $|\phi(i)-c| < 2$:
observe that the part of $u^{\phi_k}(i)$ that was not reduced has length at least $2|u|$,
so it is part of the $u$-power suffix.
If something was reduced from $u^{\phi_k}(i)$ then the $u$-power suffix is exactly
the $u$-power that is left from $u^{\phi_k}(i)$ after reducing:
we can make the reduction by first
computing $\nf(s_0 u^{\phi_1}(i) s_1 \cdots u^{\phi_{k-1}}(i) s_{k-1})$ and then reducing it
with $u^{\phi_k}(i)$ and less than $2|u|$ letters are reduced in $u^{\phi_k}(i)$,
see Lemma~\ref{lem:no_long_self_reduction_after_reduction}.
Moreover, if we reduce prefix $\gamma$ from $u^{\phi_k}(i)$ then
$\nf(s_0 u^{\phi_1}(i) s_1 \cdots u^{\phi_{k-1}}(i) s_{k-1})$ ends with $\ov \gamma$
and in order to extend $\nf(\ov \gamma u^{\phi_k}(i))$ we need to append letter from
$\gamma$, but this would imply that $\nf(s_0 u^{\phi_1}(i) s_1 \cdots u^{\phi_{k-1}}(i) s_{k-1})$
is not reduced.
When no letters in $u^{\phi_k}(i)$ are reduced on the left
then the $u$-power suffix (of $\nf(s_0 u^{\phi_1}(i) s_1 \cdots u^{\phi_{k-1}}(i) s_{k-1}u^{\phi_k(i)})$)
may be longer than $u^{\phi_k}(i)$.
As $s_{k-1}$ does not end with $u$ nor $\ov u$,
this $u$-power suffix uses less than $|u|$ letters of $s_{k-1}$.
It also use less than $|u|$ letters from $u^{\phi_{k-1}}(i)$:
consider the $u$-runs including the $u$-power suffix of
$\nf(s_0 u^{\phi_1}(i) s_1 \cdots u^{\phi_{k-1}}(i) s_{k-1} u^{\phi_k}(i))$
(which includes at least $2|u|$ letters from $u^{\phi_k}(i)$)
and the one of what is left after reducing of $u^{\phi_{k-1}}(i)$
(which also has length at least $2|u|$).
By Lemma~\ref{lem:different_runs_overlap} the overlap is of length smaller than $|u|$
or they are included in a longer $u$-run.
And they cannot be part of a longer run as $s_{k-1}$ is not a power of $u$.

Now let us look at the $v$-power that was $v$-merged to $v^{\psi}$
and its $u$-power suffix, call it $u^{c'}$.
Whatever is left from $u^{\phi_k}(i)$ after the reductions in
$\nf(s_0 u^{\phi_1}(i) s_1 \cdots u^{\phi_{k-1}}(i) s_{k-1}$ $u^{\phi_k}(i))$
is part of this $u$-power suffix, so $|\phi_k(i)| - |c'| \leq 1$.
On the other had, the length of this $u$-power suffix is not longer
than the length of the $u$-power suffix of the whole
$\nf(s_0 u^{\phi_1}(i) s_1 \cdots u^{\phi_{k-1}}(i) s_{k-1} u^{\phi_k}(i))$,
so $|c'| - |\phi_k(i)| \leq 1$.
Hence $|\phi_k(i) - c'| \leq 1$.
On the other hand, $u^{c'}$ is both a $v$-run and a $u$-run,
so it is of length less than $|u| + |v|\leq 2|v|$.
Hence $u^{c'}$ is the $u$-power suffix of one of $\ov v ^2, \ov v, \varepsilon, v$ or $v^2$.

Now, in order to compute the appropriate pairs $(i,\psi) \in S_{I,E}$ and $i \in S_I'$,
we proceed as follows. After the initial $u$-reduction and $v$-reduction
($\Ocomp()$),
we choose a $u$-parametric power $u^{\phi_k}$, which has the associated  $v^{\psi_k}$-parametric power next to it, say to the right.
Then for $\phi^{\bullet} \in \{\phi_k-1, \phi_k, \phi_k+1\}$
and $\ell$ as one of $5$ possible lengths of $u$-power suffixes
(of $\ov v ^2, \ov v, \varepsilon, v$ or $v^2$)
we compute $i$ such that $\phi^\bullet(i) = \ell$.
We can do this in time $\Ocomp(n/|u|)$:
separately for each $u$-parametric power that is one of $\Ocomp(n/|u|)$ from Lemma~\ref{lem:how_many_different_powers_simple}
and separately for $\Ocomp(1)$ parametric powers from the same Lemma.
In the latter case we add all such computed $i$s to $S_I'$,
clearly there are $\Ocomp(1)$ of them.
In the former case for each such computed $i$ we compute
$W_{I = i}$ ($\Ocomp(m)$ time) and identify the unique $v$-power $v^\psi$
to which $u^{\phi_k}$ was merged.
We add the pair $(i,\psi)$ to $S_{I, E}$.
The running time is $\Ocomp(mn/|u|)$, as claimed.

The second case, when the $v$ parametric power $\ov v ^J$ was directly to the left of $\ov u ^I$,
is analogous: note that in this case the considered word
$s_0 u^{\phi_1} s_1 \cdots u^{\phi_{k-1}} s_{k-1} u^{\phi_k} s_k$
is simply $u^{\phi_k} s_k$ and the whole $\nf(u^{\phi_k}(i) s_k)$ is $u$-reduced to the $v$-parametric power,
in particular it is a $v$-power.
We analyze its $u$-power prefix of a $v$-power, the analysis is similar (in fact: simpler)
as in the first case, where we analyzed the $u$-power suffix of a $v$-power.

In the third case, in which there is no neighboring $v$-parametric power of $u^I$ or $\ov u ^I$,
the case assumption implies that the power is $\ov u^I$ and it is the first symbol in $W$.
Hence the considered word is again $u^{\phi_k} s_k$
and the rest of the analysis is identical as in the second case.

The fourth case, in which the $v$-parametric power next to $u^I$ was removed
during the preprocessing, is a bit more general, though similar to the first one.
Observe that there are at most $2n/|v| \leq 2n/|u|$ such occurrences of $u$-parametric powers,
as a factor in which two $v$-parametric powers are removed has its word $u_k$ of length at least $|v|$.
Such $u$-parametric powers can be identified by the choice of appropriate $v$-power that is removed in the preprocessing.
Recall that $u^{\phi_k}(i)$ reduces less $2|u|$ letters to the left
and less than $|u|$ to the right,
as $s_k$ does not begin with $u$ nor $\ov u$.
Let $u^{c'}$ for $|c'| \geq 3$ be the $u$-power that remained from $u^{\phi_k}(i)$ after the reductions
(remove the beginning and ending letters that do not form a power of $u$)
and let $s_{k}'$ be the word to the right of this $u^c$ in $\nf(s_0 u^{\phi_1}(i) s_1 \cdots u^{\phi_{k-1}}(i) s_{k-1} u^{\phi_k}(i) s_{k})$.
The $u$-power suffix $u^{c''}$ after the removal of $s_k'$,
so formally of $\nf(s_0 u^{\phi_1}(i) s_1 \cdots u^{\phi_{k-1}}(i) s_{k-1} u^{\phi_k}(i) s_{k} \ov{s_{k}'})$
satisfies $|\phi_k(i) - c''| < 3$:
we loose one $u$ on the right, reduce less than $2|u|$ letters on the left and if nothing is reduced on the left
then we can extend by less than $2|u|$ letters.
As in the first case, we move to the $v$-power suffix, say $v^c$,
of $\nf(u^{\phi_1}(i) s_1 \cdots u^{\phi_{k-1}}(i) s_{k-1} u^\phi(i) s_{k})$.
Then the $u$-power suffix of $\nf(v^c \ov {s_k'})$ is not longer than
the length of the $u$-power suffix of $\nf(u^{\phi_1}(i) s_1 \cdots u^{\phi_{k-1}}(i) s_{k-1} u^\phi(i) s_{k})$
and contains at least $u^{c'}$.
Moreover, this $u$-power suffix is also a $v$-run,
so it is of length less than $|u| + |v| \leq 2|v|$.
As $s_k'$ is known (for a fixed occurrence $u^{\phi_k}$),
as in the first case there are at most $6$ different
$u$-power suffixes of $\nf(v^c \ov{s_k'})$ for different $c$s.
The rest of the analysis and the algorithm finding the actual pairs and $i$s
is an in first case:
the choice of $\phi_k$ (there are $\Ocomp(n/|u|)$ of them)
determines a constant-size set of $i$s
and each $\phi_k, i$ (plus the direction left/right)
determines $\psi$.
\end{proof}

We now consider the case when $u^\phi$ was not affected.
Essentially, we claim that $u^\phi$ is almost the same as some $u^{\phi'}$ in $W$.
The difference is that it can $u$-reduce letters from $v$-parametric powers
that become $v$-powers.
However, as such $v$-power is not wholly merged (as it is not affected),
only its proper suffix or prefix can be $u$-reduced
and by primitivity and by case assumption $u \not \shift v$ and $|v| \geq |u|$, this suffix is of length at most $|v| + |u|$.
Thus, while in principle there are infinitely many possibilities for $v^{\psi}(j)$
when $j \in \mathbb Z$, it is enough to consider a constant number of different candidates (roughly: $\ov v ^2, \ov v, \varepsilon, v, v^2$)
and we can procure all of them so that an analysis similar to the one in Lemma~\ref{lem:how_many_different_powers}
can be carried out:
essentially we replace a fragment $v^J f_h u^I$ with $5$ ``fragments''
$v^{c} f_h u^I$ for $c \in \{-2,-1,0,1,2\}$.
In this argument, we used the assumption that $|v|\geq |u|$
(the $u$-reduction is of length at most $|v| + |u| \leq 2|v|$),
but it turns out that in the case $v$-parametric powers the argument is even simpler:
the $v$-reduced prefix of $u$-parametric power is of length at~most~$2|v|$,
so~the~$v$-parametric power is modified by an additive $\Ocomp(1)$ summand.

\begin{lemma}
\label{lem:how_many_different_powers_second_reduction}
We can compute and sort in $\Ocomp(m + n/|u|)$ time a set of $\Ocomp({\sqrt{n|u_{i_0}u_{i_0+1}|}}/{|u|})$
integer expressions $E$
such that for every $j$ if $u^\phi$ is a parametric power in $W_{J = j}$ not affected by substitution $J = j$ then $\phi \in E$.

A similar set of $\Ocomp({\sqrt{n|u_{i_0}u_{i_0+1}|}}/{|v|})$ integer expressions
can be computed for the not affected $v$-parametric powers after the second $v$-reduction
in $\Ocomp(m + n/|v|)$ time.
\end{lemma}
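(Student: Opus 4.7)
The plan is to show that any not-affected $u^\phi$ in $W_{J=j}$ is the $u$-power suffix (or prefix) of $\nf(v^{c_L} f_h v^{c_R})$, where $f_h$ is the fragment associated with the unique $u^{\phi'}$ in $W$ merged to $u^\phi$, and $c_L, c_R \in \{-2,\dots,2\}$ are boundary exponents of constant magnitude. Enumerating these $5\times 5$ candidates for all fragments yields $E$, whose size inherits from Lemma~\ref{lem:how_many_different_powers} up to a multiplicative constant.

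The key boundary claim: suppose $u^\phi$ is not affected, and let $u^{\phi'}$ be the unique parametric power merged to $u^\phi$ (Lemma~\ref{lem:merged_to_one}). Any $v^{\psi'}$ adjacent to $u^{\phi'}$ with no $u$-parametric power between them satisfies $|\psi'(j)| \geq 6$, and such $v^{\psi'}$ is itself not merged to $u^\phi$, else two parametric powers would have been merged, contradicting not-affectedness. Hence only a proper prefix (or suffix) of $v^{\psi'(j)}$ is $u$-reduced into $u^\phi$, and this absorbed portion is simultaneously a $v$-run and a $u$-run. By $u \not\shift v$ and primitivity, Lemma~\ref{lem:different_runs_overlap} bounds its length by $|u|+|v| \leq 2|v|$, hence it equals $v^{c}$ for some $c \in \{-2,-1,0,1,2\}$. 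If the relevant neighbor of $u^{\phi'}$ in $W$ is not a $v$-parametric power (e.g.\ a trivial fragment or a non-parametric word), the corresponding $c$ is forced to $0$, which reduces to the setting of Lemma~\ref{lem:how_many_different_powers}.

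The algorithm enumerates, for every $u$-parametric power $u^{\phi'}$ in $W$ and every $(c_L, c_R) \in \{-2, \dots, 2\}^2$, the $u$-power suffix or prefix of $\nf(v^{c_L} f_h v^{c_R})$, computed in $\Ocomp(1)$ time via Lemma~\ref{lem:solution_testing}. The resulting integer expression $\pm I + c'$ is added to $E$, giving $\Ocomp(m)$ collection time and $\Ocomp(n/|u|)$ counting-sort deduplication, since constants in these expressions are $\Ocomp(n/|u|)$ (as $|f_h|+4|v| = \Ocomp(n)$). The size bound follows from Lemma~\ref{lem:how_many_different_powers}: the Sum-of-Powers (Lemma~\ref{lem:sum_of_powers}) and Different-Powers (Lemma~\ref{lem:different_powers_in_a word}) arguments apply to $\nf(v^{c_L} f_h v^{c_R})$ exactly as to $f_h$, with only a constant multiplicative overhead for the $25$ boundary choices.

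For the symmetric claim on not-affected $v^\psi$ in $W_{I=i}$, the roles of $u$ and $v$ swap: the runs bound is still $|u|+|v| \leq 2|v|$, but now the absorbed boundary of $u^{\phi'(i)}$ is a prefix (or suffix) of a $u$-power whose content depends only on $u$ and not on $i$, so each adjacent $u^{\phi'}$ contributes only $\Ocomp(1)$ candidates, and the counting carries through with $|v|$ in the denominator, yielding $\Ocomp(m + n/|v|)$ time. The main obstacle throughout is the boundary bound, requiring the careful combination of not-affectedness (unique merge and long adjacent $v$-powers) with Lemma~\ref{lem:different_runs_overlap}; once this is in hand, the rest is a constant-factor lift of Lemma~\ref{lem:how_many_different_powers}.
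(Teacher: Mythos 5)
Your high-level strategy coincides with the paper's: fix the unique $u^{\phi'}$ in $W$ merged to the non-affected $u^\phi$, show the adjacent $v$-parametric power contributes only a bounded $v$-boundary to the new exponent, surrogate that boundary by a constant-size $v$-power, build a ``prepared word,'' and re-run the counting argument of Lemma~\ref{lem:how_many_different_powers}. However, there are genuine gaps in the details.

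The most serious issue is the size of the surrogate. You take $c_L, c_R \in \{-2,\dots,2\}$, i.e.\ at most $v^{\pm2}$. The paper uses $v^{\pm3}$ (together with $\varepsilon$), and this is not slack: the $u$-power suffix of $\nf(v^{\psi(j)} s)$ can overlap the $v$-run inherited from $v^{\psi(j)}$ in up to $|u|+|v|-1 \le 2|v|-1$ letters (Lemma~\ref{lem:different_runs_overlap}), and the reduction between $v^{\psi(j)}$ and $s$ consumes up to $|v|-1$ further letters. So the surrogate must supply at least $3|v|-2$ letters of $v$-run; $v^3$ does, $v^2$ need not, and with $v^2$ your $u$-power prefix/suffix computation can report a strictly shorter power than the true one. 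Relatedly, the assertion that the absorbed portion ``equals $v^c$'' is not quite right: the absorbed letters form a $v$-run, possibly ending mid-period in $v$, not an integer $v$-power. What the paper actually proves is the weaker but sufficient statement that the relevant $u$-power prefix/suffix is unchanged when $v^{\psi(j)}$ is replaced by $v^3$ (or $\ov v^3$, or $\varepsilon$) of matching sign, provided $|\psi(j)|\ge 6$.

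A second, structural issue is that $\nf(v^{c_L} f_h v^{c_R})$ does not match the geometry of $W$. The parametric power $u^{\phi'}$ sits at the $u^I$ (or $\ov u^I$) end of the fragment $v^J f_h u^I$, so absorption on one side comes from $v^J(j)$ and the post-$u$-reduction remainder $u_h'$ of $f_h$, whereas absorption on the \emph{other} side comes from the next fragment's word $u_{h+1}'$ (or a trivial fragment) and its parametric power --- not from another $v^{c_R}$ appended to $f_h$. The paper's prepared word $v^{\bullet}\,u_h'\,u^\phi\,u_{h+1}'\,v^{\bullet\bullet}$ places $u^\phi$ between the two neighboring words and uses the post-first-$u$-reduction remainders on both sides. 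Your formula loses this, so the enumerated candidates need not contain the true $\phi$. Finally, the paper's transfer of the $\Ocomp(\sqrt{n|u_{i_0}u_{i_0+1}|}/|u|)$ bound to the prepared word requires explicit bookkeeping (each $u_h$ is copied at most $9$ times, the ``$\beta$'' side becomes one of five variants $v^3,\ov v^3,v^3\beta,\ov v^3\beta,\beta$ with lengths controlled by $|u_{i_0}u_{i_0+1}|$, trivial fragments must be renormalized via the $u^d f'_{h+1} v^3$ construction); ``constant multiplicative overhead'' is the correct intuition but not a proof, since Lemma~\ref{lem:sum_of_powers} has to be reapplied to the new boundaries and the new bound on $\beta'$ has to be established. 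These are exactly the steps where the paper spends most of its effort, and they cannot be skipped.
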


Note that the second $u$ (or $v$) reduction is performed only for some chosen values of $i$ and $j$,
and not for each possible one.

\begin{proof}
Fix some not affected $u^\phi$ and consider (the unique) $u^{\phi_2}$ that is merged to $u^{\phi'}$ that is not affected by the second $u$-reduction.
Consider the maximal parametric word $s_0v^{\psi_1}s_1 \ldots v^{\psi_k}s_k$
to the left of $u^{\phi_2}$ without a $v$-parametric power.
By the assumption $|\psi_\ell(j)|\geq 6$ and by Lemma~\ref{lem:no_long_self_reduction_after_reduction}
the reduction in each $v^{\psi_{\ell-1}}(j)s_\ell v^{\psi_\ell}(j)$ has length at most $2|v|$.
Hence none of those powers is reduced.

Consider $\nf(s_0v^{\psi_1}s_1 \ldots v^{\psi_k}s_k)$ and its $u$-power suffix.
The reduction in  $  v^{\psi_{k-1}}s_{k-1} v^{\psi_k}$
has length less than $2|v|$ and there are no further reduction on the left.
As $s_k$ does not have a prefix $v$ nor $\ov v$, the reduction on the right
has length less than $|v|$. Hence there are at least $3|v|$ letters left from $v^{\psi_k}$,
which are a $v$-run.
The part of them that are $u$-reduced into $u^\phi$ are also a $u$-run,
which means that less than $2|v|$ letters are $u$-reduced.
Hence the $u$-power suffix of $\nf(s_0v^{\psi_1}s_1 \ldots v^{\psi_k}s_k)$
is the same as the $u$-power suffix of $\nf(v^3s_k)$, when $\psi_k(j)\geq 6$,
or $\nf(\ov v^3s_k)$, when $\psi_k(j)\leq -6$ or $\varepsilon$, when $k=0$,
i.e.\ there are no $v$-parametric powers.

We make a similar analysis in the case of the $u$-parametric power directly to the right of $u^{\phi_2}$,
let $s_{k+1}$ be the word between $u^{\phi_2}$ and the following $v$-parametric power
(or the end of the word, if there is not such parametric power).
Hence $u^\phi$ is obtained by $u$-reduction of $u^{\phi_2}$ for a word
$v^\bullet s_k u^{\phi_2} s_{k+1} v^{\bullet \bullet}$,
where $v^\bullet, v^{\bullet \bullet} \in \{\ov v ^3, \varepsilon, v^3\}$.
We procure a word such that each such subword is in it,
and $u$-reduction of each such subword can be made separately.

Fix an occurrence of the $u$-parametric power $u^\phi$ in $W$ (from~\eqref{eq:parametric_equation})
consider the subword of the equation including the two neighboring parametric powers,
say $v^\psi u_h' u^\phi u_{h+1}' v^{\psi'}$.
Then we create nine words of the form
$v^{\bullet} u_h'$ $u^\phi u_{h+1}'$ $v^{\bullet \bullet}$,
for $v^\bullet, v^{\bullet \bullet} \in \{\ov v ^3, \varepsilon, v^3\}$.
The other options is that instead of $v$-power expressions
there are $u$-power expressions at the ends or,
then we introduce no $v$s, so for, say,
$v^\psi u_h' u^\phi u_{h+1}' u^{\phi'}$
we introduce only three words of the form
$v^{\bullet} u_h' u^\phi u_{h+1}'$ for $v^\bullet \in \{\ov v ^3, \varepsilon, v^3\}$;
similarly, when there is no power expression (as the equation ends),
we do not introduce $v^\bullet, v^{\bullet \bullet}$.
We concatenate all such words, separating them by some unused symbol, say $Y$.

Note that each occurrence of $u_h'$ is copied at most $9$ times
in the new word:
if the two neighboring parametric powers are $u$ and $v$-parametric powers,
then it is used only for the $9$ new words for $u$;
if by two power expressions of $u$ then it is used for both of them,
but at most thrice for each,
if by two $v$-power expressions then it is not used at all;
lack of power expression at one end behaves similarly as the $v$-power expression.

In order to estimate the number of different $u$-power expressions,
we are going to reuse the argument from the proof of Lemma~\ref{lem:how_many_different_powers}
and see how the estimations change.
Consider a fixed $u^I$ parametric power after the preprocessing and its fragment, 
say $v^J \beta u_h \alpha u^I$, consider the case when to the right of it there is $v^J$.
After the first $u$-reduction it introduced
$u^c$ to the parametric power expression,
where $u^c$ is the $u$-power prefix of $\nf(\beta u_h \alpha)$.

Let $f_h'$ be such that $v^{c'} f_h'u^c = \nf(\beta u_h \alpha)$,
where $u^c$ is the $u$-power suffix of $\nf(\beta u_h \alpha)$
and $v^{c'}$ is the $v$-power prefix of $v^{c'} f_h'$.
In the prepared word we replace the $v^c f_h'$ with $v^3$ or $\ov v^3$ or $\varepsilon$,
we consider $v^3$, the case of $\ov v^3$ is similar, the case of $\varepsilon$ is trivial.
We show that we can define $\beta' u_h' \alpha'$ such that
\begin{itemize}
\item the $u$-power suffix of $\nf(\beta' u_h' \alpha')$ is the same as of $v^3 f'_h u^c$;
\item $\beta' \in \{ v^3, v^3 \beta \}$, in particular, $|\beta'| \leq 4 |u_{i_0}u_{i_0+1}|$;
\item $|u_h'| \leq |u_h|$;
\item $\alpha \suff \alpha'$.
\end{itemize}
(Note that for $\varepsilon$ we simply take $\alpha' = \alpha, \beta' = \beta$ and $u_h' = u_h$ instead.)
Consider the shortest suffix of $\beta u_h \alpha$ whose normal form is $f'_h u^c$, remove the remaining prefix (of $\beta u_h \alpha$);
$u_h'$ and $\alpha'$ are what is left from $u_h, \alpha$ after this removal, clearly they satisfy the claim;
$\beta'$ is what is left from $\beta$ plus leading $v^3$.
There are only two possibilities for $\beta$ --- 
either it is $v^3$ or $v^3 \beta$ --- 
we remove a prefix whose normal form is a power of $v$ or $\ov v$
and $\beta$ does not begin with $v$ nor $\ov v$.
So this removed prefix cannot be a prefix of $\beta$ and so 
either nothing is removed at all (and so $\beta' = v^3 \beta$),
or the whole $\beta$ is removed (and so $\beta' = v^3$).
As $|v| \leq |u_{i_0}u_{i_0+1}|$, so $|\beta'| \leq 3 |u_{i_0}u_{i_0+1}| + |\beta| \leq 4 |u_{i_0}u_{i_0+1}|$.
Now, when we analyze the number of possible $u$-power suffixes of $\nf(\beta' u_h' \alpha')$
over all fragments of the prepared word,
the analysis from Lemma~\ref{lem:how_many_different_powers} applies with a couple of twitches:
\begin{description}
\item[$\alpha$] The only property of $\alpha$ used in Lemma~\ref{lem:how_many_different_powers} is that it does not end with $u$ nor $\ov u$
and this applies to $\alpha'$.
\item[$\beta$]
The $\beta$ was fixed for all fragments, now there are five variants ($v^3, \ov v ^3, v^3\beta, \ov v ^3\beta$, $\beta$),
and the length estimation is now $3 |u_{i_0}u_{i_0+1}|$ for $v^3$ and $\ov v^3$, 
$4 |u_{i_0}u_{i_0+1}|$ for $\ov v ^3\beta$ and $v^3 \beta$ and $|u_{i_0}u_{i_0+1}|$ for $\beta$.
Thus, using Lemma~\ref{lem:different_powers_in_a word}, the number of different $u$ maximal powers in all of them
is at most $\sqrt{5 \cdot 15|u_{i_0}u_{i_0+1}|/|u|}$, so $\sqrt {15}$ times larger than in case
Lemma~\ref{lem:how_many_different_powers}.
\item[$u_h'$] As $|u_h'| \leq |u_h|$ and each $u_h$ from the original equation is copied at most $9$ times in the prepared word,
the sum of lengths of all such words is $9n$.
Thus the number of possible lengths of maximal powers in all such $u_h'$s is
at most $\sqrt{5 \cdot 9n|/|u|} = 3\sqrt{5 \cdot n|/|u|}$,
so $3$ times more than in case of Lemma~\ref{lem:how_many_different_powers}.
\end{description}

In original equation after preprocessing to the right of the $u^I$ there is either $v^J$ or a trivial fragment.
In the first case in the prepared word there is either $v^3$ or $\ov v^3$ or fresh symbol to the right of $u^I$
and so during the second $u$-reduction we extend by either $u$-power prefix of $v^3$ or $\ov v^3$ or $\varepsilon$,
so one of three fixed numbers,
i.e.\ in comparison with Lemma~\ref{lem:how_many_different_powers}
this increases by $3$ the number of possible $u$-power expressions.

If to the right there is a trivial fragment then 
by the way the trivial fragments are constructed the parametric power to the right
is either $\ov u^I$ or $v^J$.
In the first case, the trivial fragment stays the same
and the analysis is as in the case of Lemma~\ref{lem:how_many_different_powers},
the only difference is that now each trivial fragment is copied at most $9$ times,
so the estimation on the sum of their lengths is $9$ times larger,
and so the number of different $u$-powers in them is $3$ times the one in
Lemma~\ref{lem:how_many_different_powers} (as we take the square root according to Lemma~\ref{lem:different_powers_in_a word}).
In the other case, let the trivial factor be $\nf(f_{h+1}\cdots f_{h+k})$,
let $u^d f'_{h+1}v^{d'} = \nf(f_{h+1}\cdots f_{h+k})$ be such that
$u^d$ is the $u$-power prefix of $\nf(f_{h+1}\cdots f_{h+k})$
and $v^{d'}$ is the $v$-power suffix of $f'_{h+1}v^{d'}$.
Then we replace $\nf(f_{h+1}\cdots f_{h+k})$ with $u^df'_{h+1}v^{3}$.
Also note that when this trivial fragment
is of the form $\nf(f_{h+1}\cdots f_{h+k})$
then we upper bound its length by $\sum_{\ell=1}^k |u_{h+\ell}|$.
Now we need to upper bound $|u^df'_{h+1}v^{3}| \leq |\nf(f_{h+1}\cdots f_{h+k})| + |v^3|$
but $u_{h+1}$ is a non-trivial power of $v$,
as the corresponding fragment trivialized,
so $|u_{h+1}| \geq |v|$ and so
\begin{align*}
|u^df'_{h+1}v^3|
	&\leq
|u^df'_{h+1}v^{d'}| + 3|v|\\
	&\leq
|\nf(f_{h+1}\cdots f_{h+k})| + 3|v|\\
	&\leq
\sum_{\ell=1}^k |u_{h+\ell}| + 3|u_{h+1}|\\
	&\leq
4 \sum_{\ell=1}^k |u_{h+\ell}| \enspace .
\end{align*}
Furthermore, there are at most nine copies of such fragments.
Thus when we estimate the number of different $u$-parametric powers in all of them using 
 Lemma~\ref{lem:different_powers_in_a word} the sum of their lengths is at most $36n$
 instead of $n$ in Lemma~\ref{lem:how_many_different_powers},
 so the estimation on the number of different $u$-powers increases $6$ times.

Thus, in the end, the number of possible $u$-parametric power is $\Ocomp(1)$ times larger than in case of Lemma~\ref{lem:how_many_different_powers}.

If the $u^\phi$ comes from a power in a fragment $\ov u^I \ov \alpha u_{h} \ov \beta \ov v^J$
then the analysis is symmetrical.
If it comes form a fragment $\ov u^I \ov \alpha u_{h} \alpha u^I$
then the analysis is only simpler: nothing changes in this fragment.
And for a fixed $u$-parametric power, say the left one,
we need only to consider what happens to the right: is there $\ov v^J$ or a trivial fragment there.

Note that the construction of the prepared word is explicit
and the word itself is $\Ocomp(m)$-represented.
The reduction takes $\Ocomp(m)$ time
and it is then still $\Ocomp(m)$-represented.
Finally, to compute the $u$-parametric powers it is enough for each $u$-parametric power
compute the $u$-power prefix of the word to the right and $u$-power suffix of the word to the left,
the computation takes $\Ocomp(m)$ time in total.
This yields all integer expressions, but perhaps with duplicates.
They can be removed in $\Ocomp(m + n/|u|)$ as in Lemma~\ref{lem:how_many_different_powers}.

In case of $v$-powers the argument is much simpler, as $|v| \geq |u|$.
Similarly, consider the unique $v^{\psi_2}$ that is merged to $v^{\psi}$ that is not affected by the second $v$-reduction,
let $s_0u^{\phi_1}s_1 \ldots u^{\phi_k}s_k$ be the maximal parametric word to the left of $v^{\psi_2}$ that does not contain a $v$-parametric power.
As in the case of $u$, none of
$u^{\phi_1}(i), \ldots u^{\phi_k}(i)$ is reduced.
Consider the $v$-power suffix of $\nf(s_0u^{\phi_1}s_1 \ldots u^{\phi_k}s_k)$.
As $u^{\phi_k}$ is not merged to $v^{\psi}$ (and it is not reduced),
the $v$-power suffix of $\nf(s_0u^{\phi_1}s_1 \ldots u^{\phi_k}s_k)$
is within $\nf(u^{\phi_k}s_k)$.
The part of $u^{\phi_k}$ that is $v$-reduced in $v^\psi$ is both an $u$-run and $v$-run,
so has less than $2|v|$ letters.
Moreover, as $s_k$ does not have an $v$ nor $\ov v$ as a suffix,
less than $|v|$ of its letters from it are used in $v$-reduction.
A similar analysis applies on the right-hand side.
Hence $|\psi - \psi_2| \leq 4$.
Hence we can take the set from Lemma~\ref{lem:how_many_different_powers} and replace each integer expression $\psi'$ with five copies
$\psi' - 2, \psi' - 1, \psi', \psi' +1, \psi'+2$,
which yields the desired set.
Note that we can eliminate the duplicates, as the set of candidates in Lemma~\ref{lem:how_many_different_powers} is sorted.
\end{proof}

Lemmata~\ref{lem:how_many_different_powers_simple}, \ref{lem:how_many_different_powers},
\ref{lem:dissapearing_v_powers} and \ref{lem:how_many_different_powers_second_reduction}
are enough to prove Lemma~\ref{lem:indices_sets_description}, by a simple case distinction,
as described in text preceding Lemma~\ref{lem:how_many_different_powers_simple}.

\begin{proof}[proof of Lemma~\ref{lem:indices_sets_description}]
First, let us consider some degenerate cases. If $W$ in~\eqref{eq:parametric_equation}
depends on one variable, say on $I$, and $(i,j)$ is a solution
then by Lemma~\ref{lem:parametric_solution_almost_0}
there is a $u$-parametric power $u^\phi$ such that $|\phi(i)|\leq 3$.
From Lemma~\ref{lem:how_many_different_powers_simple} the set of exponents has size $\Ocomp\left(\frac{\sqrt{n|u_{i_0}u_{i_0+1}|}}{|u|}\right) \leq
\Ocomp(n/|u|)$
and can be computed in $\Ocomp(m + n/|u|)$ time.
So the superset of possible $i$s also has this size and can be computed in the same time,
we set this as $S_{I,\mathbb Z}$ from the statement.
A similar analysis is carried out when the parametric powers do not depend on $I$.

So suppose $W$ in~\eqref{eq:parametric_equation} depends on both $I, J$.
Let $(i,j)$ be a solution, substitute $J = j$ in $W$ from~\eqref{eq:parametric_equation}
and compute (as a mental experiment) the second $u$-reduction of $W(J = j)$, obtaining $W_{J = j}$.
If $(i,j)$ is a solution, then $W_{J = j}(i) \eqg \varepsilon$
(this includes the case when $W_{J = j} = \varepsilon$).
By Lemma~\ref{lem:parametric_solution_almost_0}
there is a $u$-parametric power $u^\phi$ such that $|\phi(i)| \leq 3$
(if $W_{J = j} = \varepsilon$ then simply take $\phi = 0$).

If $\phi$ was not affected by $j$,
then by Lemma~\ref{lem:how_many_different_powers_second_reduction}
we can compute in $\Ocomp(m + n/|u|)$ time a set $E$ of integer expressions
of size $\Ocomp\left(\frac{\sqrt{n|u_{i_0}u_{i_0+1}|}}{|u|}\right)$
such that $\phi \in E$.
Define $S_I$ as the set of numbers $i'$ such that $|\phi'(i')| \leq 3$ for some $\phi' \in E$,
clearly it can be computed and sorted in $\Ocomp(m + n/|u|)$ time
and it has size $\Ocomp\left(\frac{\sqrt{n|u_{i_0}u_{i_0+1}|}}{|u|}\right)$.
And $i \in S_I$, as claimed.

So consider the case when $\phi$ was affected by substitution $J = j$
(this includes $\phi = 0$, as in this case at least one parametric power depending on $I$
and one on $J$ are merged).
By Lemma~\ref{lem:dissapearing_v_powers}
either $j$ is from a constant-size set, its elements are then added to $S_{J}$,
or $(\phi,j)$ is from a set of $\Ocomp(n/|v|)$ elements.
If $\phi = 0$ then we add the set of second components to $S_{\mathbb Z, J}$.
If $\phi \neq 0$ then from the fact that $|\phi(i)| \leq 3$
for each pair $(\phi,j)$ we can create at most $7$ pairs $(i,j)$ such that
$|\phi(i)| \leq 3$, we add them to $S_{I,J}$.
Clearly all those pairs can be computed and sorted in $\Ocomp(m n/|v|)$ time,
see Lemma~\ref{lem:dissapearing_v_powers}.

The proof for the second claim is symmetric.
\end{proof}

What is left to show is how to compute candidate solutions, when one of $I, J$, say $J$, is already fixed,
as in the claim of Lemma~\ref{lem:indices_sets_description}.
The analysis is similar as in the case of two parameters,
however, we cannot guarantee that after the $u$-reduction
the coefficient at the $u$-parametric powers are $\pm 1$.
On the positive side, as there is only one integer variable, we can apply Lemma~\ref{lem:parametric_solution_almost_0} directly.
The additional logarithmic in the running time is due to sorting,
which now cannot be done using counting sort, as the involved numbers may be large.
\begin{lemma}
\label{lem:one_parameter_fixed}
For any given $j$ in $\Ocomp(m)$ time
we can decide, whether for each $i \in \mathbb Z$ the~$\alpha u^iv^j\beta$
is a solution of~\eqref{eq:main} and if not then in
$\Ocomp(m + n \log m/|u|)$ time
compute a superset (of size $\Ocomp(n/|u|)$) of $i$s
such that $\alpha u^iv^j\beta$ is a solution.

A similar claim holds for any fixed $i$
(with superset size $\Ocomp(n/|v|)$ and running time $\Ocomp(m + n\log m/|v|)$).
\end{lemma}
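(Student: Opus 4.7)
The plan is to specialize the one-parameter warm-up sketched at the opening of Section~\ref{sec:restricting-the-superset-of-solutions} to the parametric word $W(I,j)$ of~\eqref{eq:parametric_equation}. First substitute $J=j$, so that every $v$-parametric power $v^{\psi}$ in $W$ becomes a concrete group element $v^{\psi(j)}$, and then run a second $u$-reduction on the resulting word, absorbing into the adjacent $u$-parametric powers all the constant $u$-powers that are now present (coming from the substitution, from surviving trivial fragments, or from the shape of $W$). This yields a $u$-reduced parametric word $W_{J=j}$ depending only on $I$ and equivalent to $W(I,j)$ in the free group. Since $W$ is $\Ocomp(m)$-represented, the operations of Lemmata~\ref{lem:solution_testing}--\ref{lem:solution_testing_2} carry out the substitution-and-reduction step in $\Ocomp(m)$ total time.

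By the last part of Lemma~\ref{lem:parametric_solution_almost_0}, $W(i,j) \eqg \varepsilon$ holds for every $i \in \mathbb Z$ if and only if $W_{J=j} = \varepsilon$, a condition that is read off from the stored representation in $\Ocomp(m)$ time; this settles the decision part of the statement. When instead $W_{J=j} \neq \varepsilon$, Lemma~\ref{lem:parametric_solution_almost_0} applied to $W_{J=j}$ guarantees that for any $i$ with $W_{J=j}(i) \eqg \varepsilon$ there is a $u$-parametric power $u^\phi$ in $W_{J=j}$ with $|\phi(i)| \le 3$. Letting $\Phi$ be the set of exponents of $u$-parametric powers of $W_{J=j}$, the required superset is the union over $\phi = aI + b \in \Phi$ of the (at most $7$) integer solutions of $|aI+b| \le 3$.

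The central task, and the principal obstacle, is to bound $|\Phi|$ by $\Ocomp(n/|u|)$; care is needed because after the second $u$-reduction the coefficients $a$ are no longer restricted to $\pm 1$, so the $\Ocomp(\sqrt{n|u_{i_0}u_{i_0+1}|}/|u|)$ estimate of Lemma~\ref{lem:how_many_different_powers} is not directly available. Following the warm-up sketch I split $\Phi$ into exponents of the form $aI$ (with $a \neq 0$, since $W_{J=j}$ is $u$-reduced) and of the form $aI + b$ with $b \neq 0$. The former collectively contribute only the at most $7$ candidates $|i| \le 3$. For the latter, an accounting argument in the spirit of Lemma~\ref{lem:how_many_different_powers_simple} will show that every occurrence in $W_{J=j}$ with $b \neq 0$ is created by absorbing at least one full copy of $u$ or $\ov u$ (hence at least $|u|$ equation letters) into its position of the parametric word, and that distinct occurrences absorb from essentially disjoint pools of equation letters once copies of $\alpha,\beta$ are handled as in the proof of Lemma~\ref{lem:how_many_different_powers_simple}. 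This caps the number of such occurrences, and hence $|\Phi|$, at $\Ocomp(n/|u|)$.

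Finally, to output the candidates as a sorted deduplicated list, observe that their magnitudes can be $\Theta(n/|u|)$, which rules out a direct counting sort in $\Ocomp(n/|u|)$ time, but a comparison sort on the at most $|\Phi| = \Ocomp(\min(m,n/|u|))$ candidates runs in $\Ocomp(|\Phi|\log|\Phi|)$ time; using $|\Phi| \le m$ to bound $\log|\Phi| \le \log m$ and $|\Phi| \le n/|u|$ to bound the leading factor, this is $\Ocomp((n/|u|)\log m)$, giving the promised total $\Ocomp(m + n\log m/|u|)$. The statement for a fixed $i$ is proved identically after interchanging the roles of $u$ and $v$ (and of $I$ and $J$), substituting $I = i$, and performing a $v$-reduction in place of a $u$-reduction.
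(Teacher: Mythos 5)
Your high-level plan --- substitute $J=j$, $u$-reduce, invoke Lemma~\ref{lem:parametric_solution_almost_0}, and bound the number of exponents --- matches the intent of the statement, and the decision part and the sorting arithmetic at the end are fine. But the central counting step, which you yourself flag as ``the principal obstacle,'' is not actually established by what you wrote, and this is precisely where the paper's proof does genuinely new work.

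Two concrete problems. First, the claim that ``every occurrence in $W_{J=j}$ with $b\neq 0$ is created by absorbing at least one full copy of $u$ or $\ov u$'' worth of fresh equation letters is false as stated: a nonzero constant can arise entirely from the interaction of a copy of $\alpha$ with a copy of $\beta$ (or $\beta'$) around a very short $u_h$, consuming essentially no letters of the $u_h$'s. Since each of $\alpha,\beta$ occurs $m$ times, and $\beta'$ can be arbitrarily long for large $j$, one cannot charge such occurrences to disjoint pools of input letters. Second, your appeal to Lemma~\ref{lem:how_many_different_powers_simple} to ``handle copies of $\alpha,\beta$'' does not close the gap: that lemma's argument rests on the exponents having coefficient $\pm 1$ (so that an exponent is determined by a single $u$-power suffix), and after the second $u$-reduction the coefficients of $I$ can be any integer, which is exactly the obstruction you note. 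The paper's own proof sidesteps $W$ entirely --- it sets $\beta'=\nf(u^{-c}v^j\beta)$, substitutes $\alpha u^I\beta'$ afresh into~\eqref{eq:main}, and $u$-reduces from scratch (which also avoids having to track the trivial-fragment preprocessing of $W$) --- and then proves the $\Ocomp(n/|u|)$ bound by a case analysis you omit: (a) at most $2n/|u|$ merged powers touch some $u_\ell$ of length $\ge|u|$; (b) when all involved $u_\ell$ are short and only one copy of $u^I$ is merged, Lemma~\ref{lem:u_prefix_beta_w_ovbeta} gives $\Ocomp(1)$ possible exponents; and (c) when several copies of $u^I$ are merged, a sign-alternation argument (using Lemma~\ref{lem:u_prefix_beta_w_ovbeta}, third item, to rule out opposite signs) shows the exponent is $(h+1)I+c''$ with $|c''|\le 2h+10$, so any feasible $|i|\le 7$. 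Case (c) is the genuinely new ingredient needed once coefficients exceed $\pm 1$, and your sketch does not supply a substitute for it.
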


\begin{proof}
We show the first claim, the second follows in the same way.
We proceed as follows: consider $\alpha u^i v^j \beta$ and compute
the $u$-power prefix $u^c$ of $\nf(v^j \beta)$, this can be done in $\Ocomp(1)$ time,
set $\beta' = \nf(u^{-c}v^j\beta)$.
Then $\alpha u^i v^j \beta \eqg \alpha u^{i+c} \beta'$.
Thus in the following we consider the problem of finding $i$ such that
$\alpha u^i \beta'$ is a solution of~\eqref{eq:main}.
Note that we do not have any bound on $|\beta'|$,
we do know that it is reduced and does not have a prefix $u$ nor $\ov u$.
Moreover, $\beta'$ is represented as a concatenation of a $v$-run and $2$-represented word,
so is of the form in Lemma~\ref{lem:solution_testing_2}.

We substitute $\alpha u^I \beta'$ for $X$ and perform the $u$-reduction.
During and after the reduction the equation is represented
as a concatenation words,
each is either $1$-represented, a $v$-run, a $u$-run or $u$-parametric power.
We store $w_1 \cdots w_{i}$, which is $u$-reduced and equivalent
to the read parameterized word,
moreover, when $w_j$ is a $u$-parametric power, then $w_{j+1}$
is the longest common prefix of $w_{j+1}\cdots w_i$ and $u$ or $\ov u$
(as $u$ is cyclically reduced, this is well defined).
Note, that as the word is $u$-reduced,
there are no two consecutive $u$-parametric powers
and a word before (after) a $u$-parametric power does not end (begin) with $u$ nor $\ov u$.

Suppose that we processed $w_1 \cdots w_{i-1}$ and the stored string is $w_1'\cdots w_{i'-1}'$.
We read $w$. If it is a $1$-parametric word or a run then we first reduce
it with previous words one by one.
By Lemma~\ref{lem:solution_testing}--\ref{lem:solution_testing_2}
this takes $\Ocomp(1)$ time (this is assigned to $w$)
plus $\Ocomp(1)$ time per words removed from $w_1'\ldots w_{i'-1}'$.
Let $w'$ be what remained from $w$ after the reduction
and $w_{i'-1}$ again denote the last stored word.
If the word after the $u$-parametric power is a prefix of $u$
(or $\ov u$, the case are symmetric, we consider only the former)
after the $u$-parametric power or it is the $u$-parametric power,
we should update the $u$-parametric power and the prefix after it.
We describe the case when $u_{i'-1}' = u' \pref u$, the other are done in the same way, let also $u = u'u''$.
We check compute the longest common prefix of $u''u'$ and $w'$,
we can deduce it from the length of $\nf(\ov{u'} \, \ov{u''} w')$,
which can be computed in $\Ocomp(1)$ by Lemma~\ref{lem:solution_testing_2}.
If the prefix is of length $|u|$, we also compute the longest prefix of $w'$
that has period $|u|$:
if it has length $p$ then $|\nf(\ov{w'[|u|+1\twodots |w'|]}, w')| = 2|w'|- |u| - 2p$,
and this can be computed in $\Ocomp(1)$ time by Lemma~\ref{lem:solution_testing_2}.
Thus we can compute the $u$-power prefix of $u'w'$ in $\Ocomp(1)$ time,
we remove it from the words, add it to the exponent in the $u$-power prefix
and also the longest common prefix of $u$ and what remains from $u'w'$ after the removal of the $u$-power prefix.

So suppose that the next $w$ is a $u$-parametric power $u^\phi$
(in fact it can be only $u^I$ or $\ov u ^I$, but this has no effect on the algorithm).
If $w_{i'-1}$ is also a $u$-parametric power, then we replace them by one (by adding the exponents).
If the exponent depends on the variable then we are done.
If it does not then the parametric power is equal to a $u$-run,
and we proceed as if we read this $u$-run.
The $\Ocomp(1)$ cost that should be charge to this $u$-run is charged to the removed $u$-parametric power instead.
If $w_{i'-1}$ is a word, then let $w_j$ be the previous $u$-parametric power.
By the assumption that $w_{j+1}\cdots w_i$ is a $u$-reduced,
this word is not a power of $u$.
We compute the $u$-power suffix of $w_{j+1}\cdots w_i$,
this is done similarly as in the previous case.
Let $w_{j+1}\cdots w_i = w^\bullet u^p$.
Then we replace $w_{j+1}\cdots w_i$ with $w^\bullet$ (appropriately represented),
and put $w_{i'} = u^{\phi+k}$ on the list.

The processing time is clearly $\Ocomp(1)$ per read symbol and $\Ocomp(1)$ per symbol removed.
As we add symbols only once, this yields total $\Ocomp(m)$ processing time.

The obtained parameterized word $w$ is a concatenation of $\Ocomp(m)$ $1$-represented words and $u$-runs and $v$-runs or $u$-parametric powers.
If $w$ does not depend on $I$ then either each $i$ yields a solution (when $w = \varepsilon$) or none (when $w \neq \varepsilon$).
In the other case, i.e.\ when $w$ does depend on $I$,
if $w(i) \eqg \varepsilon$ then by Lemma~\ref{lem:parametric_solution_almost_0} 
there is a $u$-parametric power $u^\phi$ in $w$ such that $|\phi(i)| \leq 3$.
We want to show that by removing some (though not necessarily all)
duplicates the set of exponents of $u$-parametric powers in $w$
we can construct a multi-set, i.e.\ possibly with duplicates, $E$ of $\Ocomp(n/|u|)$ integer expressions.
This will yield the claim:
we consider the $u$-parametric powers one by one and for each $u^{\phi}$
compute all $i$s such that $|u^{\phi}(i)|\leq 3$,
clearly there are at most $7$ such $i$s for one integer expression.
We can eliminate the duplicates by simple sorting in $\Ocomp(n/|u| \log m)$ time,
(as there are at most $m$ exponents).

Among words $u_1, \ldots, u_m$ there are at most $n/|u|$ that have length at least $|u|$.
Thus there are at most $2 n / |u|$ $u$-parametric powers $u^\phi$
that include parametric power that was neighboring one of such long $u_\ell$s
and $u^\phi$ is uniquely determined by the choice of $u_\ell$ and the direction left/right.
Choosing one of possible $7$ values of $\phi(i)$ yields that there are $14 n/|u|$ possible values of $i$.
Note that the proof above gives and effective procedure to compute them
in $\Ocomp(m)$ time.

So consider the case that $u^\phi$ is obtained from
\begin{equation*}\gamma u_{\ell-1} (\alpha u^I \beta')^{p_\ell} u_{\ell} \cdots u_{h+\ell-1} (\alpha u^I \beta')^{p_{\ell+h}} u_{\ell+h} \delta \enspace,
\end{equation*}
where $\gamma \in \{\beta', \ov \alpha \}$ and $\delta \in \{\ov {\beta'}, \alpha\}$;
note that the $u$-reduction does not necessarily consume the whole $\gamma u_{\ell-1}$ nor $u_{\ell+h} \delta$.
Let us first consider a degenerate case, when $h=0$,
i.e.\ there is only one $u$-parametric power, take $p_\ell = 1$,
the other case is symmetric.
Consider the $u$-power prefix of $\nf(\beta' u_\ell \delta)$.
If $\delta = \ov {\beta '}$ then from Lemma~\ref{lem:u_prefix_beta_w_ovbeta}, first point,
the $u$-power prefix is of length less than $2|u| + |u_\ell| < 3|u|$.
If $\delta = \alpha$ then from again from Lemma~\ref{lem:u_prefix_beta_w_ovbeta}
there are only $\Ocomp(1)$ possible lengths of its $u$-power prefix.
A symmetric analysis applies to the $u$-power suffix of $\nf (\gamma u_{\ell-1} \alpha)$
and so there are $\Ocomp(1)$ many possible $u$-parametric powers obtained 
as a $u$-reduction of $\gamma u_{\ell-1} \alpha u^I \beta' u_\ell \delta$.
Note that those integer expressions can be effectively computed:
during the $u$-reduction we can identify the expressions that were obtained
in this way and simply add them to a set, removing the duplicates in constant time (as the set is of constant size).
For each such expression $\phi$ we compute $i$s such $|\phi(i)|\leq 3$,
clearly there are at most $7$ for one $\phi$.
Then we add them to the superset,
as there are $\Ocomp(1)$ many of them, we can remove the duplicates made by this union in $\Ocomp(n/|u|)$ time.

Now, if $h>0$, i.e.\ there are at least two parametric powers that are $u$-reduced,
then $p _\ell = p _{\ell+1} = \cdots = p_{\ell+h}$:
suppose not and consider the first moment, when the $u$-reduction merges two integer expression with different signs at $I$:
then the word between them is $\nf (\beta' u_k \ov{\beta'})$  or $\nf(\alpha u_k \ov \alpha)$ for some $k$
and it is a $u$-run.
Then by Lemma~\ref{lem:u_prefix_beta_w_ovbeta}, third point,
we obtain that $|u_k| \geq |u|$, which we excluded.
By symmetry, assume that $p _\ell = 1$.
Then each $\nf(\beta' u_k \alpha)$ is a power of $u$
and so from Lemma~\ref{lem:sum_of_powers} we have 
$\nf(\beta' u_k \alpha) \in \{\ov u ^2, \ov u, 1, u, u^2\}$,
as the $u$-power prefix of $\beta'$ and $u$-power suffix of $\alpha$ are trivial and $|u_k|<|u|$.
It is left to consider, what are the possible $u$-power suffix
of $\gamma u_{\ell - 1} \alpha$ and $u$-power prefix of $\beta' u_{\ell+h} \delta$,
we analyse the latter, the analysis of the former is symmetric.
If $\delta  = \ov {\beta'}$ then we already have shown (in the case when $h=0$) that
the $u$-power prefix of $\nf(\beta' u_{\ell+1}\ov {\beta'})$ is of length at most $2|u|$.
If $\delta  = \alpha$ then we use that fact that $h>0$: we know that $\nf(\beta' u_\ell \alpha) = u^c$
for some $|c| \leq 2$.
Let $u^{c'}$ be the $u$-power prefix of $\nf(\beta' u_{\ell+1} \alpha)$,
then $u^{c'-c}$ is a $u$-power prefix of
$\nf(\ov \alpha \, \ov u_\ell \ov {\beta'} \beta' u_{\ell+1} \alpha)
=
\nf(\ov \alpha \, \ov u_\ell u_{\ell+1} \alpha)$.
From Lemma~\ref{lem:u_prefix_beta_w_ovbeta}, first case,
we get that the $u$-power prefix of $\nf(\ov \alpha \, \ov {u_\ell} u_{\ell+1} \alpha)$
is of length at most $2|u| + |\ov {u_\ell} u_{\ell+1}| < 4|u|$.
So $|c' - c| < 4$ and so $|c'| < 6$.
A similar analysis applies to the $u$-power suffix of $\nf(\gamma u_\ell \alpha)$,
which is also of length at most $5|u|$.
So in total the obtained parametric expression is of the form
$\phi = (h+1)J + c''$, where $|c''|\leq 2h+10$,
as $c''$ is a a sum of lengths of $u$-power suffix of $\nf(\gamma u_\ell \alpha)$
(at most $5|u|$), $h$ powers obtained from $\beta' u_{\ell+h'} \alpha$ (each of length at most $2|u|$)
and $u$-power prefix of $\nf(\beta' u_{\ell+h} \alpha)$ (at most $5|u|$).

Now, if $\phi(i) = c_0$ for some $|c_0| \leq 3$
then $(h+1)i + c'' = c_0$ and so
$
i = \frac{c_0 - c''}{h+1}
$, let us estimate its absolute value:
\begin{align*}
|i|
	&=
\frac{|c_0 - c''|}{h+1}\\
	&\leq
\frac{|c_0|}{h+1} + \frac{|c''|}{h+1}\\
	&\leq
\frac{3}{2} + \frac{2h+10}{h+1}\\
	&\leq
\frac{3}{2} + 6\\
	&=
7\frac 1 2 \enspace ,
\end{align*}
and so $|i| \leq 7$.
So instead of actually computing all the values, we simply add numbers $-7, \ldots, 7$
to the superset;
as there are $\Ocomp(1)$ many of them, we can remove the duplicates made by this union in $\Ocomp(n/|u|)$ time.
\end{proof}

\subsubsection{$u \in \{v, \ov v\}$}\label{sec:u-in-v-ov-v}
When $u \in \{v, \ov v\}$ then $u^Iv^J \eqg u^{I+J}$ or $u^Iv^J \eqg u^{I-J}$
and we can replace the parameter $I+J$ (or $I-J$) with a single $I$.
This case is subsumed by the case when we fix one of the parameters (i.e.\ $I$ or $J$), see Lemma~\ref{lem:one_parameter_fixed}.

\subsubsection{$u \shift v$}
\label{sec:u-shift-v}

In this case either $u = u'u''$ and $v = u''u'$ or $v  =\ov{u'} \, \ov{u''}$,
for some $u', u''$.
By substituting $v = \ov v$ we reduce the latter case to the former.
We consider the parametric solution $\alpha u^Iv^J\beta$,
note that $v \eqg u'' u \ov {u''}$ and so
$\alpha u^Iv^J \beta \eqg \alpha u^I u'' u^J \ov{u''}\beta$.
From now on the approach is similar as when $u \not \shift v$.
Most of the arguments are simpler, however, the extra technicality is that after the $u$-reduction we can have $u$-parametric power
of the form $u^{\pm(I+J) + c}$.
As a result, we consider not only substitutions $I = i$ and $J = j$,
but also $I + J = k$, i.e.\ we substitute $\phi$ with $\phi(I,k-I)$,
which depends only on $I$.
This requires some additional cases to consider and makes some formulations longer, but everything follows in a similar way.

We substitute $X = \alpha u^I u'' u^J \beta$ to~\eqref{eq:main} and proceed similarly as in Section~\ref{sec:u-not-sim-v}:
we rotate the equation, so that if it is of the form $\alpha u^I u'' u^J w \eqg \varepsilon$,
we replace it with $u'' u^J w \alpha u^I \eqg \varepsilon$
and replace $\ov \beta \, \ov u^J \, \ov{u''} \ov u^I \, \ov \alpha w \eqg \varepsilon$
with $\ov u^I \, \ov \alpha w \ov \beta \, \ov u^J \, \ov{u''} \eqg \varepsilon$.
The $u''$ and $\ov{u''}$ are not assigned to any fragment,
otherwise each subword between two consecutive parametric powers is a fragment.
Then we preprocess the equation: we reduce the words between the parametric powers
and then for each fragment of the form $\ov u^I f_h u^I$ or $u^J f_h \ov u^J$,
such that $f_h$ is a power of $u$
we replace it with $f_h$. Finally we concatenate such neighboring words,
as well as all neighboring $u''$ or $u''$,
obtaining trivial fragments.

\begin{lemma}
\label{lem:preprocessing_2}
The preprocessing can be performed in $\Ocomp(m)$ time.
The obtained parametric word is a concatenation of fragments
and trivial fragments as well as some $u''$ and $\ov{u''}$
and it is $\Ocomp(m)$-represented.
A trivial fragment obtained from fragments with words $f_{h}, \ldots, f_{h+k}$ has length less than
$2|u| + \sum_{i = h}^{h+k} |f_i| \leq 3\sum_{i = h}^{h+k} |f_i|$
and if $k>0$ then such trivial fragment is not a power of $u$.
\end{lemma}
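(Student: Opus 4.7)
The plan is to mirror the proof of Lemma~\ref{lem:preprocessing}, keeping the data-structure-driven computation unchanged but accounting carefully for the additional $u''$ and $\ov{u''}$ pieces introduced by the substitution $v^J \eqg u''u^J\ov{u''}$. The running time and representation size are argued exactly as before: after substitution and rotation the parametric word is $\Ocomp(m)$-represented; for each fragment, computing $\nf(f_h)$ and testing whether it is a $u$-power takes $\Ocomp(1)$ by Lemma~\ref{lem:solution_testing}; and the replacements together with the subsequent concatenations of neighbouring replaced words and of neighbouring $u''$/$\ov{u''}$ are performed in $\Ocomp(m)$ total.

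The key structural observation is that an eligible fragment has form $\ov u^I f u^I$ or $u^J f \ov u^J$, so the two substituted copies of $X$ bordering it must have opposite signs $p_i = -p_{i+1}$. A maximal chain of $k+1$ replaced eligible fragments therefore comes from $k+2$ consecutive substituted copies of $X$ with alternating signs, and the resulting trivial fragment takes the form
\[
s_0\,f_h\,s_1\,f_{h+1}\,s_2\cdots s_k\,f_{h+k}\,s_{k+1} \enspace,
\]
where each $s_i\in\{u'',\ov{u''}\}$ is the interior separator of one of the substituted $X^{p}$'s, and the $s_i$'s alternate between $u''$ and $\ov{u''}$ because the $p_i$'s do.

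A short direct calculation using $u=u'u''$ and $v=u''u'$ yields the identity $u''\,u^c\,\ov{u''} \eqg v^c$ for every $c\in\mathbb Z$. Applied throughout the chain, this pairs each $u''$ with a neighbouring $\ov{u''}$ across the intervening $u$-power, collapsing the trivial fragment in normal form to an alternating concatenation of $u$- and $v$-powers with at most one leftover $u''$ or $\ov{u''}$ at each boundary (depending on the parity of $k$ and on the sign of the leading $p_i$). Since $|u|=|v|$, the total length is bounded by $\sum_{i=0}^{k}|f_{h+i}| + 2|u''| < 2|u| + \sum_{i=0}^{k}|f_{h+i}|$. For the non-power claim, when $k>0$ each $f_{h+i}$ has nonzero exponent: the equation forbids $X\varepsilon\ov X$ and $\ov X\varepsilon X$, so the relevant $u_j\neq\varepsilon$, and a brief calculation (analogous to the one used for Lemma~\ref{lem:u_prefix_beta_w_ovbeta}) shows this forces $\nf(f_{h+i})\neq\varepsilon$. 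Hence the simplified form is a nontrivial alternating concatenation of $u$- and $v$-powers; if it equalled $u^d$, Lemma~\ref{lem:concatenation_of_powers_is_not_a_power} would force $u$ and $v$ to be powers of a common word, contradicting that $u,v$ are primitive, cyclically reduced, and that $u\notin\{v,\ov v\}$ in the current case.

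The main technical obstacle is the length bound: once the alternating structure of the $s_i$'s is in place, it is the pairing identity $u''u^c\ov{u''}\eqg v^c$ that prevents the $(k+2)$-many $u''$/$\ov{u''}$ separators from blowing the length up by a factor proportional to $|u|$. The remaining bookkeeping—tracking which interior separators end up inside a trivial fragment versus remaining as standalone $u''$ or $\ov{u''}$ between surviving parametric powers, and deriving the non-power claim—is then routine.
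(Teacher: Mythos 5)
Your proof follows the paper's architecture (timing and representation as in Lemma~\ref{lem:preprocessing}, non-power claim via Lemma~\ref{lem:concatenation_of_powers_is_not_a_power}) but phrases the non-power step differently: you collapse $u'' u^c \ov{u''}\eqg v^c$ and apply the lemma to $u,v$, whereas the paper rewrites everything in terms of $u',u''$ (using that $\langle u',u''\rangle$ is free on these generators) and applies the lemma there. The two routes are closely related---after substituting $u=u'u''$ and $v=u''u'$ the paper's version also reduces to an alternation of conjugate blocks---but yours has the advantage of making the length bound explicit, which the paper only states; the alternation of the separators $s_i$ and the identity $u''u^c\ov{u''}\eqg v^c$ are exactly the right mechanism there.

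There is a gap in the non-power part, however, and it is not routine bookkeeping. The separators $s_0,\dots,s_{k+1}$ number $k+2$. When $k$ is even they pair off completely (if $s_0=u''$) or leave $\ov{u''}$ and $u''$ at the two ends, and in the latter case conjugating both sides by $u''$ turns $u^d$ into $v^d$ and restores a pure alternation---so the lemma applies. But when $k$ is odd exactly one separator survives, and then the relation has the shape $v^{c_0}u^{c_1}\cdots u^{c_k}\,u''\eqg u^d$. The left-hand side is \emph{not} an alternation of $u$- and $v$-powers and $u^d\ov{u''}$ is not a power of either, so Lemma~\ref{lem:concatenation_of_powers_is_not_a_power} cannot be invoked as you state. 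A separate argument is needed for this parity class; for instance, lifting to $F(S',T'')$ via $u'\mapsto S', u''\mapsto T''$, the left-hand side has free-group word length of parity $k+2\pmod 2$ (reduction steps remove two letters at a time) while $u^d$ has even length, so they cannot agree when $k$ is odd. The paper's own one-paragraph sketch also elides this check, but since you spell the rest out, the odd-$k$ case needs to be addressed rather than deferred to ``routine bookkeeping.''
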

\begin{proof}
The proof follows as in the case of Lemma~\ref{lem:preprocessing},
with the additional twist: observe that the trivial fragment is formed not only as the concatenation
of powers of $u$ but could also include $u''$ and $\ov{u''}$ from between the parametric powers.
Represent $u = u' u''$ and note that $u' u''$ is cyclically reduced.
Suppose that a trivial fragment is a power of $u$.
Then from Lemma~\ref{lem:concatenation_of_powers_is_not_a_power}
we could transform such an equality to the form that concatenation
of powers of $u$ and $u''$ is equivalent to $\varepsilon$,
replacing $u$ with $u'u''$ would yield a similar representation for powers of 
$u', u''$, and the Lemma~\ref{lem:concatenation_of_powers_is_not_a_power} 
yields that $u', u''$
are powers of the same word, which means that $u$ is not primitive.
\end{proof}

We $u$-reduce the equation as before, this time though it can happen that
a $u$-parametric power depending on $I + J$ is created
(when the whole fragment becomes one parametric power),
though parametric powers from different fragments are not $u$-reduced,
see Lemma~\ref{lem:reduction_2}.
In the end we obtain equation of the form
\begin{equation}
\label{eq:parametric_equation_2}
W \eqg \varepsilon
\end{equation}
where $W$ is a parameterized word.
We call $(i,j)$ a solution of~\eqref{eq:parametric_equation_2} if indeed $W(i, j) \eqg \varepsilon$.

\begin{lemma}
\label{lem:reduction_2}
\label{lem:different_u_v_reduction_2}
Every $u$-parametric power in~\eqref{eq:parametric_equation_2}
has an exponent of the form $n_II + n_JJ + c$,
where
\begin{itemize}
\item $n_J = 0$ and $n_I \in \{-1,+1\}$ or
\item $n_I = 0$ and $n_J \in \{-1,+1\}$ or
\item $n_I = n_J \in \{-1,+1\}$.
\end{itemize}
\end{lemma}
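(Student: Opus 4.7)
The plan is to track the exponents of the $u$-parametric powers through both preprocessing and the $u$-reduction, and verify that the claimed three-shape form of exponents is preserved at every step. Immediately after substituting $X=\alpha u^I u''u^J\beta$ and performing the rotation, every $u$-parametric power in the resulting word has exponent exactly $+I$, $-I$, $+J$, or $-J$, which trivially satisfies one of the three listed shapes with $c=0$. The preprocessing of Lemma~\ref{lem:preprocessing_2} neither creates nor modifies parametric powers: it only reduces intermediate words to normal form and, for each fragment of the form $\ov u^I f_h u^I$ or $u^J f_h \ov u^J$ with $f_h$ a power of $u$, removes the two bordering parametric powers and concatenates $f_h$ together with the adjacent $u''$ or $\ov{u''}$ into a trivial fragment. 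Hence after preprocessing every surviving parametric power still has exponent $\pm I$ or $\pm J$.

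The $u$-reduction then modifies a parametric power in exactly two ways: it can absorb a maximal $u$-power of an adjacent word, which affects only the constant $c$ of the exponent and preserves the shape, or it can merge two consecutive parametric powers $u^{\phi_1}, u^{\phi_2}$ into $u^{\phi_1+d+\phi_2}$ in the case that the entire word between them reduces to a single $u$-power $u^d$. The proof thus reduces to classifying when such merging can occur. The word lying between two consecutive parametric powers after preprocessing is one of: (a)~a single $u''$ or $\ov{u''}$ lying inside a single occurrence of $X^{\pm 1}$; (b)~a surviving fragment $f_h$ between two distinct $X$-occurrences; or (c)~a trivial fragment. In case (a), $u''$ is a non-empty proper suffix of the primitive word $u$ (because $u\not\in\{v,\ov v\}$ forces $u'\neq\varepsilon$), so it is not a power of $u$ and no merge occurs. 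In case (b), enumerating the four sign pairs $(p_i,p_{i+1})\in\{\pm 1\}^2$ shows that consecutive parametric powers separated only by $f_h$ are either same-type opposite-sign --- exactly the patterns removed by preprocessing when $f_h$ is a $u$-power --- or different-type same-sign, namely $(u^J,u^I)$ with $f_h=\beta u_h\alpha$ from the $(+,+)$-case or $(\ov u^I,\ov u^J)$ with $f_h=\ov\alpha u_h\ov\beta$ from the $(-,-)$-case, and merging the latter produces $u^{\pm(I+J)+d}$, giving $n_I=n_J\in\{-1,+1\}$.

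Case (c) is the main technical obstacle. A multi-fragment trivial fragment is not a power of $u$ by Lemma~\ref{lem:preprocessing_2}. A single-fragment trivial fragment with $f_h=u^c$ takes the form $\ov{u''}\cdot u^c\cdot u''$ or $u''\cdot u^c\cdot\ov{u''}$; using the shift identity $u''\cdot u^c\cdot\ov{u''}\eqg v^c$, the second form directly gives $v^c\eqg u^k$, while the first rearranges (multiplying on the left by $u''$ and on the right by $\ov{u''}$) into $u^c\eqg v^k$. In either case, primitivity of both $u$ and $v$ together with $v\not\in\{u,\ov u\}$ forces $c=0$. When $c=0$ the trivial fragment is empty and the remaining adjacent parametric powers, which by the case analysis must be of the same type with opposite signs, cancel completely in the $u$-reduction, leaving no parametric power behind. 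Therefore every merge that actually produces a parametric power is of one of the allowed forms, and the exponents of all surviving parametric powers of $u$ in~\eqref{eq:parametric_equation_2} satisfy one of the three listed cases.
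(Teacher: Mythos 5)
Your proof is correct and follows essentially the same route as the paper's: both proofs classify what lies between two consecutive $u$-parametric powers (a $u''$ or $\ov{u''}$, a surviving fragment, or a trivial fragment), observe that only the cross-$X$ fragments $u^J f_h u^I$ and $\ov u^I f_h \ov u^J$ can trigger a merge, and conclude the merged exponent has $n_I=n_J\in\{\pm 1\}$. Your treatment is slightly more explicit in spelling out the sign-pair enumeration and the $u''u^c\ov{u''}\eqg v^c$ computation (the paper simply asserts $\ell\neq 0$ and that $u''u^\ell\ov{u''}$ is not a $u$-power), while it quietly omits the degenerate trivial fragments $u''u^\ell$ and $u^\ell u''$ at the ends of $W$ that the paper notes in passing; neither difference affects correctness.
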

\begin{proof}
Consider two consecutive $u$-parametric powers that are $u$-reduced together as first.
If they are not from the same fragment, then between them is either $u''$ or $\ov {u''}$,
which cannot be $u$-reduced,
or a trivial fragment.
By Lemma~\ref{lem:preprocessing_2} if a trivial fragment is a $u$-power then it is formed
from exactly one fragment.
But then the word is of the form
$u'' u^\ell \ov{u''}$ or $\ov{u''} u^\ell u''$
(or $u'' u^\ell$ or $u^\ell u''$ when it is on the beginning or end of $W$)
for $\ell \neq 0$,
in all cases it is not a power of $u$.

If they are from the same fragment, then this fragment is 
$u^I f_h u^J$ or $\ov u^J f_h \ov u^I$
(as in the preprocessing we removed $u$-parametric powers $u^I f_h \ov u^I$ and $\ov u^j f_h u^J$ such that $f_h$ is a power of $u$),
the analysis is symmetric, so we consider $u^I f_h u^J$.
Then indeed coefficients at $I$ and $J$ in the exponent are the same.
By the argument as above, this $u$-parametric power cannot be $u$-reduced with another $u$-parametric power (as they are separated by $u''$ or $\ov{u''}$).
\end{proof}

As in Section~\ref{sec:u-not-sim-v}, we can estimate the number of different $u$-parametric powers in $W$ from~\eqref{eq:parametric_equation_2}.

\begin{lemma}
\label{lem:how_many_different_powers_2_simple}
\label{lem:how_many_different_powers_2}

There is a set $S$ of $\Ocomp(1)$ size of integer expressions
such that there are $\Ocomp(n/|u|)$ occurrences of $u$-parametric powers
in~$W$ from~\eqref{eq:parametric_equation_2} whose exponents are not in $S$.
The set $S$ can be computed and the parametric powers identified in $\Ocomp(m + n /|u|)$ time.

Furthermore, there are $\Ocomp({\sqrt{n|u_{i_0}u_{i_0+1}|}}/{|u|})$
different integer expressions as exponents in parametric powers of $u$ in~\eqref{eq:parametric_equation_2}.
The (sorted) lists of such expressions can be computed in $\Ocomp(m + n/|u|)$ time.
\end{lemma}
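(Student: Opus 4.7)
The proof plan for both claims mirrors the strategy of Lemma~\ref{lem:how_many_different_powers_simple} and Lemma~\ref{lem:how_many_different_powers} from Section~\ref{sec:u-not-sim-v}, with modifications reflecting the structure of~\eqref{eq:parametric_equation_2}. For every $u$-parametric power $u^\phi$ in $W$, I would identify its origin: by Lemma~\ref{lem:different_u_v_reduction_2}, its variables come from a single fragment, possibly extended by letters of the adjacent trivial fragments (or adjacent $u''$, $\ov{u''}$) on either side. The relevant fragment has one of the forms $\ov u^I f_h u^I$, $u^I f_h \ov u^I$, $u^I f_h u'' u^J$, $\ov u^J \ov{u''} f_h \ov u^I$, etc., where $f_h$ is a reduction involving one $u_h$ and an $\alpha$, $\ov\alpha$, $\beta$ or $\ov\beta$ on each side.

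For the first claim, split the occurrences of $u$-parametric powers into two groups. The first consists of powers $u^\phi$ that during the $u$-reduction either absorbed at least $|u|$ letters from an adjacent trivial fragment or came from a fragment with $|u_h| \geq |u|$; as each such occurrence ``consumes'' at least $|u|$ letters from the input equation (total length $n$, and the trivial fragments have total length bounded by $n$ by Lemma~\ref{lem:preprocessing_2}), there are $\Ocomp(n/|u|)$ of them. They can be marked during the $u$-reduction. For every remaining occurrence, both the trivial contribution and $|u_h|$ are strictly smaller than $|u|$; Lemma~\ref{lem:u_prefix_beta_w_ovbeta} then gives only $\Ocomp(1)$ possible $u$-power prefixes/suffixes of $\nf(\beta u_h \alpha)$, $\nf(\ov\alpha u_h \alpha)$, etc., and so only $\Ocomp(1)$ possible exponents. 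These $\Ocomp(1)$ exponents constitute $S$.

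For the second claim, fix the sign pattern of $(n_I, n_J)$: by Lemma~\ref{lem:different_u_v_reduction_2} there are only three possibilities ($\pm I+c$, $\pm J+c$, $\pm(I+J)+c$), so it is enough to bound the number of distinct constants $c$ per pattern. For each fragment together with the relevant portion of an adjacent trivial fragment, apply Lemma~\ref{lem:sum_of_powers} to decompose $c$ up to a bounded additive error as $c_\alpha + c_{u_h} + c_\beta + c_{\text{tf}}$, where the summands are exponents of maximal $u$-powers taken from $\alpha$ (or $\ov\alpha$), $u_h$, $\beta$ (or $\ov\beta$), and the trivial fragment. Since $\alpha, \beta$ do not begin or end with $u$ or $\ov u$, the summands from them are either forced to zero (at the relevant end) or are drawn from $\Ocomp(\sqrt{|u_{i_0}u_{i_0+1}|/|u|})$ possibilities by Lemma~\ref{lem:different_powers_in_a word} (using $|\alpha|, |\beta| \leq |u_{i_0}u_{i_0+1}|$). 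Summed over all fragments and trivial fragments, whose total length is $\Ocomp(n)$, Lemma~\ref{lem:different_powers_in_a word} yields $\Ocomp(\sqrt{n/|u|})$ possibilities for $c_{u_h}$ and for $c_{\text{tf}}$. Multiplying gives the claimed bound $\Ocomp(\sqrt{n|u_{i_0}u_{i_0+1}|}/|u|)$.

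For the running time, the parametric word $W$ is $\Ocomp(m)$-represented (Lemma~\ref{lem:preprocessing_2} and the $u$-reduction), so scanning the powers and extracting each exponent takes $\Ocomp(m)$ total time using the data structure from Lemma~\ref{lem:solution_testing}. The constant $|c|$ in each exponent is $\Ocomp(n/|u|)$, hence one can deduplicate and sort by bucket/counting sort in $\Ocomp(n/|u|)$ time, separately for each sign pattern, exactly as in Lemma~\ref{lem:how_many_different_powers}. The main obstacle is handling the extra structural possibilities introduced by the $u''$ and $\ov{u''}$ connectors, in particular verifying that they can be absorbed into either the adjacent trivial fragment or the relevant $u_h$ without spoiling either the $\Ocomp(n/|u|)$ bound on distinct ``long'' exponents or the application of Lemma~\ref{lem:sum_of_powers}; a careful but routine case analysis across the fragment shapes $u^I f_h u^I$, $u^I f_h u''u^J$, etc.\ delivers this, using that trivial fragments of length $\geq 1$ are not $u$-powers (Lemma~\ref{lem:preprocessing_2}).
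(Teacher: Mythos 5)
Your proposal is essentially the paper's proof, and the key lemmata you invoke (Lemma~\ref{lem:different_u_v_reduction_2} for the three sign patterns, Lemma~\ref{lem:u_prefix_beta_w_ovbeta} for the constant-size set $S$, Lemma~\ref{lem:sum_of_powers} plus Lemma~\ref{lem:different_powers_in_a word} for the $\Ocomp(\sqrt{n|u_{i_0}u_{i_0+1}|}/|u|)$ bound, counting sort on the $\Ocomp(n/|u|)$-bounded constants) are exactly the ones the paper uses.

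One minor deviation worth flagging: for the first claim you place into the ``large'' group the occurrences absorbing at least $|u|$ letters of a trivial fragment, whereas the paper simply uses adjacency to any trivial fragment. Your finer criterion still gives $\Ocomp(n/|u|)$ by charging each occurrence either to $|u|$ consumed letters of a trivial fragment or to a long $u_h$, and the remaining occurrences then absorb strictly fewer than $|u|$ letters of the trivial fragment, hence zero full copies of $u$, so they reduce to the case with no trivial contribution; that is a valid variant. Where the proposal stays vague is precisely where the paper's proof has to do real work: the $I+J$-exponents. For those, Lemma~\ref{lem:u_prefix_beta_w_ovbeta} is not the right tool (the whole $\nf(\beta u_h \alpha)$ must be a power of $u$, not merely its prefix or suffix); the paper instead uses Lemma~\ref{lem:sum_of_powers} directly to get $|k|\leq 2$ for the small case, and for the bound with nonempty neighboring trivial fragments $f, f'$ it bounds the total length of the words $f\,\nf(\beta u_h\alpha)\,f'$ over all $h$ (using $|\nf(\beta u_h\alpha)|<|u_h|+2|u|$ and $|f|+|f'|\geq |u|$) before invoking Lemma~\ref{lem:different_powers_in_a word}, which yields only $\Ocomp(\sqrt{n/|u|})$ there --- no $\beta$-factor, but still within the stated bound. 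Similarly, the $u''$/$\ov{u''}$ connectors are absorbed by inflating the trivial-fragment length estimate by a constant factor (Lemma~\ref{lem:preprocessing_2} gives a factor $3$), which only changes the leading constant; you acknowledge this but do not spell it out. These are gaps of exposition rather than approach.
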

\begin{proof}
For the first claim we proceed similarly as in Lemma~\ref{lem:how_many_different_powers_simple}:
suppose that the parametric power $u^\phi$ is the result of $u$-reduction of $u^J$ from fragment $u^J \beta u_h \alpha u^I$.
If $|u_h| \geq |u|$ or this parametric power $u^J$ is next to a trivial fragment,
then this $u^\phi$ is one of the chosen parametric powers;
other powers and types of fragments are handled similarly.
Clearly there are at most $4n/|u|$ such chosen powers:
there are at most $n/|u|$ such $u_h$s and trivial fragments (as a trivial fragment also corresponds to $u_h$ of length at least $|u|$)
and one can be counted for two parametric powers.

Otherwise its exponent is from a constant-size set:
if the exponent depends on only one variable,
say in $u^J \beta u_h \alpha u^I$,
then $\nf(\beta u_h \alpha)$ is not a power of $u$
and the $u$-power prefix and suffix overlap at less than $u$ letters,
see Lemma~\ref{lem:different_runs_overlap}.
Thus the constant in the exponent in the parametric power is (almost) the exponent of the $u$-power prefix or suffix of $\nf(\beta u_h \alpha)$,
where $|u_h| < |u|$,
which is analyzed exactly as in Lemma~\ref{lem:how_many_different_powers_simple}.

For the parametric powers depending on $J + J$ say created from a fragment $u^J \beta u_h \alpha u^I$,
first note that they do not include letters
from the neighboring trivial fragments, 
as by case assumption there are no such trivial fragments.
So the constant in the exponent
is $k$, where $\nf (\beta u_h \alpha) = u^k$.
However, as $\beta$ does not begin and $\alpha$ does not end with $u$ nor $\ov u$
and $|u_k| < |u|$ from Lemma~\ref{lem:sum_of_powers}
we  get that $|k| \leq 2$.

Consider the second claim of the Lemma: note that the argument for the $u$-parametric powers with exponents depending on one variable
is exactly the same as in Lemma~\ref{lem:how_many_different_powers}:
suppose that we compute the $u$-parametric power for $u^J$
in $u^J \beta u_h \alpha u^I$.
The difference between the $u$-power prefix of $\nf(\beta u_h \alpha)$ and what was $u$-reduced is of length at most $|u|$:
it could be that we first $u$-reduced the $u^I$ on the right,
but as $\nf(\beta u_h \alpha)$ is not a power of $u$,
the $u$-power prefix and suffix can overlap by less than $|u|$ letters, see Lemma~\ref{lem:different_runs_overlap}.
Otherwise, we estimate the number of possible $u$-power prefixes (and suffixes) of fragments in exactly the same way
as in Lemma~\ref{lem:how_many_different_powers},
the difference is the estimation on the length of $\beta$,
which is at most $|u_{i_0}u_{i_0+1}|$ in Lemma~\ref{lem:how_many_different_powers}
and at most $2|u_{i_0}u_{i_0+1}|$ now,
this increase the constant by $\sqrt{2}$.
To the right there could be a trivial fragment,
we also estimate the number of possible $u$-power prefixes and suffixes in the same way,
with the only difference that we can have extra $u''$ or $\ov {u''}$ at one or both ends.
This increases the estimation of the length of the trivial fragment at most $3$ times,
so affects the constant by a factor of $\sqrt 3$.

Lastly, consider the $u$-parametric powers that depend on $I + J$,
say it was created from a fragment $u^J \beta u_h \alpha u^I$
(the other case is $\ov u^I \ov \alpha u_h \ov \beta \ov u^J$
and it is analyzed in a similar way).
If there are no trivial fragments to the left and right,
then the $u$-parametric powers is $u$-reduced exactly from
$u^I \beta u_h \alpha u^J$ and its exponent
is $I+J + k$, where $\nf(\beta u_h \alpha) = u^k$.
As $\beta$ does not begin and $\alpha$ does not end with $u, \ov u$,
from Lemma~\ref{lem:sum_of_powers} we get that there is a 
maximal $u$ power $u^{k'}$ in $u_h$ such that $|k - k'| \leq 2$.
The rest of the analysis is then as in Lemma~\ref{lem:how_many_different_powers}.
If there are trivial fragments $f,f'$ to the left and right
(to streamline the argument, one of them can be $1$),
then we are looking at a maximal $u$-power in
$f \nf(\beta u_h \alpha) f'$, where $|f| + |f'| \geq |u|$.
We estimate the sum of lengths of all such words, over possible $h$.
Note that $|\nf(\beta u_h \alpha)| < |u_h| + 2|u|$
and so $|\nf(f \beta u_h \alpha f')| < 3 |ff'| + |u_h|$
and so the sum of all those lengths over possible $h$ is at most
$19n$:
as $\sum |u_h| \leq n$, each trivial fragment is counted at most twice, the sum of lengths of all trivial fragments is less than $3n$,
by Lemma~\ref{lem:preprocessing_2}.
Hence by Lemma~\ref{lem:different_powers_in_a word} there are at most
$\sqrt{133 n}$ such different powers.
\end{proof}

Consider $W$ in equation~\eqref{eq:parametric_equation_2},
from Lemma~\ref{lem:parametric_solution_almost_0}
if $(i,j)$ is a solution of~\eqref{eq:parametric_equation_2}
then some exponent $\phi$ in the $u$-parametric power $u^\phi$ satisfies
$|\phi(i, j)| \leq 3$.
In principle, we could choose this exponent, the value of $I, J$
(so of $I$ or $J$, when it depends on one variable, and $I + J$ if on $I + J$)
substitute to $W$, and reuse the same argument.
However, the estimations on the possible exponents after the substitution are much weaker than the one in Lemma~\ref{lem:how_many_different_powers_2},
so instead we proceed as in Section~\ref{sec:u-not-sim-v},
but the argument is more subtle, as for the exponents that depend on $I + J$
the substitution is of the form $I + J = k$.
We say that a substitution depends on $I$ (on $J$, on $I + J$)
if it is of the form $I = i$ ($J = j$, $I+J = k$, respectively).
Note that the substitution $I + J = k$ is equivalent to $J = - I +k$
(and $I = -J + k$, we choose the former arbitrarily).

The argument follows the same line as in Section~\ref{sec:u-not-sim-v}:
when we substitute $I = i$ (or $J = j$ or $I + J = k$) and $u$-reduce the equation
in the obtained parametric word $W_{I = i}$ ($W_{J=j}$, $W_{I + J = k}$, respectively)
some new $u$-parametric power is (almost) $0$ for appropriate substitution.
This parametric power is almost the same as in~\eqref{eq:parametric_equation_2}
or it was affected by substitution, in which case we can link it with some other $u$-parametric power that was (almost) reduced due to the substitution.

Formally, we say that an occurrence of a $u$-parametric power $u^{\phi}$ in $W_{I + J = k}$
was affected by a substitution $I + J = k$
if more than one $u$-parametric power was merged to $u^{\phi}$ or
there are $u$-parametric powers $u^{\phi_1}$, $u^{\phi_2}$
in $W$ such that $|\phi_1(I, k - I)| < 6$, $u^\phi_2$ depends on $I$ or on $J$
and all $u$-parametric powers of $u$ between $u^{\phi_1}$ and $u^{\phi_2}$
depend on $I + J$.
Being affected by substitution $I = i$ and $J = j$ are defined similarly.
Note that we do allow a special case, when $W_{I+J = k} = \varepsilon$,
in which case $\phi = 0$ and all $u$-parametric powers were merged to it.

\begin{lemma}
\label{lem:dissapearing_v_powers_2}
There are sets $S_{I+J}, S_{E,I+J}$, $|S_{I + J}| = \Ocomp(1)$ and $|S_{E,I + J}| = \Ocomp(n/|v|)$
such that for each occurrence of $u$-parametric power $u^\phi$ in $W_{I + J = k}$
affected by substitution $I + J = k$ either $k \in S_{I + J}$ or $(\phi,k) \in S_{E,I + J}$.
These sets can be computed and sorted in $\Ocomp(m n/|u|)$ time.

Analogous claims hold for substitutions $I = i$ and $J = j$.
\end{lemma}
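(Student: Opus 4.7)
The plan is to mirror the proof of Lemma~\ref{lem:dissapearing_v_powers}, exploiting the fact that in the $u \shift v$ setting all parametric powers in $W$ are powers of $u$ and every exponent has one of the three forms listed in Lemma~\ref{lem:reduction_2}. Since $|v| = |u|$ here, the bound $\Ocomp(n/|v|)$ on $|S_{E,I+J}|$ coincides with $\Ocomp(n/|u|)$.

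I focus on the substitution $I + J = k$; the cases $I = i$ and $J = j$ are entirely analogous, the only change being which family of exponents becomes constant under the substitution. Consider an occurrence of $u^\phi$ in $W_{I + J = k}$ that was affected. Suppose first that the second clause of the definition applies: there is $u^{\phi_1}$ in $W$ depending on $I+J$ with $|\phi_1(I, k - I)| < 6$, linked to some $u^{\phi_2}$ depending on $I$ or $J$ through a chain of $u$-parametric powers all depending on $I + J$. Apply the first part of Lemma~\ref{lem:how_many_different_powers_2_simple} to $\phi_1$: either $\phi_1$ lies in the $\Ocomp(1)$-size exception set, in which case $|\phi_1(I, k-I)| < 6$ pins down $\Ocomp(1)$ candidate values of $k$ which are added to $S_{I+J}$; or this occurrence of $u^{\phi_1}$ is one of the $\Ocomp(n/|u|)$ distinguished ones, in which case the triple (this occurrence, the value $k$, and the direction in which $u^\phi$ lies) uniquely determines the pair $(\phi, k)$ --- by an argument in the spirit of Lemma~\ref{lem:merged_to_one} adapted to the $I+J=k$ substitution, $u^\phi$ is simply the first $u$-parametric power of $W$ on the chosen side that is not absorbed into the collapsing chain. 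This produces $\Ocomp(n/|u|)$ pairs for $S_{E, I+J}$.

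The key step is to argue that the first clause of the definition --- several $u$-parametric powers of $W$ being merged to a single $u^\phi$ --- always reduces to the second clause. Since $W$ is $u$-reduced by Lemma~\ref{lem:different_u_v_reduction_2}, the non-parametric portions separating consecutive $u$-parametric powers in $W$ are non-empty and are not $u$-powers, and substitution $I + J = k$ does not change them. Hence any merge must route through at least one intermediate $u$-parametric power whose exponent becomes a small constant after substitution; under $I + J = k$ only exponents of the form $\pm(I + J) + c$ turn constant, so the intermediate power $u^{\phi'}$ depends on $I + J$. If $|\phi'(I, k - I)| \geq 6$, a periodicity argument identical to the one at the end of the proof of Lemma~\ref{lem:dissapearing_v_powers} (Lemma~\ref{lem:no_long_self_reduction_after_reduction} bounds the reductions into $u^{\phi'(I, k-I)}$ from each side by $2|u|$, and Lemma~\ref{lem:different_runs_overlap} rules out the surviving $u$-run being absorbed into a neighbouring $v$- or $u$-run of different period) leaves more than $2|u|$ letters of $u^{\phi'(I, k-I)}$ intact, blocking the merge --- a contradiction. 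So $|\phi'(I, k - I)| < 6$ for some such $u^{\phi'}$, and we fall back into the second clause.

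The main obstacle is precisely this periodicity argument, where we must verify that the arithmetic forms allowed by Lemma~\ref{lem:reduction_2} do not produce a pathological cancellation pattern that sidesteps the length bound on surviving $u$-runs; it is exactly here that the $u \shift v$ case feels delicate, since all parametric powers are $u$-powers and the in-between words $u''$, $\ov{u''}$, and trivial fragments must be handled uniformly. For the running time, the initial $u$-reduction producing $W$ takes $\Ocomp(m)$ time; the $\Ocomp(1)$-size exception set of Lemma~\ref{lem:how_many_different_powers_2_simple} is computed in $\Ocomp(m + n/|u|)$ time, giving $S_{I+J}$ immediately. For $S_{E,I+J}$ I iterate over the $\Ocomp(n/|u|)$ distinguished occurrences, enumerate $\Ocomp(1)$ candidate $k$'s per occurrence, and for each perform the substitution and the second $u$-reduction in $\Ocomp(m)$ time (using Lemmata~\ref{lem:solution_testing}--\ref{lem:solution_testing_2}) to identify $\phi$; a final sort removes duplicates. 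This gives the claimed $\Ocomp(mn/|u|)$ bound.
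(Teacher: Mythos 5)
Your proof follows the paper's argument closely: the same split of ``affected'' into the second clause (handled via Lemma~\ref{lem:how_many_different_powers_2_simple}, distinguishing the $\Ocomp(1)$-size exception set feeding $S_{I+J}$ from the $\Ocomp(n/|u|)$ distinguished occurrences feeding $S_{E,I+J}$), the same reduction of the first clause to the second by locating an intermediate $u$-parametric power depending on $I+J$ that must become small under the substitution, and the same $\Ocomp(mn/|u|)$ running-time accounting. One small correction to the step you flag as delicate: when you invoke Lemma~\ref{lem:different_runs_overlap} to block the merge, in the $u\shift v$ setting there are no neighbouring runs of a different period --- after the substitution $I+J=k$ all relevant runs are $u$-runs with period $|u|$; the contradiction comes from the \emph{same-period} clause of that lemma: the $\geq 2|u|$ surviving letters of $u^{\phi_\ell(I,k-I)}$ form a $u$-run overlapping the $u$-power suffix of the normal form of the surrounding word by at least $|u|$ positions, so they belong to a single maximal $u$-run, which forces the separating word $s_\ell$ (not a power of $u$ by $u$-reducedness) into that run, a contradiction. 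With that clause swapped in, your argument is exactly the paper's.
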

\begin{proof}
The proof is similar to the first (simpler) case of Lemma~\ref{lem:dissapearing_v_powers}.
Consider, why $u^\phi$ is affected by the substitution $I + J = k$.
It could be that there are two $u$-parametric powers $u^{\phi_1}$
and $u^\phi_2$ such that $\phi_1$ depends on $I + J$,
$|\phi_1(I, k - I)| < 6$,
each parametric power between $u^{\phi_1}$ and $u^{\phi_2}$
depends on $I + J$ and $\phi_2$ depends on $I$ or $J$.
Using Lemma~\ref{lem:how_many_different_powers_2_simple}
either $\phi_1$ is from a constant-size set $E$,
or it is one of $\Ocomp(n/|u|)$ occurrences
of parametric powers in $W$.
In the first case for each $\phi' \in E$ we add each $k$ such that $|\phi'(k)| < 6$ to $S_{I+J}$,
in the second case the choice of $\phi_1$ plus the direction (left or right)
determines $u^\phi$,
we add each such pair $(\phi,k)$ to $S_{E,I + J}$, clearly there are $\Ocomp(n/|u|)$ many of them.

The other reason for $u^{\phi}$ being affected is that there is more than one $u$-parametric power merged to $u^{\phi}$.
Observe that this means that some $u$-parametric power depending on $I + J$ was also merged to $u^\phi$:
suppose not and consider the first two parametric powers (that do not depend on $I + J$)
that are merged to $u^{\phi}$, there are two by the case assumption.
Then the word between them is a $u$-power in $W_{I + J = k}$
but it is not in $W$ (as they were not $u$-reduced), so it has a parametric power.
It cannot be a one depending on $I$ or $J$, it depends on $I + J$.

If there are two $u$-parametric powers depending on $I$ or $J$
merged to $u^\phi$ then consider the two such powers $u$-reduced first and the word
$w = s_0 u^{\phi_1} s_1 u^{\phi_2} \cdots u^{\phi_\ell}s_\ell$,
between them, where each $u^{\phi_\ell}$ depends on $I + J$,
note that $w(I, k - I)$ is equivalent to power of $u$.
If only one $u$-parametric depending on $I$ or $J$ was merged to $u^\phi$,
call it $u^{\phi'}$,
then consider the maximal word to the left and right of $u^{\phi'}$ in $W$ without a $u$-parametric power not depending on $I+J$..
Choose the one that contains a $u$-parametric depending on $I+J$:
it has to exist, as at least two parametric powers were merged.
Suppose that it is to the left, the other case is symmetric.
Let the word be $w = s_0 u^{\phi_1} s_1 u^{\phi_2} \cdots u^{\phi_\ell}s_\ell$,
where all letters in $u^{\phi_\ell}(i)s_\ell$ are $u$-reduced to $u^\phi$ (and perhaps some other as well).
This generalizes the previous case.

Then, as in Lemma~\ref{lem:dissapearing_v_powers},
the case when some $|\psi_\ell(j)| < 6$ for $\ell > 0$ was already covered and in the other case
it can be shown that from each $u^{\phi_\ell}(I, k - I)$
at least $2|u|$ letters remained.
In particular, there are $2|u|$ letters from $u^{\phi_\ell}(I, k - I)$
and they should also be a part of $u$-power suffix of
the whole word $\nf(w(I, \ell - I))$.
Then Lemma~\ref{lem:runs_of_different_words} implies
that those are the same run, which contradicts the fact that $s_\ell$ is not a power of $u$.

The running time analysis is as in Lemma~\ref{lem:dissapearing_v_powers}.

The analysis of other cases (of substitution $I = i$ or $J = j$) is identical.
\end{proof}

We now consider the $u$-parametric powers that were not affected by the substitution.
Note that the formulation is more involved then in case of Lemma~\ref{lem:how_many_different_powers_second_reduction},
there is a reason though.
Intuitively, we want to say that $u^{\phi'}$ in $W_{I + J = k}$ was not affected
(by the substitution $I + J = k$),
when it is (almost) the same as some $\phi$ in $W$.
However, if $\phi$ depends on $J$, say it is $\phi = J +1$ then
after the substitution we get $\phi' = - I + k +1$.
Moreover, varying $k$ yields infinitely many possible $\phi'$s.
Thus our set should include $\phi$ and we characterize $\phi'$ via $\phi$, i.e.\ as $\phi' = \phi(I,-I+k)$.

\begin{lemma}
\label{lem:how_many_different_powers_second_reduction_2}
We can compute in $\Ocomp(m + n/|u|)$ time a set of $\Ocomp({\sqrt{n|u_{i_0}u_{i_0+1}|}}/{|u|})$
integer expressions $E$ each depending on $I$ or $J$
such that for every $k$ if $u^{\phi'}$ is a parametric power in $W_{I + J = l}$ not affected by the substitution $I + J = k$
then there is $\phi \in E$ such that $\phi' = \phi(I, k - I)$.

Similar bounds hold also for parametric powers not affected
by substitutions $I = i$ or $J = j$.
\end{lemma}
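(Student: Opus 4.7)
The plan is to mirror the proof of Lemma~\ref{lem:how_many_different_powers_second_reduction}, with the role of $v$-parametric powers taken by $u$-parametric powers that depend on $I+J$. Fix a not-affected $u^{\phi'}$ in $W_{I+J=k}$. Since $u^{\phi'}$ is parametric, some merged parametric power in $W$ must depend on $I$ or $J$; by the first clause of the not-affected definition, exactly one such power exists, call it $u^{\phi_0}$. The remaining merged powers (if any) depend on $I+J$ and, by the second clause, satisfy $|\phi_h(I, k-I)| \geq 6$.

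First I would study the local structure of $u^{\phi_0}$ in $W$: on each side, consider the maximal parametric word $s_0 u^{\phi_1} s_1 \cdots u^{\phi_\ell} s_\ell$ in which every $\phi_h$ depends on $I+J$, stopping before the next parametric power depending on $I$ or $J$ (if it exists). By Lemma~\ref{lem:no_long_self_reduction_after_reduction}, since each $|\phi_h(I, k-I)| \geq 6$, at most $2|u|$ letters of each $u^{\phi_h(I, k-I)}$ reduce with neighboring $s_h$'s, leaving a long $u$-run at each step. As in the proof of Lemma~\ref{lem:how_many_different_powers_second_reduction}, this means the $u$-power contribution to $u^{\phi'}$ from this side is governed by the trailing $s_\ell$ together with a short residual $u$-power that can be simulated by $u^{\bullet}$ for $\bullet$ in a bounded set.

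Then I would follow the ``prepared equation'' construction of Lemma~\ref{lem:how_many_different_powers_second_reduction}: for each parametric power $u^{\phi_0}$ in $W$ depending on $I$ or $J$, replace each neighboring contextual $I+J$-dependent power by a short $u^{\bullet}$ with $\bullet \in \{-3, 0, 3\}$, producing a constant-factor number of variants, and enrich the equation by concatenating all these variants separated by fresh symbols. Each original $u_h$ appears $\Ocomp(1)$ times in the prepared equation and each trivial fragment grows only by $\Ocomp(|u|)$. Running the $u$-reduction on this equation and counting distinct $u$-power exponents via Lemma~\ref{lem:sum_of_powers} and Lemma~\ref{lem:different_powers_in_a word}, as in Lemma~\ref{lem:how_many_different_powers_2}, yields the claimed bound $\Ocomp(\sqrt{n|u_{i_0}u_{i_0+1}|}/|u|)$. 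The resulting exponent has the shape $\phi = (1{+}N)I + NJ + C$ (or the symmetric $J$-leading form), where $N$ is the signed sum of the $I+J$-dependent powers absorbed on the two sides and $C$ collects all the constant contributions; this $\phi$ always depends on $I$ or $J$ but never on $I+J$ (its coefficients at $I$ and $J$ differ by $\pm 1$), and $\phi(I, k-I) = \phi'$.

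The main obstacle will be verifying that a constant number of contextual variants suffices to capture all values of $N$ and $C$ that can arise: the key fact is that once $|\phi_h(I, k-I)| \geq 6$ holds for every merged $I+J$-dependent power, further merging across an intermediate $s_h$ is governed only by whether $s_h$ ``passes through'' to yield a $u$-power in the $u$-reduction, a combinatorial condition determined by $W$ and not by $k$; thus the same bounded set of contextual replacements faithfully reproduces the $u$-power prefix and suffix seen at each stage of merging. The cases of substitutions $I=i$ and $J=j$ are analogous and in fact simpler: there, powers in $W$ depending on $I+J$ remain parametric after the substitution (now depending on the remaining free variable), so they behave as ``single-variable'' powers in the second $u$-reduction, and the analysis reduces almost directly to that of Lemma~\ref{lem:how_many_different_powers_second_reduction}.
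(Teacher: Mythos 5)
Your opening paragraph matches the paper's setup, but the next two diverge from the paper's argument and contain substantive errors. First, the paper does \emph{not} build a prepared equation here: it explicitly follows the \emph{simpler} second case of Lemma~\ref{lem:how_many_different_powers_second_reduction}. It shows directly that $|\phi_{\ell+1}(I,k-I) - \phi'| \leq 2$, where $u^{\phi_{\ell+1}}$ is the unique parametric power of $W$ merged to $u^{\phi'}$, and then takes $E$ to be the five integer shifts of each exponent already counted in Lemma~\ref{lem:how_many_different_powers_2}. The simple argument applies precisely because the contextual $u^{\phi_h}(I,k-I)$ are themselves $u$-runs: by Lemma~\ref{lem:different_runs_overlap} two $u$-runs that are not part of one common run overlap in fewer than $|u|$ letters, and since $s_\ell$ is not a power of $u$ the absorbed $u$-power prefix/suffix on each side of $u^{\phi_{\ell+1}}$ has length at most $|u|$. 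In the $u \not\shift v$ case the blocking factor was a $v$-run, whose overlap with a $u$-run is only bounded by $|u|+|v| \leq 2|v|$, arbitrarily larger than $|u|$ --- that is why the prepared equation was unavoidable there. Replacing those $v^{\pm 3}$ proxies by $u^{\pm 3}$ does not transfer: a $u$-power is not a barrier in a $u$-power prefix/suffix computation, it would itself be $u$-reduced into the neighbouring parametric power and systematically distort the recorded exponent.

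Second, your reading of ``not affected'' is incorrect, and the error shows in your claimed exponent form. The first clause of ``affected'' forbids more than one $u$-parametric power of $W$ of \emph{any} kind --- including those depending on $I{+}J$ --- from being merged to $u^{\phi'}$; it is not restricted to powers depending on $I$ or $J$. Hence under ``not affected'' exactly one parametric power of $W$ is merged (necessarily the one contributing the non-constant part of $\phi'$, so one depending on $I$ or on $J$), and no $I{+}J$-dependent power is absorbed: the quantity you call $N$ must equal $0$. Your $\phi = (1{+}N)I + NJ + C$ with $N \neq 0$ would also contradict the lemma statement itself, which requires every expression in $E$ to ``depend on $I$ or $J$'' in the paper's sense, i.e.\ to have exactly one of the coefficients $n_I, n_J$ nonzero.
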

\begin{proof}
The argument follows the lines of the simpler (second) case in Lemma~\ref{lem:how_many_different_powers_second_reduction},
with the extra care due to substitution $\phi(I, k - I)$.
We want to show that if $u^{\phi'}$ was not affected by the substitution $I + J = k$ then there is $\phi$ in $W$
such that $|\phi(I, k - I) - \phi'|\leq 2$.
Using the bounds from Lemma~\ref{lem:how_many_different_powers_2}
we get the claim of the Lemma.

Consider the unique $u^{\phi_{\ell+1}}$ that is merged to $u^{\phi'}$ that is not affected by the substitution $I + J = k$
and the maximal subword to the left of $u^{\phi_{\ell+1}}$ in $W$
(the one to the right is analyzed in the same way)
that does not include the $u$-parametric power depending on
$I$ or depending on $J$,
let this word be  $w = s_0u^{\phi_1}s_1 \ldots u^{\phi_\ell}s_\ell$,
where each $\phi_1, \ldots \phi_\ell$ depends on $I + J$.
Then it can be argued as in Lemma~\ref{lem:how_many_different_powers_second_reduction}
that none of $u^{\phi_\ell}(I, k - I)$ is reduced to less than $2|u|$ letters,
moreover, as $u^{\phi_\ell}$ and $u^{\phi_{\ell+1}}$ were not $u$-reduced together,
as $s_\ell$ is not a power of $u$.
Then the $u$-power suffix of $\nf(w(I, k - I))$ 
and what remained from $u^{\phi_\ell}(I, k - I)$ after the reductions
are different $u$-runs (as $s_\ell$ is not a power of $u$) and so they overlap for less than $|u|$ letters.
As $s_\ell$ does not begin nor end with $u$ nor $\ov u$
we conclude that the length of the $u$-power suffix of
$\nf(w(I, k - I))$ has length at most $|u|$.
The same applies to a similar word to the right of $u^{\phi_{\ell+1}}(I, k - I)$.
Hence the second $u$-reduction adds at most $|u|$ letter to the right and $u$ letters to the left of
$u^{\phi_{\ell+1}}(I, k - I)$ and so
$|\phi_{\ell+1}(I, k - I) - \phi'| \leq 2$, as claimed.

The analysis of the running time is the same as in Lemma~\ref{lem:how_many_different_powers_second_reduction_2}.

Lastly, the argument for other substitutions is the same.
\end{proof}

Overall, the main characterization is

\begin{lemma}
\label{lem:indices_sets_description_2}
Given the equation~\eqref{eq:parametric_equation}
we can compute in $\Ocomp(m n/|u|)$ time
sets $S_I, S_J,
S_{I + J}, 
S_{I+J, \mathbb Z} \subseteq \mathbb Z$
and 
$S_{I,J} \subseteq \mathbb Z^2$,
where $|S_I|, |S_J| = \Ocomp({\sqrt{n|u_{i_0}u_{i_0+1}|}}/{|u|})$,
$|S_{I + J}| = \Ocomp(1)$, $|S_{I+J, \mathbb Z}| = |S_{I,J}| = \Ocomp(n/|u|)$
such that
if $(i,j)$ is a solution of~\eqref{eq:parametric_equation_2} then
at least one of the following holds:
\begin{itemize}
\item $i \in S_I$ or
\item $j \in S_J$ or
\item $i+j \in S_{I+J}$ or
\item $i+j \in S_{I+J,\mathbb Z}$ and for each $i'$ the $(i',(i+j)-i')$ is a solution or
\item $(i,j) \in S_{I,J}$.
\end{itemize}

Similarly,
we can compute in $\Ocomp(m n/|u|)$ time
sets $S_I', S_J',
S_{I + J}', 
S_{I, \mathbb Z}' \subseteq \mathbb Z$
and 
$S_{I,J}' \subseteq \mathbb Z^2$,
where $|S_{I+J}'|, |S_J'| = \Ocomp({\sqrt{n|u_{i_0}u_{i_0+1}|}}/{|u|})$,
$|S_I'| = \Ocomp(1)$, $|S_{I, \mathbb Z}'| = |S_{I,J}'| = \Ocomp(n/|u|)$
such that
if $(i,j)$ is a solution of~\eqref{eq:parametric_equation_2} then
at least one of the following holds:
\begin{itemize}
\item $i \in S_I'$ or
\item $j \in S_J'$ or
\item $i+j \in S_{I+J}'$ or
\item $i \in S_{I,\mathbb Z}'$ and for each $j'$ the $(i,j')$ is a solution or
\item $(i,j) \in S_{I,J}'$
\end{itemize}
and
we can compute in $\Ocomp(m n/|u|)$ time
sets $S_I'', S_J'',
S_{I + J}'', 
S_{\mathbb Z, J}'' \subseteq \mathbb Z$
and 
$S_{I,J}'' \subseteq \mathbb Z^2$,
where $|S_{I+J}''|, |S_I''| = \Ocomp({\sqrt{n|u_{i_0}u_{i_0+1}|}}/{|u|})$,
$|S_J''| = \Ocomp(1)$, $|S_{\mathbb Z,J}''| = |S_{I,J}''| = \Ocomp(n/|u|)$
such that
if $(i,j)$ is a solution of~\eqref{eq:parametric_equation_2} then
at least one of the following holds:
\begin{itemize}
\item $i \in S_I''$ or
\item $j \in S_J''$ or
\item $i+j \in S_{I+J}''$ or
\item $j \in S_{\mathbb Z, J}''$ and for each $i'$ the $(i',j)$ is a solution or
\item $(i,j) \in S_{I,J}''$.
\end{itemize}
\end{lemma}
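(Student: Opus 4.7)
The plan is to follow the template of Lemma~\ref{lem:indices_sets_description}, exploiting the three available substitutions $I+J=k$, $I=i$, $J=j$ in parallel to obtain the three symmetric decompositions of the candidate set. I would prove the first decomposition in detail and then remark that the other two follow by the same argument with the roles of the three substitutions permuted.

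First I would dispose of the degenerate cases: if $W$ contains no non-constant parametric powers then the claim is trivial, and if all its parametric powers depend on a single one of $I$, $J$, $I+J$, then Lemma~\ref{lem:parametric_solution_almost_0} combined with the bounds of Lemma~\ref{lem:how_many_different_powers_2} directly produces a candidate set of the required form and size. In the main case, fix a hypothetical solution $(i,j)$ and let $k=i+j$. As a mental experiment, substitute $I+J=k$ (equivalently, $J=k-I$) into $W$ and perform a second $u$-reduction, obtaining a parametric word $W_{I+J=k}$; then $W_{I+J=k}(i)\eqg \varepsilon$. If $W_{I+J=k}=\varepsilon$, then by definition every parametric power is merged into the trivial power $u^0$, which is therefore affected by the substitution; Lemma~\ref{lem:dissapearing_v_powers_2} gives either $k\in S_{I+J}$ (the constant-size set) or $(0,k)$ in a set of size $\Ocomp(n/|u|)$, in which case I add $k$ to $S_{I+J,\mathbb Z}$, observing that every $(i',k-i')$ is then a solution.

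Otherwise, Lemma~\ref{lem:parametric_solution_almost_0} yields a $u$-parametric power $u^{\phi'}$ in $W_{I+J=k}$ with $|\phi'(i)|\leq 3$, and I split on whether $\phi'$ was affected by the substitution. If not, Lemma~\ref{lem:how_many_different_powers_second_reduction_2} produces a set $E$ of $\Ocomp(\sqrt{n|u_{i_0}u_{i_0+1}|}/|u|)$ integer expressions $\phi$ from $W$, each depending only on $I$ or only on $J$, with $\phi'=\phi(I,k-I)$; note that $\phi$ cannot depend on $I+J$, since then $\phi(I,k-I)$ would be constant and hence not a valid parametric exponent. The crucial observation is that if $\phi$ depends only on $I$ then $\phi(I,k-I)=\phi(I)$, so $|\phi'(i)|\leq 3$ is a constraint on $i$ alone, independent of $k$, yielding at most seven values to add to $S_I$; symmetrically, if $\phi$ depends only on $J$ then $|\phi'(i)|\leq 3$ reduces to $|\phi(j)|\leq 3$, contributing at most seven values to $S_J$. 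Summing over $\phi\in E$ gives the claimed sizes of $S_I$ and $S_J$. If instead $\phi'$ is affected, Lemma~\ref{lem:dissapearing_v_powers_2} places $k$ in $S_{I+J}$ or $(\phi',k)$ in a set of size $\Ocomp(n/|u|)$, and in the latter case $|\phi'(i)|\leq 3$ pins $i$ to one of $\Ocomp(1)$ values, so each resulting pair $(i,k-i)$ is added to $S_{I,J}$.

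The running time is dominated by the $\Ocomp(mn/|u|)$ of Lemma~\ref{lem:dissapearing_v_powers_2} (each of the $\Ocomp(n/|u|)$ candidates $k$ requires an $\Ocomp(m)$-time substitution and second $u$-reduction), and all sets can be sorted and deduplicated in $\Ocomp(m+n/|u|)$ time by counting sort as in Lemma~\ref{lem:how_many_different_powers}. The second and third claims follow by exactly the same pattern for substitutions $I=i$ and $J=j$ respectively. The main obstacle, absent from Lemma~\ref{lem:indices_sets_description}, is tracking how the constraint $|\phi(I,J)|\leq 3$ transforms under each substitution: for the substitution $I=i$, an expression $\phi$ depending on $I+J$ becomes $\phi(i,J)=n(i+J)+c$, so that the constraint $|\phi(i,j)|\leq 3$ is in fact a constraint on $i+j$, which explains the appearance of $S_{I+J}'$ in the second claim (and analogously $S_{I+J}''$ in the third). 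Once this reindexing is handled, the classification of candidates according to which variable the relevant $\phi\in E$ depends on is routine.
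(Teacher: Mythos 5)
Your proof follows the paper's argument closely: dispose of the degenerate cases via Lemma~\ref{lem:parametric_solution_almost_0} and Lemma~\ref{lem:how_many_different_powers_2_simple}, then for the main case substitute $I+J=k$, split on whether the relevant $u$-parametric power was affected, invoke Lemma~\ref{lem:dissapearing_v_powers_2} for the affected case and Lemma~\ref{lem:how_many_different_powers_second_reduction_2} for the unaffected one, and finally permute the roles of the three substitutions for the remaining two claims. The observation about how the $|\phi|\leq 3$ constraint reindexes under each substitution (e.g.\ a $J$-expression $\phi$ satisfying $\phi'(i)=\phi(j)$ under $I+J=k$, or an $(I+J)$-expression becoming an $i+j$ constraint under $I=i$) is exactly what the paper's proof does as well, modulo a harmless swap in the names $\phi$ vs.\ $\phi'$.
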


\begin{proof}
We proceed similarly as in Lemma~\ref{lem:indices_sets_description}.
First, let us consider some degenerate cases.
If all $u$-parametric powers in $W$ in~\eqref{eq:parametric_equation_2}
depend on the same set of variables,
say on $I + J$, and $(i,j)$ is a solution
then by Lemma~\ref{lem:parametric_solution_almost_0}
there is a $u$-parametric power $u^\phi$ such that $|\phi(i+j)|\leq 3$.
The rest of the analysis is an in Lemma~\ref{lem:indices_sets_description}:
from Lemma~\ref{lem:how_many_different_powers_2_simple}
either $\phi$ is one of $\Ocomp(1)$ parametric expressions
and so there are only $\Ocomp(1)$ possible values of $i+j$
and those are added to $S_{I+J}$;
or $u^\phi$ is one of $\Ocomp(n/u)$ occurrences of $u$-parametric powers in $W$.
Then there are at most $\Ocomp(n/|u|)$ different values of
$i+j$ such that
$\phi(I, i + j - I) = 0$, and for each such $i+j$
each $(i', (i+j) - i')$ is a solution,
each such $i+j$ is added to $S_{I+J, \mathbb Z}$

The running time analysis is as in Lemma~\ref{lem:indices_sets_description}.

So suppose $W$ in~\eqref{eq:parametric_equation_2}
has $u$-parametric powers depending on two different sets of variables,
so in particular it has a parametric power that does not depend on $I + J$.
Let $(i,j)$ be a solution, substitute $I + J = i + j$ in $W$ from~\eqref{eq:parametric_equation_2}
and compute $W_{I + J = i + j}$.
Recall that by the convention, the substitution means that we replace $J$ by
$-I +i+j$.

As in proof of Lemma~\ref{lem:indices_sets_description}
we conclude that $W_{I + J = i+j}(i) \eqg \varepsilon$,
this includes the case when $W_{I + J = i + j} = \varepsilon$.
Then there is a $u$-parametric power $u^\phi$ in $W_{I + J = i+j}$
such that $|\phi(i)| \leq 3$ (if $W_{I + J = i + j} = \varepsilon$ then simply we take $\phi = 0$).

If $\phi$ was not affected by substitution $I + J = i + j$,
then by  Lemma~\ref{lem:how_many_different_powers_second_reduction_2}
we can compute in $\Ocomp(m + n/|u|)$ time a set $E$ of integer expressions (depending on $I$ or $J$)
of size $\Ocomp\left(\frac{\sqrt{n|u_{i_0}u_{i_0+1}|}}{|u|}\right)$
such that there is $\phi' \in E$ such that $\phi = \phi'(I, i + j - I)$.
Define $S_I$ ($S_J$)as the set of numbers $i'$ ($j'$) such that $|\phi''(i')| \leq 3$ for some $\phi'' \in E$ depending on $I$
($|\phi''(j')| \leq 3$ for some $\phi'' \in E$ depending on $J$).
Observe that if $\phi'$ depends on $I$ then $i \in S_I$:
in this case $\phi = \phi'$ and so
$|\phi'(i)| = |\phi(i)| \leq 3$ and so $i \in S_I$.
If $\phi'$ depends on $J$ then let
$\phi = n_I I + c$ then $\phi' = n_I(-J + i+j) + c$
and $\phi'(j) = n_I(-j + i+j) + c = n_Ii + c=\phi(i)$
and thus $|\phi'(j)| = |\phi(i)| \leq 3$
and so $j \in S_J$.

The running time analysis is as in Lemma~\ref{lem:indices_sets_description}.

So consider the case when $\phi$ was affected by the substitution $I + J = i + j$
(this includes $\phi = 0$, as in this case at least two parametric powers depending on different sets of variables are merged).
By Lemma~\ref{lem:dissapearing_v_powers_2}
either $i+j$ is from a constant-size set, whose elements are added to $S_{I+J}$;
or $(\phi,i+j)$ is from a set of $\Ocomp(n/|v|)$ elements.
If $\phi = 0$ then we add the set of second components to $S_{(I + J), \mathbb Z}$.
Note that if $W_{I + J = i +j} \eqg \varepsilon$ then for every $i'$
the pair $(i', i+j-i')$ is a solution, as the sum $i' + i +j -i' = i+j$, and so $W(i',i +j -i') \eqg W_{I + J = i +j} (i) \eqg \varepsilon$.
If $\phi \neq 0$ then it is not a constant,
i.e.\ it depends on a variable ($I$) by definition.
From the fact that $|\phi(i)| \leq 3$ for each $\phi$
there are at most $7$ possible values of $i$
and so for each pair $(\phi,i+j)$ we can create at most $7$ pairs $(i,j)$ such that
$|\phi(i)| \leq 3$, we add them to $S_{I,J}$.
Clearly there are $\Ocomp(n/|u|)$ such pairs added.
The running time analysis is as in
Lemma~\ref{lem:indices_sets_description}.

The proof for other claims is symmetric.
\end{proof}

The fourth possibility in Lemma~\ref{lem:indices_sets_description_2} means that $W(I,k-I) \eqg \varepsilon$,
which would yield an infinite family of solutions
$\{ \alpha u^i v^{k-i}\beta \: : \: i \in \mathbb Z\}$.
Additional combinatorial analysis yields that this cannot happen
(and we know this from the earlier characterisation of the solution set).

\begin{lemma}
	\label{lem:no_i+j_trivilizes}
	Consider a parametric word $\alpha u^I v^J \beta$ for $u \shift v$
	and the corresponding $W \neq \varepsilon$ obtained after the substitution of $X = \alpha u^I v^J \beta$,
	as in~\eqref{eq:parametric_equation}.
	Then for every $k$ it holds that $W(I, k-I) \not \eqg \varepsilon$.
\end{lemma}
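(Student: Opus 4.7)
I plan to argue by contradiction; suppose $W(I, k-I) \eqg \varepsilon$ for every $I$, so that $x_I := \alpha u^I v^{k-I} \beta$ is a solution of~\eqref{eq:main} for every $I \in \mathbb Z$. Since $u \shift v$ and $u \notin \{v,\ov v\}$, we may write $u = u'u''$ and $v = u''u'$ with $u', u'' \neq \varepsilon$, and in the free group the identity $v^\ell \eqg u''u^\ell \ov{u''}$ holds for every $\ell$. Substituting this into each $x_I^{p_h}$ and then into~\eqref{eq:main} yields, for every $I$, a free-group identity
\[
\Phi(I) := c_0 \, u^{d_1(I)} \, c_1 \, u^{d_2(I)} \, c_2 \cdots c_{2m-1} \, u^{d_{2m}(I)} \, c_{2m} \eqg \varepsilon,
\]
where each $c_\ell$ is a fixed reduced word built from $\alpha, \beta, u'', \ov{u''}, u_h$, and $d_\ell(I) = e_\ell I + f_\ell$ with $e_\ell \in \{+1, -1\}$, $f_\ell \in \{0, \pm k\}$. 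Direct inspection of the expansions $X^{p_h}(I) = \alpha u^I u'' u^{k-I} \ov{u''} \beta$ and $\ov{X^{p_h}}(I) = \ov\beta u'' u^{I-k} \ov{u''} u^{-I} \ov\alpha$ shows that the signs $e_\ell$ alternate strictly $+,-,+,-,\ldots$, and that for each $h$ the interior constant lying between the two $u$-powers coming from the same $X^{p_h}$ is literally $u''$ (if $p_h = +1$) or $\ov{u''}$ (if $p_h = -1$).

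The key step is to show that every interior $c_\ell$ with $1 \le \ell \le 2m-1$ lies in $\langle u \rangle$. Since $\Phi(I) \eqg \varepsilon$ for all $I$, also $\Phi(I+1) \Phi(I)^{-1} \eqg \varepsilon$, and because the signs $e_\ell$ alternate, this product telescopes massively: each consecutive pair $u^{d_{\ell+1}(I+1)}$ from $\Phi(I+1)$ and $u^{-d_{\ell+1}(I)}$ from $\Phi(I)^{-1}$ contributes only $u^{e_{\ell+1}}$, wrapped around the interior constants as conjugates $c_\ell \, u^{e_{\ell+1}}\, c_\ell^{-1}$. Applying Lemma~\ref{lem:concatenation_of_powers_is_not_a_power} to this shorter identity (using that $u$ is primitive and cyclically reduced, so the centralizer of $u$ in the free group is $\langle u \rangle$), an induction on the number of surviving parametric powers shows that each interior $c_\ell$ must commute with $u$, hence $c_\ell \in \langle u \rangle$.

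Applying this conclusion to one of the interior constants equal to $u''$ or $\ov{u''}$ (which exist since $m \ge 1$) yields $u'' \eqg u^n$ for some $n \in \mathbb Z$. Because $|u''| < |u|$ and $u$ is primitive, the only possibility is $n = 0$, i.e.\ $u'' = \varepsilon$; but this forces $v = u''u' = u'$ and $u = u'u'' = u'$, giving $u = v$ and contradicting $u \notin \{v,\ov v\}$, as required.

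The main obstacle is the induction in the second step: one must carefully manage the intermediate cancellations in $\Phi(I+1) \Phi(I)^{-1}$, using Lemma~\ref{lem:no_long_self_reduction_after_reduction} to bound by $2|u|$ the cancellation between each non-$u$-power $c_\ell$ and the adjacent $u$-powers, so that the surviving $u$-content can be isolated and Lemma~\ref{lem:concatenation_of_powers_is_not_a_power} applied. The base case of the induction is the classical fact that the centralizer of a primitive element in a free group is cyclic; the inductive step strips off one interior $c_\ell$ (now known to be a $u$-power) together with one $u^{e_\ell}$ pair, reducing the identity to a shorter one of the same form.
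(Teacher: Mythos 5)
Your high-level strategy---showing that the interior constants of the parametric word must lie in $\langle u\rangle$ and then reading off $u''\in\langle u\rangle$, hence $u''=\varepsilon$, hence $u=v$---is a reasonable target, but the argument you sketch for the central claim does not work. The ``massive telescoping'' of $\Phi(I+1)\Phi(I)^{-1}$ is incorrect: only the outermost pair $u^{d_{2m}(I+1)}u^{-d_{2m}(I)}$ combines (after $c_{2m}c_{2m}^{-1}$ cancels), and one more pair combines at the other end after conjugating away $c_0$ and $u^{d_1}$. Every other matched pair $u^{d_{\ell}(I+1)}$, $u^{-d_{\ell}(I)}$ is separated by the word $c_{\ell}\bigl(\cdots\bigr)c_{\ell}^{-1}$ sitting between them, which is not a priori a $u$-power; to push the telescoping further you would already need $c_{2m-1}u^{e_{2m}}c_{2m-1}^{-1}\in\langle u\rangle$, which is exactly the conclusion you are after. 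Worse, the proposed induction goes in the wrong direction: $\Phi$ has $2m$ parametric $u$-powers, while after the two cancellations $\Phi(I+1)\Phi(I)^{-1}$ (suitably conjugated) has $4m-4$, so for $m\ge 3$ the count strictly increases. Lemma~\ref{lem:concatenation_of_powers_is_not_a_power} also cannot be invoked at this stage: it governs concatenations of powers of two \emph{fixed} words $s,t$, not words of the form $\cdots u^{d_\ell}c_\ell u^{d_{\ell+1}}\cdots$ with arbitrary separators $c_\ell$. Finally, note that you never actually use the hypothesis $W\neq\varepsilon$; without it the statement (and your intermediate claim) is simply false, e.g.\ take $c_1=a$, $c_2=\varepsilon$, $c_3=\ov a$, $u=ab$, so that $u^{I}a\,u^{-I}u^{I}\ov a\,u^{-I}\eqg\varepsilon$ for all $I$ yet $a\notin\langle ab\rangle$. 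That hypothesis enters precisely by saying the preprocessed, $u$-reduced parametric word is nontrivial, which rules out such collapses.

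The paper proves the lemma by a purely combinatorial route that you may want to compare against: it works directly with $W$ (after preprocessing and $u$-reduction, so trivial fragments and adjacent $u$-powers have already been eliminated), and argues that in $W(I,k-I)$ no two $u$-parametric powers can $u$-reduce together. The key mechanism is a parity argument on fragment boundaries combined with the observation that the word that would have to $u$-reduce away is a concatenation of powers of $u'$ and $u''$ with a rigid alternation pattern, equation~\eqref{eq:powers_are_powers}; applying Lemma~\ref{lem:concatenation_of_powers_is_not_a_power} in its \emph{intended} form (freeness of $\langle u',u''\rangle$) then produces the contradiction. Since a $u$-parametric power depending on $I$ survives after $u$-reduction, Lemma~\ref{lem:parametric_solution_almost_0} gives $W(I,k-I)\not\eqg\varepsilon$. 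Your final step (from $u''\in\langle u\rangle$ and $|u''|<|u|$ to $u''=\varepsilon$ and $u=v$) is correct, but the bridge to it is missing.
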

\begin{proof}
	Suppose first that in $W$ there is a $u$-parametric power depending on $I$ or $J$ (so not only ones depending on $I + J$).
	We show that in $W(I,k-I)$ no two $u$-parametric powers are $u$-reduced;
	note that those have to depend on $I$,
	as after the substitution $I + J = k$ the $u$-parametric powers depending on $J$
	become $u$-parametric powers depending on $I$ and $u$-parametric powers
	depending on $I + J$ are turned to $u$-powers.

	Suppose not, consider an arbitrary sequence of $u$-reductions and the first moment when two $u$-parametric powers are $u$-reduced,
	call them $u^\phi$ and $u^{\phi'}$ and let $u^\phi$ be the left one.
	First, observe that in between them (in $W(I,k-I)$) there cannot be a $u$-parametric power,
	as this would contradict the fact that they were the first  parametric powers to be $u$-reduced.
	So in $W$ between $u^\phi$ and $u^{\phi'}$ there only $u$-parametric powers depending on $I+J$.
	So one of $u^\phi, u^{\phi'}$ was the first parametric power in its fragment and the other the second in its fragment:
	otherwise the number of parametric powers between them before the preprocessing would be odd, and during the preprocessing we remove two parametric powers in one step
	and also creation of $u$-parametric power depending on $I + J$ uses two parametric powers.

	Suppose that $u^\phi$ was created from the left parametric power in its fragment
	(and $u^{\phi'}$ from the right-parametric power in its fragment).
	Consider the parametric power $u^\psi$ that was created from the right parametric power
	in the same fragment as $u^\phi$ and similarly $u^{\psi'}$ that was created from the left parametric power in the fragment in which $u^{\phi'}$ was created.
	Then in preprocessing we do not remove the $u^{\psi}, u^{\psi'}$:
	the other powers in their fragments, i.e.\ $u^\phi, u^{\phi'}$, are not removed in the preprocessing.
	By the choice of $u^\phi, u^{\phi'}$ they are not $u$-reduced, contradiction, as they are between $u^\phi, u^{\phi'}$.

	So $u^\phi$ is obtained from the $u$-parametric power on the right of the fragment and $u^{\phi'}$ from the parametric power on the left of the fragment.
	Consider the word between those two parametric powers, before the preprocessing.
	It is a concatenation of fragments and $u''$ or $\ov {u''}$ in between,
	it also begins with $u''$ or $\ov{u''}$ and ends with $u''$ or $\ov{u''}$.
	There is no $u$-parametric power in $W(I,k-I)$ left from the powers between $u^{\phi}$ and $u^{\phi'}$,
	so each such fragment either was replaced by a power of $u$ during the preprocessing
	(this power is not $\varepsilon$)
	or was turned into a parametric power depending on $I+J$ and turned into a power of $u$
	by substitution $I + J = k$ (those ones could be equal to $\varepsilon$).
	So the word between $u^{\phi}$ and $u^{\phi'}$ is of the form
	(each power of $u$ comes from one fragment, it could be that $p_i = 0$)
	\begin{equation}
	\label{eq:powers_are_powers}
	(u'')^{p_1}(u'u'')^{p_2}\cdots (u'u'')^{p_{2\ell}} (u'')^{p_{2\ell+1}} \eqg (u'u'')^{p}
	\end{equation}
	where each $p_k \in \{-1,1\}$ for odd $k$.
	
	By Lemma~\ref{eq:powers_to_powers}, the mapping $S \mapsto u'$, $T \mapsto u''$
	is an isomorphism between the group freely generated by $S, T$ and the group generated by $u', u''$.
	Hence the rewriting procedure that takes a concatenation of powers of $u', u''$
	and reduces $u' \ov{u'}$, $\ov{u'}u'$, $u'' \ov{u''}$, $\ov{u''}u''$ leads to a~unique normal form.
	In~\eqref{eq:powers_are_powers} the right-hand side is already in such a normal form.
	Consider any reduction that leads to the normal form and the corresponding partial pairing
	of the left-hand side.
	The right hand-side begins and ends with a power of $u''$,
	so at least one of $(u'')^{p_1}$ or $(u'')^{p_{2\ell_1}}$ is paired.
	Consider paired $u'', \ov {u''}$ such that between them there is no $u''$ nor $\ov {u''}$.
	Clearly there are such paired $u'', \ov{u''}$:
	we have a paired $u'', \ov{u''}$ and when they are paired,
	each $u', u'', \ldots$ between them is also paired.
	Then the whole word between them is also paired or $\varepsilon$,
	so between them there is $(u'u'')^0 = \varepsilon$.
	We obtain a $u$-power with exponent $0$ only from
	a $u$-parametric power depending on $I + J$, as $u$-powers obtained from
	$u$-reducing a fragment into a trivial trivial fragment are not equal to $\varepsilon$
	(as, say, it replaces $u^I u_h \ov u ^I$ with $u_h$).
	But a $u$-parametric power depending on $I+J$ has to its sides either $u''$ and $u''$
	or $\ov{u''}, \ov {u''}$, contradiction, as the one that we found has paired $u''$ and $\ov{u''}$.
	
	The other case is that there is no $u$-power depending on $I$ or $J$ in $W$
	(and $W(I,k-I) \eqg \varepsilon$).
	When we represent $W$ as a concatenation of fragments and $u''$ or $\ov{u''}$,
	then it either begins with $u''$ or it ends with $\ov{u''}$
	and the same argument as in the main case shows that this $u''$ or $\ov{u''}$
	should reduce with some other $\ov{u''}$ or $u''$ inside (as the whole $W(I,k-I)$ reduces to $\varepsilon$).
	Then the rest of the argument is the same.	
\end{proof}

\clearpage

\subsection{Algorithm and running time}\label{sec:running-time}
\begin{lemma}
\label{lem:running_time}
All candidate solutions can be tested in total $\Ocomp(n^2m)$ time.
\end{lemma}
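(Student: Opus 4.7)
The plan is to combine the four tools already available: Lemma~\ref{lem:solutions_superset} reduces the task to $\Ocomp(n^2)$ constant-size candidates plus the parametric words $\alpha u^I v^J \beta$; Lemmata~\ref{lem:indices_sets_description} and \ref{lem:indices_sets_description_2} restrict each parametric word's solutions to short candidate sets; Lemma~\ref{lem:one_parameter_fixed} enumerates candidates once one parameter is fixed; and Lemma~\ref{lem:testing_a_single_solution} verifies an individual candidate in $\Ocomp(m)$ time. The constant-size set $S$ from Lemma~\ref{lem:solutions_superset} has size $\Ocomp(n^2)$, so checking it with Lemma~\ref{lem:testing_a_single_solution} costs $\Ocomp(n^2m)$; the entire task therefore reduces to accounting the cost incurred by the parametric words.

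For each parametric word associated with index $i_0$ (so $|u|=\ell_{i_0}$ and $|v|=\ell_{i_0}'$, each at most $|u_{i_0}u_{i_0+1}|$) I would invoke both directions of Lemma~\ref{lem:indices_sets_description} in the cases $u\not\shift v$ and $u\in\{v,\ov v\}$, or the three versions of Lemma~\ref{lem:indices_sets_description_2} in the case $u\shift v$ (where Lemma~\ref{lem:no_i+j_trivilizes} kills the $S_{I+J,\mathbb Z}$ subcase). Taking the conjunction of the two (or three) declared options that any solution $(i,j)$ must satisfy, the search shrinks to the following candidates: every pair in $S_{I,J}\cup S_{I,J}'$ (and their $u\shift v$ analogues); the $1$-parametric families indexed by $j\in S_{\mathbb Z,J}$ or $i\in S_{I,\mathbb Z}'$, each verified in $\Ocomp(m)$ via the decision part of Lemma~\ref{lem:one_parameter_fixed}; for each $j\in S_J$ (respectively each $i\in S_I'$ and, in the shift case, each $k\in S_{I+J}$ after substituting $J=k-I$), applying Lemma~\ref{lem:one_parameter_fixed} to generate $\Ocomp(n/|u|)$ (respectively $\Ocomp(n/|v|)$) candidate values of the remaining parameter; and finally every pair in the Cartesian product $S_I\times S_J'$, which catches the residual case in which none of the above options applies, for then both $i\in S_I$ and $j\in S_J'$ must hold.

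For the amortisation, I would bound the work per parametric word by $\Ocomp(mn/|u| + mn/|v| + mn|u_{i_0}u_{i_0+1}|/(|u||v|))$, using $|S_I||S_J'|=\Ocomp(n|u_{i_0}u_{i_0+1}|/(|u||v|))$ and the other set sizes from Lemma~\ref{lem:indices_sets_description}. Summing over the $\ell_{i_0}\ell_{i_0}'$ parametric words at index $i_0$ gives
\begin{equation*}
\ell_{i_0}\ell_{i_0}'\cdot\Ocomp\!\left(\tfrac{mn}{\ell_{i_0}}+\tfrac{mn}{\ell_{i_0}'}+\tfrac{mn|u_{i_0}u_{i_0+1}|}{\ell_{i_0}\ell_{i_0}'}\right)
= \Ocomp\!\bigl(mn(\ell_{i_0}+\ell_{i_0}'+|u_{i_0}u_{i_0+1}|)\bigr) = \Ocomp(mn|u_{i_0}u_{i_0+1}|).
\end{equation*}
Since $\sum_{i_0}|u_{i_0}u_{i_0+1}|\le 2n$, summing over $i_0$ gives $\Ocomp(mn^2)$ for the parametric words, and together with the $\Ocomp(n^2m)$ for the set $S$ the stated bound follows.

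The main obstacle is the case analysis that certifies the enumerated candidates really cover every solution when the two (or three) versions of the index-set description are intersected; the crucial observation is that once the ``easy'' options (the explicit pair sets, the $1$-parametric families, and the constant-size one-parameter sets $S_J$, $S_I'$, $S_{I+J}$) are removed, the only remaining possibility forces $i\in S_I$ \emph{and} $j\in S_J'$, so the Cartesian product $S_I\times S_J'$ suffices, and its size $\Ocomp(n|u_{i_0}u_{i_0+1}|/(|u||v|))$ is precisely what makes the telescoping sum collapse to $\Ocomp(n^2m)$.
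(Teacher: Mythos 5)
Your proposal matches the paper's proof in structure and content: same four tool lemmata, same case split on $u\not\shift v$, $u\in\{v,\ov v\}$, $u\shift v$, same reduction to the ``Cartesian product $S_I\times S_J'$'' residual case after peeling off the explicit pair sets, $1$-parametric families, and constant-size one-parameter sets, and the same telescoping $\sum_{i_0}\ell_{i_0}\ell_{i_0}'\cdot(\text{per-word cost})=\Ocomp(mn^2)$ bound via $\sum_{i_0}\ell_{i_0}\le 2n$ and $\sum_{i_0}|u_{i_0}u_{i_0+1}|\le 2n$. This is essentially the paper's argument, so no further comparison is needed.
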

\begin{proof}
Lemma~\ref{lem:solutions_superset} yields that there
are $\Ocomp(n^2)$ solutions that satisfy the statement (the ones from set $S$).

We show that we test $\Ocomp(n^2)$ solutions, each in time $\Ocomp(m)$.
We separately estimate the time needed to test the infinite solution sets
as well as the time spent on other subprocedures.

By Lemma~\ref{lem:testing_a_single_solution}
one solution of the form $\alpha u^i v^j \beta$, for fixed $\alpha, u,i,v,j,\beta$ which are $\Ocomp(1)$-represented,
can be tested in $\Ocomp(m)$ time.
So it is enough to show that there are at most
$\Ocomp(n^2)$ different candidates tested
(we estimate other computation times as well).
Lemma~\ref{lem:solutions_superset} yields that there are $\Ocomp(n^2)$ candidate solutions
(from the set $S$).
Other solutions are obtained in the following way:
for two consecutive words $u_{i_0}, u_{i_0+1}$ from the equation
we have a~family of~$\ell_{i_0} \cdot \ell_{i_0}'$ candidates of the form
$\alpha u^Iv^J \beta$, see Lemma~\ref{lem:solutions_superset},
where $\ell_{i_0} = |u|, \ell_{i_0}' = |v|$ and $\ell_{i_0},\ell_{i_0}' \leq |u_{i_0}|+|u_{i_0+1}|$;
by Lemma~\ref{lem:solutions_superset} the total time, over all $i_0$, spent on computing words 
$\alpha, \beta, u, v$ is $\Ocomp(n^2)$ time.
We will often use the estimation (a similar one hold for $\ell_{i_0}'$):
\begin{equation}
\label{eq:_l_i_sum}
\sum_{i_0=1}^{m} \ell_{i_0} \leq \sum_{i_0=1}^m |u_{i_0}|+|u_{i_0+1}| \leq 2n \enspace .
\end{equation}

Suppose first that $u \not \shift v$,
then by Lemma~\ref{lem:indices_sets_description}
we can compute in time $\Ocomp(m n/\ell_{i_0})$
sets  $S_I$, $S_J$, $S_{J,\mathbb Z}$, $S_{I,J}$,
where $|S_I| = \Ocomp\left({\sqrt{n|u_{i_0}u_{i_0+1}|}}/{l_{i_0}} \right)$,
$|S_J| = \Ocomp(1)$, $|S_{\mathbb Z,J}|, |S_{I,J}| = \Ocomp(n/\ell_{i_0})$,
such that for each solution $(i,j)$ at least one of the following holds:
\begin{enumerate}[{i}1.]
\item $i \in S_I$ or \label{i1}
\item $j \in S_J$ or \label{i2}
\item $j \in S_{\mathbb Z, J}$ and $(i',j)$ is a solution for each $i'$ or \label{i3}
\item $(i,j) \in S_{I,J}$. \label{i4}
\end{enumerate}
and in time $\Ocomp(m n/\ell_{i_0}')$
sets  $S_{J}'$, $S_I'$, $S_{I,\mathbb Z}'$, $S_{I,J}'$,
where $|S_{I}'| = \Ocomp(1)$,
$|S_{J}'| = \Ocomp({\sqrt{n|u_{i_0}u_{i_0+1}|}}/{\ell_{i_0}'})$ 
$|S_{I, \mathbb Z}'|, |S_{I,J}'| = \Ocomp(n/\ell_{i_0}')$,
such that for each solution $(i,j)$ at least one of the following holds:
\begin{enumerate}[{j}1.]
\item $j \in S_{J}'$ or \label{j1}
\item $i \in S_{I}'$ or \label{j2}
\item $i \in S_{I,\mathbb Z}$ and $(i,j')$ is a solution for each $j'$ or \label{j3}
\item $(i,j) \in S_{I,J}'$. \label{j4}
\end{enumerate}
As both of those characterization hold, we should describe how do we treat each of the 16 cases.
Fortunately, for most of the cases the further action and analysis depends on one of the cases alone.

If we are in the case~\iref{4} or \jref{4}
then we test each pair $(i,j) \in S_{I,J} \cup S_{I,J}'$ separately.
There are (over all $0 \leq i_0 \leq m-1$) at most
(note that some of those solutions have $u \shift v$,
we will estimate their running time separately, so now we overestimate the running time)
\begin{equation}
\label{eq:n_u_quadratic}
\sum_{i_0=0}^{m-1} \ell_{i_0} \ell_{i_0}' \left( \frac n {\ell_{i_0}} + \frac n {\ell_{i_0}'} \right)
	=
n \sum_{i_0=0}^{m-1} \ell_{i_0}' + \ell_{i_0} \leq 4n^2 \quad \quad \text{ by~\eqref{eq:_l_i_sum}}
\end{equation}
such solutions.

Concerning the time of establishing those sets,
the largest is from Lemma~\ref{lem:dissapearing_v_powers}
and it is $\Ocomp(mn/\ell_{i_0})$ (for $S_{I,J}$) or $\Ocomp(mn/\ell_{i_0}')$ (for $S_{I,J}'$).
So up to a constant it is:
\begin{equation*}
\sum_{i_0=0}^{m-1}\ell_{i_0}\ell_{i_0}' \left( \frac{mn}{\ell_{i_0}} + \frac{mn}{\ell_{i_0}'}\right) = 
mn \sum_{i_0=0}^{m-1} (\ell_{i_0} +\ell_{i_0}') \leq 2mn^2 \quad \quad  \text{by~\eqref{eq:_l_i_sum}} \enspace .
\end{equation*}

If we are in the case~\iref{3}
then for each $j \in S_{J,\mathbb Z}$ we substitute $J = j$
and test, whether $W(I,j) \eqg \varepsilon$;
by Lemma~\ref{lem:parametric_solution_almost_0} this is equivalent
to $(i',j)$ being a solution for each $i' \in \mathbb Z$.
Each such $j$ yields a family of solutions
of the required form
$\{ \underbrace{\alpha}_{\text{fixed}} u^i \underbrace{v^j \beta}_{\text{fixed}} \: : \: i \in \mathbb Z\}
$
and there are at most $| S_{J,\mathbb Z}| = \Ocomp(n/\ell_{i_0})$ such families. 
Over all $0 \leq i_0 \leq m-1$ this yields at most (up to a~constant)
\begin{equation*}
\sum_{i_0 = 0}^{m-1}\ell_{i_0}\ell_{i_0}' \frac n {\ell_{i_0}'}
	=
n \sum_{i_0 = 0}^{m-1} \ell_{i_0} \leq 2n^2 \quad \quad \text{ by~\eqref{eq:_l_i_sum}} \enspace .
\end{equation*}
Concerning the running time,
note that testing whether $W(I,j) \eqg \varepsilon$
takes $\Ocomp(m)$ time, see Lemma~\ref{lem:one_parameter_fixed},
so it is enough to show that
we test $\Ocomp(n^2)$ such $j$s.
As $|S_{\mathbb Z, J}| = \Ocomp(n/\ell_{i_0})$,
the calculations are as in~\eqref{eq:n_u_quadratic}.
A similar analysis applies to $S_{I,\mathbb Z}$,
i.e.\ case \jref{3}.

If we are in case \iref{2} then for each $j \in S_J$
we can compute, by Lemma~\ref{lem:one_parameter_fixed}, in time $\Ocomp(m)$
whether each $(i',j)$ is a solution, note that the set is of the required form,
as in the case of $j \in S_{\mathbb Z, J}$,
moreover the estimation on the number of such solution sets is not larger than in the case of $j \in S_{\mathbb Z, J}$, as $|S_J| = \Ocomp(1)$ and
$|S_{\mathbb Z, J}| = \Ocomp(n/\ell_{i_0})$.
Otherwise, again by Lemma~\ref{lem:one_parameter_fixed},
we compute in time $\Ocomp(m + n/\ell_{i_0} \log m))$ a set $S$ of size $|S| = \Ocomp(n/\ell_{i_0})$
such that if $(i,j)$ is a~solution then $i \in S$.
This yields $|S_J| \times |S| = \Ocomp(n/\ell_{i_0})$ candidate pairs,
which are individually tested, so the running time is $\Ocomp(m n / \ell_{i})$, note that this dominates $\Ocomp(m + n \log m/\ell_{i_0})$ from Lemma~\ref{lem:one_parameter_fixed}.
The estimation in~\eqref{eq:n_u_quadratic} yields that there are at most $\Ocomp(n^2)$ such candidate pairs and the whole running time is
$\Ocomp(m n^2)$.
A similar analysis applies to $i \in S_{I}'$.

The only remaining option is that we are simultaneously in case \iref{1} and \jref{1},
i.e.\ $i \in S_I$ and $j \in S_{J}'$.
As
$|S_I| = \Ocomp\left({\sqrt{n|u_{i_0}u_{i_0+1}|}}/{\ell_{i}} \right)$
and
$|S_{J}'| = \Ocomp\left({\sqrt{n|u_{i_0}u_{i_0+1}|}}/{\ell_{i}'} \right)$
there are (over all $i_0$ and up to a constant) at most
\begin{equation*}
\sum_{i_0=1}^m \ell_{i_0} \ell_{i_0}' \frac{\sqrt{n|u_{i_0}u_{i_0+1}|}}{\ell_{i_0}} \cdot 
\frac{\sqrt{n|u_{i_0}u_{i_0+1}|}}{\ell_{i_0}'}
	=
\sum_{i_0=1}^m n|u_{i_0}u_{i_0+1}|
	\leq
2n^2
\end{equation*}
such solutions tested.

The cases of $u = v$ or $u = \ov v$
are done using Lemma~\ref{lem:one_parameter_fixed},
the bounds are the same as in case of $u \not \shift v$.

The case of $u \shift v$ is a bit more involved,
let $\ell_{i_0} = |u|$.
By Lemma~\ref{lem:indices_sets_description_2}
we can compute in time $\Ocomp(m n/|u|)$
sets $S_I$, $S_J$, $S_{I+J}$, $S_{I+J,\mathbb Z}$, $S_{I,J}$
and
such that for each solution $(i,j)$ either
\begin{enumerate}
\item $i \in S_I$ or
\item $j \in S_J$ or
\item $i+j \in S_{I+J}$ or
\item $i+j \in S_{I+J,\mathbb Z}$ and for each $i'$ the $(i',(i+j)-i')$ is a solution or
\item $(i,j) \in S_{I,J}$.
\end{enumerate}
The third case is dealt with as previously (for each $i+j$ we make a substitution $I + J = i + j$, check, whether the obtained equation is trivial
and solve the corresponding equation),
similarly fourth (for each $i+j$ we substitute $I + J = i+ j$ and check whether the obtained word is $\varepsilon$;
note that it can be shown that this never holds,
see Lemma~\ref{lem:no_i+j_trivilizes})
and fifth (we~substitute $I = i, J = j$ and test).
So we are left only with the first two cases.
Moreover, Lemma~\ref{lem:indices_sets_description_2} also gives us a similar characterization resulting from a substitution
$I = i$, again there are 5 cases and the last three of them are dealt with similarly, the first two give that there are sets
$S_J', S_{I+J}'$ such that
\begin{enumerate}
\item $j \in S_J'$ or
\item $i+j \in S_{I+J}'$
\end{enumerate}
and applied to substitution $J = j$ again gives 5 cases, the last three of which are dealt with and the first two yield
that there are sets 
$S_I'', S_{I+J}''$ such that
\begin{enumerate}
\item $i \in S_I''$ or
\item $i+j \in S_{I+J}''$.
\end{enumerate}
There are in total $8$ cases (we choose one of two options for three substitutions),
in each such a case from the three choices some two (though not each two) allow to give
$\Ocomp\left({n|u_{i_0}u_{i_0+1}|}/{\ell_{i_0}^2}\right)$ 
candidates for $(i,j)$ :
say if $i \in S_I$, $j \in S_J'$ and $i+j \in S_{I+J}''$ then any two determine $(i,j)$
and when $j \in S_J$, $j \in S_J'$ and $i+j \in S_{I+J}''$ then
$j \in S_J \cap S_J'$ and $i+j \in S_{I+J}''$.
The rest of the calculations is the same as in the case of $u \not \shift v$.
\end{proof}

\end{document}